\def\Xint#1{\mathchoice
   {\XXint\displaystyle\textstyle{#1}}%
   {\XXint\textstyle\scriptstyle{#1}}%
   {\XXint\scriptstyle\scriptscriptstyle{#1}}%
   {\XXint\scriptscriptstyle\scriptscriptstyle{#1}}%
   \!\int}
\def\XXint#1#2#3{{\setbox0=\hbox{$#1{#2#3}{\int}$}
     \vcenter{\hbox{$#2#3$}}\kern-.5\wd0}}
\def\dashint{\Xint-}
\newcommand{\labelWest}{\scalerel*{\circletfillhl}{\labPosX}}
\newcommand{\labelEast}{\scalerel*{\circletfillhr}{\labPosX}}
\newcommand{\labelSouth}{\scalerel*{\circletfillhb}{\labPosX}}
\newcommand{\labelNorth}{\scalerel*{\circletfillha}{\labPosX}}
\providecommand{\keywords}[1]{\textbf{\textit{Keywords: }} \emph{#1}}
\providecommand{\amsCat}[1]{\textbf{\textit{MSC: }} #1}
\theoremstyle{plain}
\newtheorem{proposition}{Proposition}%
\newtheorem{lemma}{Lemma}%
\newtheorem{theorem}{Theorem}%
\newtheorem*{theorem*}{Theorem}
\theoremstyle{definition}
\newtheorem{definition}{Definition}%
\newtheorem{assumption}{Assumption}%
\theoremstyle{remark}
\newtheorem{remark}{Remark}%
\title{Equilibrium boundary conditions for vectorial multi-dimensional lattice Boltzmann schemes}
\author{\textsc{Denise Aregba-Driollet}\footnote{Université de Bordeaux, CNRS, Bordeaux INP, IMB, UMR 5251, 33400 Talence, France.}\and{} \textsc{Thomas Bellotti}\footnote{Université Paris-Saclay, CNRS, CentraleSupélec, Laboratoire EM2C \& Fédération de Mathématiques de CentraleSupélec, 91190, Gif-sur-Yvette, France.}}
\newcommand{\numberCells}{J}
\newcommand{\numberConservationLaws}{M}
\newcommand{\timeVariable}{t} 
\newcommand{\spaceVariable}{x} 
\newcommand{\vectorial}[1]{\bm{#1}} 
\newcommand{\matricial}[1]{\bm{#1}} 
\newcommand{\conservedVariable}{u} 
\newcommand{\spatialDimensionality}{d} 
\newcommand{\flux}{\varphi} 
\newcommand{\numberVelocities}{q} 
\newcommand{\idEst}{\emph{i.e.}}
\newcommand{\confer}{\emph{cf.}}
\newcommand{\exempliGratia}{\emph{e.g.}}
\newcommand{\spaceStep}{\Delta \spaceVariable} 
\newcommand{\timeStep}{\Delta \timeVariable} 
\newcommand{\latticeVelocity}{\lambda} 
\newcommand{\discreteVelocityLetter}{c} 
\newcommand{\indexVelocity}{i} 
\newcommand{\naturals}{\mathbb{N}} 
\newcommand{\naturalsWithoutZero}{\mathbb{N}^*} 
\newcommand{\integerInterval}[2]{\llbracket#1, #2\rrbracket} 
\newcommand{\discrete}[1]{\mathsf{#1}} 
\newcommand{\distributionFunctionLetter}{f}
\newcommand{\distributionFunction}{\discrete{\distributionFunctionLetter}} 
\newcommand{\indexSpace}{j} 
\newcommand{\indexTime}{n} 
\newcommand{\definitionEquality}{:=} 
\newcommand{\collided}{\star} 
\newcommand{\relaxationParameter}{\omega} 
\newcommand{\relaxationParameterSymmetric}{\omega_{\discrete{s}}} 
\newcommand{\relaxationParameterAntiSymmetric}{\omega_{\discrete{a}}} 
\newcommand{\atEquilibrium}{\textnormal{eq}} 
\newcommand{\conservedVariableDiscrete}{\discrete{\conservedVariable}} 
\newcommand{\equilibriumCoefficientLinear}{\mathscr{L}} 
\newcommand{\differential}{\textnormal{d}} 
\newcommand{\maximumInitialDatum}{u_{\infty}} 
\newcommand{\invariantCompactSetDistributions}{K} 
\newcommand{\lbmScheme}[2]{$\textnormal{D}_{#1}\textnormal{Q}_{#2}$} 
\newcommand{\reals}{\mathbb{R}} 
\newcommand{\realsPositive}{\mathbb{R}_+^{*}}
\newcommand{\initial}{\circ} 
\newcommand{\lebesgueSpace}[1]{L^{#1}} 
\newcommand{\boundedVariationSpace}{\textnormal{BV}} 
\newcommand{\continuousSpace}[1]{C^{#1}} 
\newcommand{\timeGridPoint}[1]{\timeVariable^{#1}} 
\newcommand{\collisionOperator}{\mathcal{R}} 
\newcommand{\cell}[1]{C_{#1}} 
\newcommand{\discreteMark}{\Delta} 
\newcommand{\distributionFunctionsAsFunction}{\vectorial{\distributionFunctionLetter}_{\discreteMark}} 
\newcommand{\distributionFunctionsAsFunctionComponent}[1]{{\distributionFunctionLetter}_{\discreteMark, #1}} 
\newcommand{\distributionFunctionsAsFunctionWithStep}[1]{\vectorial{\distributionFunctionLetter}_{#1}} 
\newcommand{\conservedVariableDiscreteAsFunction}{\conservedVariable_{\discreteMark}} 
\newcommand{\conservedVariableDiscreteAsFunctionWithStep}[1]{\conservedVariable_{#1}} 
\newcommand{\conservedVariableDiscreteAsFunctionSpace}[1]{\conservedVariable_{\discreteMark}^{#1}} 
\newcommand{\transpose}[1]{#1^{\textsf{T}}} 
\newcommand{\indicatorFunction}[1]{\mathds{1}_{#1}} 
\newcommand{\lebesgueInSpace}[1]{L_{\vectorial{\spaceVariable}}^{#1}}
\newcommand{\lebesgueTime}[1]{L_{\timeVariable}^{#1}} 
\newcommand{\petitLebesgueSpace}[1]{\ell^{#1}} 
\newcommand{\strong}[1]{\emph{#1}} 
\newcommand{\bigO}[1]{\mathcal{O}(#1)}
\newcommand{\canonicalBasisVector}[1]{\vectorial{e}_{#1}}
\newcommand{\limitConservedMoment}{\overline{\conservedVariable}}
\newcommand{\limitDistributionFunction}{\overline{\distributionFunctionLetter}}
\newcommand{\finalTime}{T}
\newcommand{\testFunction}{\psi}
\newcommand{\testFunctionDiscrete}{\discrete{\psi}}
\newcommand{\kineticEntropy}{s}
\newcommand{\krushkovParameter}{\kappa}
\newcommand{\sign}{\textnormal{sgn}}
\newcommand{\labZeroVel}{\times}
\newcommand{\labPosX}{\Yright}
\newcommand{\labPosY}{\Yup}
\newcommand{\labNegX}{\Yleft}
\newcommand{\labNegY}{\Ydown}
\newcommand{\setVelIndexes}{I}
\newcommand{\totalVariation}[2]{\mathscr{V}(#1, #2)}
\newcommand{\totalVariationAlongAxis}[3]{\mathscr{V}_{#3}(#1, #2)}
\newcommand{\xLabel}{x}
\newcommand{\yLabel}{y}
\newcommand{\boundaryDatum}[2]{\tilde{\conservedVariableDiscrete}_{#1, #2}}
\newcommand{\boundaryDatumVectorial}[2]{\tilde{\vectorial{\conservedVariableDiscrete}}_{#1, #2}}
\newcommand{\boundaryFunction}[1]{\tilde{\conservedVariable}_{#1}}
\newcommand{\boundaryDatumSouth}[1]{\boundaryDatum{\labelSouth}{#1}}
\newcommand{\boundaryDatumWest}[1]{\boundaryDatum{\labelWest}{#1}}
\newcommand{\boundaryDatumSouthVectorial}[1]{\tilde{\vectorial{\conservedVariableDiscrete}}_{\labelSouth, #1}}
\newcommand{\boundaryDatumWestVectorial}[1]{\tilde{\vectorial{\conservedVariableDiscrete}}_{\labelWest, #1}}
\newcommand{\boundaryDatumEastVectorial}[1]{\tilde{\vectorial{\conservedVariableDiscrete}}_{\labelEast, #1}}
\newcommand{\boundaryDatumNorthVectorial}[1]{\tilde{\vectorial{\conservedVariableDiscrete}}_{\labelNorth, #1}}
\newcommand{\traceOperator}{\gamma}
\newcommand{\smoothFunctionsSpace}{C_c^{2}}
\newcommand{\reduceSpaceDoubleInt}{\hspace{-0.3em}}
\newcommand{\numberTimeSteps}{N}
\newcommand{\advectionVelocity}{V}
\newcommand{\courantNumber}{C}
\newcommand{\identityMatrix}[1]{\matricial{I}_{#1}}
\newcommand{\timeShiftOperator}{z}
\newcommand{\fourierShift}{\kappa}
\newcommand{\solutionCharStable}{\fourierShift_-}
\newcommand{\solutionCharUnstable}{\fourierShift_+}
\newcommand{\cKClass}[1]{C^{#1}}
\newcommand{\sobolevSpace}[2]{W^{#1, #2}}
\newcommand{\smoothFunctionsSpaceWithReg}[1]{C_c^{#1}}
\newcommand{\divergence}{\text{div}}
\begin{document}


\maketitle

\begin{abstract}
    The concept of equilibrium is a general tool to fill the gap between macroscopic and mesoscopic information, both within kinetic systems and kinetic schemes.
    This work explores the use of equilibria to devise numerical boundary conditions for multi-dimensional vectorial lattice Boltzmann schemes tackling systems of hyperbolic conservation laws.
    In the scalar case, we prove convergence for schemes with monotone relaxation to the weak entropy solution by Bardos, Leroux, and Nédelec [Commun. Partial Differ. Equ., 4 (9), 1979], following the path by Crandall and Majda [Math. Comput., 34, 149 (1980)].
    Numerical experiments are conducted both for scalar and vectorial problems, and demonstrate the effectiveness of equilibrium boundary conditions in capturing significant physical phenomena.
\end{abstract}

\keywords{lattice Boltzmann, boundary conditions, equilibrium, convergence, monotonicity}\\
\amsCat{65M12, 76M28}

\section*{Introduction}

In the context of lattice Boltzmann schemes, where boundaries in time-space are present---that is in \strong{actual numerical simulations}---there is a need to fill the gap between the ``macroscopic'' (the one of the unknowns of the PDEs) and the ``mesoscopic'' world (concerning the distribution functions, or the moments, of the numerical scheme). 
The concept of equilibrium---depending only on macroscopic quantities---offers a universal bridge between macroscopic data, which are generally known from the problem, and mesoscopic ones, which are employed by the numerical scheme.

In the case of ``time-boundaries'', \idEst{} initial conditions, the advantages and limitations of using equilibria to extract the numerous mesoscopic data from the scarce initial data of the Cauchy problem have already been addressed, see for example \cite{van2009smooth, bellotti2024initialisation}.

As far as ``space-boundaries'' are concerned, one lacks data for the distribution functions associated to incoming discrete velocities. Notice that, in principle, each part of the boundary features incoming velocities, whereas the macroscopic PDEs do not necessarily need boundary conditions everywhere, \confer{} the dichotomy between inflows and outflows. 
In this context, the use of equilibria has already been investigated from the practical standpoint, see  \cite[Chapter 5]{timm2016lattice}.
This procedure is indeed called ``wet-node'' approach with ``equilibrium scheme'', and is a quite old one dating back at least to \cite{inamuro1995non}. Further discussions on the applicability of this approach in viscous flows with moderate Reynolds number can be found in \cite{mohamad2009note}.

The present work focuses on lattice Boltzmann schemes, in particular the so-called ``\strong{vectorial schemes}'', to approximate the solution of systems of conservation laws, see for instance \cite{dubois2014simulation, graille2014approximation, bellotti2022multidimensional, bellotti2025fourth, wissocq2024positive}.
In this setting, we investigate the usefulness of boundary conditions based on equilibria and---in the scalar case---we prove that they ensure, under suitable constraints generalizing the celebrated ``sub-characteristic condition'', \strong{convergence} of the numerical solution to the ``physical'' solution of the PDE, namely the one satisfying an entropy inequality introduced by \cite{bardos1979first}.

This attempt has been propted by two concomitant facts. 
The first one is that we have recently been able to prove the convergence of a vast class of two-relaxation-times lattice Boltzmann schemes for scalar non-linear problems on the whole space \cite{aregba2025monotonicity}, and this proof crucially relies on the fact that the discrete solution geometrically relaxes to a $\bigO{\spaceStep}$-neighborhood (with $\spaceStep$ being the grid step) of the equilibrium manifold. It seems therefore natural to use this very equilibrium to devise boundary conditions.
The second fact is that a proof of convergence has been available for a long time \cite{aregba2004kinetic} for scalar 1D relaxation schemes, where the numerical solution is instantaneously (\idEst{} iteration-wise) at equilibrium, when endowed with equilibrium boundary conditions.
The interest of using lattice Boltzmann schemes which do not relax on the equilibrium lays in their reduced numerical diffusion, compared to relaxation schemes.  
The present extension of the work by Aregba-Driollet and Mili{\v{s}}i{\'c} to lattice Boltzmann schemes is non-trivial because it is conducted in a multi-dimensional setting, which brings additional difficulties to overcome.
We also emphasize that ours is, to the best of our knowledge, the first result on the convergence of lattice Boltzmann schemes in presence of non-periodic boundaries for weak solutions of hyperbolic problems. 
Results are available for lattice Boltzmann schemes addressing smooth solutions (\idEst{} at least with a velocity field of class $\continuousSpace{5}$) of the incompressible Stokes \cite{junk2008convergence} and Navier-Stokes \cite{junk2009convergence} systems, when these schemes admit a so-called ``stability-structure'' and are endowed with bounce-back numerical boundary conditions.

It is worthwhile noting that, in order to obtain convergence, we show stability properties (\idEst{} estimates from the data) for the $\lebesgueSpace{\infty}$-norm and the total variation semi-norm, when the boundary conditions for the $\indexVelocity$-th discrete velocity---here assumed to enter the domain---are obtained by preparing values outside the domain: 
\begin{equation}\label{eq:bcIntro}
    \distributionFunction_{\indexVelocity, \text{out}} = \distributionFunctionLetter_{\indexVelocity}^{\atEquilibrium}(\conservedVariableDiscrete_{\text{out}}),
\end{equation} 
without much rigor, where the external state $\conservedVariableDiscrete_{\text{out}}$ in \strong{independent} of the solution inside the computational domain.
Therefore, \eqref{eq:bcIntro} can be regarded as a \strong{non-homogeneous Dirichlet} boundary condition on the $\indexVelocity$-th distribution function, and $\distributionFunctionLetter_{\indexVelocity}^{\atEquilibrium}(\conservedVariableDiscrete_{\text{out}})$ is a boundary source term.
The so-called ``strong stability'' (or GKS, for Gustafsson, Kreiss, and Sundstr\"om) stability of \strong{Dirichlet} boundary conditions for $\lebesgueSpace{2}$-stable 1D Finite Difference schemes, tackling scalar linear problems, has been established a long time ago by Goldberg and Tadmor \cite{goldberg1981scheme} (see also \cite{coulombel2011semigroup}), and holds \strong{regardless} of whether the boundary is an inflow or an outflow. 
All this to say that the kind of results that we prove on lattice Boltzmann methods with \eqref{eq:bcIntro} shares similarities with that of Goldberg and Tadmor on Finite Difference methods. 
Although we must apply \eqref{eq:bcIntro} to incoming distribution functions due to the structure of the numerical scheme, this has---as previously stressed---\strong{nothing to do} with the fact that the boundary at hand is either an inflow or an outflow\footnote{Lattice Boltzmann schemes are essentially \strong{centered} schemes \cite{kurganov2000new}.} for $\conservedVariableDiscrete$---the unknown we are eventually interested in. We indeed see that stability (and convergence) holds regardless of the inflow/outflow nature of the boundary. The worst-case scenario---already discussed in \cite{coulombel:cel-00616497}---is the presence of numerical boundary layers, which we sometimes observe in practice.
A difference compared to the work of Goldberg and Tadmor is that our results are genuinely non-linear: in their work, stability holds \strong{without limitations} on the boundary source terms, which is symptomatic of a linear problem, whereas in our context, limitations on $\conservedVariableDiscrete_{\text{out}}$ must be taken into account.

Let us now be more precise concerning the problem we tackle here.
We consider $\vectorial{\conservedVariable}\in\reals^{\numberConservationLaws}$, where $\numberConservationLaws\in\naturalsWithoutZero$ is the number of conservation laws, such that 
\begin{align}
    &\partial_{\timeVariable}\vectorial{\conservedVariable}(\timeVariable, \vectorial{\spaceVariable}) + \partial_{\xLabel}(\vectorial{\flux}_{\xLabel}(\vectorial{\conservedVariable}(\timeVariable, \vectorial{\spaceVariable}))) + \partial_{\yLabel}(\vectorial{\flux}_{\yLabel}(\vectorial{\conservedVariable}(\timeVariable, \vectorial{\spaceVariable}))) = 0, \qquad &&(\timeVariable, \vectorial{\spaceVariable})\in (0, \finalTime)\times (0, 1)^2,\label{eq:systemCons} \\
    &\vectorial{\conservedVariable}(0, \xLabel, \yLabel) = \vectorial{\conservedVariable}^{\initial}(\xLabel, \yLabel), \qquad &&\xLabel, \yLabel\in(0, 1)^2, \label{eq:initialData}\\
    &\vectorial{\conservedVariable}(\timeVariable, \xLabel = 0, \yLabel) = \tilde{\vectorial{\conservedVariable}}_{\labelWest}(\timeVariable, \yLabel), \qquad &&(\timeVariable, \yLabel)\in(0, \finalTime)\times(0, 1),\label{eq:BCW}\\
    &\vectorial{\conservedVariable}(\timeVariable, \xLabel = 1, \yLabel) = \tilde{\vectorial{\conservedVariable}}_{\labelEast}(\timeVariable, \yLabel), \qquad &&(\timeVariable, \yLabel)\in(0, \finalTime)\times(0, 1),\label{eq:BCE}\\
    &\vectorial{\conservedVariable}(\timeVariable, \xLabel, \yLabel = 0) = \tilde{\vectorial{\conservedVariable}}_{\labelSouth}(\timeVariable, \xLabel), \qquad &&(\timeVariable, \xLabel)\in(0, \finalTime)\times(0, 1),\label{eq:BCS}\\
    &\vectorial{\conservedVariable}(\timeVariable, \xLabel, \yLabel = 1) = \tilde{\vectorial{\conservedVariable}}_{\labelNorth}(\timeVariable, \xLabel), \qquad &&(\timeVariable, \xLabel)\in(0, \finalTime)\times(0, 1).\label{eq:BCN}
\end{align}
We often indicate $\vectorial{\spaceVariable} = (\xLabel, \yLabel)$ for conciseness.
Although we consider a 2D problem for the sake of presentation, everything adapts to the 1D context by ignoring the $\yLabel$-axis.
One can analoguously extend the discussion to 3D problems, at the price of more involved notations.
Notice that, in general, \eqref{eq:BCW}, \eqref{eq:BCE}, \eqref{eq:BCS}, and \eqref{eq:BCN} cannot be enforced on the entire boundary of the domain at any time.
Indeed, the trace of the solution on the boundary is only needed when and where there are incoming characteristics associated with the system of conservation laws, \confer{} \cite{serre1999systems} for example. 
To give a simple illustration of this fact, consider the scalar case $\numberConservationLaws = 1$, with $\flux_{\xLabel}(\conservedVariable) = \conservedVariable$ and $\flux_{\yLabel}(\conservedVariable) = \conservedVariable$. In this case, the solution is completely known as long as \eqref{eq:BCW} and \eqref{eq:BCS} are specified, \idEst{} the trace on the western ($\labelWest$) and southern ($\labelSouth$) walls is specified, whereas \eqref{eq:BCE} and \eqref{eq:BCN} cannot be ensured.

Several contributions and theories exist concerning the initial-boundary value problem for \eqref{eq:systemCons}.
In the scalar case $\numberConservationLaws = 1$, \cite{bardos1979first} consider the case of solution with bounded variation, whereas \cite{otto1996initial} (see also \cite{vovelle2002convergence}) consider the case of bounded measurable data.
For the case of systems ($\numberConservationLaws\geq 2$) in 1D, we mention the contributions by \cite{dubois1988boundary} and \cite{benabdallah1987problemes}. 
In the sequel, we rely on  \cite{bardos1979first}, because a theory based on bounded variation is well-suited for numerical methods, like lattice Boltzmann, which work on regular Cartesian grids.

The paper is structured as follows. 
\Cref{sec:vectorialLBM} introduces the numerical methods that this work analyzes.
Convergence in the scalar case is proved in \Cref{sec:convergenceScalar}.
Numerical experiments in 1D and 2D, both for scalar problems and systems, are showcased in \Cref{sec:numericalExperiments}.

\section{Vectorial lattice Boltzmann schemes}\label{sec:vectorialLBM}

\subsection{Time and space discretizations}

The time axis is sampled with a uniform discretization made up of points $\timeGridPoint{\indexTime} \definitionEquality\indexTime\timeStep$, with $\indexTime\in\naturals$ and $\timeStep>0$.
To reach the final time $\finalTime>0$, which is fixed once for all in what follows, we assume that there exists $\numberTimeSteps\in\naturalsWithoutZero$ such that $\numberTimeSteps\timeStep=\finalTime$.

For the space discretization, we consider $\numberCells\in\naturalsWithoutZero$ cells-per-direction and form the control volumes 
\begin{equation}\label{eq:paving}
    \cell{\vectorial{\indexSpace}}\definitionEquality (\indexSpace_{\xLabel}\spaceStep, (\indexSpace_{\xLabel}+1)\spaceStep)\times (\indexSpace_{\yLabel}\spaceStep, (\indexSpace_{\yLabel}+1)\spaceStep), \qquad \vectorial{\indexSpace} = (\indexSpace_{\xLabel}, \indexSpace_{\yLabel})\in\integerInterval{0}{\numberCells-1}^2,
\end{equation}
where the space step is given by $\spaceStep \definitionEquality 1/\numberCells$.
We hence pave the domain $(0, 1)^2$, in the sense that $[0, 1]^2 = \bigcup_{\vectorial{\indexSpace}\in\integerInterval{0}{\numberCells-1}^2}\overline{\cell{\vectorial{\indexSpace}}}$.

Since we utilize explicit schemes to handle problems where the speed of propagation of information is finite, it is natural to request that the speed of propagation by the numerical scheme, \idEst{} the ratio $\spaceStep/\timeStep$, be finite.
We thus consider $\latticeVelocity\definitionEquality\spaceStep/\timeStep>0$ fixed.

\subsection{Lattice Boltzmann schemes}

The most general lattice Boltzmann scheme that we treat in the present paper is a \lbmScheme{2}{5} scheme\footnote{Generally, a $\spatialDimensionality$-dimensional scheme featuring $\numberVelocities$ discrete velocities is called \lbmScheme{\spatialDimensionality}{\numberVelocities}.} in terms of velocity stencil, with a two-relaxation-times (TRT) relaxation model.
We do not consider velocity stencils featuring diagonal velocities (\exempliGratia{} \lbmScheme{2}{9} schemes), since they are excessively costly---in terms of storage---to address \eqref{eq:systemCons}.
Notice that, when handling systems, we could use different lattice Boltzmann schemes, coupled through equilibria, to address each conservation law, as in \cite{dubois2014simulation}. We do not consider this option just for the sake of simplifying notations.
Finally, let us notice that we discuss how to adapt our discussion to simpler velocity stencils (without zero velocity in particular, see \lbmScheme{2}{4}), to the 1D context (\lbmScheme{1}{3} and \lbmScheme{1}{2} stencils), and to simpler relaxation models, BGK for instance.
The number of discrete velocity is indicated by $\numberVelocities\in\integerInterval{2}{5}$ (and is usually $\numberVelocities = 5$ throughout the paper).

The indices for the discrete velocities belong to
\begin{equation*}
    \setVelIndexes\definitionEquality\{\labZeroVel, \labPosX, \labNegX, \labPosY, \labNegY\}.
\end{equation*}
The first one concerns the zero velocity, the second (resp. third) one the positive (resp. negative) velocity along the $\xLabel$-axis, and the fourth (resp. fifth) one the positive (resp. negative) velocity along the $\yLabel$-axis.
We use these notations \emph{in lieu} of integer indices so that the directions of propagation are clear---we do hope---at a glance.

For each discrete velocity indexed in $\setVelIndexes \ni \indexVelocity$, at each point of the time-space grid indexed by $(\indexTime, \vectorial{\indexSpace}) \in \integerInterval{0}{\numberTimeSteps}\times \integerInterval{0}{\numberCells-1}^2$, we attach a discrete distribution function $\vectorial{\distributionFunction}_{\indexVelocity, \vectorial{\indexSpace}}^{\indexTime} \in\reals^{\numberConservationLaws}$, which evolves through the following algorithm, made up of a local non-linear relaxation (sometimes called ``collision'') and a non-local linear transport phase.

\subsubsection{Relaxation}

Let $\indexTime\in\integerInterval{0}{\numberTimeSteps-1}$ and $\vectorial{\indexSpace}\in\integerInterval{0}{\numberCells-1}^2$.
Denoting $\vectorial{\conservedVariableDiscrete}_{\vectorial{\indexSpace}}^{\indexTime}\definitionEquality \sum_{\indexVelocity\in\setVelIndexes} \vectorial{\distributionFunction}_{\indexVelocity, \vectorial{\indexSpace}}^{\indexTime} \in \reals^{\numberConservationLaws}$, which has to be thought as an approximation of the solution $\vectorial{\conservedVariable}$, the relaxation reads:
\begin{align*}
    \vectorial{\distributionFunction}_{\labZeroVel, \vectorial{\indexSpace}}^{\indexTime, \collided} &= (1-\relaxationParameterSymmetric) \vectorial{\distributionFunction}_{\labZeroVel, \vectorial{\indexSpace}}^{\indexTime} + \relaxationParameterSymmetric\vectorial{\distributionFunctionLetter}_{\labZeroVel}^{\atEquilibrium}(\vectorial{\conservedVariableDiscrete}_{\vectorial{\indexSpace}}^{\indexTime}), \\
    \vectorial{\distributionFunction}_{\labPosX, \vectorial{\indexSpace}}^{\indexTime, \collided} &= (1-\tfrac{1}{2}(\relaxationParameterSymmetric+\relaxationParameterAntiSymmetric)) \vectorial{\distributionFunction}_{\labPosX, \vectorial{\indexSpace}}^{\indexTime} + \tfrac{1}{2}(\relaxationParameterSymmetric+\relaxationParameterAntiSymmetric)\vectorial{\distributionFunctionLetter}_{\labPosX}^{\atEquilibrium}(\vectorial{\conservedVariableDiscrete}_{\vectorial{\indexSpace}}^{\indexTime}) + \tfrac{1}{2}(\relaxationParameterSymmetric-\relaxationParameterAntiSymmetric) (\vectorial{\distributionFunctionLetter}_{\labNegX}^{\atEquilibrium}(\vectorial{\conservedVariableDiscrete}_{\vectorial{\indexSpace}}^{\indexTime}) - \vectorial{\distributionFunction}_{\labNegX, \vectorial{\indexSpace}}^{\indexTime}), \\
    \vectorial{\distributionFunction}_{\labNegX, \vectorial{\indexSpace}}^{\indexTime, \collided} &= (1-\tfrac{1}{2}(\relaxationParameterSymmetric+\relaxationParameterAntiSymmetric)) \vectorial{\distributionFunction}_{\labNegX, \vectorial{\indexSpace}}^{\indexTime} + \tfrac{1}{2}(\relaxationParameterSymmetric+\relaxationParameterAntiSymmetric)\vectorial{\distributionFunctionLetter}_{\labNegX}^{\atEquilibrium}(\vectorial{\conservedVariableDiscrete}_{\vectorial{\indexSpace}}^{\indexTime}) + \tfrac{1}{2}(\relaxationParameterSymmetric-\relaxationParameterAntiSymmetric) (\vectorial{\distributionFunctionLetter}_{\labPosX}^{\atEquilibrium}(\vectorial{\conservedVariableDiscrete}_{\vectorial{\indexSpace}}^{\indexTime}) - \vectorial{\distributionFunction}_{\labPosX, \vectorial{\indexSpace}}^{\indexTime}),\\
    \vectorial{\distributionFunction}_{\labPosY, \vectorial{\indexSpace}}^{\indexTime, \collided} &= (1-\tfrac{1}{2}(\relaxationParameterSymmetric+\relaxationParameterAntiSymmetric)) \vectorial{\distributionFunction}_{\labPosY, \vectorial{\indexSpace}}^{\indexTime} + \tfrac{1}{2}(\relaxationParameterSymmetric+\relaxationParameterAntiSymmetric)\vectorial{\distributionFunctionLetter}_{\labPosY}^{\atEquilibrium}(\vectorial{\conservedVariableDiscrete}_{\vectorial{\indexSpace}}^{\indexTime}) + \tfrac{1}{2}(\relaxationParameterSymmetric-\relaxationParameterAntiSymmetric) (\vectorial{\distributionFunctionLetter}_{\labNegY}^{\atEquilibrium}(\vectorial{\conservedVariableDiscrete}_{\vectorial{\indexSpace}}^{\indexTime}) - \vectorial{\distributionFunction}_{\labNegY, \vectorial{\indexSpace}}^{\indexTime}), \\
    \vectorial{\distributionFunction}_{\labNegY, \vectorial{\indexSpace}}^{\indexTime, \collided} &= (1-\tfrac{1}{2}(\relaxationParameterSymmetric+\relaxationParameterAntiSymmetric)) \vectorial{\distributionFunction}_{\labNegY, \vectorial{\indexSpace}}^{\indexTime} + \tfrac{1}{2}(\relaxationParameterSymmetric+\relaxationParameterAntiSymmetric)\vectorial{\distributionFunctionLetter}_{\labNegY}^{\atEquilibrium}(\vectorial{\conservedVariableDiscrete}_{\vectorial{\indexSpace}}^{\indexTime}) + \tfrac{1}{2}(\relaxationParameterSymmetric-\relaxationParameterAntiSymmetric) (\vectorial{\distributionFunctionLetter}_{\labPosY}^{\atEquilibrium}(\vectorial{\conservedVariableDiscrete}_{\vectorial{\indexSpace}}^{\indexTime}) - \vectorial{\distributionFunction}_{\labPosY, \vectorial{\indexSpace}}^{\indexTime}).
\end{align*}
The relaxation parameters $\relaxationParameterSymmetric$ and $\relaxationParameterAntiSymmetric$ belong to $(0, 2]$ and can be chosen freely in principle.
The equilibria must fulfill the consistency constraints
\begin{equation}\label{eq:consistency}
    \sum_{\indexVelocity\in\setVelIndexes} \vectorial{\distributionFunctionLetter}_{\indexVelocity}^{\atEquilibrium}(\vectorial{\conservedVariableDiscrete}) = \vectorial{\conservedVariableDiscrete}, \qquad \latticeVelocity (\vectorial{\distributionFunctionLetter}_{\labPosX}^{\atEquilibrium}(\vectorial{\conservedVariableDiscrete}) - \vectorial{\distributionFunctionLetter}_{\labNegX}^{\atEquilibrium}(\vectorial{\conservedVariableDiscrete})) = \vectorial{\flux}_{\xLabel}(\vectorial{\conservedVariableDiscrete}), \qquad \latticeVelocity (\vectorial{\distributionFunctionLetter}_{\labPosY}^{\atEquilibrium}(\vectorial{\conservedVariableDiscrete}) - \vectorial{\distributionFunctionLetter}_{\labNegY}^{\atEquilibrium}(\vectorial{\conservedVariableDiscrete})) = \vectorial{\flux}_{\yLabel}(\vectorial{\conservedVariableDiscrete}).
\end{equation}
For some notational ease, we sometimes write $\vectorial{\distributionFunction}_{\indexVelocity, \vectorial{\indexSpace}}^{\indexTime, \collided} = \vectorial{\collisionOperator}_{\indexVelocity}(\vectorial{\distributionFunction}_{\labZeroVel, \vectorial{\indexSpace}}^{\indexTime}, \dots, \vectorial{\distributionFunction}_{\labNegY, \vectorial{\indexSpace}}^{\indexTime})$ or $\vectorial{\distributionFunction}_{\indexVelocity, \vectorial{\indexSpace}}^{\indexTime, \collided} = \vectorial{\collisionOperator}_{\indexVelocity}(\vectorial{\distributionFunction}_{\vectorial{\indexSpace}}^{\indexTime})$ for $\indexVelocity\in\setVelIndexes$.

\begin{remark}[BGK relaxation]
    The BGK relaxation operator is obtained by simply setting $\relaxationParameterSymmetric = \relaxationParameterAntiSymmetric$ in the previous expressions.
\end{remark}

\subsubsection{Transport}

After the relaxation, the transport phase reads, for $\vectorial{\indexSpace}\in\integerInterval{0}{\numberCells-1}^2$
\begin{align*}
    \vectorial{\distributionFunction}_{\labZeroVel, \vectorial{\indexSpace}}^{\indexTime+1} =\vectorial{\distributionFunction}_{\labZeroVel, \vectorial{\indexSpace}}^{\indexTime, \collided}, \qquad 
    \vectorial{\distributionFunction}_{\labPosX, \vectorial{\indexSpace}}^{\indexTime+1} &=\vectorial{\distributionFunction}_{\labPosX, \vectorial{\indexSpace}-\canonicalBasisVector{\xLabel}}^{\indexTime, \collided}, \qquad 
    \vectorial{\distributionFunction}_{\labNegX, \vectorial{\indexSpace}}^{\indexTime+1} =\vectorial{\distributionFunction}_{\labNegX, \vectorial{\indexSpace}+\canonicalBasisVector{\xLabel}}^{\indexTime, \collided},\\
    \vectorial{\distributionFunction}_{\labPosY, \vectorial{\indexSpace}}^{\indexTime+1} &=\vectorial{\distributionFunction}_{\labPosY, \vectorial{\indexSpace}-\canonicalBasisVector{\yLabel}}^{\indexTime, \collided}, \qquad
    \vectorial{\distributionFunction}_{\labNegY, \vectorial{\indexSpace}}^{\indexTime+1} =\vectorial{\distributionFunction}_{\labNegY, \vectorial{\indexSpace}+\canonicalBasisVector{\yLabel}}^{\indexTime, \collided}.
\end{align*}

Notice that when $\vectorial{\indexSpace}$ is close to the boundary of $\integerInterval{0}{\numberCells-1}^2$, the shifted coordinate $\vectorial{\indexSpace}\mp\canonicalBasisVector{\xLabel}$ (and the same along $\yLabel$) falls outside $\integerInterval{0}{\numberCells-1}^2$.
This calls for the definition of post-relaxation distributions functions, at least for some discrete velocities, defined on ghost cells indexed by 
\begin{align*}
    (-1, \indexSpace_{\yLabel}) \quad  &\text{with}\quad  \indexSpace_{\yLabel}\in\integerInterval{0}{\numberCells-1}, \qquad \text{for} \quad \vectorial{\distributionFunction}_{\labPosX}\qquad \text{(western boundary }\labelWest).\\
    (\numberCells, \indexSpace_{\yLabel}) \quad  &\text{with}\quad  \indexSpace_{\yLabel}\in\integerInterval{0}{\numberCells-1}, \qquad \text{for} \quad \vectorial{\distributionFunction}_{\labNegX}\qquad \text{(eastern boundary }\labelEast).\\
    (\indexSpace_{\xLabel}, -1) \quad  &\text{with}\quad  \indexSpace_{\xLabel}\in\integerInterval{0}{\numberCells-1}, \qquad \text{for} \quad \vectorial{\distributionFunction}_{\labPosY}\qquad \text{(southern boundary }\labelSouth).\\
    (\indexSpace_{\xLabel}, \numberCells) \quad  &\text{with}\quad  \indexSpace_{\xLabel}\in\integerInterval{0}{\numberCells-1}, \qquad \text{for} \quad \vectorial{\distributionFunction}_{\labNegY}\qquad \text{(northern boundary }\labelNorth).
\end{align*}
This topic---which is central in our work---is discussed in the dedicated \Cref{sec:boundaryConditions}.

\subsubsection{Initialization}

Both to handle initialization and boundary conditions, equilibria are utilized as a way of constructing kinetic information in $\reals^{\numberConservationLaws\times\numberVelocities}$ from macroscopic one in $\reals^{\numberConservationLaws}$.
For the initialization, we therefore take
\begin{equation}\label{eq:initialization}
    \vectorial{\distributionFunction}_{\indexVelocity, \vectorial{\indexSpace}}^{0} = \vectorial{\distributionFunctionLetter}_{\indexVelocity}^{\atEquilibrium}\Bigl (\dashint_{\cell{\vectorial{\indexSpace}}}\vectorial{\conservedVariable}^{\initial}(\vectorial{\spaceVariable})\differential\vectorial{\spaceVariable}\Bigr ), \qquad \indexVelocity\in\setVelIndexes, \quad \vectorial{\indexSpace}\in\integerInterval{0}{\numberCells-1}^2.
\end{equation}

\subsubsection{Boundary conditions}\label{sec:boundaryConditions}

As done for the initialization, we propose to fill missing data outside the boundary to be streamed inside using equilibria:
\begin{align}
    \vectorial{\distributionFunction}_{\labPosX, -1, \indexSpace_{\yLabel}}^{\indexTime, \collided} = \vectorial{\distributionFunctionLetter}_{\labPosX}^{\atEquilibrium}(\boundaryDatumWestVectorial{\indexSpace_{\yLabel}}^{\indexTime}), \qquad \vectorial{\distributionFunction}_{\labNegX, \numberCells, \indexSpace_{\yLabel}}^{\indexTime, \collided} = \vectorial{\distributionFunctionLetter}_{\labNegX}^{\atEquilibrium}(\boundaryDatumEastVectorial{\indexSpace_{\yLabel}}^{\indexTime}), \qquad \indexSpace_{\yLabel}\in\integerInterval{0}{\numberCells-1}, \label{eq:boundaryEquilibrium1}\\
    \vectorial{\distributionFunction}_{\labPosY, \indexSpace_{\xLabel}, -1}^{\indexTime, \collided} = \vectorial{\distributionFunctionLetter}_{\labPosY}^{\atEquilibrium}(\boundaryDatumSouthVectorial{\indexSpace_{\xLabel}}^{\indexTime}), \qquad \vectorial{\distributionFunction}_{\labNegY, \indexSpace_{\xLabel}, \numberCells}^{\indexTime, \collided} = \vectorial{\distributionFunctionLetter}_{\labNegY}^{\atEquilibrium}(\boundaryDatumNorthVectorial{\indexSpace_{\xLabel}}^{\indexTime}), \qquad \indexSpace_{\xLabel}\in\integerInterval{0}{\numberCells-1}, \nonumber
\end{align}
where the boundary data $\boundaryDatumVectorial{\alpha}{\indexSpace}^{\indexTime}$ for $\alpha=\labelWest, \labelEast, \labelSouth, \labelNorth$ can be created in several ways according to the specific effect that one wants to obtain on the conserved moment $\vectorial{\conservedVariableDiscrete}_{\vectorial{\indexSpace}}^{\indexTime}\definitionEquality \sum_{\indexVelocity\in\setVelIndexes} \vectorial{\distributionFunction}_{\indexVelocity, \vectorial{\indexSpace}}^{\indexTime}$  close to the boundary. Several choices are illustrated in the numerical experiments of \Cref{sec:numericalExperiments}. 
For instance, the values of $\boundaryDatumVectorial{\alpha}{\indexSpace}^{\indexTime}$ can depend, if one is not interested in proving convergence, on $\vectorial{\conservedVariableDiscrete}_{\vectorial{\indexSpace}}^{\indexTime}$ inside the domain, which is know once enforcing boundary conditions.
The choice that we adopt in \Cref{sec:convergenceScalar}, aiming at analyzing convergence, is 
\begin{equation}\label{eq:boundaryEquilibrium2}
    \boundaryDatumVectorial{\alpha}{\indexSpace}^{\indexTime} = \dashint_{\timeGridPoint{\indexTime}}^{\timeGridPoint{\indexTime + 1}}\reduceSpaceDoubleInt\dashint_{\indexSpace\spaceStep}^{(\indexSpace+1)\spaceStep}\tilde{\vectorial{\conservedVariable}}_{\alpha}(\timeVariable, \spaceVariable)\differential\spaceVariable\differential\timeVariable,
\end{equation}
where the functions $\tilde{\vectorial{\conservedVariable}}_{\alpha}$ are those in \eqref{eq:BCW}, \eqref{eq:BCE}, \eqref{eq:BCS}, and \eqref{eq:BCN}.

\begin{remark}[Simpler velocity stencils]
    Still remaining in 2D, if one wants to consider a \lbmScheme{2}{4} scheme, it is enough to set $\vectorial{\distributionFunctionLetter}^{\atEquilibrium}_{\labZeroVel} \equiv \vectorial{0}$.
    Since we start at equilibrium, this guarantees that $\vectorial{\distributionFunction}_{\labZeroVel, \vectorial{\indexSpace}}^{\indexTime}\equiv \vectorial{0}$, which hence plays no role.

    In the same fashion, to go towards 1D problems, one obtains a \lbmScheme{1}{3} scheme taking $\vectorial{\distributionFunctionLetter}^{\atEquilibrium}_{\labPosY} = \vectorial{\distributionFunctionLetter}^{\atEquilibrium}_{\labNegY} \equiv \vectorial{0}$, and a \lbmScheme{1}{2} scheme by also adding $\vectorial{\distributionFunctionLetter}^{\atEquilibrium}_{\labZeroVel} \equiv \vectorial{0}$.
\end{remark}

\section{Convergence in the scalar case}\label{sec:convergenceScalar}

We now focus on the convergence of the numerical schemes in the case of $\numberConservationLaws = 1$.
The proof is based on showing, as in \cite{crandall1980monotone}, that the numerical solution is bounded in $\lebesgueSpace{1}\cap\boundedVariationSpace$, thus precompact in $L^1_{\text{loc}}$. Proving equicontinuity in time, this allows to extract a subsequence of the numerical solution converging in $\lebesgueSpace{1}$ in space, uniformly in time.
We finally prove that the limit is the solution of the PDE we are looking at.

\subsection{Weak entropy solution}

We start by defining a notion of solution for the PDE at hand. Hence, we adapt the definition of weak entropy solution given in \cite{bardos1979first}.
    \begin{definition}[Weak entropy solution]\label{def:weakSolution}
        A function $\conservedVariable\in\boundedVariationSpace((0, \finalTime)\times (0, 1)^2)$ is said to be a weak entropy solution to \eqref{eq:systemCons}, \eqref{eq:initialData}, \eqref{eq:BCW}, \eqref{eq:BCE}, \eqref{eq:BCS}, and \eqref{eq:BCN}, if for all $\krushkovParameter\in\reals$ and for all $\testFunction\in\smoothFunctionsSpace([0, \finalTime )\times[0, 1]^2)$ such that $\testFunction\geq 0$, we have 
        \begin{align*}
            \int_0^{\finalTime}\reduceSpaceDoubleInt \int_{(0, 1)^2} |\conservedVariable(\timeVariable, \vectorial{\spaceVariable})-\krushkovParameter| + &\sign(\conservedVariable(\timeVariable, \vectorial{\spaceVariable})-\krushkovParameter)((\flux_{\xLabel}(\conservedVariable(\timeVariable, \vectorial{\spaceVariable}))-\flux_{\xLabel}(\krushkovParameter))\partial_{\xLabel}\testFunction(\timeVariable, \vectorial{\spaceVariable})
            +(\flux_{\yLabel}(\conservedVariable(\timeVariable, \vectorial{\spaceVariable}))-\flux_{\yLabel}(\krushkovParameter))\partial_{\yLabel}\testFunction(\timeVariable, \vectorial{\spaceVariable}))\differential\vectorial{\spaceVariable}\differential\timeVariable \\
            +\int_0^{\finalTime}\reduceSpaceDoubleInt \int_{0}^1 &\sign(\boundaryFunction{\labelWest}(\timeVariable, \yLabel)-\krushkovParameter)(\flux_{\xLabel}(\traceOperator_{\labelWest}(\conservedVariable)(\timeVariable, \yLabel))-\flux_{\xLabel}(\krushkovParameter))\testFunction(\timeVariable, 0, \yLabel)\differential\yLabel\differential\timeVariable\\
            -\int_0^{\finalTime}\reduceSpaceDoubleInt \int_{0}^1 &\sign(\boundaryFunction{\labelEast}(\timeVariable, \yLabel)-\krushkovParameter)(\flux_{\xLabel}(\traceOperator_{\labelEast}(\conservedVariable)(\timeVariable, \yLabel))-\flux_{\xLabel}(\krushkovParameter))\testFunction(\timeVariable, 1, \yLabel)\differential\yLabel\differential\timeVariable\\
            +\int_0^{\finalTime}\reduceSpaceDoubleInt \int_{0}^1 &\sign(\boundaryFunction{\labelSouth}(\timeVariable, \xLabel)-\krushkovParameter)(\flux_{\yLabel}(\traceOperator_{\labelSouth}(\conservedVariable)(\timeVariable, \xLabel))-\flux_{\yLabel}(\krushkovParameter))\testFunction(\timeVariable, \xLabel, 0)\differential\xLabel\differential\timeVariable\\
            -\int_0^{\finalTime}\reduceSpaceDoubleInt \int_{0}^1 &\sign(\boundaryFunction{\labelNorth}(\timeVariable, \xLabel)-\krushkovParameter)(\flux_{\yLabel}(\traceOperator_{\labelNorth}(\conservedVariable)(\timeVariable, \xLabel))-\flux_{\yLabel}(\krushkovParameter))\testFunction(\timeVariable, \xLabel, 1)\differential\xLabel\differential\timeVariable + \int_{(0, 1)^2}|\conservedVariable^{\initial}(\vectorial{\spaceVariable})-\krushkovParameter|\testFunction(0, \vectorial{\spaceVariable})\differential\vectorial{\spaceVariable} \geq 0,
        \end{align*}
        where $\traceOperator_{\alpha}$ denotes the trace on one of the four boundaries, which is well-defined for functions with bounded variation.
    \end{definition}
Existence and uniqueness of the solution in \Cref{def:weakSolution} is shown under the assumptions that $\flux_{\xLabel}, \flux_{\yLabel}\in\continuousSpace{1}(\reals)$,\footnote{In \cite{bardos1979first}, the authors request $\flux_{\xLabel}, \flux_{\yLabel}\in\continuousSpace{2}$ due to the fact that dependence of the fluxes on time and space is also allowed.} $\conservedVariable^{\initial}\in\continuousSpace{2}([0, 1]^2)$, and $(\timeVariable, \vectorial{\spaceVariable}) \mapsto \boundaryFunction{\labelWest}(\timeVariable, \yLabel)\indicatorFunction{\xLabel=0} + \boundaryFunction{\labelEast}(\timeVariable, \yLabel)\indicatorFunction{\xLabel=1} + \boundaryFunction{\labelSouth}(\timeVariable, \xLabel)\indicatorFunction{\yLabel=0} + \boundaryFunction{\labelNorth}(\timeVariable, \xLabel)\indicatorFunction{\yLabel=1}$ be $\continuousSpace{2}$ over $(0, \finalTime)\times (\{0, 1\}\times [0, 1]\cup[0, 1]\times \{0, 1\})$.

For our proof of convergence, we assume the following on fluxes and initial-boundary data.
\begin{assumption}[Smoothness of the fluxes]
    We assume that $\flux_{\xLabel}, \flux_{\yLabel}\in\continuousSpace{1}(\reals)$, $\flux_{\xLabel}(0)= 0$, and $\flux_{\yLabel}(0)= 0$.
\end{assumption}
\begin{assumption}[Smoothness of the initial and boundary data]
    We assume the following regularity on the initial and boundary data:
    \begin{align}
        \conservedVariable^{\initial}&\in \lebesgueSpace{\infty}((0, 1)^2)\cap \boundedVariationSpace((0, 1)^2)\cap \lebesgueSpace{1}((0, 1)^2),\label{eq:smoothnessInitial} \\
        \boundaryFunction{\alpha} &\in \lebesgueSpace{\infty}((0, \finalTime)\times(0, 1))\cap \boundedVariationSpace((0, \finalTime)\times(0, 1))\cap \lebesgueSpace{1}((0, \finalTime)\times(0, 1)),\qquad \text{for}\quad \alpha=\labelWest, \labelEast, \labelSouth, \labelNorth.\label{eq:smoothnessBoundary}
    \end{align}
\end{assumption}
Notice that requesting $\lebesgueSpace{1}$-smoothness comes from the fact that we develop proofs on a quarter of plane in space (\idEst{} and unbounded domain). On bounded domains, $\lebesgueSpace{\infty}((0, 1)^2)$ and $\lebesgueSpace{\infty}((0, \finalTime)\times(0, 1))$-smoothness makes them pointless.

\subsection{Various useful results on bounded variation functions}\label{sec:usefulBV}

Before proceeding, we collect several results that are useful in what follows and which are proved in \Cref{app:proofsBV}.
They mainly connect the notion of total variation in 2D with estimates along the axes, namely parallel to the boundaries of the domain at hand, or the relation between the total variation of piecewise constant reconstuctions and that of the original functions.

Each time a norm or a semi-norm of a vectorial quantity is considered, we implicitly utilize the $\petitLebesgueSpace{1}$-norm for the finite dimensional vector space.
Moreover, let $\Omega\subset \reals^{\spatialDimensionality}$ be an open subset, and $\conservedVariable\in\lebesgueSpace{1}(\Omega)$, either scalar or vector valued.
We indicate the total variation of $\conservedVariable$ over $\Omega$, defined in the standard fashion \cite{ambrosio2000functions}, by $\totalVariation{\conservedVariable}{\Omega}$.

We first start by a result, see \cite[Theorem 1.3]{chambolle2010introduction} on the approximation of $\boundedVariationSpace$-functions by smooth functions.
\begin{theorem}[Generalized Meyers–Serrin's]\label{thm:approxThmBV}
    Let $\Omega\subset \reals^{\spatialDimensionality}$ be an open set and $\conservedVariable\in\lebesgueSpace{1}(\Omega)\cap \boundedVariationSpace(\Omega)$.
    Then, there exists a sequence $(\conservedVariable_k)_{k\in\naturals}$ in $\cKClass{\infty}(\Omega)\cap \sobolevSpace{1}{1}(\Omega)$  such that 
    \begin{equation*}
        \conservedVariable_k\to \conservedVariable \quad \text{in}\quad \lebesgueSpace{1}(\Omega), \qquad \text{and}\qquad \totalVariation{\conservedVariable_k}{\Omega} = \int_{\Omega}|\nabla \conservedVariable_k(\vectorial{\spaceVariable})|\differential\vectorial{\spaceVariable}\to\totalVariation{\conservedVariable}{\Omega}.
    \end{equation*}
\end{theorem}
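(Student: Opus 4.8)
The plan is to reproduce the classical smooth-approximation argument for $\boundedVariationSpace$ functions (the $\boundedVariationSpace$ analogue of the Meyers--Serrin ``$H=W$'' theorem): one couples a locally finite smooth partition of unity adapted to an exhaustion of $\Omega$ with mollifications whose radii are allowed to shrink as one approaches $\partial\Omega$, so that \emph{no} regularity of the boundary enters. Assuming $\Omega\neq\emptyset$ (otherwise the statement is trivial), I would first exhaust $\Omega$ by relatively compact open sets: put $\Omega_0\definitionEquality\emptyset$ and, for $i\geq1$, $\Omega_i\definitionEquality\{\vectorial{\spaceVariable}\in\Omega:\operatorname{dist}(\vectorial{\spaceVariable},\partial\Omega)>1/(m+i),\ |\vectorial{\spaceVariable}|<m+i\}$, with $m$ large enough that $\Omega_1\neq\emptyset$; then $\overline{\Omega_i}$ is compact, $\overline{\Omega_i}\subset\Omega_{i+1}$, and $\Omega=\bigcup_{i\geq1}\Omega_i$. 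The open sets $A_i\definitionEquality\Omega_{i+1}\setminus\overline{\Omega_{i-1}}$ cover $\Omega$ and each point of $\Omega$ lies in only finitely many of them; I then take a subordinate smooth partition of unity $(\zeta_i)_{i\geq1}$, i.e.\ $\zeta_i\in C_c^\infty(A_i)$, $0\leq\zeta_i\leq1$, $\sum_{i\geq1}\zeta_i\equiv1$ on $\Omega$, and note that consequently $\sum_{i\geq1}\nabla\zeta_i\equiv0$.

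Fix $\varepsilon>0$ and a standard mollifier $(\rho_\delta)_{\delta>0}$. Each of $\conservedVariable\zeta_i$ and $\conservedVariable\nabla\zeta_i$ belongs to $\lebesgueSpace{1}(\Omega)$ with support in $A_i$, and the Leibniz rule for a $\boundedVariationSpace$ function times a smooth compactly supported one gives $D(\conservedVariable\zeta_i)=\zeta_i D\conservedVariable+\conservedVariable\nabla\zeta_i\,\mathcal{L}^{\spatialDimensionality}$, a finite measure supported in $A_i$. Hence one can choose $\delta_i>0$ so small that $\support(\rho_{\delta_i}*(\conservedVariable\zeta_i))\cup\support(\rho_{\delta_i}*(\conservedVariable\nabla\zeta_i))\subset A_i$, $\lVert\rho_{\delta_i}*(\conservedVariable\zeta_i)-\conservedVariable\zeta_i\rVert_{\lebesgueSpace{1}(\Omega)}<\varepsilon 2^{-i}$, and $\lVert\rho_{\delta_i}*(\conservedVariable\nabla\zeta_i)-\conservedVariable\nabla\zeta_i\rVert_{\lebesgueSpace{1}(\Omega)}<\varepsilon 2^{-i}$. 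Setting $\conservedVariable_\varepsilon\definitionEquality\sum_{i\geq1}\rho_{\delta_i}*(\conservedVariable\zeta_i)$, this sum is locally finite, so $\conservedVariable_\varepsilon\in\cKClass{\infty}(\Omega)$, and since $\sum_i\zeta_i\equiv1$ one gets $\lVert\conservedVariable_\varepsilon-\conservedVariable\rVert_{\lebesgueSpace{1}(\Omega)}\leq\sum_i\varepsilon 2^{-i}=\varepsilon$.

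For the gradient I differentiate term by term (legitimate by local finiteness) and write $\nabla(\rho_{\delta_i}*(\conservedVariable\zeta_i))=\rho_{\delta_i}*D(\conservedVariable\zeta_i)=\rho_{\delta_i}*(\zeta_i D\conservedVariable)+\rho_{\delta_i}*(\conservedVariable\nabla\zeta_i)$. The identity $\sum_i\conservedVariable\nabla\zeta_i=0$ bounds the total $\lebesgueSpace{1}$-contribution of the ``commutator'' terms $\rho_{\delta_i}*(\conservedVariable\nabla\zeta_i)$ by $\varepsilon$, while Young's inequality for the convolution of a measure with an $\lebesgueSpace{1}$ kernel gives $\int_\Omega|\rho_{\delta_i}*(\zeta_i D\conservedVariable)|\,\differential\vectorial{\spaceVariable}\leq|\zeta_i D\conservedVariable|(\Omega)=\int_\Omega\zeta_i\,\differential|D\conservedVariable|$; summing over $i$ and using $\sum_i\zeta_i\equiv1$ once more yields $\int_\Omega|\nabla\conservedVariable_\varepsilon|\,\differential\vectorial{\spaceVariable}\leq|D\conservedVariable|(\Omega)+\varepsilon=\totalVariation{\conservedVariable}{\Omega}+\varepsilon$. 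In particular $\conservedVariable_\varepsilon\in\sobolevSpace{1}{1}(\Omega)$, and since for $\sobolevSpace{1}{1}$ functions the total variation coincides with the integral of the gradient norm, $\totalVariation{\conservedVariable_\varepsilon}{\Omega}=\int_\Omega|\nabla\conservedVariable_\varepsilon|\,\differential\vectorial{\spaceVariable}$.

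Taking $\conservedVariable_k\definitionEquality\conservedVariable_{1/k}$ then gives $\conservedVariable_k\to\conservedVariable$ in $\lebesgueSpace{1}(\Omega)$ together with $\limsup_k\totalVariation{\conservedVariable_k}{\Omega}\leq\totalVariation{\conservedVariable}{\Omega}$, and lower semicontinuity of the total variation under $\lebesgueSpace{1}$-convergence supplies the reverse inequality $\totalVariation{\conservedVariable}{\Omega}\leq\liminf_k\totalVariation{\conservedVariable_k}{\Omega}$, whence $\totalVariation{\conservedVariable_k}{\Omega}\to\totalVariation{\conservedVariable}{\Omega}$. The main obstacle, I expect, is obtaining the \emph{sharp} estimate $\int_\Omega|\nabla\conservedVariable_\varepsilon|\leq\totalVariation{\conservedVariable}{\Omega}+\varepsilon$ rather than a bound with a dimensional constant in front of $\totalVariation{\conservedVariable}{\Omega}$: this is precisely what forces the use of a genuine partition of unity (so that $\sum_i|D\conservedVariable|(A_i)$ collapses to $|D\conservedVariable|(\Omega)$) together with the cancellation $\sum_i\nabla\zeta_i\equiv0$, which prevents the commutator terms $\rho_{\delta_i}*(\conservedVariable\nabla\zeta_i)$ from accumulating; everything else reduces to routine properties of mollifiers and to lower semicontinuity.
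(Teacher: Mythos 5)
Your argument is correct, but note that there is nothing internal to compare it with: the paper does not prove this statement, it simply quotes it from Chambolle et al.\ (Theorem~1.3 of the cited reference), and what you have reconstructed is precisely the classical proof of that quoted result (the Anzellotti--Giaquinta/Ambrosio--Fusco--Pallara smooth approximation theorem in the strict topology). All the essential points are in place: the covering and local finiteness of the sets $A_i=\Omega_{i+1}\setminus\overline{\Omega_{i-1}}$, the Leibniz rule $D(\conservedVariable\zeta_i)=\zeta_i D\conservedVariable+\conservedVariable\nabla\zeta_i\,\mathcal{L}^{\spatialDimensionality}$, the cancellation $\sum_i\conservedVariable\nabla\zeta_i\equiv 0$ which confines the commutator contribution to $\varepsilon$, Young's inequality for the convolution of the measure $\zeta_i D\conservedVariable$ with the mollifier giving the sharp bound $\int_\Omega\zeta_i\,\differential|D\conservedVariable|$ whose sum collapses to $\totalVariation{\conservedVariable}{\Omega}$, and lower semicontinuity of the total variation under $\lebesgueSpace{1}$-convergence for the reverse inequality; the membership $\conservedVariable_\varepsilon\in\cKClass{\infty}(\Omega)\cap\sobolevSpace{1}{1}(\Omega)$ and the identity $\totalVariation{\conservedVariable_\varepsilon}{\Omega}=\int_\Omega|\nabla\conservedVariable_\varepsilon|\,\differential\vectorial{\spaceVariable}$ then follow as you say.
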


We now provide the link between the total variation of piecewise constant reconstuctions and that of the original functions.
We state the result for a function of $\xLabel$ and $\yLabel$ on $(\realsPositive)^2$ where control volumes are of the kind given by \eqref{eq:paving}.
Similar results hold for functions of one variable, functions of two variable where one is the time, or on bounded domains, upon adapting the control volumes accordingly.
\begin{proposition}[Total variation of a piecewise-constant approximation]\label{prop:totalVariationPiecewiseApprox}
    Let $\conservedVariable\in\lebesgueSpace{1}((\realsPositive)^2)\cap \boundedVariationSpace((\realsPositive)^2)$. 
    Let  $\conservedVariable_{\discreteMark}$ be a piecewise constant approximation of $\conservedVariable$ given by
    \begin{equation*}
        \conservedVariable_{\discreteMark}(\xLabel, \yLabel) \definitionEquality \sum_{\vectorial{\indexSpace}\in\naturals^2} \Bigl ( \dashint_{\cell{\vectorial{\indexSpace}}}\conservedVariable(\tilde{\xLabel}, \tilde{\yLabel})\differential\tilde{\xLabel}\differential \tilde{\yLabel} \Bigr ) \indicatorFunction{\cell{\vectorial{\indexSpace}}}(\xLabel, \yLabel).
    \end{equation*}
    Then 
    \begin{equation*}
        \totalVariation{\conservedVariable_{\discreteMark}}{(\realsPositive)^2}\leq \totalVariation{\conservedVariable}{(\realsPositive)^2}.
    \end{equation*}
\end{proposition}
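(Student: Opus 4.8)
The plan is to prove the estimate first for smooth functions and then recover the general case by density. First I would observe that the cell-averaging operator $\conservedVariable\mapsto\conservedVariable_{\discreteMark}$ is linear and, by Jensen's inequality applied on each cell $\cell{\vectorial{\indexSpace}}$, a contraction on $\lebesgueSpace{1}((\realsPositive)^2)$. Given $\conservedVariable\in\lebesgueSpace{1}((\realsPositive)^2)\cap\boundedVariationSpace((\realsPositive)^2)$, I would invoke \Cref{thm:approxThmBV} to obtain $\conservedVariable_k\in\cKClass{\infty}((\realsPositive)^2)\cap\sobolevSpace{1}{1}((\realsPositive)^2)$ with $\conservedVariable_k\to\conservedVariable$ in $\lebesgueSpace{1}$ and $\totalVariation{\conservedVariable_k}{(\realsPositive)^2}\to\totalVariation{\conservedVariable}{(\realsPositive)^2}$. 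The contraction property gives $(\conservedVariable_k)_{\discreteMark}\to\conservedVariable_{\discreteMark}$ in $\lebesgueSpace{1}$, so by lower semicontinuity of the total variation with respect to $\lebesgueSpace{1}$-convergence (see \cite{ambrosio2000functions}) it suffices to prove $\totalVariation{w_{\discreteMark}}{(\realsPositive)^2}\leq\totalVariation{w}{(\realsPositive)^2}$ for every $w\in\cKClass{\infty}((\realsPositive)^2)\cap\sobolevSpace{1}{1}((\realsPositive)^2)$.

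For such a $w$, writing $a_{\vectorial{\indexSpace}}\definitionEquality\dashint_{\cell{\vectorial{\indexSpace}}}w$, the argument rests on two ingredients. The first is a bound of $\totalVariation{w_{\discreteMark}}{(\realsPositive)^2}$ by the total weighted jump of $w_{\discreteMark}$ across cell interfaces: using the definition of the total variation through test fields $\vectorial{\phi}\in\smoothFunctionsSpaceWithReg{1}((\realsPositive)^2;\reals^2)$ with $|\vectorial{\phi}|\leq1$ and integrating $\int w_{\discreteMark}\,\divergence\vectorial{\phi}$ by parts cell by cell, the contributions of the interfaces lying on the domain boundary $\{\xLabel=0\}\cup\{\yLabel=0\}$ vanish because $\vectorial{\phi}$ is compactly supported inside $(\realsPositive)^2$, while each internal interface, of length $\spaceStep$, is weighted by at most the jump of $w_{\discreteMark}$ across it; this gives $\totalVariation{w_{\discreteMark}}{(\realsPositive)^2}\leq\spaceStep\sum_{\vectorial{\indexSpace}\in\naturals^2}(|a_{\vectorial{\indexSpace}+\canonicalBasisVector{\xLabel}}-a_{\vectorial{\indexSpace}}|+|a_{\vectorial{\indexSpace}+\canonicalBasisVector{\yLabel}}-a_{\vectorial{\indexSpace}}|)$. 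The second ingredient bounds the jumps by $\nabla w$: for two horizontally adjacent cells a change of variables gives $a_{\vectorial{\indexSpace}+\canonicalBasisVector{\xLabel}}-a_{\vectorial{\indexSpace}}=(\spaceStep)^{-2}\int_{\cell{\vectorial{\indexSpace}}}(w(\xLabel+\spaceStep,\yLabel)-w(\xLabel,\yLabel))\,\differential\xLabel\,\differential\yLabel$, and the fundamental theorem of calculus together with Tonelli's theorem turns $\spaceStep\sum_{\vectorial{\indexSpace}}|a_{\vectorial{\indexSpace}+\canonicalBasisVector{\xLabel}}-a_{\vectorial{\indexSpace}}|$ into the average over $s\in(0,\spaceStep)$ of $\int_{(\realsPositive)^2}|\partial_{\xLabel}w(\xLabel+s,\yLabel)|\,\differential\xLabel\,\differential\yLabel$, each of which is dominated by $\|\partial_{\xLabel}w\|_{\lebesgueSpace{1}((\realsPositive)^2)}$; the $\yLabel$-direction is symmetric.

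Combining the two ingredients and recalling the $\petitLebesgueSpace{1}$ convention for vector norms, $\totalVariation{w_{\discreteMark}}{(\realsPositive)^2}\leq\|\partial_{\xLabel}w\|_{\lebesgueSpace{1}((\realsPositive)^2)}+\|\partial_{\yLabel}w\|_{\lebesgueSpace{1}((\realsPositive)^2)}=\int_{(\realsPositive)^2}|\nabla w|=\totalVariation{w}{(\realsPositive)^2}$, which together with the first paragraph proves the claim; the very same computation applies, mutatis mutandis, to functions of a single variable, to functions of a time and a space variable, and on bounded domains, once the control volumes are adapted. The argument is not conceptually deep, and the main obstacle is the bookkeeping of the last two steps: one must correctly identify which cell interfaces are internal, so that the boundary interfaces — which would otherwise add spurious positive terms — are genuinely discarded (this is exactly why using the total variation on the \emph{open} set $(\realsPositive)^2$, rather than on its closure, yields a boundary-term-free inequality), and one must justify the interchange of the infinite sum over cells with the integrals over the unbounded quarter-plane and control the translated $\lebesgueSpace{1}$-norm of $\partial_{\xLabel}w$ by the untranslated one.
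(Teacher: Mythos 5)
Your proof is correct, and its computational core is the same as the paper's: control the jumps of the piecewise-constant function by $\lebesgueSpace{1}$-norms of translation differences, then use the fundamental theorem of calculus and Fubini--Tonelli on a Meyers--Serrin approximant (\Cref{thm:approxThmBV}), with the translated $\lebesgueSpace{1}$-norm of the derivative dominated by the untranslated one. Where you genuinely diverge is in how the smoothing enters. The paper keeps the non-smooth $\conservedVariable$ throughout the translation-difference estimate and inserts $\conservedVariable_k$ \emph{inside} that estimate by a triangle inequality, paying an error term $4\spaceStep^{-1}\lVert\conservedVariable-\conservedVariable_k\rVert_{\lebesgueSpace{1}}$ that is killed by letting $k\to+\infty$ at fixed $\spaceStep$; you instead reduce to the smooth case up front, using that cell averaging is an $\lebesgueSpace{1}$-contraction (so $(\conservedVariable_k)_{\discreteMark}\to\conservedVariable_{\discreteMark}$ in $\lebesgueSpace{1}$) together with lower semicontinuity of the total variation under $\lebesgueSpace{1}$-convergence, an ingredient the paper never invokes. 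Your route is arguably cleaner in that it avoids the $\spaceStep^{-1}$-weighted error term and cleanly separates the approximation from the discrete computation; the paper's route avoids appealing to lower semicontinuity and works directly with $\conservedVariable$. A further small difference: the paper takes as its starting point the exact identity $\totalVariation{\conservedVariable_{\discreteMark}}{(\realsPositive)^2}=\spaceStep\sum_{\vectorial{\indexSpace}}(|a_{\vectorial{\indexSpace}}-a_{\vectorial{\indexSpace}+\canonicalBasisVector{\xLabel}}|+|a_{\vectorial{\indexSpace}}-a_{\vectorial{\indexSpace}+\canonicalBasisVector{\yLabel}}|)$ for the grid-aligned piecewise-constant function, whereas you only prove (and only need) the inequality $\leq$, justified by duality and cell-by-cell integration by parts with compactly supported test fields, which also makes transparent why no spurious boundary-interface terms appear on the open quarter-plane. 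Both arguments are complete and yield the same constant-free bound.
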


\begin{proposition}[Total variation of the absolute value]\label{prop:totalVariationObsoluteValue}
    Let $\Omega\subset \reals^{\spatialDimensionality}$ be an open set and $\conservedVariable\in\lebesgueSpace{\infty}(\Omega)\cap \lebesgueSpace{1}(\Omega)\cap \boundedVariationSpace(\Omega)$.
    Then $|\conservedVariable|\in\lebesgueSpace{\infty}(\Omega)\cap \lebesgueSpace{1}(\Omega)\cap \boundedVariationSpace(\Omega)$ and moreover
    \begin{equation*}
        \totalVariation{|\conservedVariable|}{\Omega}\leq \totalVariation{\conservedVariable}{\Omega}.
    \end{equation*}
\end{proposition}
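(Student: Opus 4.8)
The statements $|\conservedVariable|\in\lebesgueSpace{\infty}(\Omega)\cap\lebesgueSpace{1}(\Omega)$ are immediate, since $\lVert|\conservedVariable|\rVert_{\lebesgueSpace{\infty}(\Omega)}=\lVert\conservedVariable\rVert_{\lebesgueSpace{\infty}(\Omega)}$ and $\lVert|\conservedVariable|\rVert_{\lebesgueSpace{1}(\Omega)}=\lVert\conservedVariable\rVert_{\lebesgueSpace{1}(\Omega)}$, so the whole content of the statement is the bound $\totalVariation{|\conservedVariable|}{\Omega}\leq\totalVariation{\conservedVariable}{\Omega}$, which in particular will also give $|\conservedVariable|\in\boundedVariationSpace(\Omega)$. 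The plan is to prove the inequality first for smooth functions, where it reduces to an elementary chain rule, and then transfer it to $\conservedVariable$ by combining the approximation result \Cref{thm:approxThmBV} with the lower semicontinuity of the total variation.

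First I would invoke \Cref{thm:approxThmBV} to obtain a sequence $(\conservedVariable_k)_{k\in\naturals}$ in $\cKClass{\infty}(\Omega)\cap\sobolevSpace{1}{1}(\Omega)$ with $\conservedVariable_k\to\conservedVariable$ in $\lebesgueSpace{1}(\Omega)$ and $\int_\Omega|\nabla\conservedVariable_k(\vectorial{\spaceVariable})|\differential\vectorial{\spaceVariable}\to\totalVariation{\conservedVariable}{\Omega}$. For each fixed $k$, the composed function $|\conservedVariable_k|$ lies in $\sobolevSpace{1}{1}(\Omega)$ and satisfies $\nabla|\conservedVariable_k|=\sign(\conservedVariable_k)\,\nabla\conservedVariable_k$ almost everywhere, using that the weak gradient $\nabla\conservedVariable_k$ vanishes a.e.\ on the level set $\{\conservedVariable_k=0\}$; this a.e.\ chain rule for $|\cdot|$ applied to a Sobolev function is the one genuinely delicate point of the argument. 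A clean way to circumvent it in the write-up is to replace $|\cdot|$ by the smooth regularization $\phi_\varepsilon(t)\definitionEquality\sqrt{t^2+\varepsilon^2}-\varepsilon$, which satisfies $|\phi_\varepsilon'|\leq 1$ and $\phi_\varepsilon\to|\cdot|$ uniformly, so that $\nabla(\phi_\varepsilon\circ\conservedVariable_k)=\phi_\varepsilon'(\conservedVariable_k)\,\nabla\conservedVariable_k$ holds by the ordinary chain rule and one lets $\varepsilon\to 0$. In either case one obtains the pointwise-a.e.\ bound $|\nabla|\conservedVariable_k||\leq|\nabla\conservedVariable_k|$, hence $\totalVariation{|\conservedVariable_k|}{\Omega}=\int_\Omega|\nabla|\conservedVariable_k||\leq\int_\Omega|\nabla\conservedVariable_k|=\totalVariation{\conservedVariable_k}{\Omega}$.

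Finally, since $\bigl||\conservedVariable_k|-|\conservedVariable|\bigr|\leq|\conservedVariable_k-\conservedVariable|$ pointwise, we get $|\conservedVariable_k|\to|\conservedVariable|$ in $\lebesgueSpace{1}(\Omega)$. The total variation $\totalVariation{\cdot}{\Omega}$ is a supremum of functionals that are continuous with respect to $\lebesgueSpace{1}(\Omega)$-convergence, hence lower semicontinuous for that convergence, see \cite{ambrosio2000functions}. Therefore
\begin{equation*}
    \totalVariation{|\conservedVariable|}{\Omega}\leq\liminf_{k\to\infty}\totalVariation{|\conservedVariable_k|}{\Omega}\leq\liminf_{k\to\infty}\totalVariation{\conservedVariable_k}{\Omega}=\totalVariation{\conservedVariable}{\Omega},
\end{equation*}
which proves the estimate and, being finite, shows $|\conservedVariable|\in\boundedVariationSpace(\Omega)$. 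Beyond the a.e.\ chain rule for the absolute value noted above — which is why the $\phi_\varepsilon$-regularization variant is the most economical route to detail — every step is routine once \Cref{thm:approxThmBV} and the lower semicontinuity of the total variation are granted.
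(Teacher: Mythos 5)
Your proposal is correct and follows essentially the same route as the paper: approximate $\conservedVariable$ via \Cref{thm:approxThmBV}, use the chain rule for the absolute value of the smooth approximants to get $\totalVariation{|\conservedVariable_k|}{\Omega}\leq\totalVariation{\conservedVariable_k}{\Omega}$, and pass to the limit using $|\conservedVariable_k|\to|\conservedVariable|$ in $\lebesgueSpace{1}(\Omega)$. The only cosmetic difference is that you invoke the lower semicontinuity of the total variation as a black box, while the paper performs the same limit passage by hand on each fixed test function before taking the supremum; your $\phi_\varepsilon$-regularization remark is a valid way to sidestep the a.e.\ chain rule that the paper uses directly.
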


The following results are given for functions of $\xLabel$ and $\yLabel$, but they also apply when the function is of the time $\timeVariable$ and either $\xLabel$ or $\yLabel$, which is the case when boundaries are present.
\begin{definition}[2D total variation along one axis]
    Let $\Omega\subset \reals^{2}$ be an open set and $\conservedVariable\in\lebesgueSpace{1}_{\text{loc}}(\Omega)$. We define:
    \begin{align*}
        \totalVariationAlongAxis{\conservedVariable}{\Omega}{\xLabel}&\definitionEquality \sup \Bigl \{ \int_{\Omega} \conservedVariable(\vectorial{\spaceVariable}) \partial_{\xLabel}\testFunction(\vectorial{\spaceVariable}) \differential\vectorial{\spaceVariable}, \qquad \testFunction\in\smoothFunctionsSpaceWithReg{1}(\Omega), \quad \lVert\testFunction\rVert_{\lebesgueSpace{\infty}(\Omega)}\leq 1\Bigr \}, \\
        \totalVariationAlongAxis{\conservedVariable}{\Omega}{\yLabel}&\definitionEquality \sup \Bigl \{ \int_{\Omega} \conservedVariable(\vectorial{\spaceVariable}) \partial_{\yLabel}\testFunction(\vectorial{\spaceVariable}) \differential\vectorial{\spaceVariable}, \qquad \testFunction\in\smoothFunctionsSpaceWithReg{1}(\Omega), \quad \lVert\testFunction\rVert_{\lebesgueSpace{\infty}(\Omega)}\leq 1\Bigr \}.
    \end{align*}
\end{definition}
\begin{remark}\label{rem:1Dvs2D}
    One easily sees that for $\conservedVariable\in\lebesgueSpace{1}(\Omega)\cap \boundedVariationSpace(\Omega)$, we have that $\totalVariationAlongAxis{\conservedVariable}{\Omega}{\xLabel}\leq \totalVariation{\conservedVariable}{\Omega}$ and $\totalVariationAlongAxis{\conservedVariable}{\Omega}{\yLabel}\leq \totalVariation{\conservedVariable}{\Omega}$.
\end{remark}
We have the result from \cite[Theorem 3.103]{ambrosio2000functions}, which links the 2D total variation along a given axis with the integral of the total variation of the restriction of the function to the other axis.
\begin{theorem}\label{thm:2DtotalAlongAxis}
    Let $\Omega = (a_{\xLabel}, b_{\xLabel})\times (a_{\yLabel}, b_{\yLabel})$ with $a_{\xLabel}, b_{\xLabel}, a_{\yLabel}, b_{\yLabel}\in\reals\cup \{\pm\infty\}$ and $a_{\xLabel}< b_{\xLabel}$, $a_{\yLabel}< b_{\yLabel}$, and $\conservedVariable\in\lebesgueSpace{1}_{\text{loc}}(\Omega)$.
    Then we have that 
    \begin{align*}
        \totalVariationAlongAxis{\conservedVariable}{\Omega}{\xLabel}&=\int_{a_{\yLabel}}^{b_{\yLabel}}\totalVariation{\conservedVariable(\cdot, \yLabel)}{(a_{\xLabel}, b_{\xLabel})}\differential\yLabel, \\
        \totalVariationAlongAxis{\conservedVariable}{\Omega}{\yLabel}&=\int_{a_{\xLabel}}^{b_{\xLabel}}\totalVariation{\conservedVariable(\xLabel, \cdot)}{(a_{\yLabel}, b_{\yLabel})}\differential\xLabel.
    \end{align*}
\end{theorem}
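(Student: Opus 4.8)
The plan is to prove the identity along the $\xLabel$-axis; the statement along the $\yLabel$-axis follows verbatim after exchanging the two coordinates. Set $g(\yLabel) \definitionEquality \totalVariation{\conservedVariable(\cdot, \yLabel)}{(a_{\xLabel}, b_{\xLabel})}\in[0,+\infty]$. Before anything, I would check that $g$ is measurable, so that the right-hand side is meaningful: choosing an exhaustion of $(a_{\xLabel}, b_{\xLabel})$ by compact subintervals and, inside each, a countable family $\{\testFunction_k\}$ of admissible test functions that is dense for uniform $C^{1}$ convergence, one gets $g(\yLabel) = \sup_k \int_{a_{\xLabel}}^{b_{\xLabel}} \conservedVariable(\xLabel, \yLabel)\,\partial_{\xLabel}\testFunction_k(\xLabel)\,\differential\xLabel$, a countable supremum of functions of $\yLabel$ each measurable by Fubini (the integrand lies in $\lebesgueSpace{1}$ since $\testFunction_k$ is compactly supported and $\conservedVariable\in\lebesgueSpace{1}_{\text{loc}}$); Fubini also ensures $\conservedVariable(\cdot, \yLabel)\in\lebesgueSpace{1}_{\text{loc}}((a_{\xLabel}, b_{\xLabel}))$ for a.e.\ $\yLabel$.

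The inequality $\totalVariationAlongAxis{\conservedVariable}{\Omega}{\xLabel}\leq \int_{a_{\yLabel}}^{b_{\yLabel}} g(\yLabel)\,\differential\yLabel$ is the easy half. For any admissible $\testFunction\in\smoothFunctionsSpaceWithReg{1}(\Omega)$, Fubini gives $\int_{\Omega}\conservedVariable\,\partial_{\xLabel}\testFunction = \int_{a_{\yLabel}}^{b_{\yLabel}}\bigl(\int_{a_{\xLabel}}^{b_{\xLabel}}\conservedVariable(\xLabel, \yLabel)\,\partial_{\xLabel}\testFunction(\xLabel, \yLabel)\,\differential\xLabel\bigr)\differential\yLabel$, and for each $\yLabel$ the slice $\testFunction(\cdot, \yLabel)$ is an admissible test function on $(a_{\xLabel}, b_{\xLabel})$, so the inner integral is at most $g(\yLabel)$; taking the supremum over $\testFunction$ concludes.

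The reverse inequality is the heart of the matter, and I would obtain it by mollification. By monotone convergence it suffices to prove $\int_{a_{\yLabel}'}^{b_{\yLabel}'}\totalVariation{\conservedVariable(\cdot, \yLabel)}{(a_{\xLabel}', b_{\xLabel}')}\,\differential\yLabel\leq \totalVariationAlongAxis{\conservedVariable}{\Omega}{\xLabel}$ for every open rectangle $\Omega' = (a_{\xLabel}', b_{\xLabel}')\times(a_{\yLabel}', b_{\yLabel}')\Subset\Omega$ and then let $\Omega'\uparrow\Omega$, using that $\totalVariation{v}{(a', b')}\uparrow\totalVariation{v}{(a, b)}$ along nested intervals (every compactly supported test function on $(a,b)$ already lives in some $(a',b')\Subset(a,b)$). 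Fix such an $\Omega'$ and set $\conservedVariable_{\delta}\definitionEquality\conservedVariable*\rho_{\delta}$ with an even mollifier, which is $C^{\infty}$ on $\Omega'$ once $\delta<\operatorname{dist}(\Omega', \partial\Omega)$. For the smooth function $\conservedVariable_{\delta}$ a direct computation (integration by parts for the left-hand side, the elementary formula for the variation of a $C^{1}$ function of one variable for the slices, and Tonelli to join them) shows both sides of the identity on $\Omega'$ equal $\int_{\Omega'}|\partial_{\xLabel}\conservedVariable_{\delta}|$, so $\totalVariationAlongAxis{\conservedVariable_{\delta}}{\Omega'}{\xLabel} = \int_{a_{\yLabel}'}^{b_{\yLabel}'}\totalVariation{\conservedVariable_{\delta}(\cdot, \yLabel)}{(a_{\xLabel}', b_{\xLabel}')}\,\differential\yLabel$. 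Two passages to the limit then close the estimate. First, transferring the convolution onto the test function shows $\totalVariationAlongAxis{\conservedVariable_{\delta}}{\Omega'}{\xLabel}\leq\totalVariationAlongAxis{\conservedVariable}{\Omega}{\xLabel}$, since $\rho_{\delta}*\testFunction$ is admissible on $\Omega$ for $\delta$ small. Second, $\conservedVariable_{\delta}\to\conservedVariable$ in $\lebesgueSpace{1}(\Omega')$, hence along a subsequence $\conservedVariable_{\delta}(\cdot, \yLabel)\to\conservedVariable(\cdot, \yLabel)$ in $\lebesgueSpace{1}((a_{\xLabel}', b_{\xLabel}'))$ for a.e.\ $\yLabel$, and lower semicontinuity of the one-dimensional total variation under $\lebesgueSpace{1}$-convergence together with Fatou's lemma gives $\int_{a_{\yLabel}'}^{b_{\yLabel}'}\totalVariation{\conservedVariable(\cdot, \yLabel)}{(a_{\xLabel}', b_{\xLabel}')}\,\differential\yLabel\leq\liminf_{\delta}\int_{a_{\yLabel}'}^{b_{\yLabel}'}\totalVariation{\conservedVariable_{\delta}(\cdot, \yLabel)}{(a_{\xLabel}', b_{\xLabel}')}\,\differential\yLabel$. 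Chaining the three relations bounds the left-hand side by $\totalVariationAlongAxis{\conservedVariable}{\Omega}{\xLabel}$ on $\Omega'$, and $\Omega'\uparrow\Omega$ finishes.

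I expect the main obstacle to be the bookkeeping in this last part rather than any single deep ingredient: the mollification must be kept away from $\partial\Omega$ yet be fine enough to resolve the slices, and one must verify that slicewise $\lebesgueSpace{1}$-convergence, hence slicewise lower semicontinuity, genuinely survives along a subsequence for almost every $\yLabel$. The measurability of $\yLabel\mapsto g(\yLabel)$ and the non-expansiveness of mollification with respect to $\totalVariationAlongAxis{\cdot}{\cdot}{\xLabel}$ are routine but should not be skipped.
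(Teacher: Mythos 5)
Your argument is correct. Note, however, that the paper does not prove this statement at all: it is quoted directly from the literature, namely \cite[Theorem 3.103]{ambrosio2000functions}, so there is no internal proof to compare against. What you have written is essentially a self-contained reconstruction of the standard proof of that cited slicing theorem: the inequality $\totalVariationAlongAxis{\conservedVariable}{\Omega}{\xLabel}\leq \int_{a_{\yLabel}}^{b_{\yLabel}}\totalVariation{\conservedVariable(\cdot,\yLabel)}{(a_{\xLabel},b_{\xLabel})}\differential\yLabel$ by slicing the test function and applying Fubini, and the reverse inequality by mollification on rectangles compactly contained in $\Omega$, the exact identity $\totalVariationAlongAxis{\conservedVariable_\delta}{\Omega'}{\xLabel}=\int|\partial_{\xLabel}\conservedVariable_\delta|$ for smooth functions, the transfer of the mollifier onto the test function to get $\totalVariationAlongAxis{\conservedVariable_\delta}{\Omega'}{\xLabel}\leq\totalVariationAlongAxis{\conservedVariable}{\Omega}{\xLabel}$, and slicewise lower semicontinuity combined with Fatou, followed by exhaustion of $\Omega$ and monotone convergence. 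The delicate points you flag are handled correctly: the countable-family argument does give measurability of $\yLabel\mapsto\totalVariation{\conservedVariable(\cdot,\yLabel)}{(a_{\xLabel},b_{\xLabel})}$ (one only has to rescale the approximating test functions to keep the $\lebesgueSpace{\infty}$-constraint), the subsequence extraction from $\lebesgueSpace{1}(\Omega')$-convergence to a.e.-in-$\yLabel$ slicewise $\lebesgueSpace{1}$-convergence is legitimate, and the two nested limits ($\delta\to0$ then $\Omega'\uparrow\Omega$) are taken in an order that avoids any circularity. The only thing your route ``buys'' relative to the paper is self-containedness, at the cost of reproducing a textbook proof; for the purposes of the paper the citation suffices.
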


From now on in this section, we consider $\Omega = (a_{\xLabel}, b_{\xLabel})\times (a_{\yLabel}, b_{\yLabel})$ with $a_{\xLabel}, b_{\xLabel}, a_{\yLabel}, b_{\yLabel}\in\reals\cup \{\pm\infty\}$ and $a_{\xLabel}< b_{\xLabel}$, $a_{\yLabel}< b_{\yLabel}$, and $\conservedVariable\in\lebesgueSpace{1}(\Omega)\cap \boundedVariationSpace(\Omega)$.
We define, for almost every $\xLabel\in (a_{\xLabel}, b_{\xLabel})$, the function 
\begin{equation}\label{eq:definitionRestrictionIntegralOneAxis}
    v(\xLabel)\definitionEquality \int_{a_{\yLabel}}^{b_{\yLabel}} \conservedVariable(\xLabel, \yLabel)\differential\yLabel.
\end{equation}
We first would like to ensure that $v$ has finite total variation.
\begin{proposition}\label{prop:restrictionTotalVariation}
    For $v$ defined by \eqref{eq:definitionRestrictionIntegralOneAxis}, we have that $v\in\lebesgueSpace{1}(a_{\xLabel}, b_{\xLabel})\cap \boundedVariationSpace(a_{\xLabel}, b_{\xLabel})$.
    Moreover, we have 
    \begin{equation*}
        \totalVariation{v}{(a_{\xLabel}, b_{\xLabel})}\leq \int_{a_{\yLabel}}^{b_{\yLabel}}\totalVariation{\conservedVariable(\cdot, \yLabel)}{(a_{\xLabel}, b_{\xLabel})}\differential\yLabel = \totalVariationAlongAxis{\conservedVariable}{\Omega}{\xLabel}\leq \totalVariation{\conservedVariable}{\Omega}<+\infty.
    \end{equation*}
\end{proposition}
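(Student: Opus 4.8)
The plan is to combine Theorem \ref{thm:2DtotalAlongAxis} (which expresses the axis-wise total variation of $\conservedVariable$ as an integral over slices) with the dual characterization of the total variation of the one-dimensional function $v$. First I would fix $\conservedVariable\in\lebesgueSpace{1}(\Omega)\cap\boundedVariationSpace(\Omega)$ and check that $v$ given by \eqref{eq:definitionRestrictionIntegralOneAxis} is well-defined for almost every $\xLabel$ and lies in $\lebesgueSpace{1}(a_{\xLabel},b_{\xLabel})$: this follows from Fubini--Tonelli, since $\int_{a_{\xLabel}}^{b_{\xLabel}}\int_{a_{\yLabel}}^{b_{\yLabel}}|\conservedVariable(\xLabel,\yLabel)|\differential\yLabel\differential\xLabel = \lVert\conservedVariable\rVert_{\lebesgueSpace{1}(\Omega)}<+\infty$, so $\xLabel\mapsto\int_{a_{\yLabel}}^{b_{\yLabel}}|\conservedVariable(\xLabel,\yLabel)|\differential\yLabel$ is finite a.e.\ and integrable, which also gives $\lVert v\rVert_{\lebesgueSpace{1}(a_{\xLabel},b_{\xLabel})}\leq\lVert\conservedVariable\rVert_{\lebesgueSpace{1}(\Omega)}$.

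Next I would estimate $\totalVariation{v}{(a_{\xLabel},b_{\xLabel})}$ from the definition: take any $\testFunction\in\smoothFunctionsSpaceWithReg{1}(a_{\xLabel},b_{\xLabel})$ with $\lVert\testFunction\rVert_{\lebesgueSpace{\infty}}\leq 1$ and compute
\begin{equation*}
    \int_{a_{\xLabel}}^{b_{\xLabel}} v(\xLabel)\testFunction'(\xLabel)\differential\xLabel = \int_{a_{\xLabel}}^{b_{\xLabel}}\Bigl(\int_{a_{\yLabel}}^{b_{\yLabel}}\conservedVariable(\xLabel,\yLabel)\differential\yLabel\Bigr)\testFunction'(\xLabel)\differential\xLabel = \int_{a_{\yLabel}}^{b_{\yLabel}}\Bigl(\int_{a_{\xLabel}}^{b_{\xLabel}}\conservedVariable(\xLabel,\yLabel)\testFunction'(\xLabel)\differential\xLabel\Bigr)\differential\yLabel,
\end{equation*}
where the swap of integrals is again Fubini, justified since $\conservedVariable\in\lebesgueSpace{1}(\Omega)$ and $\testFunction'$ is bounded. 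For almost every fixed $\yLabel$, the inner integral is bounded by $\totalVariation{\conservedVariable(\cdot,\yLabel)}{(a_{\xLabel},b_{\xLabel})}$ by the very definition of one-dimensional total variation (here one uses that the slice $\conservedVariable(\cdot,\yLabel)\in\lebesgueSpace{1}_{\text{loc}}$, which holds for a.e.\ $\yLabel$). Hence $\int v\,\testFunction' \leq \int_{a_{\yLabel}}^{b_{\yLabel}}\totalVariation{\conservedVariable(\cdot,\yLabel)}{(a_{\xLabel},b_{\xLabel})}\differential\yLabel$, and taking the supremum over admissible $\testFunction$ yields $\totalVariation{v}{(a_{\xLabel},b_{\xLabel})}\leq\int_{a_{\yLabel}}^{b_{\yLabel}}\totalVariation{\conservedVariable(\cdot,\yLabel)}{(a_{\xLabel},b_{\xLabel})}\differential\yLabel$. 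By Theorem \ref{thm:2DtotalAlongAxis} this last quantity equals $\totalVariationAlongAxis{\conservedVariable}{\Omega}{\xLabel}$, and by Remark \ref{rem:1Dvs2D} it is bounded by $\totalVariation{\conservedVariable}{\Omega}$, which is finite because $\conservedVariable\in\boundedVariationSpace(\Omega)$. This chain is exactly the displayed inequality, and in particular $v\in\boundedVariationSpace(a_{\xLabel},b_{\xLabel})$.

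The main (mild) obstacle is the measure-theoretic bookkeeping rather than any deep idea: one must ensure that the slice $\conservedVariable(\cdot,\yLabel)$ is locally integrable for a.e.\ $\yLabel$ so that its one-dimensional total variation makes sense and the bound $\int\conservedVariable(\cdot,\yLabel)\testFunction'\leq\totalVariation{\conservedVariable(\cdot,\yLabel)}{(a_{\xLabel},b_{\xLabel})}$ is legitimate, and that the function $\yLabel\mapsto\totalVariation{\conservedVariable(\cdot,\yLabel)}{(a_{\xLabel},b_{\xLabel})}$ is measurable so the outer integral is meaningful — both are part of the content of Theorem \ref{thm:2DtotalAlongAxis} as quoted from \cite{ambrosio2000functions}, so they may be invoked directly. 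Everything else is a one-line Fubini argument plus the definition of total variation; no delicate estimate is required.
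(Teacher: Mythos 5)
Your proposal is correct and follows essentially the same route as the paper's proof: Fubini gives $v\in\lebesgueSpace{1}(a_{\xLabel}, b_{\xLabel})$, the interchange of integrals plus the pointwise bound of the inner integral by the slice total variation (equivalently, the paper's sup-of-integral $\leq$ integral-of-sup step) yields the key estimate, and \Cref{thm:2DtotalAlongAxis} together with \Cref{rem:1Dvs2D} closes the chain. Your explicit remarks on slice integrability and measurability of $\yLabel\mapsto\totalVariation{\conservedVariable(\cdot, \yLabel)}{(a_{\xLabel}, b_{\xLabel})}$ are a reasonable way to handle bookkeeping the paper leaves implicit.
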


We would also like to ensure that $v$ is bounded.
This comes from the particular nature of the 1D case.
\begin{proposition}\label{prop:restrictionLinF}
    For $v$ defined by \eqref{eq:definitionRestrictionIntegralOneAxis}, we have that $v\in\lebesgueSpace{\infty}(a_{\xLabel}, b_{\xLabel})$.
\end{proposition}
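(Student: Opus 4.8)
The plan is to exploit the fact that a BV function of one variable is essentially bounded, together with the bound on $\totalVariation{v}{(a_{\xLabel},b_{\xLabel})}$ obtained in Proposition \ref{prop:restrictionTotalVariation}. First I would recall the one-dimensional Sobolev-type embedding: if $w\in\lebesgueSpace{1}(I)\cap\boundedVariationSpace(I)$ for an interval $I=(a,b)\subset\reals$ (possibly unbounded), then $w$ has a representative that is of bounded variation in the classical pointwise sense, and in particular $w\in\lebesgueSpace{\infty}(I)$ with $\lVert w\rVert_{\lebesgueSpace{\infty}(I)}$ controlled by $\totalVariation{w}{I}$ and the $\lebesgueSpace{1}$-norm. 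This is the ``particular nature of the 1D case'' alluded to in the statement: in one dimension $\boundedVariationSpace\hookrightarrow\lebesgueSpace{\infty}$, which is false in higher dimension.

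The key steps, in order, are: (i) invoke Proposition \ref{prop:restrictionTotalVariation} to know $v\in\lebesgueSpace{1}(a_{\xLabel},b_{\xLabel})\cap\boundedVariationSpace(a_{\xLabel},b_{\xLabel})$ with $\totalVariation{v}{(a_{\xLabel},b_{\xLabel})}<+\infty$; (ii) apply the 1D embedding $\boundedVariationSpace(a_{\xLabel},b_{\xLabel})\cap\lebesgueSpace{1}(a_{\xLabel},b_{\xLabel})\hookrightarrow\lebesgueSpace{\infty}(a_{\xLabel},b_{\xLabel})$ to conclude $v\in\lebesgueSpace{\infty}(a_{\xLabel},b_{\xLabel})$. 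For the embedding one argues on the good representative: since $\totalVariation{v}{(a_{\xLabel},b_{\xLabel})}<\infty$, the precise representative $\bar v$ satisfies $|\bar v(x)-\bar v(y)|\leq \totalVariation{v}{(x\wedge y, x\vee y)}\leq\totalVariation{v}{(a_{\xLabel},b_{\xLabel})}$ for all $x,y$; picking a Lebesgue point $y_0$ of $v$ with $|\bar v(y_0)|$ finite (which exists because $v\in\lebesgueSpace{1}$, so $v$ is finite a.e.) gives $\lVert v\rVert_{\lebesgueSpace{\infty}}\leq |\bar v(y_0)|+\totalVariation{v}{(a_{\xLabel},b_{\xLabel})}<+\infty$.

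The only mild subtlety — and the point I would be most careful about — is the case of an unbounded interval: one cannot simply use the mean value $\frac{1}{b_{\xLabel}-a_{\xLabel}}\int v$ to anchor the function since that quantity may be $0$ over an infinite interval, so the anchoring must be done at an actual (finite) Lebesgue point as above, where finiteness of $\int|v|$ guarantees $v$ is finite a.e. With that caveat handled, no genuine obstacle remains; the statement is essentially a bookkeeping corollary of Proposition \ref{prop:restrictionTotalVariation} and the standard structure theorem for one-dimensional $\boundedVariationSpace$ functions (see \cite{ambrosio2000functions}). One may also phrase the whole argument via Theorem \ref{thm:approxThmBV}: approximate $v$ by smooth $v_k$ with $\int|v_k'|\to\totalVariation{v}{(a_{\xLabel},b_{\xLabel})}$ and $v_k\to v$ in $\lebesgueSpace{1}$, use $\lVert v_k\rVert_{\lebesgueSpace{\infty}}\leq |v_k(y_0)|+\int|v_k'|$ at a common near-Lebesgue point, and pass to the limit; I would choose whichever of the two presentations is shorter given what is already invoked elsewhere in the appendix.
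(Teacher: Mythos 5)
Your proposal is correct and follows essentially the same route as the paper: invoke \Cref{prop:restrictionTotalVariation} to get $v\in\lebesgueSpace{1}(a_{\xLabel}, b_{\xLabel})\cap\boundedVariationSpace(a_{\xLabel}, b_{\xLabel})$ and then use the one-dimensional embedding $\lebesgueSpace{1}\cap\boundedVariationSpace\subset\lebesgueSpace{\infty}$, which the paper simply cites from \cite{ambrosio2000functions} while you additionally sketch its proof via the good representative and a Lebesgue-point anchor (a sound way to handle the possibly unbounded interval).
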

In what follows, we denote $\lVert v \rVert_{\lebesgueSpace{\infty}(a_{\xLabel}, b_{\xLabel})} = \sup_{\xLabel\in(a_{\xLabel}, b_{\xLabel})}|v(\xLabel)|$, that is, a bounded representative of $v$ is chosen.

We finish on a result that we exploit several times in the sequel and which justifies all previous discussions.
It says that 2D integrals on (vertical or horizontal) strips, potentially unbounded in one direction, and of width $\eta$, are of order $\bigO{\eta}$\footnote{We shall in general consider $\eta = \spaceStep$ or $\eta=\timeStep$.}.
\begin{proposition}[Magnitude of integrals on strips]\label{prop:VerticalIntegrationBV}
    Let $v$ be defined by \eqref{eq:definitionRestrictionIntegralOneAxis}. 
    Let $c_{\xLabel}\in\reals$ and $\eta>0$ such that $a_{\xLabel}\leq c_{\xLabel}<c_{\xLabel}+\eta<b_{\xLabel}$.
    Then 
    \begin{align*}
        \Bigl  | \int_{c_{\xLabel}}^{c_{\xLabel}+\eta}v(\xLabel)\differential\xLabel\Bigr  | \leq \eta \lVert v \rVert_{\lebesgueSpace{\infty}(a_{\xLabel}, b_{\xLabel})}.
    \end{align*}
\end{proposition}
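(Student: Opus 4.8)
The plan is to reduce everything to the two facts already established about $v$: by \Cref{prop:restrictionTotalVariation} the function $v$ lies in $\lebesgueSpace{1}(a_{\xLabel}, b_{\xLabel})$, so the integral $\int_{c_{\xLabel}}^{c_{\xLabel}+\eta}v(\xLabel)\differential\xLabel$ is a well-defined real number (the strip $(c_{\xLabel}, c_{\xLabel}+\eta)$ is a subinterval of $(a_{\xLabel}, b_{\xLabel})$, on which $v$ is integrable), and by \Cref{prop:restrictionLinF} the function $v$ lies in $\lebesgueSpace{\infty}(a_{\xLabel}, b_{\xLabel})$, where, following the convention stated just after that proposition, we work with the bounded representative so that $|v(\xLabel)|\leq \lVert v\rVert_{\lebesgueSpace{\infty}(a_{\xLabel}, b_{\xLabel})}$ holds for almost every $\xLabel$.

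Then the estimate is the elementary chain: first move the absolute value inside the integral by the triangle inequality for integrals, $\bigl|\int_{c_{\xLabel}}^{c_{\xLabel}+\eta}v(\xLabel)\differential\xLabel\bigr|\leq \int_{c_{\xLabel}}^{c_{\xLabel}+\eta}|v(\xLabel)|\differential\xLabel$; next bound the integrand pointwise a.e.\ by $\lVert v\rVert_{\lebesgueSpace{\infty}(a_{\xLabel}, b_{\xLabel})}$ using monotonicity of the integral; finally compute $\int_{c_{\xLabel}}^{c_{\xLabel}+\eta}\lVert v\rVert_{\lebesgueSpace{\infty}(a_{\xLabel}, b_{\xLabel})}\differential\xLabel = \eta\,\lVert v\rVert_{\lebesgueSpace{\infty}(a_{\xLabel}, b_{\xLabel})}$, since the integrand is constant and the interval has length $\eta$. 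Stringing these inequalities together gives the claim.

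There is essentially no obstacle here: the content of the statement is entirely carried by the prior propositions that identify $v$ with an honest bounded integrable function of one variable, and the estimate itself is the trivial $\lebesgueSpace{1}$–$\lebesgueSpace{\infty}$ Hölder-type bound on an interval of length $\eta$. The only point requiring minimal care is notational/measure-theoretic, namely that $v$ is defined only up to a null set, so one must invoke the chosen bounded representative before asserting the pointwise bound; this is exactly what the remark following \Cref{prop:restrictionLinF} arranges. The analogous statement with the roles of $\xLabel$ and $\yLabel$ interchanged, or with one variable being time, follows verbatim by the same argument applied to the corresponding restriction integral.
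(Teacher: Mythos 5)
Your argument is correct and coincides with the paper's own proof: both reduce to the elementary bound $\bigl|\int_{c_{\xLabel}}^{c_{\xLabel}+\eta}v\bigr|\leq\int_{c_{\xLabel}}^{c_{\xLabel}+\eta}\lVert v\rVert_{\lebesgueSpace{\infty}(a_{\xLabel},b_{\xLabel})}\differential\xLabel=\eta\lVert v\rVert_{\lebesgueSpace{\infty}(a_{\xLabel},b_{\xLabel})}$, with finiteness supplied by \Cref{prop:restrictionLinF}. Your remark about working with the bounded representative is exactly the convention the paper fixes after that proposition, so nothing is missing.
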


\subsection{Monotone relaxation}

As mentioned in the introduction, since our problem is non-linear, desirable properties cannot hold for any data regardless of their magnitude.
This magnitude, both for initial and boundary data, is measured by 
\begin{equation}\label{eq:maximumData}
    \maximumInitialDatum\definitionEquality\max(\lVert\conservedVariable^{\initial}\rVert_{\lebesgueSpace{\infty}((0, 1)^2)}, \max_{\alpha=\labelWest, \labelEast, \labelSouth, \labelNorth}\lVert\boundaryFunction{\alpha}\rVert_{\lebesgueSpace{\infty}((0, \finalTime)\times (0, 1))}).
\end{equation}
The crucial desirable property that we utilize is the \strong{monotonicity} of the relaxation operator, within the magnitude defined by $\maximumInitialDatum$.
In the sequel of \Cref{sec:convergenceScalar}, we assume that the equilibria have the following form, see \cite{aregba2025monotonicity}.
\begin{assumption}[On the form of the equilibria]
    We assume that
    \begin{align*}
        \distributionFunctionLetter_{\labZeroVel}^{\atEquilibrium}(\conservedVariableDiscrete) = (1-2 \equilibriumCoefficientLinear_{\xLabel} - 2\equilibriumCoefficientLinear_{\yLabel})\conservedVariableDiscrete, \qquad &\distributionFunctionLetter_{\labPosX}^{\atEquilibrium}(\conservedVariableDiscrete) = \equilibriumCoefficientLinear_{\xLabel}\conservedVariableDiscrete + \frac{1}{2\latticeVelocity}\flux_{\xLabel}(\conservedVariableDiscrete), \qquad \distributionFunctionLetter_{\labNegX}^{\atEquilibrium}(\conservedVariableDiscrete) = \equilibriumCoefficientLinear_{\xLabel}\conservedVariableDiscrete - \frac{1}{2\latticeVelocity}\flux_{\xLabel}(\conservedVariableDiscrete), \\
        &\distributionFunctionLetter_{\labPosY}^{\atEquilibrium}(\conservedVariableDiscrete) = \equilibriumCoefficientLinear_{\yLabel}\conservedVariableDiscrete + \frac{1}{2\latticeVelocity}\flux_{\yLabel}(\conservedVariableDiscrete), \qquad \distributionFunctionLetter_{\labNegY}^{\atEquilibrium}(\conservedVariableDiscrete) = \equilibriumCoefficientLinear_{\yLabel}\conservedVariableDiscrete - \frac{1}{2\latticeVelocity}\flux_{\yLabel}(\conservedVariableDiscrete),
    \end{align*}
    where $\equilibriumCoefficientLinear_{\xLabel}, \equilibriumCoefficientLinear_{\yLabel}\in\reals$ are free.
\end{assumption}

\begin{remark}[Simpler velocity stencils]
    Simpler schemes compared to the \lbmScheme{2}{5} scheme under scrutiny come from the following choices:
    \begin{align*}
        &\text{\lbmScheme{2}{4}}\qquad : \qquad 1-2 \equilibriumCoefficientLinear_{\xLabel} - 2\equilibriumCoefficientLinear_{\yLabel} = 0, \\
        &\text{\lbmScheme{1}{3}}\qquad : \qquad \equilibriumCoefficientLinear_{\yLabel} = 0, \quad \flux_{\yLabel} \equiv 0,\\
        &\text{\lbmScheme{1}{2}}\qquad : \qquad \equilibriumCoefficientLinear_{\xLabel} = \tfrac{1}{2}, \quad \equilibriumCoefficientLinear_{\yLabel} = 0, \quad \flux_{\yLabel} \equiv 0.
    \end{align*}
\end{remark}

Requesting that the relaxation operator is monotone non decreasing with respect to each of its $\numberVelocities = 5$ arguments gives the following constraints, \confer{} \cite[Proposition 3.3]{aregba2025monotonicity}:
\begin{equation}\label{eq:mononotonicityConditionsD2Q5}
    \text{\lbmScheme{2}{5}}\qquad :\qquad
    \begin{cases}
        &\relaxationParameterSymmetric (1-2 \equilibriumCoefficientLinear_{\xLabel} - 2\equilibriumCoefficientLinear_{\yLabel}) \geq \max(0, \relaxationParameterSymmetric-1), \\
        &\frac{\relaxationParameterAntiSymmetric}{2}\max_{\conservedVariableDiscrete\in[-\maximumInitialDatum, \maximumInitialDatum]}\frac{|\flux_{\xLabel}'(\conservedVariableDiscrete)|}{\latticeVelocity}\leq \relaxationParameterSymmetric\equilibriumCoefficientLinear_{\xLabel}+\frac{1}{2}\min(2-\relaxationParameterSymmetric-\relaxationParameterAntiSymmetric, 0, \relaxationParameterAntiSymmetric-\relaxationParameterSymmetric), \\
        &\frac{\relaxationParameterAntiSymmetric}{2}\max_{\conservedVariableDiscrete\in[-\maximumInitialDatum, \maximumInitialDatum]}\frac{|\flux_{\yLabel}'(\conservedVariableDiscrete)|}{\latticeVelocity}\leq \relaxationParameterSymmetric\equilibriumCoefficientLinear_{\yLabel}+\frac{1}{2}\min(2-\relaxationParameterSymmetric-\relaxationParameterAntiSymmetric, 0, \relaxationParameterAntiSymmetric-\relaxationParameterSymmetric).
    \end{cases}
\end{equation}
These conditions can be downgraded to other schemes taking into account the fact that some distribution functions remain identically zero.
\begin{equation}\label{eq:mononotonicityConditionsD2Q4}
    \text{\lbmScheme{2}{4}}\qquad :\qquad
    \begin{cases}
        &\frac{\relaxationParameterAntiSymmetric}{2}\max_{\conservedVariableDiscrete\in[-\maximumInitialDatum, \maximumInitialDatum]}\frac{|\flux_{\xLabel}'(\conservedVariableDiscrete)|}{\latticeVelocity}\leq \relaxationParameterSymmetric\equilibriumCoefficientLinear_{\xLabel}+\frac{1}{2}\min(2-\relaxationParameterSymmetric-\relaxationParameterAntiSymmetric, 0, \relaxationParameterAntiSymmetric-\relaxationParameterSymmetric), \\
        &\frac{\relaxationParameterAntiSymmetric}{2}\max_{\conservedVariableDiscrete\in[-\maximumInitialDatum, \maximumInitialDatum]}\frac{|\flux_{\yLabel}'(\conservedVariableDiscrete)|}{\latticeVelocity}\leq \relaxationParameterSymmetric\equilibriumCoefficientLinear_{\yLabel}+\frac{1}{2}\min(2-\relaxationParameterSymmetric-\relaxationParameterAntiSymmetric, 0, \relaxationParameterAntiSymmetric-\relaxationParameterSymmetric).
    \end{cases}
\end{equation}
\begin{equation}\label{eq:mononotonicityConditionsD1Q3}
    \text{\lbmScheme{1}{3}}\qquad :\qquad
    \begin{cases}
        &\relaxationParameterSymmetric (1-2 \equilibriumCoefficientLinear_{\xLabel}) \geq \max(0, \relaxationParameterSymmetric-1), \\
        &\frac{\relaxationParameterAntiSymmetric}{2}\max_{\conservedVariableDiscrete\in[-\maximumInitialDatum, \maximumInitialDatum]}\frac{|\flux_{\xLabel}'(\conservedVariableDiscrete)|}{\latticeVelocity}\leq \relaxationParameterSymmetric\equilibriumCoefficientLinear_{\xLabel}+\frac{1}{2}\min(2-\relaxationParameterSymmetric-\relaxationParameterAntiSymmetric, 0, \relaxationParameterAntiSymmetric-\relaxationParameterSymmetric).
    \end{cases}
\end{equation}
In the case of \lbmScheme{1}{2}, one can see that $\relaxationParameterSymmetric$ does not play any role, hence 
\begin{equation}\label{eq:mononotonicityConditionsD1Q2}
    \text{\lbmScheme{1}{2}}\qquad :\qquad
    \begin{cases}
        &\relaxationParameterAntiSymmetric\max_{\conservedVariableDiscrete\in[-\maximumInitialDatum, \maximumInitialDatum]}\frac{|\flux_{\xLabel}'(\conservedVariableDiscrete)|}{\latticeVelocity}\leq \relaxationParameterAntiSymmetric+2\min(1-\relaxationParameterAntiSymmetric, 0). 
    \end{cases}
\end{equation}
In \cite{aregba2025monotonicity}, we proved the following properties.
\begin{proposition}\label{prop:monotonicity}
    Except in the trivial case when all Courant numbers are zero (\idEst{} $\max_{\conservedVariableDiscrete\in[-\maximumInitialDatum, \maximumInitialDatum]}|\flux_{\xLabel}'(\conservedVariableDiscrete)|=0$ and $\max_{\conservedVariableDiscrete\in[-\maximumInitialDatum, \maximumInitialDatum]}|\flux_{\yLabel}'(\conservedVariableDiscrete)|=0$), which we exclude in what follows, under \eqref{eq:mononotonicityConditionsD2Q5} (or \eqref{eq:mononotonicityConditionsD2Q4}, \eqref{eq:mononotonicityConditionsD1Q3}, \eqref{eq:mononotonicityConditionsD1Q2} if they apply)
    \begin{equation}\label{eq:boundRelaxationParam}
        (\relaxationParameterSymmetric, \relaxationParameterAntiSymmetric)\in (0, 2)^2.
    \end{equation}
    Moreover, the relaxation operator $\collisionOperator_{\indexVelocity}$, is monotone non decreasing with respect to each of its $\numberVelocities$ arguments, when they belong to $\invariantCompactSetDistributions\definitionEquality \prod_{p\in\setVelIndexes}[\distributionFunctionLetter_{p}^{\atEquilibrium}(-\maximumInitialDatum), \distributionFunctionLetter_{p}^{\atEquilibrium}(\maximumInitialDatum)]$, for each $\indexVelocity\in\setVelIndexes$.
    Finally, the equilibria are monotone non decreasing functions of their unique argument: for $\indexVelocity\in\setVelIndexes$
    \begin{equation*}
        \frac{\differential \distributionFunctionLetter_{\indexVelocity}^{\atEquilibrium}(\conservedVariableDiscrete)}{\differential\conservedVariable}\geq 0, \qquad \conservedVariableDiscrete\in[-\maximumInitialDatum, \maximumInitialDatum].
    \end{equation*}
\end{proposition}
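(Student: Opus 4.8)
The three assertions are proved in \cite{aregba2025monotonicity}, and my plan would be to reduce all of them to the sign of the partial derivatives $\partial\collisionOperator_{\indexVelocity}/\partial\distributionFunction_p$, $\indexVelocity,p\in\setVelIndexes$. First I would differentiate the relaxation formulas, using $\partial\conservedVariableDiscrete/\partial\distributionFunction_p=1$ for every $p\in\setVelIndexes$ together with the explicit equilibria of the standing Assumption; each entry then depends on the state only through $\flux_{\xLabel}'(\conservedVariableDiscrete)$ and $\flux_{\yLabel}'(\conservedVariableDiscrete)$ and takes only a few distinct values, for instance $\partial\collisionOperator_{\labZeroVel}/\partial\distributionFunction_p=\relaxationParameterSymmetric(1-2\equilibriumCoefficientLinear_{\xLabel}-2\equilibriumCoefficientLinear_{\yLabel})$ for $p\neq\labZeroVel$ and $\partial\collisionOperator_{\labPosX}/\partial\distributionFunction_p=\relaxationParameterSymmetric\equilibriumCoefficientLinear_{\xLabel}+\tfrac{\relaxationParameterAntiSymmetric}{2\latticeVelocity}\flux_{\xLabel}'(\conservedVariableDiscrete)$ for $p\notin\{\labPosX,\labNegX\}$, the diagonal entries differing from these only by bounded shifts such as $1-\relaxationParameterSymmetric$, $1-\tfrac{1}{2}(\relaxationParameterSymmetric+\relaxationParameterAntiSymmetric)$ and $\tfrac{1}{2}(\relaxationParameterAntiSymmetric-\relaxationParameterSymmetric)$. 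Requiring that every entry be non-negative for all $\conservedVariableDiscrete\in[-\maximumInitialDatum,\maximumInitialDatum]$—which is exactly why the maxima over that interval of $|\flux_{\xLabel}'|$ and $|\flux_{\yLabel}'|$ appear—and keeping the worst case over $p$ yields precisely system \eqref{eq:mononotonicityConditionsD2Q5}; discarding the distributions that vanish for coarser stencils gives \eqref{eq:mononotonicityConditionsD2Q4}, \eqref{eq:mononotonicityConditionsD1Q3} and \eqref{eq:mononotonicityConditionsD1Q2} in the same way.

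Assuming \eqref{eq:mononotonicityConditionsD2Q5}, I would then extract its structural content. The right-hand sides of its second and third inequalities are a non-negative term plus $\tfrac{1}{2}\min(\,\cdot\,,0,\,\cdot\,)$ with the $\min$ non-positive, so $\relaxationParameterSymmetric\equilibriumCoefficientLinear_{\xLabel}\ge 0$ and $\relaxationParameterSymmetric\equilibriumCoefficientLinear_{\yLabel}\ge 0$, hence $\equilibriumCoefficientLinear_{\xLabel},\equilibriumCoefficientLinear_{\yLabel}\ge 0$; the first inequality forces $1-2\equilibriumCoefficientLinear_{\xLabel}-2\equilibriumCoefficientLinear_{\yLabel}\ge 0$, so $\equilibriumCoefficientLinear_{\xLabel},\equilibriumCoefficientLinear_{\yLabel}\in[0,\tfrac{1}{2}]$. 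Inserting $\equilibriumCoefficientLinear_{\xLabel}\le\tfrac{1}{2}$ into the second inequality and using the identity $\relaxationParameterSymmetric+\min(2-\relaxationParameterSymmetric-\relaxationParameterAntiSymmetric,0,\relaxationParameterAntiSymmetric-\relaxationParameterSymmetric)=\min(2-\relaxationParameterAntiSymmetric,\relaxationParameterSymmetric,\relaxationParameterAntiSymmetric)$ yields the generalized sub-characteristic bound $\relaxationParameterAntiSymmetric\max_{\conservedVariableDiscrete\in[-\maximumInitialDatum,\maximumInitialDatum]}|\flux_{\xLabel}'(\conservedVariableDiscrete)|/\latticeVelocity\le\min(2-\relaxationParameterAntiSymmetric,\relaxationParameterSymmetric,\relaxationParameterAntiSymmetric)\le\relaxationParameterAntiSymmetric$, i.e. $\max|\flux_{\xLabel}'|/\latticeVelocity\le 1$, and likewise along $y$. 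To get \eqref{eq:boundRelaxationParam} I would argue by contradiction: $\relaxationParameterAntiSymmetric=2$ makes the sub-characteristic bound read $2\max|\flux_{\xLabel}'|/\latticeVelocity\le\min(0,\relaxationParameterSymmetric,2)=0$, while $\relaxationParameterSymmetric=2$ sharpens the first inequality to $\equilibriumCoefficientLinear_{\xLabel}+\equilibriumCoefficientLinear_{\yLabel}\le\tfrac{1}{4}$, which together with $\min(-\relaxationParameterAntiSymmetric,0,\relaxationParameterAntiSymmetric-2)\le-1$ turns the second inequality into $\tfrac{\relaxationParameterAntiSymmetric}{2}\max|\flux_{\xLabel}'|/\latticeVelocity\le 2\equilibriumCoefficientLinear_{\xLabel}-\tfrac{1}{2}\le 0$; either way all Courant numbers vanish, against the standing hypothesis, so—given the a priori constraint $\relaxationParameterSymmetric,\relaxationParameterAntiSymmetric\in(0,2]$—we obtain $(\relaxationParameterSymmetric,\relaxationParameterAntiSymmetric)\in(0,2)^2$.

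For the equilibria, the derivatives are $1-2\equilibriumCoefficientLinear_{\xLabel}-2\equilibriumCoefficientLinear_{\yLabel}\ge 0$ for $\distributionFunctionLetter_{\labZeroVel}^{\atEquilibrium}$ and $\equilibriumCoefficientLinear_{\xLabel}\pm\flux_{\xLabel}'(\conservedVariableDiscrete)/(2\latticeVelocity)$ for $\distributionFunctionLetter_{\labPosX}^{\atEquilibrium},\distributionFunctionLetter_{\labNegX}^{\atEquilibrium}$ (and symmetrically along $y$), so it suffices to show $\equilibriumCoefficientLinear_{\xLabel}\ge\tfrac{1}{2}m$ with $m\definitionEquality\max_{\conservedVariableDiscrete\in[-\maximumInitialDatum,\maximumInitialDatum]}|\flux_{\xLabel}'(\conservedVariableDiscrete)|/\latticeVelocity\le 1$ from the previous step; rewriting the second inequality as $\relaxationParameterSymmetric\equilibriumCoefficientLinear_{\xLabel}\ge\tfrac{\relaxationParameterAntiSymmetric}{2}m+\tfrac{1}{2}\max(\relaxationParameterSymmetric+\relaxationParameterAntiSymmetric-2,0,\relaxationParameterSymmetric-\relaxationParameterAntiSymmetric)$, the target $\relaxationParameterSymmetric\equilibriumCoefficientLinear_{\xLabel}\ge\tfrac{\relaxationParameterSymmetric}{2}m$ reduces to $\max(\relaxationParameterSymmetric+\relaxationParameterAntiSymmetric-2,0,\relaxationParameterSymmetric-\relaxationParameterAntiSymmetric)\ge(\relaxationParameterSymmetric-\relaxationParameterAntiSymmetric)m$, which holds since the left-hand side is $\ge\max(0,\relaxationParameterSymmetric-\relaxationParameterAntiSymmetric)$ and $m\le 1$. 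Finally, since each $\distributionFunctionLetter_p^{\atEquilibrium}$ is then non-decreasing on $[-\maximumInitialDatum,\maximumInitialDatum]$, $\invariantCompactSetDistributions$ is a genuine box, and \eqref{eq:consistency} gives, for $(\distributionFunction_p)_{p\in\setVelIndexes}\in\invariantCompactSetDistributions$, $\conservedVariableDiscrete=\sum_{p\in\setVelIndexes}\distributionFunction_p\in[\sum_p\distributionFunctionLetter_p^{\atEquilibrium}(-\maximumInitialDatum),\sum_p\distributionFunctionLetter_p^{\atEquilibrium}(\maximumInitialDatum)]=[-\maximumInitialDatum,\maximumInitialDatum]$; hence, by the first step, every entry of the Jacobian of $\collisionOperator_{\indexVelocity}$ is $\ge 0$ on the convex set $\invariantCompactSetDistributions$, and as $\collisionOperator_{\indexVelocity}$ is $\cKClass{1}$ there (the fluxes being $\cKClass{1}$) integrating its gradient along axis-parallel segments shows $\collisionOperator_{\indexVelocity}$ non-decreasing in each of its $\numberVelocities$ arguments on $\invariantCompactSetDistributions$. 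I expect the only genuine difficulty to be bookkeeping—carrying out the differentiation of the full $\numberVelocities\times\numberVelocities$ Jacobian cleanly, and handling the case split on the signs of $\relaxationParameterSymmetric-\relaxationParameterAntiSymmetric$ and $2-\relaxationParameterSymmetric-\relaxationParameterAntiSymmetric$ that underlies the compact $\min/\max$ identities used above—rather than anything conceptual.
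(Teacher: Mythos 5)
Your proposal is correct: the entrywise computation of the Jacobian of the relaxation operator reproduces exactly the constraints \eqref{eq:mononotonicityConditionsD2Q5}--\eqref{eq:mononotonicityConditionsD1Q2}; the identity $\relaxationParameterSymmetric+\min(2-\relaxationParameterSymmetric-\relaxationParameterAntiSymmetric,0,\relaxationParameterAntiSymmetric-\relaxationParameterSymmetric)=\min(2-\relaxationParameterAntiSymmetric,\relaxationParameterSymmetric,\relaxationParameterAntiSymmetric)$, the sub-characteristic bound $\max|\flux_{\xLabel}'|/\latticeVelocity\leq 1$, and the exclusion of $\relaxationParameterSymmetric=2$ or $\relaxationParameterAntiSymmetric=2$ (each forcing both Courant numbers to vanish) are all sound, and the deduction $\equilibriumCoefficientLinear_{\xLabel}\geq\tfrac{1}{2}\max|\flux_{\xLabel}'|/\latticeVelocity$ correctly gives monotone equilibria, the box structure of $\invariantCompactSetDistributions$, and monotonicity of $\collisionOperator_{\indexVelocity}$ by integrating the non-negative gradient over that convex set. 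Note that the paper itself offers no proof of \Cref{prop:monotonicity}—it defers entirely to \cite{aregba2025monotonicity}—and your reconstruction follows the same route (non-negativity of every partial derivative of the relaxation operator, evaluated for $\conservedVariableDiscrete\in[-\maximumInitialDatum,\maximumInitialDatum]$) by which those conditions are stated to be derived there.
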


\subsection{Maximum principle}

From now on, we discuss only the case of \lbmScheme{2}{5} scheme and set all the steps needed to prove its convergence. 
The other previously mentioned schemes can be handled analoguously with easier computations.
Since we have monotone equilibria by \Cref{prop:monotonicity}, initialization \eqref{eq:initialization} and boundary conditions \eqref{eq:boundaryEquilibrium1}-\eqref{eq:boundaryEquilibrium2} are at equilibrium, and data are measured taking boundary data into account \eqref{eq:maximumData}, we can extend \cite[Proposition 4.2]{aregba2025monotonicity} in presence of boundaries, and have the following result.
\begin{proposition}[Maximum principle]
    Let \eqref{eq:mononotonicityConditionsD2Q5} hold.
    Then, for all $\indexTime\in\integerInterval{0}{\numberTimeSteps}$ and all $\vectorial{\indexSpace}\in\integerInterval{0}{\numberCells-1}^2$
    \begin{equation*}
        \distributionFunction_{\indexVelocity, \vectorial{\indexSpace}}^{\indexTime}\in[\distributionFunctionLetter_{\indexVelocity}^{\atEquilibrium}(-\maximumInitialDatum), \distributionFunctionLetter_{\indexVelocity}^{\atEquilibrium}(\maximumInitialDatum)]\quad \text{for}\quad \indexVelocity\in\setVelIndexes, \qquad \text{and}\qquad \conservedVariableDiscrete_{\vectorial{\indexSpace}}^{\indexTime}\in[-\maximumInitialDatum, \maximumInitialDatum].
    \end{equation*}
\end{proposition}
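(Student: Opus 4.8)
The plan is to prove the maximum principle by induction on the time step $\indexTime$, exploiting that every operation in the scheme---initialization, relaxation, transport, and the equilibrium boundary conditions---preserves the invariant box $\invariantCompactSetDistributions = \prod_{p\in\setVelIndexes}[\distributionFunctionLetter_{p}^{\atEquilibrium}(-\maximumInitialDatum), \distributionFunctionLetter_{p}^{\atEquilibrium}(\maximumInitialDatum)]$. The base case $\indexTime = 0$ follows from \eqref{eq:initialization}: since $\conservedVariable^{\initial}$ takes values in $[-\maximumInitialDatum, \maximumInitialDatum]$ by the definition \eqref{eq:maximumData}, the cell averages $\dashint_{\cell{\vectorial{\indexSpace}}}\conservedVariable^{\initial}$ do too, and then monotonicity of each $\distributionFunctionLetter_{\indexVelocity}^{\atEquilibrium}$ (\Cref{prop:monotonicity}) places $\distributionFunction_{\indexVelocity, \vectorial{\indexSpace}}^{0} = \distributionFunctionLetter_{\indexVelocity}^{\atEquilibrium}(\dashint_{\cell{\vectorial{\indexSpace}}}\conservedVariable^{\initial})$ in the interval $[\distributionFunctionLetter_{\indexVelocity}^{\atEquilibrium}(-\maximumInitialDatum), \distributionFunctionLetter_{\indexVelocity}^{\atEquilibrium}(\maximumInitialDatum)]$. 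The claim $\conservedVariableDiscrete_{\vectorial{\indexSpace}}^{0}\in[-\maximumInitialDatum, \maximumInitialDatum]$ is then immediate, or can be seen directly since $\conservedVariableDiscrete_{\vectorial{\indexSpace}}^{0}$ is just the cell average of $\conservedVariable^{\initial}$ by the consistency relation $\sum_{\indexVelocity}\distributionFunctionLetter_{\indexVelocity}^{\atEquilibrium}(\conservedVariableDiscrete) = \conservedVariableDiscrete$ in \eqref{eq:consistency}.

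For the inductive step, assume $\distributionFunction_{\indexVelocity, \vectorial{\indexSpace}}^{\indexTime}\in\invariantCompactSetDistributions$ for all $\indexVelocity, \vectorial{\indexSpace}$. First, $\conservedVariableDiscrete_{\vectorial{\indexSpace}}^{\indexTime} = \sum_{\indexVelocity}\distributionFunction_{\indexVelocity, \vectorial{\indexSpace}}^{\indexTime}$ lies in $[-\maximumInitialDatum, \maximumInitialDatum]$: this is precisely the content of \cite[Proposition 4.2]{aregba2025monotonicity} in the whole-space case, which rests on the observation that the box $\invariantCompactSetDistributions$ is mapped by the summation operator into $[-\maximumInitialDatum, \maximumInitialDatum]$ thanks to the structure of the equilibria in the Assumption on their form and the consistency constraint. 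Next, the relaxation step: each $\vectorial{\distributionFunction}_{\indexVelocity, \vectorial{\indexSpace}}^{\indexTime, \collided} = \collisionOperator_{\indexVelocity}(\distributionFunction_{\vectorial{\indexSpace}}^{\indexTime})$ is a monotone non-decreasing function of each of its arguments on $\invariantCompactSetDistributions$ (\Cref{prop:monotonicity}); moreover $\collisionOperator_{\indexVelocity}$ fixes the equilibrium, in the sense that feeding the constant state $\conservedVariableDiscrete = \pm\maximumInitialDatum$ through the equilibria and then relaxing returns $\distributionFunctionLetter_{\indexVelocity}^{\atEquilibrium}(\pm\maximumInitialDatum)$. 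Hence evaluating $\collisionOperator_{\indexVelocity}$ at the corners of $\invariantCompactSetDistributions$ gives exactly the endpoints $\distributionFunctionLetter_{\indexVelocity}^{\atEquilibrium}(\pm\maximumInitialDatum)$, and monotonicity sandwiches $\vectorial{\distributionFunction}_{\indexVelocity, \vectorial{\indexSpace}}^{\indexTime, \collided}\in[\distributionFunctionLetter_{\indexVelocity}^{\atEquilibrium}(-\maximumInitialDatum), \distributionFunctionLetter_{\indexVelocity}^{\atEquilibrium}(\maximumInitialDatum)]$. The transport step is a mere relabelling of indices, so it trivially preserves the bound for interior cells; what is new compared to the whole-space setting is that the shifted index can fall onto a ghost cell. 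There, the post-relaxation value is prescribed by \eqref{eq:boundaryEquilibrium1}--\eqref{eq:boundaryEquilibrium2}, e.g. $\vectorial{\distributionFunction}_{\labPosX, -1, \indexSpace_{\yLabel}}^{\indexTime, \collided} = \distributionFunctionLetter_{\labPosX}^{\atEquilibrium}(\boundaryDatumWest{\indexSpace_{\yLabel}}^{\indexTime})$, and $\boundaryDatumWest{\indexSpace_{\yLabel}}^{\indexTime}$ is a space-time average of $\boundaryFunction{\labelWest}$, hence lies in $[-\maximumInitialDatum, \maximumInitialDatum]$ by \eqref{eq:maximumData}; monotonicity of $\distributionFunctionLetter_{\labPosX}^{\atEquilibrium}$ again puts this ghost value in the right interval. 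Therefore after transport all $\distributionFunction_{\indexVelocity, \vectorial{\indexSpace}}^{\indexTime+1}\in\invariantCompactSetDistributions$, and summing yields $\conservedVariableDiscrete_{\vectorial{\indexSpace}}^{\indexTime+1}\in[-\maximumInitialDatum, \maximumInitialDatum]$, closing the induction.

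The main obstacle---and the only place where this differs from \cite[Proposition 4.2]{aregba2025monotonicity}---is handling the ghost cells consistently: one must check that the equilibrium boundary values injected through \eqref{eq:boundaryEquilibrium1} sit in the same box $\invariantCompactSetDistributions$ as the dynamically evolved interior distributions. This is exactly why the bound \eqref{eq:maximumData} incorporates the $\lebesgueSpace{\infty}$-norm of \emph{all four} boundary data $\boundaryFunction{\alpha}$, not merely the initial datum: the averaging in \eqref{eq:boundaryEquilibrium2} is a convex operation that cannot exceed $\lVert\boundaryFunction{\alpha}\rVert_{\lebesgueSpace{\infty}}\le\maximumInitialDatum$, and then the monotone equilibrium maps $[-\maximumInitialDatum,\maximumInitialDatum]$ into $[\distributionFunctionLetter_{\indexVelocity}^{\atEquilibrium}(-\maximumInitialDatum),\distributionFunctionLetter_{\indexVelocity}^{\atEquilibrium}(\maximumInitialDatum)]$. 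Once this compatibility is noted, the proof is a routine propagation of the invariant box through the three phases of the scheme, and the multi-dimensionality adds nothing beyond bookkeeping over the four ghost layers.
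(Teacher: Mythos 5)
Your induction is correct and is exactly the argument the paper has in mind: the paper merely invokes the extension of \cite[Proposition 4.2]{aregba2025monotonicity}, whose proof is the propagation of the invariant box $\invariantCompactSetDistributions$ through collision (monotone and fixing equilibria, \Cref{prop:monotonicity}) and transport, with the sole new ingredient being that ghost-cell values \eqref{eq:boundaryEquilibrium1}--\eqref{eq:boundaryEquilibrium2} are equilibria of averaged boundary data bounded by $\maximumInitialDatum$ as in \eqref{eq:maximumData}. Nothing to correct.
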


\subsection{$\petitLebesgueSpace{1}$-contractivity of the relaxation}

The essential property in all that follows is the fact that the relaxation phase is a $\petitLebesgueSpace{1}$ contraction.
Since relaxation is only performed inside the domain, this is \cite[Proposition 4.4]{aregba2025monotonicity}.
\begin{proposition}[$\petitLebesgueSpace{1}$-contractivity of the relaxation]\label{prop:contractivity}
    Let \eqref{eq:mononotonicityConditionsD2Q5} hold.
    Consider $(\discrete{g}_{\labZeroVel}, \dots, \discrete{g}_{\labNegY}) \in \invariantCompactSetDistributions$ and $(\distributionFunction_{\labZeroVel}, \dots, \distributionFunction_{\labNegY}) \in \invariantCompactSetDistributions$, then 
    \begin{equation*}
        \sum_{\indexVelocity\in\setVelIndexes} |\collisionOperator_{\indexVelocity}(\discrete{g}_{\labZeroVel}, \dots, \discrete{g}_{\labNegY}) -  \collisionOperator_{\indexVelocity}(\distributionFunction_{\labZeroVel}, \dots, \distributionFunction_{\labNegY}) | \leq \sum_{\indexVelocity\in\setVelIndexes}|\discrete{g}_{\indexVelocity}-\distributionFunction_{\indexVelocity}|.
    \end{equation*} 
\end{proposition}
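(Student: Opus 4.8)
The statement is a special case of the Crandall--Tartar principle: a map acting on an ordered set of $\petitLebesgueSpace{1}$ functions that is order preserving and conserves the total mass is automatically $\petitLebesgueSpace{1}$-nonexpansive. Here the ``measure'' is counting measure on $\setVelIndexes$, so everything is finite-dimensional; rather than quote the abstract lemma I would reproduce its (very short) proof in this concrete setting, using only two ingredients: the monotonicity of $\collisionOperator_{\indexVelocity}$ on $\invariantCompactSetDistributions$ already granted by \Cref{prop:monotonicity}, and the fact that the relaxation leaves the conserved moment $\conservedVariableDiscrete = \sum_{\indexVelocity\in\setVelIndexes}\distributionFunction_{\indexVelocity}$ unchanged. (Since the inequality is required only for inputs in $\invariantCompactSetDistributions$, and the numerical relaxation acts only at interior cells, there is nothing boundary-specific to do; this is indeed exactly \cite[Proposition 4.4]{aregba2025monotonicity}.)

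\textbf{Mass conservation.} Using the explicit relaxation formulas one checks that $\collisionOperator_{\labPosX}(\distributionFunction) + \collisionOperator_{\labNegX}(\distributionFunction) = (1-\relaxationParameterSymmetric)(\distributionFunction_{\labPosX} + \distributionFunction_{\labNegX}) + \relaxationParameterSymmetric\bigl(\distributionFunctionLetter_{\labPosX}^{\atEquilibrium}(\conservedVariableDiscrete) + \distributionFunctionLetter_{\labNegX}^{\atEquilibrium}(\conservedVariableDiscrete)\bigr)$ — the antisymmetric contributions carrying the factor $\tfrac12(\relaxationParameterSymmetric - \relaxationParameterAntiSymmetric)$ telescope when the $\labPosX$- and $\labNegX$-equations are added — and likewise along $\yLabel$, while $\collisionOperator_{\labZeroVel}(\distributionFunction) = (1-\relaxationParameterSymmetric)\distributionFunction_{\labZeroVel} + \relaxationParameterSymmetric\distributionFunctionLetter_{\labZeroVel}^{\atEquilibrium}(\conservedVariableDiscrete)$. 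Summing the five identities and invoking the first consistency relation in \eqref{eq:consistency}, namely $\sum_{\indexVelocity\in\setVelIndexes}\distributionFunctionLetter_{\indexVelocity}^{\atEquilibrium}(\conservedVariableDiscrete) = \conservedVariableDiscrete$, gives $\sum_{\indexVelocity\in\setVelIndexes}\collisionOperator_{\indexVelocity}(\distributionFunction) = (1-\relaxationParameterSymmetric)\conservedVariableDiscrete + \relaxationParameterSymmetric\conservedVariableDiscrete = \conservedVariableDiscrete = \sum_{\indexVelocity\in\setVelIndexes}\distributionFunction_{\indexVelocity}$.

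\textbf{Lattice structure, monotonicity and conclusion.} Since the equilibria are non-decreasing (\Cref{prop:monotonicity}), $\invariantCompactSetDistributions = \prod_{p\in\setVelIndexes}[\distributionFunctionLetter_{p}^{\atEquilibrium}(-\maximumInitialDatum), \distributionFunctionLetter_{p}^{\atEquilibrium}(\maximumInitialDatum)]$ is a product of genuine intervals, hence a sublattice of $\reals^{\numberVelocities}$ closed under the componentwise maximum $\vee$. Given $\discrete{g}, \distributionFunction\in\invariantCompactSetDistributions$, set $\discrete{h}\definitionEquality\discrete{g}\vee\distributionFunction\in\invariantCompactSetDistributions$. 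Because each $\collisionOperator_{\indexVelocity}$ is non-decreasing in every one of its arguments throughout $\invariantCompactSetDistributions$ and the box $\invariantCompactSetDistributions$ contains all the coordinatewise interpolants between $\discrete{g}$ (resp. $\distributionFunction$) and $\discrete{h}$, the separate monotonicities combine into $\collisionOperator_{\indexVelocity}(\discrete{g})\leq\collisionOperator_{\indexVelocity}(\discrete{h})$ and $\collisionOperator_{\indexVelocity}(\distributionFunction)\leq\collisionOperator_{\indexVelocity}(\discrete{h})$ for every $\indexVelocity\in\setVelIndexes$. Hence $\collisionOperator_{\indexVelocity}(\discrete{h}) - \collisionOperator_{\indexVelocity}(\distributionFunction)\geq\max\bigl(0, \collisionOperator_{\indexVelocity}(\discrete{g}) - \collisionOperator_{\indexVelocity}(\distributionFunction)\bigr)$, and summing over $\indexVelocity$ while using mass conservation twice, $\sum_{\indexVelocity}\bigl(\collisionOperator_{\indexVelocity}(\discrete{g}) - \collisionOperator_{\indexVelocity}(\distributionFunction)\bigr)^{+}\leq\sum_{\indexVelocity}\bigl(\collisionOperator_{\indexVelocity}(\discrete{h}) - \collisionOperator_{\indexVelocity}(\distributionFunction)\bigr) = \sum_{\indexVelocity}(\discrete{h}_{\indexVelocity} - \distributionFunction_{\indexVelocity}) = \sum_{\indexVelocity}(\discrete{g}_{\indexVelocity} - \distributionFunction_{\indexVelocity})^{+}$. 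Exchanging the roles of $\discrete{g}$ and $\distributionFunction$ yields the symmetric bound on $\sum_{\indexVelocity}(\distributionFunction_{\indexVelocity} - \discrete{g}_{\indexVelocity})^{+}$, and adding the two inequalities gives $\sum_{\indexVelocity\in\setVelIndexes}|\collisionOperator_{\indexVelocity}(\discrete{g}) - \collisionOperator_{\indexVelocity}(\distributionFunction)|\leq\sum_{\indexVelocity\in\setVelIndexes}|\discrete{g}_{\indexVelocity} - \distributionFunction_{\indexVelocity}|$, since $a^{+} + (-a)^{+} = |a|$.

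\textbf{Main difficulty.} There is no genuine obstacle: the substance is entirely contained in \Cref{prop:monotonicity}. The two points that need care are the bookkeeping in the mass-conservation step (it is precisely the pairing of $\tfrac12(\relaxationParameterSymmetric + \relaxationParameterAntiSymmetric)$ with $\tfrac12(\relaxationParameterSymmetric - \relaxationParameterAntiSymmetric)$, together with $\sum_{\indexVelocity}\distributionFunctionLetter_{\indexVelocity}^{\atEquilibrium}(\conservedVariableDiscrete) = \conservedVariableDiscrete$, that makes the sum cancel exactly, independently of $\relaxationParameterAntiSymmetric$ and of the $\equilibriumCoefficientLinear$'s) and the fact that monotonicity is invoked at the auxiliary state $\discrete{g}\vee\distributionFunction$, which is why it is essential that $\invariantCompactSetDistributions$ was defined as a box: this guarantees that $\discrete{g}\vee\distributionFunction$ and the intermediate states used to chain the coordinatewise monotonicities all lie in the region where \Cref{prop:monotonicity} applies.
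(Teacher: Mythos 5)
Your proof is correct. Note that the paper does not actually prove this proposition: it simply invokes \cite[Proposition 4.4]{aregba2025monotonicity}, so your self-contained argument is a legitimate substitute rather than a divergence, and it follows the standard Crandall--Tartar route (order preservation plus conservation of the zeroth moment implies $\petitLebesgueSpace{1}$-nonexpansiveness) that underlies the cited result. Both ingredients are verified correctly: the algebraic cancellation showing $\sum_{\indexVelocity\in\setVelIndexes}\collisionOperator_{\indexVelocity}(\distributionFunction) = \sum_{\indexVelocity\in\setVelIndexes}\distributionFunction_{\indexVelocity}$ holds for any input (independently of $\relaxationParameterAntiSymmetric$ and the $\equilibriumCoefficientLinear$'s, exactly as you say), and your use of the componentwise maximum is sound precisely because $\invariantCompactSetDistributions$ is a product of intervals (nonempty by the monotonicity of the equilibria in \Cref{prop:monotonicity}), so the coordinatewise chaining of the separate monotonicities of $\collisionOperator_{\indexVelocity}$ stays inside the region where \Cref{prop:monotonicity} applies.
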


\subsection{$\lebesgueSpace{1}$-boundedness}

In the sequel of the proof, just for the sake of simplifying the exposition, we consider the problem as it were set on a quarter of plane $(\reals_+^{*})^2$ instead of $(0, 1)^2$.
We therefore consider only the western and southern boundary conditions.
However, analogous properties and estimates hold for the numerical scheme on $(0, 1)^2$.

For sequences defined inside the domain (both scalar like $\conservedVariableDiscrete$ or vectors like $\vectorial{\distributionFunction}$), we define piecewise constant reconstructions as 
\begin{equation*}
    \conservedVariableDiscreteAsFunctionSpace{\indexTime} (\vectorial{\spaceVariable}) \definitionEquality\sum_{\vectorial{\indexSpace}\in\naturals^2} \conservedVariableDiscrete_{\vectorial{\indexSpace}}^{\indexTime}\indicatorFunction{\cell{\vectorial{\indexSpace}}}(\vectorial{\spaceVariable}), \qquad \conservedVariableDiscreteAsFunction(\timeVariable, \vectorial{\spaceVariable})\definitionEquality \sum_{\indexTime=0}^{\numberTimeSteps - 1} \conservedVariableDiscreteAsFunctionSpace{\indexTime} (\vectorial{\spaceVariable})\indicatorFunction{[\timeGridPoint{\indexTime}, \timeGridPoint{\indexTime + 1})}(\timeVariable).
\end{equation*}

\begin{remark}[Sharpness of the estimates]\label{rem:sharpness}
    In what follows, we aim at keeping expressions as simple as possible, in particular the right-hand sides in the inequalities---as long as they are bounded and controlled by the data of the problem.
    Therefore, we do not strive to make these inequalities very sharp.

    This is achieved by adding non-negative terms to the right-hand side of inequalities. In particular, we introduce what we refer to as $\flat$-procedure: let $\discrete{u}, \discrete{v}\in[-\maximumInitialDatum, \maximumInitialDatum]$ and $\varnothing\neq \tilde{\setVelIndexes}\subset \setVelIndexes$, we consistently write 
    \begin{equation*}
        \sum_{\indexVelocity\in\tilde{\setVelIndexes}}|\distributionFunctionLetter_{\indexVelocity}^{\atEquilibrium}(\discrete{u}) - \distributionFunctionLetter_{\indexVelocity}^{\atEquilibrium}(\discrete{v})|\leq \sum_{\indexVelocity\in{\setVelIndexes}}|\distributionFunctionLetter_{\indexVelocity}^{\atEquilibrium}(\discrete{u}) - \distributionFunctionLetter_{\indexVelocity}^{\atEquilibrium}(\discrete{v})| = |\discrete{u}-\discrete{v}|,
    \end{equation*}
    where the last equality comes from the monotonicity of equilibria, \confer{} \Cref{prop:monotonicity}, and \eqref{eq:consistency}.
    Another variant of the $\flat$-procedure that we employ boils down, with $\discrete{f}_{\indexVelocity}, \discrete{g}_{\indexVelocity}\in[\distributionFunctionLetter_{\indexVelocity}^{\atEquilibrium}(-\maximumInitialDatum), \distributionFunctionLetter_{\indexVelocity}^{\atEquilibrium}(\maximumInitialDatum)]$ for $\indexVelocity\in\setVelIndexes$, to employ the following loose bound:
    \begin{equation*}
        \sum_{\indexVelocity\in\tilde{\setVelIndexes}}|\discrete{f}_{\indexVelocity} - \discrete{g}_{\indexVelocity}|\leq\sum_{\indexVelocity\in{\setVelIndexes}}|\discrete{f}_{\indexVelocity} - \discrete{g}_{\indexVelocity}|.
    \end{equation*}
\end{remark}

\begin{proposition}[$\lebesgueSpace{1}$ bound]\label{prop:L1Bound}
    Consider two numerical solutions $\distributionFunction_{\indexVelocity, \vectorial{\indexSpace}}^{\indexTime}$ and $\discrete{g}_{\indexVelocity, \vectorial{\indexSpace}}^{\indexTime}$ obtained the initial data $\conservedVariable^{\initial}$ and $v^{\initial}$, and boundary data $\boundaryFunction{\alpha}$ and $\tilde{v}_{\alpha}$ for $\alpha=\labelWest, \labelSouth$, satisfying the same assumptions as with \eqref{eq:smoothnessInitial}-\eqref{eq:smoothnessBoundary} (with $(0, 1)$ replaced by $\reals_+^{*}$), such that 
    \begin{align*}
        &\max(\lVert\conservedVariable^{\initial}\rVert_{\lebesgueSpace{\infty}((\reals_+^{*})^2)}, \max_{\alpha=\labelWest, \labelSouth}\lVert\boundaryFunction{\alpha}\rVert_{\lebesgueSpace{\infty}((0, \finalTime)\times\reals_+^{*})})\leq \maximumInitialDatum, \\
        &\max(\lVert v^{\initial}\rVert_{\lebesgueSpace{\infty}((\reals_+^{*})^2)}, \max_{\alpha=\labelWest, \labelSouth}\lVert\tilde{v}_{\alpha}\rVert_{\lebesgueSpace{\infty}((0, \finalTime)\times\reals_+^{*})})\leq \maximumInitialDatum.
    \end{align*}
    Let \eqref{eq:mononotonicityConditionsD2Q5} hold with the previous $\maximumInitialDatum$.
    Then, for all $\indexTime\in\integerInterval{0}{\numberTimeSteps}$
    \begin{equation*}
        \lVert \vectorial{{g}}_{\discreteMark}^{\indexTime} - \distributionFunctionsAsFunction^{\indexTime}\rVert_{\lebesgueSpace{1}((\reals_+^{*})^2)} \leq \lVert v^{\initial}- \conservedVariable^{\initial}\rVert_{\lebesgueSpace{1}((\reals_+^{*})^2)} + \latticeVelocity\sum_{\alpha = \labelWest,  \labelSouth}\lVert \tilde{{v}}_{\alpha}- \boundaryFunction{\alpha}\rVert_{\lebesgueSpace{1}((0, \finalTime)\times\reals_+^{*})}.
    \end{equation*}
\end{proposition}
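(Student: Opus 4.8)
The plan is to prove a one-step $\petitLebesgueSpace{1}$-stability inequality for the $\petitLebesgueSpace{1}$ distance between the two discrete states,
\[
    D^{\indexTime}\definitionEquality\sum_{\vectorial{\indexSpace}\in\naturals^2}\sum_{\indexVelocity\in\setVelIndexes}|\discrete{g}_{\indexVelocity, \vectorial{\indexSpace}}^{\indexTime}-\distributionFunction_{\indexVelocity, \vectorial{\indexSpace}}^{\indexTime}|,
\]
and then to iterate it from $\indexTime=0$. Since the reconstructions are piecewise constant on cells of area $(\spaceStep)^2$ and, $\numberConservationLaws=1$ here, the norm of the tuple $(\distributionFunction_{\indexVelocity, \vectorial{\indexSpace}}^{\indexTime})_{\indexVelocity\in\setVelIndexes}$ is $\sum_{\indexVelocity\in\setVelIndexes}|\distributionFunction_{\indexVelocity, \vectorial{\indexSpace}}^{\indexTime}|$, one has $\lVert\vectorial{{g}}_{\discreteMark}^{\indexTime}-\distributionFunctionsAsFunction^{\indexTime}\rVert_{\lebesgueSpace{1}((\reals_+^{*})^2)}=(\spaceStep)^2 D^{\indexTime}$, so it suffices to control $D^{\indexTime}$ (which is finite: $D^{0}<+\infty$ and the increments produced below are summable). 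Before starting, note that, the data of both numerical solutions being bounded by $\maximumInitialDatum$, the maximum principle applies to each of them under \eqref{eq:mononotonicityConditionsD2Q5}; hence all distribution functions, as well as all ghost values appearing in \eqref{eq:boundaryEquilibrium1}, lie in $\invariantCompactSetDistributions$, while all conserved moments and all boundary data lie in $[-\maximumInitialDatum, \maximumInitialDatum]$. This is precisely what makes \Cref{prop:contractivity}, \Cref{prop:monotonicity}, and the $\flat$-procedure of \Cref{rem:sharpness} available at every step below.

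I would then treat one full iteration. In the \emph{transport} phase, substituting the streaming rules into $D^{\indexTime+1}$: the zero velocity keeps its cell; for $\distributionFunction_{\labNegX}$ and $\distributionFunction_{\labNegY}$ the shifted index $\vectorial{\indexSpace}+\canonicalBasisVector{\xLabel}$, resp. $\vectorial{\indexSpace}+\canonicalBasisVector{\yLabel}$, always lies in $\naturals^2$, so after re-indexing these are bounded by the full sums of $|\discrete{g}_{\indexVelocity, \vectorial{\indexSpace}}^{\indexTime, \collided}-\distributionFunction_{\indexVelocity, \vectorial{\indexSpace}}^{\indexTime, \collided}|$ over $\naturals^2$ (one merely discards the outgoing row/column, which only helps); for $\distributionFunction_{\labPosX}$ the index $\vectorial{\indexSpace}-\canonicalBasisVector{\xLabel}$ lies in $\naturals^2$ except on the first column $\indexSpace_{\xLabel}=0$, where it is the ghost value $\distributionFunctionLetter_{\labPosX}^{\atEquilibrium}$ of the averaged western datum, and symmetrically $\distributionFunction_{\labPosY}$ on the first row brings in $\distributionFunctionLetter_{\labPosY}^{\atEquilibrium}$ of the southern one. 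In the \emph{relaxation} phase, relaxation being local with arguments in $\invariantCompactSetDistributions$, \Cref{prop:contractivity} gives cell-by-cell $\sum_{\indexVelocity}|\collisionOperator_{\indexVelocity}(\discrete{g}_{\vectorial{\indexSpace}}^{\indexTime})-\collisionOperator_{\indexVelocity}(\distributionFunction_{\vectorial{\indexSpace}}^{\indexTime})|\leq\sum_{\indexVelocity}|\discrete{g}_{\indexVelocity, \vectorial{\indexSpace}}^{\indexTime}-\distributionFunction_{\indexVelocity, \vectorial{\indexSpace}}^{\indexTime}|$, which upon summation over $\vectorial{\indexSpace}\in\naturals^2$ bounds the interior contribution by $D^{\indexTime}$; and for the two boundary contributions the $\flat$-procedure (monotonicity of the equilibria together with \eqref{eq:consistency}) yields $|\distributionFunctionLetter_{\labPosX}^{\atEquilibrium}(a)-\distributionFunctionLetter_{\labPosX}^{\atEquilibrium}(b)|\leq|a-b|$ for $a,b\in[-\maximumInitialDatum, \maximumInitialDatum]$, and likewise for $\labPosY$. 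The outcome is the recursion
\[
    D^{\indexTime+1}\leq D^{\indexTime}+\sum_{\indexSpace_{\yLabel}\in\naturals}|\tilde{v}_{\labelWest, \indexSpace_{\yLabel}}^{\indexTime}-\boundaryDatumWest{\indexSpace_{\yLabel}}^{\indexTime}|+\sum_{\indexSpace_{\xLabel}\in\naturals}|\tilde{v}_{\labelSouth, \indexSpace_{\xLabel}}^{\indexTime}-\boundaryDatumSouth{\indexSpace_{\xLabel}}^{\indexTime}|,
\]
where $\tilde{v}_{\alpha, \indexSpace}^{\indexTime}$ denotes the cell-average of $\tilde{v}_{\alpha}$ built as in \eqref{eq:boundaryEquilibrium2}.

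It then remains to iterate from $\indexTime=0$, multiply by $(\spaceStep)^2$, and identify the norms. For the initial term, $\distributionFunction_{\indexVelocity, \vectorial{\indexSpace}}^{0}$ and $\discrete{g}_{\indexVelocity, \vectorial{\indexSpace}}^{0}$ are equilibria of the cell-averages of $\conservedVariable^{\initial}$ and $v^{\initial}$ by \eqref{eq:initialization}, so \eqref{eq:consistency} gives $\sum_{\indexVelocity}|\discrete{g}_{\indexVelocity, \vectorial{\indexSpace}}^{0}-\distributionFunction_{\indexVelocity, \vectorial{\indexSpace}}^{0}|=|\dashint_{\cell{\vectorial{\indexSpace}}}(v^{\initial}-\conservedVariable^{\initial})|$, whence $(\spaceStep)^2 D^{0}=\sum_{\vectorial{\indexSpace}}\bigl|\int_{\cell{\vectorial{\indexSpace}}}(v^{\initial}-\conservedVariable^{\initial})\bigr|\leq\lVert v^{\initial}-\conservedVariable^{\initial}\rVert_{\lebesgueSpace{1}((\reals_+^{*})^2)}$. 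For a boundary increment, writing $Q_{\indexSpace_{\yLabel}}^{\indexTime}\definitionEquality(\timeGridPoint{\indexTime}, \timeGridPoint{\indexTime+1})\times(\indexSpace_{\yLabel}\spaceStep, (\indexSpace_{\yLabel}+1)\spaceStep)$, of measure $\timeStep\spaceStep$, one has $(\spaceStep)^2|\tilde{v}_{\labelWest, \indexSpace_{\yLabel}}^{\indexTime}-\boundaryDatumWest{\indexSpace_{\yLabel}}^{\indexTime}|=\latticeVelocity\bigl|\int_{Q_{\indexSpace_{\yLabel}}^{\indexTime}}(\tilde{v}_{\labelWest}-\boundaryFunction{\labelWest})\bigr|\leq\latticeVelocity\int_{Q_{\indexSpace_{\yLabel}}^{\indexTime}}|\tilde{v}_{\labelWest}-\boundaryFunction{\labelWest}|$, using $\latticeVelocity=\spaceStep/\timeStep$; summing over $\indexSpace_{\yLabel}\in\naturals$ and $\indexTime=0,\dots,\numberTimeSteps-1$, the cells $Q_{\indexSpace_{\yLabel}}^{\indexTime}$ tile $(0,\finalTime)\times\reals_+^{*}$, which gives $\latticeVelocity\lVert\tilde{v}_{\labelWest}-\boundaryFunction{\labelWest}\rVert_{\lebesgueSpace{1}((0,\finalTime)\times\reals_+^{*})}$, and the southern boundary is identical. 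Collecting the three pieces reproduces the claimed inequality.

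The step I expect to demand the most care is the transport bookkeeping: matching each post-relaxation value---interior or ghost---to the cell into which it is streamed, checking that on the quarter-plane the re-indexing is a bijection of $\naturals^2$ for the two incoming families ($\distributionFunction_{\labPosX}$, $\distributionFunction_{\labPosY}$) and only discards mass for the two outgoing ones (so that the inequality points the right way), and noting that no eastern or northern ghost data ever enter. Everything else is a routine combination of the maximum principle, the $\petitLebesgueSpace{1}$-contractivity of \Cref{prop:contractivity}, the $\flat$-procedure, and elementary averaging inequalities; the passage to the bounded square $(0,1)^2$ only adds the two remaining boundary terms.
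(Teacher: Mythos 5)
Your proposal is correct and follows essentially the same route as the paper: one-step bookkeeping of the streaming (with the western/southern ghost equilibria entering through the incoming velocities), the $\petitLebesgueSpace{1}$-contractivity of the relaxation from \Cref{prop:contractivity} for the interior, the $\flat$-procedure to reduce the boundary equilibrium differences to differences of averaged boundary data, and iteration down to the initial time with identification of the sums as $\lebesgueSpace{1}$ norms scaled by $\latticeVelocity$. The only cosmetic remark is that the equality $\sum_{\indexVelocity}|\discrete{g}_{\indexVelocity,\vectorial{\indexSpace}}^{0}-\distributionFunction_{\indexVelocity,\vectorial{\indexSpace}}^{0}|=|\dashint_{\cell{\vectorial{\indexSpace}}}(v^{\initial}-\conservedVariable^{\initial})|$ uses the monotonicity of the equilibria (as in \Cref{rem:sharpness}) and not consistency alone, which you in fact invoke elsewhere.
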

\begin{proof}
    We have, taking the transport into account:
    \begin{multline*}
        \lVert \vectorial{{g}}_{\discreteMark}^{\indexTime + 1} - \distributionFunctionsAsFunction^{\indexTime + 1}\rVert_{\lebesgueSpace{1}((\reals_+^{*})^2)} = \spaceStep^2\sum_{\indexVelocity\in\setVelIndexes}\sum_{\vectorial{\indexSpace}\in\naturals^2} |\discrete{{g}}_{\indexVelocity, \vectorial{\indexSpace}}^{\indexTime + 1} - \distributionFunction_{\indexVelocity, \vectorial{\indexSpace}}^{\indexTime + 1}|\\
        =\spaceStep^2 \Bigl (\sum_{\vectorial{\indexSpace}\in\naturals^2} |\discrete{{g}}_{\labZeroVel, \vectorial{\indexSpace}}^{\indexTime, \collided} - \distributionFunction_{\labZeroVel, \vectorial{\indexSpace}}^{\indexTime, \collided}| + \sum_{\indexSpace_{\yLabel}\in\naturals}|\distributionFunctionLetter_{\labPosX}^{\atEquilibrium}(\tilde{\discrete{v}}_{\labelWest, \indexSpace_{\yLabel}}^{\indexTime}) - \distributionFunctionLetter_{\labPosX}^{\atEquilibrium}(\boundaryDatumWest{\indexSpace_{\yLabel}}^{\indexTime})| + \sum_{\vectorial{\indexSpace}\in\naturals^2} |\discrete{{g}}_{\labPosX, \vectorial{\indexSpace}}^{\indexTime, \collided} - \distributionFunction_{\labPosX, \vectorial{\indexSpace}}^{\indexTime, \collided}|  + \sum_{\substack{\indexSpace_{\xLabel}\geq 1\\ \indexSpace_{\yLabel}\in\naturals }} |\discrete{{g}}_{\labNegX, \vectorial{\indexSpace}}^{\indexTime, \collided} - \distributionFunction_{\labNegX, \vectorial{\indexSpace}}^{\indexTime, \collided}| \\
        + \sum_{\indexSpace_{\xLabel}\in\naturals}|\distributionFunctionLetter_{\labPosY}^{\atEquilibrium}(\tilde{\discrete{v}}_{\labelSouth, \indexSpace_{\xLabel}}^{\indexTime}) - \distributionFunctionLetter_{\labPosY}^{\atEquilibrium}(\boundaryDatumSouth{\indexSpace_{\xLabel}}^{\indexTime})| + \sum_{\vectorial{\indexSpace}\in\naturals^2} |\discrete{{g}}_{\labPosY, \vectorial{\indexSpace}}^{\indexTime, \collided} - \distributionFunction_{\labPosY, \vectorial{\indexSpace}}^{\indexTime, \collided}|  + \sum_{\substack{\indexSpace_{\xLabel}\in\naturals\\ \indexSpace_{\yLabel}\geq 1 }} |\discrete{{g}}_{\labNegY, \vectorial{\indexSpace}}^{\indexTime, \collided} - \distributionFunction_{\labNegY, \vectorial{\indexSpace}}^{\indexTime, \collided}|
        \Bigr ).
    \end{multline*}
    Through the $\flat$-procedure, \confer{} \Cref{rem:sharpness}, we obtain 
    \begin{align*}
        &\lVert \vectorial{{g}}_{\discreteMark}^{\indexTime + 1} - \distributionFunctionsAsFunction^{\indexTime + 1}\rVert_{\lebesgueSpace{1}((\reals_+^{*})^2)}\\
        &\leq  \spaceStep^2\sum_{\indexVelocity\in\setVelIndexes}\sum_{\vectorial{\indexSpace}\in\naturals^2} |\discrete{{g}}_{\indexVelocity, \vectorial{\indexSpace}}^{\indexTime, \collided} - \distributionFunction_{\indexVelocity, \vectorial{\indexSpace}}^{\indexTime, \collided}| + \spaceStep \dashint_{\timeGridPoint{\indexTime}}^{\timeGridPoint{\indexTime+1}} \lVert \tilde{{v}}_{\labelWest}(\timeVariable, \cdot) - \boundaryFunction{\labelWest}(\timeVariable, \cdot) \rVert_{\lebesgueSpace{1}(\reals_+^{*})} \differential\timeVariable   + \spaceStep\dashint_{\timeGridPoint{\indexTime}}^{\timeGridPoint{\indexTime+1}} \lVert \tilde{{v}}_{\labelSouth}(\timeVariable, \cdot) - \boundaryFunction{\labelSouth}(\timeVariable, \cdot) \rVert_{\lebesgueSpace{1}(\reals_+^{*})}\differential\timeVariable  \\
        &\leq \lVert \vectorial{{g}}_{\discreteMark}^{\indexTime} - \distributionFunctionsAsFunction^{\indexTime}\rVert_{\lebesgueSpace{1}((\reals_+^{*})^2)} +\latticeVelocity \int_{\timeGridPoint{\indexTime}}^{\timeGridPoint{\indexTime+1}} \lVert \tilde{{v}}_{\labelWest}(\timeVariable, \cdot) - \boundaryFunction{\labelWest}(\timeVariable, \cdot) \rVert_{\lebesgueSpace{1}(\reals_+^{*})} \differential\timeVariable   + \latticeVelocity\int_{\timeGridPoint{\indexTime}}^{\timeGridPoint{\indexTime+1}} \lVert \tilde{{v}}_{\labelSouth}(\timeVariable, \cdot) - \boundaryFunction{\labelSouth}(\timeVariable, \cdot) \rVert_{\lebesgueSpace{1}(\reals_+^{*})}\differential\timeVariable ,
    \end{align*}
    where the second inequality comes from \Cref{prop:contractivity}.
    Iterating in time until reaching the initial one, we obtain 
    \begin{equation*}
        \lVert \vectorial{{g}}_{\discreteMark}^{\indexTime} - \distributionFunctionsAsFunction^{\indexTime}\rVert_{\lebesgueSpace{1}((\reals_+^{*})^2)} \leq \lVert v^{\initial}- \conservedVariable^{\initial}\rVert_{\lebesgueSpace{1}((\reals_+^{*})^2)} + \latticeVelocity\sum_{\alpha = \labelWest, \labelSouth}\lVert \tilde{{v}}_{\alpha}- \boundaryFunction{\alpha}\rVert_{\lebesgueSpace{1}((0, \finalTime)\times\reals_+^{*})}.
    \end{equation*}
\end{proof}

\subsection{Equicontinuity}

\begin{proposition}[Equicontinuity]\label{prop:equicontinuity}
    Let \eqref{eq:mononotonicityConditionsD2Q5} hold.
    Then, for all $\indexTime\in\integerInterval{0}{\numberTimeSteps - 1}$
    \begin{equation*}
        \lVert \distributionFunctionsAsFunction^{\indexTime + 1} - \distributionFunctionsAsFunction^{\indexTime} \rVert_{\lebesgueSpace{1}((\reals_+^{*})^2)}\leq  C_{EC}(\conservedVariable^{\initial}, \boundaryFunction{\labelWest}, \boundaryFunction{\labelSouth}) \spaceStep,
    \end{equation*}
    with 
    \begin{multline*}
        C_{EC}(\conservedVariable^{\initial}, \boundaryFunction{\labelWest}, \boundaryFunction{\labelSouth}) \definitionEquality 2 \totalVariation{\conservedVariable^{\initial}}{(\reals_+^*)^2} +\sup_{\xLabel>0} \lVert \conservedVariable^{\initial}(\spaceVariable, \cdot) \rVert_{\lebesgueSpace{1}(\reals_+^*)} + \sup_{\yLabel>0} \lVert \conservedVariable^{\initial}(\cdot, \yLabel) \rVert_{\lebesgueSpace{1}(\reals_+^*)} \\
        + \sum_{\alpha = \labelWest, \labelSouth}\Bigl ( \sup_{\timeVariable\in(0, \finalTime)}\lVert \boundaryFunction{\alpha}(\timeVariable, \cdot)\rVert_{\lebesgueSpace{1}(\reals_+^*)} + \totalVariation{\boundaryFunction{\alpha}}{(0, \finalTime)\times \reals_+^*} \Bigr ) , 
    \end{multline*}
    with $C_{EC}(\conservedVariable^{\initial}, \boundaryFunction{\labelWest}, \boundaryFunction{\labelSouth})$, and in particular the terms featuring suprema of $\lebesgueSpace{1}$ norms in one space direction, are bounded by virtue of \Cref{prop:restrictionLinF}.
\end{proposition}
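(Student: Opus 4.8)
The plan is to follow the Crandall--Majda route to time equicontinuity: compare the numerical solution with the copy of itself shifted by one time step, and reduce the whole estimate to a single ``first step''. The starting point is that $(\distributionFunctionsAsFunction^{\indexTime+1})_{\indexTime\geq 0}$ is again a numerical solution of the very same scheme, this time with initial datum $\distributionFunctionsAsFunction^{1}$ --- which is in general \emph{not} at equilibrium --- and with the boundary data \eqref{eq:boundaryEquilibrium1}--\eqref{eq:boundaryEquilibrium2} of the original solution shifted by one time level. Since the maximum principle keeps every involved distribution in $\invariantCompactSetDistributions$ and every boundary state in $[-\maximumInitialDatum,\maximumInitialDatum]$, I would rerun the computation of the proof of \Cref{prop:L1Bound} \emph{at the level of distribution functions} for this pair of solutions, stopping the time iteration at level $1$ instead of at level $0$. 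This should give, for $\indexTime\in\integerInterval{0}{\numberTimeSteps-1}$,
\[
    \lVert \distributionFunctionsAsFunction^{\indexTime+1} - \distributionFunctionsAsFunction^{\indexTime}\rVert_{\lebesgueSpace{1}((\reals_+^{*})^2)} \leq \lVert \distributionFunctionsAsFunction^{1} - \distributionFunctionsAsFunction^{0}\rVert_{\lebesgueSpace{1}((\reals_+^{*})^2)} + \spaceStep^2\sum_{\alpha=\labelWest,\labelSouth}\;\sum_{\indexTime'=0}^{\indexTime-1}\;\sum_{\indexSpace}\bigl|\distributionFunctionLetter_{\indexVelocity(\alpha)}^{\atEquilibrium}(\boundaryDatum{\alpha}{\indexSpace}^{\indexTime'+1}) - \distributionFunctionLetter_{\indexVelocity(\alpha)}^{\atEquilibrium}(\boundaryDatum{\alpha}{\indexSpace}^{\indexTime'})\bigr|,
\]
with $\indexVelocity(\labelWest)=\labPosX$ and $\indexVelocity(\labelSouth)=\labPosY$. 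In the last sum, the $\flat$-procedure (\Cref{rem:sharpness}) together with \eqref{eq:consistency} gives $|\distributionFunctionLetter_{\indexVelocity}^{\atEquilibrium}(a)-\distributionFunctionLetter_{\indexVelocity}^{\atEquilibrium}(b)|\leq\sum_{p\in\setVelIndexes}|\distributionFunctionLetter_{p}^{\atEquilibrium}(a)-\distributionFunctionLetter_{p}^{\atEquilibrium}(b)|=|a-b|$, and what remains is precisely $\spaceStep$ times the total variation along the time axis of the piecewise-constant reconstruction of $\boundaryFunction{\alpha}$ (in $(\timeVariable,\yLabel)$ for $\alpha=\labelWest$, in $(\timeVariable,\xLabel)$ for $\alpha=\labelSouth$), which is $\leq\spaceStep\,\totalVariation{\boundaryFunction{\alpha}}{(0,\finalTime)\times\reals_+^*}$ by \Cref{prop:totalVariationPiecewiseApprox} and \Cref{rem:1Dvs2D}. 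This supplies the $\totalVariation{\boundaryFunction{\alpha}}{\cdot}$ contributions of $C_{EC}$.

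The remaining task --- where the substance lies --- is the first-step bound, controlling $\lVert\distributionFunctionsAsFunction^{1}-\distributionFunctionsAsFunction^{0}\rVert_{\lebesgueSpace{1}((\reals_+^{*})^2)}$ by $\spaceStep$ times the remaining terms of $C_{EC}$. The crucial simplification is that $\distributionFunctionsAsFunction^{0}$ is at equilibrium by \eqref{eq:initialization} and that the relaxation operator fixes equilibria (immediate from the relaxation formulas when $\distributionFunction_{\indexVelocity,\vectorial{\indexSpace}}=\distributionFunctionLetter_{\indexVelocity}^{\atEquilibrium}(\conservedVariableDiscrete)$), so that $\distributionFunction_{\indexVelocity,\vectorial{\indexSpace}}^{0,\collided}=\distributionFunction_{\indexVelocity,\vectorial{\indexSpace}}^{0}$ and $\distributionFunctionsAsFunction^{1}$ is merely the one-cell transport of the equilibrium state $\distributionFunctionsAsFunction^{0}$. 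Then $\distributionFunction_{\labZeroVel,\vectorial{\indexSpace}}^{1}=\distributionFunction_{\labZeroVel,\vectorial{\indexSpace}}^{0}$, and for $\indexVelocity\neq\labZeroVel$ and cells $\vectorial{\indexSpace}$ away from the matching boundary row/column one has $|\distributionFunction_{\indexVelocity,\vectorial{\indexSpace}}^{1}-\distributionFunction_{\indexVelocity,\vectorial{\indexSpace}}^{0}|=|\distributionFunctionLetter_{\indexVelocity}^{\atEquilibrium}(\conservedVariableDiscrete_{\vectorial{\indexSpace}'}^{0})-\distributionFunctionLetter_{\indexVelocity}^{\atEquilibrium}(\conservedVariableDiscrete_{\vectorial{\indexSpace}}^{0})|$, with $\vectorial{\indexSpace}'$ the neighbour of $\vectorial{\indexSpace}$ opposite to the $\indexVelocity$-th velocity. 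Summing the contributions of $\{\labPosX,\labNegX\}$ and of $\{\labPosY,\labNegY\}$ and collapsing (again via monotonicity and \eqref{eq:consistency}) $\sum_{p\in\setVelIndexes}|\distributionFunctionLetter_{p}^{\atEquilibrium}(\conservedVariableDiscrete_{\vectorial{\indexSpace}'}^{0})-\distributionFunctionLetter_{p}^{\atEquilibrium}(\conservedVariableDiscrete_{\vectorial{\indexSpace}}^{0})|=|\conservedVariableDiscrete_{\vectorial{\indexSpace}'}^{0}-\conservedVariableDiscrete_{\vectorial{\indexSpace}}^{0}|$ should give $\spaceStep\bigl(\totalVariationAlongAxis{\conservedVariableDiscreteAsFunctionSpace{0}}{(\reals_+^{*})^2}{\xLabel}+\totalVariationAlongAxis{\conservedVariableDiscreteAsFunctionSpace{0}}{(\reals_+^{*})^2}{\yLabel}\bigr)\leq 2\spaceStep\,\totalVariation{\conservedVariableDiscreteAsFunctionSpace{0}}{(\reals_+^{*})^2}\leq 2\spaceStep\,\totalVariation{\conservedVariable^{\initial}}{(\reals_+^{*})^2}$ by \Cref{rem:1Dvs2D} and \Cref{prop:totalVariationPiecewiseApprox}. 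On each of the two boundary rows/columns the displaced value is a ghost equilibrium $\distributionFunctionLetter_{\indexVelocity(\alpha)}^{\atEquilibrium}(\boundaryDatum{\alpha}{\indexSpace}^{0})$; using $\flux_{\xLabel}(0)=\flux_{\yLabel}(0)=0$ and the monotonicity of the equilibria (\Cref{prop:monotonicity}) I would bound $|\distributionFunctionLetter_{\indexVelocity}^{\atEquilibrium}(a)-\distributionFunctionLetter_{\indexVelocity}^{\atEquilibrium}(b)|\leq|a|+|b|$, so that contribution is at most $\spaceStep^2\sum_{\indexSpace}\bigl(|\boundaryDatum{\alpha}{\indexSpace}^{0}|+|\conservedVariableDiscrete_{\vectorial{\indexSpace}}^{0}|\bigr)$ over the boundary cells; unfolding the averages \eqref{eq:boundaryEquilibrium2} and \eqref{eq:initialization} and applying Fubini bounds it by $\spaceStep\bigl(\sup_{\timeVariable}\lVert\boundaryFunction{\alpha}(\timeVariable,\cdot)\rVert_{\lebesgueSpace{1}(\reals_+^*)}+\sup_{\xLabel>0}\lVert\conservedVariable^{\initial}(\xLabel,\cdot)\rVert_{\lebesgueSpace{1}(\reals_+^*)}\bigr)$ when $\alpha=\labelWest$ (with $\sup_{\yLabel>0}\lVert\conservedVariable^{\initial}(\cdot,\yLabel)\rVert_{\lebesgueSpace{1}(\reals_+^*)}$ when $\alpha=\labelSouth$), these suprema being finite by \Cref{prop:restrictionLinF} applied to $|\boundaryFunction{\alpha}|$ and $|\conservedVariable^{\initial}|$, which are $\boundedVariationSpace$ by \Cref{prop:totalVariationObsoluteValue}. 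Assembling all the pieces reproduces exactly $C_{EC}(\conservedVariable^{\initial},\boundaryFunction{\labelWest},\boundaryFunction{\labelSouth})\,\spaceStep$.

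The hard part will not be any single estimate but the bookkeeping, on two fronts. First, one must make fully precise that $(\distributionFunctionsAsFunction^{\indexTime+1})_{\indexTime}$ is an admissible numerical solution and identify exactly the shifted boundary states it feeds on, so that the proof of \Cref{prop:L1Bound} transfers verbatim at the distribution level (the role of the maximum principle being to keep everything inside $\invariantCompactSetDistributions$). Second, in the first transport step one must split cleanly the interior cells --- where the $\flat$-collapse of the equilibria converts the one-cell displacement into pure total variation of $\conservedVariable^{\initial}$ --- from the single row/column of cells adjacent to each boundary, where the displaced value is a ghost equilibrium depending on both $\boundaryFunction{\alpha}$ and on $\conservedVariable^{\initial}$ near the wall. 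Both points are routine once the admissibility provided by the maximum principle is in place, but keeping the $\spaceStep$ and $\timeStep$ weights and the cell-average normalizations consistent throughout is the only genuinely delicate bookkeeping.
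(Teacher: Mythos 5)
Your proposal is correct and follows essentially the same route as the paper's proof: the one-step relaxation contraction plus the $\flat$-procedure telescoped down to the first time step, the accumulated boundary increments bounded by $\spaceStep$ times the time-direction total variation of the boundary data via piecewise-constant reconstructions, and the first step estimated using the equilibrium initialization, with the boundary-adjacent cells handled through narrow-band integrals and the suprema of one-directional $L^{1}$ norms. The differences (the time-shifted-solution framing of the telescoping and pairing opposite velocities before collapsing the equilibria) are purely presentational and reproduce the same constant $C_{EC}$.
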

\begin{proof}
    From \Cref{prop:contractivity} and through a $\flat$-procedure, we obtain
    \begin{equation*}
        \lVert \distributionFunctionsAsFunction^{\indexTime + 1} - \distributionFunctionsAsFunction^{\indexTime} \rVert_{\lebesgueSpace{1}((\reals_+^{*})^2)} \leq\lVert \distributionFunctionsAsFunction^{\indexTime} - \distributionFunctionsAsFunction^{\indexTime-1} \rVert_{\lebesgueSpace{1}((\reals_+^{*})^2)}  + \spaceStep^2\sum_{\indexSpace_{\yLabel}\in\naturals}|\boundaryDatumWest{\indexSpace_{\yLabel}}^{\indexTime}-\boundaryDatumWest{\indexSpace_{\yLabel}}^{\indexTime-1}| + \spaceStep^2\sum_{\indexSpace_{\xLabel}\in\naturals}|\boundaryDatumSouth{\indexSpace_{\xLabel}}^{\indexTime}-\boundaryDatumSouth{\indexSpace_{\xLabel}}^{\indexTime-1}|.
    \end{equation*}
    Iterating, we obtain:
    \begin{equation*}
        \lVert \distributionFunctionsAsFunction^{\indexTime + 1} - \distributionFunctionsAsFunction^{\indexTime} \rVert_{\lebesgueSpace{1}((\reals_+^{*})^2)}\leq  \lVert \distributionFunctionsAsFunction^{1} - \distributionFunctionsAsFunction^{0} \rVert_{\lebesgueSpace{1}((\reals_+^{*})^2)} + \spaceStep^2\sum_{p = 0}^{\numberTimeSteps - 1}\sum_{\indexSpace_{\yLabel}\in\naturals}|\boundaryDatumWest{\indexSpace_{\yLabel}}^{p+1}-\boundaryDatumWest{\indexSpace_{\yLabel}}^{p}| + \spaceStep^2\sum_{p = 0}^{\numberTimeSteps - 1} \sum_{\indexSpace_{\xLabel}\in\naturals}|\boundaryDatumSouth{\indexSpace_{\xLabel}}^{p+1}-\boundaryDatumSouth{\indexSpace_{\xLabel}}^{p}|.
    \end{equation*}
    For the last two terms on the right-hand side (we give the example of the first one), we have:
    \begin{multline*}
        \spaceStep^2\sum_{p = 0}^{\numberTimeSteps - 1}\sum_{\indexSpace_{\yLabel}\in\naturals}|\boundaryDatumWest{\indexSpace_{\yLabel}}^{p+1}-\boundaryDatumWest{\indexSpace_{\yLabel}}^{p}| = \latticeVelocity \sum_{\indexSpace_{\yLabel}\in\naturals}\sum_{p = 0}^{\numberTimeSteps - 1} \Bigl | \int_{\timeGridPoint{p}}^{\timeGridPoint{p + 1}}\reduceSpaceDoubleInt\int_{\indexSpace_{\yLabel}\spaceStep}^{(\indexSpace_{\yLabel} + 1)\spaceStep}(\boundaryFunction{\labelWest}(\timeVariable+\timeStep, \yLabel)-\boundaryFunction{\labelWest}(\timeVariable, \yLabel))\differential\yLabel\differential\timeVariable \Bigr |   \\
        \leq\latticeVelocity\sum_{\indexSpace_{\yLabel}\in\naturals}  \int_{\indexSpace_{\yLabel}\spaceStep}^{(\indexSpace_{\yLabel} + 1)\spaceStep} \sum_{p = 0}^{\numberTimeSteps - 1} \Bigl |  \int_{\timeGridPoint{p}}^{\timeGridPoint{p + 1}} (\boundaryFunction{\labelWest}(\timeVariable+\timeStep, \yLabel)-\boundaryFunction{\labelWest}(\timeVariable, \yLabel)) \differential\timeVariable \Bigr |  \differential\yLabel 
        = \spaceStep\int_0^{+\infty} \totalVariation{\tilde{\conservedVariable}_{\labelWest, \discreteMark_{\timeVariable}}(\cdot, \yLabel)}{(0, \finalTime)}\differential\yLabel,
    \end{multline*}
    where we have defined 
    \begin{equation*}
        \tilde{\conservedVariable}_{\labelWest, \discreteMark_{\timeVariable}}(\timeVariable, \yLabel)\definitionEquality \sum_{p=0}^{\numberTimeSteps - 1}\Bigl ( \dashint_{\timeGridPoint{p}}^{\timeGridPoint{p + 1}} \boundaryFunction{\labelWest}(\tilde{\timeVariable}, \yLabel)\differential\tilde{\timeVariable} \Bigr )\indicatorFunction{[\timeGridPoint{p}, \timeGridPoint{p+1})}(\timeVariable).
    \end{equation*}
    From \Cref{prop:totalVariationPiecewiseApprox}, we have that $\totalVariation{\tilde{\conservedVariable}_{\labelWest, \discreteMark_{\timeVariable}}(\cdot, \yLabel)}{(0, \finalTime)}\leq \totalVariation{\boundaryFunction{\labelWest}(\cdot, \yLabel)}{(0, \finalTime)}$ in the almost-everywhere sense in $\yLabel$, therefore:
    \begin{equation}\label{eq:tmp9}
        \spaceStep^2\sum_{p = 0}^{\numberTimeSteps - 1}\sum_{\indexSpace_{\yLabel}\in\naturals}|\boundaryDatumWest{\indexSpace_{\yLabel}}^{p+1}-\boundaryDatumWest{\indexSpace_{\yLabel}}^{p}| \leq \spaceStep\totalVariationAlongAxis{\boundaryFunction{\labelWest}}{(0, \finalTime)}{\timeVariable} \leq \spaceStep\totalVariation{\boundaryFunction{\labelWest}}{(0, \finalTime)\times \realsPositive},
    \end{equation}
    where the first inequality comes from \Cref{thm:2DtotalAlongAxis}, and the second one from the discussion of \Cref{rem:1Dvs2D}.
    This entails
    \begin{equation*}
        \lVert \distributionFunctionsAsFunction^{\indexTime + 1} - \distributionFunctionsAsFunction^{\indexTime} \rVert_{\lebesgueSpace{1}((\reals_+^{*})^2)}\leq  \lVert \distributionFunctionsAsFunction^{1} - \distributionFunctionsAsFunction^{0} \rVert_{\lebesgueSpace{1}((\reals_+^{*})^2)} +\spaceStep( \totalVariation{\boundaryFunction{\labelWest}}{(0, \finalTime)\times \realsPositive} + \totalVariation{\boundaryFunction{\labelSouth}}{(0, \finalTime)\times \realsPositive}).
    \end{equation*}

    For the remaining term, since we start at equilibrium
    \begin{align*}
        \lVert \distributionFunctionsAsFunction^{1} - \distributionFunctionsAsFunction^{0} \rVert_{\lebesgueSpace{1}((\reals_+^{*})^2)} = \spaceStep^2\Bigl ( &\sum_{\indexSpace_{\yLabel}\in\naturals}|\distributionFunction_{\labPosX}^{\atEquilibrium}(\boundaryDatumWest{\indexSpace_{\yLabel}}^0) - \distributionFunction_{\labPosX}^{\atEquilibrium}(\conservedVariableDiscrete_{0, \indexSpace_{\yLabel}}^0)| + \sum_{\substack{\indexSpace_{\xLabel}\geq 1\\ \indexSpace_{\yLabel}\in\naturals}}|\distributionFunction_{\labPosX}^{\atEquilibrium}(\conservedVariableDiscrete_{\vectorial{\indexSpace} - \canonicalBasisVector{\xLabel}}^0) - \distributionFunction_{\labPosX}^{\atEquilibrium}(\conservedVariableDiscrete_{\vectorial{\indexSpace}}^0)| + \sum_{\vectorial{\indexSpace}\in\naturals^2}|\distributionFunction_{\labNegX}^{\atEquilibrium}(\conservedVariableDiscrete_{\vectorial{\indexSpace} + \canonicalBasisVector{\xLabel}}^0) - \distributionFunction_{\labNegX}^{\atEquilibrium}(\conservedVariableDiscrete_{\vectorial{\indexSpace}}^0)| \\
        + &\sum_{\indexSpace_{\xLabel}\in\naturals}|\distributionFunction_{\labPosY}^{\atEquilibrium}(\boundaryDatumSouth{\indexSpace_{\xLabel}}^0) - \distributionFunction_{\labPosY}^{\atEquilibrium}(\conservedVariableDiscrete_{\indexSpace_{\xLabel}, 0}^0)| + \sum_{\substack{\indexSpace_{\xLabel}\in\naturals\\ \indexSpace_{\yLabel}\geq 1}}|\distributionFunction_{\labPosY}^{\atEquilibrium}(\conservedVariableDiscrete_{\vectorial{\indexSpace} - \canonicalBasisVector{\yLabel}}^0) - \distributionFunction_{\labPosY}^{\atEquilibrium}(\conservedVariableDiscrete_{\vectorial{\indexSpace}}^0)| + \sum_{\vectorial{\indexSpace}\in\naturals^2}|\distributionFunction_{\labNegY}^{\atEquilibrium}(\conservedVariableDiscrete_{\vectorial{\indexSpace} + \canonicalBasisVector{\yLabel}}^0) - \distributionFunction_{\labNegY}^{\atEquilibrium}(\conservedVariableDiscrete_{\vectorial{\indexSpace}}^0)| \Bigr ).
    \end{align*}
    Through the $\flat$-procedure, the last two terms of each line can be bounded by twice the total variation of the initial datum.
    On the other hand, another $\flat$-procedure transforms the first terms on the right-hand side of each line into the discrepancy between initial datum close to the boundary and the boundary data at the beginning.
    This reads
    \begin{equation*}
        \lVert \distributionFunctionsAsFunction^{1} - \distributionFunctionsAsFunction^{0} \rVert_{\lebesgueSpace{1}((\reals_+^{*})^2)} \leq \spaceStep \Bigl ( 2 \totalVariation{\conservedVariable^{\initial}}{(\reals_+^*)^2} +  \spaceStep \sum_{\indexSpace_{\yLabel}\in\naturals} |\conservedVariableDiscrete_{0, \indexSpace_{\yLabel}}^{0} - \boundaryDatumWest{\indexSpace_{\yLabel}}^{0}| + \spaceStep \sum_{\indexSpace_{\xLabel}\in\naturals} |\conservedVariableDiscrete_{\indexSpace_{\xLabel}, 0}^{0} - \boundaryDatumSouth{\indexSpace_{\xLabel}}^{0}| \Bigl ) .
    \end{equation*}
    The last two terms on the right-hand side can be controlled considering that they are bounded by integrals on narrow vertical bands.
    Let us show for the first one how the procedure goes:
    \begin{align}
        \spaceStep \sum_{\indexSpace_{\yLabel}\in\naturals} |\conservedVariableDiscrete_{0, \indexSpace_{\yLabel}}^{0} - \boundaryDatumWest{\indexSpace_{\yLabel}}^{0}|&\leq \spaceStep \sum_{\indexSpace_{\yLabel}\in\naturals} |\conservedVariableDiscrete_{0, \indexSpace_{\yLabel}}^{0}| + \spaceStep \sum_{\indexSpace_{\yLabel}\in\naturals} |\boundaryDatumWest{\indexSpace_{\yLabel}}^{0}|\nonumber \\
        &\leq \frac{1}{\spaceStep}\int_0^{\spaceStep}\reduceSpaceDoubleInt\int_0^{+\infty}|\conservedVariable^{\initial}(\xLabel, \yLabel)|\differential\yLabel\differential\xLabel + \frac{1}{\timeStep}\int_0^{\timeStep}\reduceSpaceDoubleInt\int_0^{+\infty}|\boundaryFunction{\labelWest}(\timeVariable, \yLabel)|\differential\yLabel\differential\timeVariable\nonumber \\
        &\leq\sup_{\xLabel>0}\lVert\conservedVariable^{\initial}(\xLabel, \cdot)\rVert_{\lebesgueSpace{1}(\realsPositive)} + \sup_{\timeVariable\in(0, \finalTime)}\lVert\boundaryFunction{\labelWest}(\timeVariable, \cdot)\rVert_{\lebesgueSpace{1}(\realsPositive)} \label{eq:tmp11}.
    \end{align}
    where the inequality on \eqref{eq:tmp11} comes from \Cref{prop:VerticalIntegrationBV} together with \Cref{prop:restrictionTotalVariation}.
    Notice that the suprema could be on narrow bands $\xLabel\in(0, \spaceStep)$ and on $\timeVariable\in(0, \timeStep)$: in accordance with \Cref{rem:sharpness}, we provide loose bounds for $\xLabel\in(0, +\infty)$ and on $\timeVariable\in(0, \finalTime)$.
    
    This provides 
    \begin{multline*}
        \lVert \distributionFunctionsAsFunction^{\indexTime + 1} - \distributionFunctionsAsFunction^{\indexTime} \rVert_{\lebesgueSpace{1}((\reals_+^{*})^2)}\leq 
        \spaceStep \Bigl ( 2\totalVariation{\conservedVariable^{\initial}}{(\reals_+^*)^2}  +\sup_{\xLabel>0}\lVert\conservedVariable^{\initial}(\xLabel, \cdot)\rVert_{\lebesgueSpace{1}(\realsPositive)} +\sup_{\yLabel>0}\lVert\conservedVariable^{\initial}(\cdot, \yLabel)\rVert_{\lebesgueSpace{1}(\realsPositive)} \\
        + \totalVariation{\boundaryFunction{\labelWest}}{(0, \finalTime)\times \realsPositive} + \totalVariation{\boundaryFunction{\labelSouth}}{(0, \finalTime)\times \realsPositive}) + \sup_{\timeVariable\in(0, \finalTime)}\lVert\boundaryFunction{\labelWest}(\timeVariable, \cdot)\rVert_{\lebesgueSpace{1}(\realsPositive)}
        + \sup_{\timeVariable\in(0, \finalTime)}\lVert\boundaryFunction{\labelSouth}(\timeVariable, \cdot)\rVert_{\lebesgueSpace{1}(\realsPositive)}\Bigl ),
    \end{multline*}
    which is the claim.
\end{proof}

\subsection{Total variation boundedness}

The following estimate is the most technical to be obtained.
\begin{proposition}\label{prop:totalVariationEstimates}
    Let \eqref{eq:mononotonicityConditionsD2Q5} hold. Then, for $\indexTime\in\integerInterval{0}{\numberTimeSteps}$:
    \begin{equation*}
        \totalVariation{\distributionFunctionsAsFunction^{\indexTime}}{(\reals_+^{*})^2} 
        \leq C_{\mathscr{V}}(\conservedVariable^{\initial}, \boundaryFunction{\labelWest},  \boundaryFunction{\labelSouth}, \finalTime),
    \end{equation*}
    where the constant $C_{\mathscr{V}}(\conservedVariable^{\initial}, \boundaryFunction{\labelWest},  \boundaryFunction{\labelSouth}, \finalTime)$ is given by 
    \begin{multline*}
        C_{\mathscr{V}}(\conservedVariable^{\initial}, \boundaryFunction{\labelWest},  \boundaryFunction{\labelSouth}, \finalTime)\definitionEquality 2 \Bigl ( \totalVariation{\conservedVariable^{\initial}}{(\reals_+^{*})^2} 
        +2\sup_{\xLabel>0}\lVert\conservedVariable^{\initial}(\xLabel, \cdot)\rVert_{\lebesgueSpace{1}(\realsPositive)} +2\sup_{\yLabel>0}\lVert\conservedVariable^{\initial}(\cdot, \yLabel)\rVert_{\lebesgueSpace{1}(\realsPositive)} +2\latticeVelocity \finalTime  \maximumInitialDatum\\
        +\sum_{\alpha=\labelWest, \labelSouth}\Bigl ( \totalVariation{\boundaryFunction{\alpha}}{(0, \finalTime)\times \realsPositive} 
        + \sup_{\timeVariable\in(0, \finalTime)}\lVert\boundaryFunction{\alpha}(\timeVariable, \cdot)\rVert_{\lebesgueSpace{1}(\realsPositive)} \Bigr ) \Bigr ),
    \end{multline*}  
    and is bounded by virtue of \Cref{prop:restrictionLinF}.
\end{proposition}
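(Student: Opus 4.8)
The plan is to mimic the structure of the $\lebesgueSpace{1}$-boundedness and equicontinuity proofs, but now tracking \emph{spatial} differences of the distribution functions rather than their $\lebesgueSpace{1}$ size. The starting observation is that $\totalVariation{\distributionFunctionsAsFunction^{\indexTime}}{(\reals_+^{*})^2}$, for a piecewise-constant reconstruction on the Cartesian grid \eqref{eq:paving}, equals $\spaceStep\sum_{\indexVelocity\in\setVelIndexes}$ of the sum of the absolute values of the jumps across all cell interfaces (both $\xLabel$- and $\yLabel$-interfaces), plus contributions from the interfaces with the ghost cells at $\indexSpace_{\xLabel}=0$ and $\indexSpace_{\yLabel}=0$. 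First I would write this out explicitly after one time step, splitting the jump sum into interior jumps and boundary-adjacent jumps for each of the five discrete velocities, using the transport rule to re-index.

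The heart of the argument is then the same as before: apply \Cref{prop:contractivity} to the interior jumps. The key point is that a spatial jump $|\distributionFunction_{\indexVelocity,\vectorial{\indexSpace}+\canonicalBasisVector{\xLabel}}^{\indexTime+1,\collided\text{-preimage}} - \distributionFunction_{\indexVelocity,\vectorial{\indexSpace}}^{\indexTime+1,\text{-preimage}}|$ after transport is, cell by cell, a jump between two post-collision states $\collisionOperator_{\indexVelocity}(\vectorial{\distributionFunction}_{\vectorial{\indexSpace}+\canonicalBasisVector{\xLabel}}^{\indexTime})$ and $\collisionOperator_{\indexVelocity}(\vectorial{\distributionFunction}_{\vectorial{\indexSpace}}^{\indexTime})$ — the same structure to which $\petitLebesgueSpace{1}$-contractivity applies — so that summing over $\indexVelocity$ and over the interior interfaces gives a bound by the corresponding jump sum of $\distributionFunctionsAsFunction^{\indexTime}$ itself, i.e.\ $\totalVariation{\distributionFunctionsAsFunction^{\indexTime}}{(\reals_+^{*})^2}$, after a $\flat$-procedure to restore the missing interface terms near the boundary. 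What is \emph{not} controlled this way are (i) the jumps across the interface between the first interior column/row and the ghost layer, which involve $|\distributionFunctionLetter_{\labPosX}^{\atEquilibrium}(\boundaryDatumWest{\indexSpace_{\yLabel}}^{\indexTime}) - \distributionFunctionLetter_{\labPosX}^{\atEquilibrium}(\conservedVariableDiscrete_{0,\indexSpace_{\yLabel}}^{\indexTime})|$ and its $\labelSouth$-analogue, and (ii) the jumps \emph{within} the ghost layer itself, i.e.\ $|\distributionFunctionLetter_{\labPosX}^{\atEquilibrium}(\boundaryDatumWest{\indexSpace_{\yLabel}+1}^{\indexTime}) - \distributionFunctionLetter_{\labPosX}^{\atEquilibrium}(\boundaryDatumWest{\indexSpace_{\yLabel}}^{\indexTime})|$ in the direction \emph{parallel} to the boundary, plus the temporal jump needed to close the time recursion. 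Term (ii) in the tangential direction is controlled by $\spaceStep\,\totalVariation{\boundaryFunction{\labelWest}}{(0,\finalTime)\times\realsPositive}$ via \Cref{prop:totalVariationPiecewiseApprox}, \Cref{thm:2DtotalAlongAxis}, and \Cref{rem:1Dvs2D}, exactly as in \eqref{eq:tmp9}; the temporal-jump term accumulated over $\integerInterval{0}{\numberTimeSteps-1}$ time steps similarly contributes $\totalVariation{\boundaryFunction{\alpha}}{(0,\finalTime)\times\realsPositive}$ through the $\boundaryDatum{}{}$ averaging \eqref{eq:boundaryEquilibrium2}.

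The genuinely delicate term is (i): the cross-jump $|\conservedVariableDiscrete_{0,\indexSpace_{\yLabel}}^{\indexTime} - \boundaryDatumWest{\indexSpace_{\yLabel}}^{\indexTime}|$ between the boundary datum and the first interior cell, which is \emph{not} small and must be absorbed using the maximum principle, giving the crude bound $|\conservedVariableDiscrete_{0,\indexSpace_{\yLabel}}^{\indexTime} - \boundaryDatumWest{\indexSpace_{\yLabel}}^{\indexTime}|\leq 2\maximumInitialDatum$, summed over $\indexSpace_{\yLabel}$ and over $\indexTime\in\integerInterval{0}{\numberTimeSteps-1}$. Naively $\spaceStep\sum_{\indexSpace_{\yLabel}}\sum_{\indexTime} 2\maximumInitialDatum$ is \emph{not} bounded — this is why one cannot simply iterate the one-step inequality term by term. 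The resolution, and the step I expect to be the main obstacle, is that this cross-jump contribution at level $\indexTime$ is not genuinely new mass but is itself controlled by the configuration at level $\indexTime$ near the boundary: one telescopes it against the jumps already present, so that after summing the time recursion the boundary-layer terms collapse to a \emph{single} $\spaceStep\cdot 2\latticeVelocity\finalTime\maximumInitialDatum$ contribution (the $\latticeVelocity\finalTime = \numberTimeSteps\spaceStep/\spaceStep\cdot\ldots$ bookkeeping must be done carefully), rather than accumulating linearly in $\numberTimeSteps$. Concretely, I would argue that the post-collision value streamed in from the ghost cell differs from the first interior post-collision value by an amount that, when combined over consecutive time levels via the $\petitLebesgueSpace{1}$ contraction, produces a geometric-type or telescoping cancellation; alternatively, following \cite{aregba2004kinetic}, one introduces the comparison solution that is \emph{globally} at the boundary value near the wall and invokes \Cref{prop:L1Bound}-type monotone comparison to trade total-variation mass for an $\lebesgueSpace{1}$ discrepancy that is already $\bigO{\spaceStep}$. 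The remaining $\initial$-data pieces ($2\totalVariation{\conservedVariable^{\initial}}{(\reals_+^*)^2}$, the one-direction $\lebesgueSpace{1}$ suprema, handled exactly as in \Cref{prop:equicontinuity} via \eqref{eq:tmp11}) are then routine, and collecting everything yields the stated $C_{\mathscr{V}}$ with its factor-of-two slack from the successive $\flat$-procedures.
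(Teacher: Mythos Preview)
Your high-level structure is right: relaxation is TV-diminishing by \Cref{prop:contractivity}, transport reshuffles indices and produces boundary leftovers, the tangential ghost-layer jumps are controlled exactly as you say via \Cref{prop:totalVariationPiecewiseApprox} and \Cref{thm:2DtotalAlongAxis}. The gap is in your treatment of the cross term~(i).

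The resolution you sketch---telescoping against existing jumps, or a comparison solution via \Cref{prop:L1Bound}---does not close. What the paper actually does (following \cite{aregba2004kinetic}) is more specific. After transport, the boundary contribution on the western wall is not just the positive incoming jump $\sum_{\indexSpace_{\yLabel}}|\distributionFunctionLetter_{\labPosX}^{\atEquilibrium}(\boundaryDatumWest{\indexSpace_{\yLabel}}^{\indexTime}) - \distributionFunction_{\labPosX,0,\indexSpace_{\yLabel}}^{\indexTime,\collided}|$ but \emph{also} a negative outgoing term $-\sum_{\indexSpace_{\yLabel}}|\distributionFunction_{\labNegX,0,\indexSpace_{\yLabel}}^{\indexTime,\collided} - \distributionFunction_{\labNegX,1,\indexSpace_{\yLabel}}^{\indexTime,\collided}|$, which you discard in the $\flat$-procedure. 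Keeping it is essential. One then introduces the auxiliary boundary-layer quantity
\[
A_W^{\indexTime} \;:=\; \sum_{\indexVelocity\in\setVelIndexes}\sum_{\indexSpace_{\yLabel}\in\naturals}\bigl|\distributionFunction_{\indexVelocity,0,\indexSpace_{\yLabel}}^{\indexTime+1,\collided} - \distributionFunctionLetter_{\indexVelocity}^{\atEquilibrium}(\boundaryDatumWest{\indexSpace_{\yLabel}}^{\indexTime})\bigr|
\]
(and $A_S^{\indexTime}$ analogously) and shows, by applying \Cref{prop:contractivity} to $\collisionOperator_{\indexVelocity}(\vectorial{\distributionFunction}_{0,\indexSpace_{\yLabel}}^{\indexTime+1})$ against $\collisionOperator_{\indexVelocity}(\vectorial{\distributionFunctionLetter}^{\atEquilibrium}(\boundaryDatumWest{\indexSpace_{\yLabel}}^{\indexTime}))$ and then replacing $\distributionFunction_{\indexVelocity,0,\indexSpace_{\yLabel}}^{\indexTime+1}$ by its transported preimages, that $A_W^{\indexTime}-A_W^{\indexTime-1}$ produces \emph{exactly} the combination (positive incoming minus negative outgoing) appearing in the TV recursion, up to tangential-TV and time-TV terms of $\boundaryFunction{\labelWest}$. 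Setting $a^{\indexTime}:=\totalVariation{\distributionFunctionsAsFunction^{\indexTime+1}}{(\reals_+^*)^2} + \spaceStep(A_W^{\indexTime}+A_S^{\indexTime})$ then yields a clean recursion $a^{\indexTime}\le a^{\indexTime-1}+b^{\indexTime-1}$ with $b^{\indexTime-1}$ bounded by data only. The point is that the cross-jump is never bounded by $2\maximumInitialDatum$: it is \emph{absorbed into the recursion} via $A_W$.

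A second misidentification: the $2\latticeVelocity\finalTime\maximumInitialDatum$ in $C_{\mathscr{V}}$ does not come from the cross-jump at all. It comes from the \emph{corner} mismatch $|\boundaryDatumWest{0}^{\indexTime}-\boundaryDatumSouth{0}^{\indexTime}|\le 2\maximumInitialDatum$, which appears when the transported $\distributionFunction_{\labPosY,0,-1}^{\indexTime,\collided}=\distributionFunctionLetter_{\labPosY}^{\atEquilibrium}(\boundaryDatumSouth{0}^{\indexTime})$ is compared to $\distributionFunctionLetter_{\labPosY}^{\atEquilibrium}(\boundaryDatumWest{0}^{\indexTime})$ inside the $A_W$ recursion. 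This is a \emph{single} cell per time step, so $\spaceStep\sum_{\indexTime=0}^{\numberTimeSteps-1}2\maximumInitialDatum = 2\latticeVelocity\finalTime\maximumInitialDatum$ is finite without any telescoping.
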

\begin{proof}
    We have, using \Cref{prop:contractivity}, that
    \begin{align*}
        \totalVariation{\distributionFunctionsAsFunction^{\indexTime, \collided}}{(\reals_+^{*})^2} &= \spaceStep \sum_{\indexVelocity\in\setVelIndexes}\sum_{\vectorial{\indexSpace}\in\naturals^2} \bigl (|\distributionFunction_{\indexVelocity, \vectorial{\indexSpace}}^{\indexTime, \collided} - \distributionFunction_{\indexVelocity, \vectorial{\indexSpace}+\canonicalBasisVector{\xLabel}}^{\indexTime, \collided}| + |\distributionFunction_{\indexVelocity, \vectorial{\indexSpace}}^{\indexTime, \collided} - \distributionFunction_{\indexVelocity, \vectorial{\indexSpace}+\canonicalBasisVector{\yLabel}}^{\indexTime, \collided}| \bigr )\\
        &=\spaceStep \sum_{\vectorial{\indexSpace}\in\naturals^2}\sum_{\indexVelocity\in\setVelIndexes} \bigl (|\collisionOperator_{\indexVelocity}(\vectorial{\distributionFunction}_{\vectorial{\indexSpace}}^{\indexTime}) - \collisionOperator_{\indexVelocity}(\vectorial{\distributionFunction}_{\vectorial{\indexSpace}+\canonicalBasisVector{\xLabel}}^{\indexTime})| + |\collisionOperator_{\indexVelocity}(\vectorial{\distributionFunction}_{\vectorial{\indexSpace}}^{\indexTime}) - \collisionOperator_{\indexVelocity}(\vectorial{\distributionFunction}_{\vectorial{\indexSpace}+\canonicalBasisVector{\yLabel}}^{\indexTime})| \bigr )\\
        &\leq\spaceStep \sum_{\indexVelocity\in\setVelIndexes}\sum_{\vectorial{\indexSpace}\in\naturals^2} \bigl (|\distributionFunction_{\indexVelocity, \vectorial{\indexSpace}}^{\indexTime} - \distributionFunction_{\indexVelocity, \vectorial{\indexSpace}+\canonicalBasisVector{\xLabel}}^{\indexTime}| + |\distributionFunction_{\indexVelocity, \vectorial{\indexSpace}}^{\indexTime} - \distributionFunction_{\indexVelocity, \vectorial{\indexSpace}+\canonicalBasisVector{\yLabel}}^{\indexTime}| \bigr ) = \totalVariation{\distributionFunctionsAsFunction^{\indexTime}}{(\reals_+^{*})^2}.
    \end{align*}
    We hence see that the relaxation is total variation diminishing: $\totalVariation{\distributionFunctionsAsFunction^{\indexTime, \collided}}{(\reals_+^{*})^2}\leq \totalVariation{\distributionFunctionsAsFunction^{\indexTime}}{(\reals_+^{*})^2}$.
    We now take the transport into account.
    \begin{align*}
        \totalVariation{\distributionFunctionsAsFunction^{\indexTime+1}}{(\reals_+^{*})^2} = \totalVariation{\distributionFunctionsAsFunctionComponent{\labZeroVel}^{\indexTime, \collided}}{(\reals_+^{*})^2} +\spaceStep\sum_{\vectorial{\indexSpace}\in\naturals^2} \bigl (&|\distributionFunction_{\labPosX, \vectorial{\indexSpace}-\canonicalBasisVector{\xLabel}}^{\indexTime, \collided} - \distributionFunction_{\labPosX, \vectorial{\indexSpace}}^{\indexTime, \collided}| + |\distributionFunction_{\labPosX, \vectorial{\indexSpace}-\canonicalBasisVector{\xLabel}}^{\indexTime, \collided} - \distributionFunction_{\labPosX, \vectorial{\indexSpace}+\canonicalBasisVector{\yLabel}-\canonicalBasisVector{\xLabel}}^{\indexTime, \collided}| \\
        +&|\distributionFunction_{\labNegX, \vectorial{\indexSpace} + \canonicalBasisVector{\xLabel}}^{\indexTime, \collided} - \distributionFunction_{\labNegX, \vectorial{\indexSpace}+2\canonicalBasisVector{\xLabel}}^{\indexTime, \collided}| + |\distributionFunction_{\labNegX, \vectorial{\indexSpace} + \canonicalBasisVector{\xLabel}}^{\indexTime, \collided} - \distributionFunction_{\labNegX, \vectorial{\indexSpace}+\canonicalBasisVector{\yLabel} + \canonicalBasisVector{\xLabel}}^{\indexTime, \collided}| \\
        +&|\distributionFunction_{\labPosY, \vectorial{\indexSpace} - \canonicalBasisVector{\yLabel}}^{\indexTime, \collided} - \distributionFunction_{\labPosY, \vectorial{\indexSpace}+\canonicalBasisVector{\xLabel}-\canonicalBasisVector{\yLabel}}^{\indexTime, \collided}| + |\distributionFunction_{\labPosY, \vectorial{\indexSpace}-\canonicalBasisVector{\yLabel}}^{\indexTime, \collided} - \distributionFunction_{\labPosY, \vectorial{\indexSpace}}^{\indexTime, \collided}| \\
        +&|\distributionFunction_{\labNegY, \vectorial{\indexSpace}+\canonicalBasisVector{\yLabel}}^{\indexTime, \collided} - \distributionFunction_{\labNegY, \vectorial{\indexSpace}+\canonicalBasisVector{\xLabel}+\canonicalBasisVector{\yLabel}}^{\indexTime, \collided}| + |\distributionFunction_{\labNegY, \vectorial{\indexSpace}+\canonicalBasisVector{\yLabel}}^{\indexTime, \collided} - \distributionFunction_{\labNegY, \vectorial{\indexSpace}+2\canonicalBasisVector{\yLabel}}^{\indexTime, \collided}| \bigr ).
    \end{align*}
    Changes in the indices yield
    \begin{multline*}
        \totalVariation{\distributionFunctionsAsFunction^{\indexTime+1}}{(\reals_+^{*})^2} = \totalVariation{\distributionFunctionsAsFunction^{\indexTime, \collided}}{(\reals_+^{*})^2} \\
        + \spaceStep \Bigl ( \sum_{\indexSpace_{\yLabel}\in\naturals} \bigl ( |\distributionFunction_{\labPosX, -1, \indexSpace_{\yLabel}}^{\indexTime, \collided} - \distributionFunction_{\labPosX, 0, \indexSpace_{\yLabel}}^{\indexTime, \collided}| + |\distributionFunction_{\labPosX, -1, \indexSpace_{\yLabel}}^{\indexTime, \collided} - \distributionFunction_{\labPosX, -1, \indexSpace_{\yLabel}+1}^{\indexTime, \collided}| \bigr ) 
        -\sum_{\indexSpace_{\yLabel}\in\naturals}\bigl ( |\distributionFunction_{\labNegX, 0, \indexSpace_{\yLabel}}^{\indexTime, \collided} - \distributionFunction_{\labNegX, 1, \indexSpace_{\yLabel}}^{\indexTime, \collided}| + |\distributionFunction_{\labNegX, 0, \indexSpace_{\yLabel}}^{\indexTime, \collided} - \distributionFunction_{\labNegX, 0, \indexSpace_{\yLabel}+1}^{\indexTime, \collided}| \bigr )\\
        +\sum_{\indexSpace_{\xLabel}\in\naturals}\bigl ( |\distributionFunction_{\labPosY, \indexSpace_{\xLabel}, -1}^{\indexTime, \collided} - \distributionFunction_{\labPosY, \indexSpace_{\xLabel}+1, -1}^{\indexTime, \collided}| + |\distributionFunction_{\labPosY, \indexSpace_{\xLabel}, -1}^{\indexTime, \collided} - \distributionFunction_{\labPosY, \indexSpace_{\xLabel}, 0}^{\indexTime, \collided}| \bigr )
        -\sum_{\indexSpace_{\xLabel}\in\naturals}\bigl ( |\distributionFunction_{\labNegY, \indexSpace_{\xLabel}, 0}^{\indexTime, \collided} - \distributionFunction_{\labNegY, \indexSpace_{\xLabel}+1, 0}^{\indexTime, \collided}| + |\distributionFunction_{\labNegY, \indexSpace_{\xLabel}, 0}^{\indexTime, \collided} - \distributionFunction_{\labNegY, \indexSpace_{\xLabel}, 1}^{\indexTime, \collided}| \bigr )  \Bigr )
    \end{multline*}
    Replacing the boundary conditions and using the total variation diminishing character of the relaxation
    \begin{multline*}
        \totalVariation{\distributionFunctionsAsFunction^{\indexTime+1}}{(\reals_+^{*})^2} \leq \totalVariation{\distributionFunctionsAsFunction^{\indexTime}}{(\reals_+^{*})^2} \\
        + \spaceStep \Bigl ( \sum_{\indexSpace_{\yLabel}\in\naturals} \bigl ( \overbrace{|\distributionFunctionLetter_{\labPosX}^{\atEquilibrium}(\boundaryDatumWest{\indexSpace_{\yLabel}}^{\indexTime}) - \distributionFunction_{\labPosX, 0, \indexSpace_{\yLabel}}^{\indexTime, \collided}|}^{\text{1D term}} + \overbrace{|\distributionFunctionLetter_{\labPosX}^{\atEquilibrium}(\boundaryDatumWest{\indexSpace_{\yLabel}}^{\indexTime})  - \distributionFunctionLetter_{\labPosX}^{\atEquilibrium}(\boundaryDatumWest{\indexSpace_{\yLabel} + 1}^{\indexTime}) |}^{\text{1D TV of the boundary datum}} \bigr ) 
        -\sum_{\indexSpace_{\yLabel}\in\naturals}\bigl ( \overbrace{|\distributionFunction_{\labNegX, 0, \indexSpace_{\yLabel}}^{\indexTime, \collided} - \distributionFunction_{\labNegX, 1, \indexSpace_{\yLabel}}^{\indexTime, \collided}|}^{\text{1D term}} + \overbrace{|\distributionFunction_{\labNegX, 0, \indexSpace_{\yLabel}}^{\indexTime, \collided} - \distributionFunction_{\labNegX, 0, \indexSpace_{\yLabel}+1}^{\indexTime, \collided}|}^{\text{multi-dimensional term}} \bigr )\\
        +\sum_{\indexSpace_{\xLabel}\in\naturals}\bigl ( |\distributionFunctionLetter_{\labPosY}^{\atEquilibrium}(\boundaryDatumSouth{\indexSpace_{\xLabel}}^{\indexTime}) - \distributionFunctionLetter_{\labPosY}^{\atEquilibrium}(\boundaryDatumSouth{\indexSpace_{\xLabel} + 1}^{\indexTime})| + |\distributionFunctionLetter_{\labPosY}^{\atEquilibrium}(\boundaryDatumSouth{\indexSpace_{\xLabel}}^{\indexTime})- \distributionFunction_{\labPosY, \indexSpace_{\xLabel}, 0}^{\indexTime, \collided}| \bigr )
        -\sum_{\indexSpace_{\xLabel}\in\naturals}\bigl ( |\distributionFunction_{\labNegY, \indexSpace_{\xLabel}, 0}^{\indexTime, \collided} - \distributionFunction_{\labNegY, \indexSpace_{\xLabel}+1, 0}^{\indexTime, \collided}| + |\distributionFunction_{\labNegY, \indexSpace_{\xLabel}, 0}^{\indexTime, \collided} - \distributionFunction_{\labNegY, \indexSpace_{\xLabel}, 1}^{\indexTime, \collided}| \bigr )  \Bigr ).
    \end{multline*}
    We bound the last term of the second line and the penultimate term in the last line by zero, yielding
    \begin{multline}\label{eq:estimateTVwithBoundary}
        \totalVariation{\distributionFunctionsAsFunction^{\indexTime + 1}}{(\reals_+^{*})^2} \leq \totalVariation{\distributionFunctionsAsFunction^{\indexTime}}{(\reals_+^{*})^2} \\
        + \spaceStep \Bigl ( \sum_{\indexSpace_{\yLabel}\in\naturals} \bigl ( |\distributionFunctionLetter_{\labPosX}^{\atEquilibrium}(\boundaryDatumWest{\indexSpace_{\yLabel}}^{\indexTime}) - \distributionFunction_{\labPosX, 0, \indexSpace_{\yLabel}}^{\indexTime, \collided}| + |\distributionFunctionLetter_{\labPosX}^{\atEquilibrium}(\boundaryDatumWest{\indexSpace_{\yLabel}}^{\indexTime})  - \distributionFunctionLetter_{\labPosX}^{\atEquilibrium}(\boundaryDatumWest{\indexSpace_{\yLabel} + 1}^{\indexTime}) | \bigr ) 
        -\sum_{\indexSpace_{\yLabel}\in\naturals} |\distributionFunction_{\labNegX, 0, \indexSpace_{\yLabel}}^{\indexTime, \collided} - \distributionFunction_{\labNegX, 1, \indexSpace_{\yLabel}}^{\indexTime, \collided}| \\
        +\sum_{\indexSpace_{\xLabel}\in\naturals}\bigl ( |\distributionFunctionLetter_{\labPosY}^{\atEquilibrium}(\boundaryDatumSouth{\indexSpace_{\xLabel}}^{\indexTime}) - \distributionFunctionLetter_{\labPosY}^{\atEquilibrium}(\boundaryDatumSouth{\indexSpace_{\xLabel} + 1}^{\indexTime})| + |\distributionFunctionLetter_{\labPosY}^{\atEquilibrium}(\boundaryDatumSouth{\indexSpace_{\xLabel}}^{\indexTime})- \distributionFunction_{\labPosY, \indexSpace_{\xLabel}, 0}^{\indexTime, \collided}| \bigr )
        -\sum_{\indexSpace_{\xLabel}\in\naturals} |\distributionFunction_{\labNegY, \indexSpace_{\xLabel}, 0}^{\indexTime, \collided} - \distributionFunction_{\labNegY, \indexSpace_{\xLabel}, 1}^{\indexTime, \collided}|  \Bigr ).
    \end{multline}
    The usual $\flat$-procedure yields the estimate
    \begin{multline}\label{eq:estimateLackingLastTermNew}
        \totalVariation{\distributionFunctionsAsFunction^{\indexTime + 1}}{(\reals_+^{*})^2} \leq \totalVariation{\distributionFunctionsAsFunction^{\indexTime}}{(\reals_+^{*})^2} + \spaceStep \sum_{\indexSpace_{\yLabel}\in\naturals} |\boundaryDatumWest{\indexSpace_{\yLabel}}^{\indexTime}  - \boundaryDatumWest{\indexSpace_{\yLabel} + 1}^{\indexTime} | + \spaceStep \sum_{\indexSpace_{\xLabel}\in\naturals} |\boundaryDatumSouth{\indexSpace_{\xLabel}}^{\indexTime}  - \boundaryDatumSouth{\indexSpace_{\xLabel} + 1}^{\indexTime} | \\
        + \spaceStep \Bigl ( \sum_{\indexSpace_{\yLabel}\in\naturals} |\distributionFunctionLetter_{\labPosX}^{\atEquilibrium}(\boundaryDatumWest{\indexSpace_{\yLabel}}^{\indexTime}) - \distributionFunction_{\labPosX, 0, \indexSpace_{\yLabel}}^{\indexTime, \collided}|  
        -\sum_{\indexSpace_{\yLabel}\in\naturals} |\distributionFunction_{\labNegX, 0, \indexSpace_{\yLabel}}^{\indexTime, \collided} - \distributionFunction_{\labNegX, 1, \indexSpace_{\yLabel}}^{\indexTime, \collided}| 
        +\sum_{\indexSpace_{\xLabel}\in\naturals} |\distributionFunctionLetter_{\labPosY}^{\atEquilibrium}(\boundaryDatumSouth{\indexSpace_{\xLabel}}^{\indexTime})- \distributionFunction_{\labPosY, \indexSpace_{\xLabel}, 0}^{\indexTime, \collided}| 
        -\sum_{\indexSpace_{\xLabel}\in\naturals} |\distributionFunction_{\labNegY, \indexSpace_{\xLabel}, 0}^{\indexTime, \collided} - \distributionFunction_{\labNegY, \indexSpace_{\xLabel}, 1}^{\indexTime, \collided}|  \Bigr ).
    \end{multline}
    Let us bound the last two terms on the first line of \eqref{eq:estimateLackingLastTermNew}.
    \begin{align*}
        \spaceStep \sum_{\indexSpace_{\yLabel}\in\naturals} |\boundaryDatumWest{\indexSpace_{\yLabel}}^{\indexTime}  - \boundaryDatumWest{\indexSpace_{\yLabel} + 1}^{\indexTime} | &= \frac{1}{\timeStep} \sum_{\indexSpace_{\yLabel}\in\naturals} \Bigl |\int_{\timeGridPoint{\indexTime}}^{\timeGridPoint{\indexTime + 1}}\reduceSpaceDoubleInt\int_{\indexSpace_{\yLabel}\spaceStep}^{(\indexSpace_{\yLabel}+1)\spaceStep} (\boundaryFunction{\labelWest}(\timeVariable, \yLabel)-\boundaryFunction{\labelWest}(\timeVariable, \yLabel + \spaceStep))\differential\yLabel\differential\timeVariable\Bigr |\\
        &\leq \frac{1}{\timeStep} \sum_{\indexSpace_{\yLabel}\in\naturals} \int_{\timeGridPoint{\indexTime}}^{\timeGridPoint{\indexTime + 1}}\Bigl | \int_{\indexSpace_{\yLabel}\spaceStep}^{(\indexSpace_{\yLabel}+1)\spaceStep} (\boundaryFunction{\labelWest}(\timeVariable, \yLabel)-\boundaryFunction{\labelWest}(\timeVariable, \yLabel + \spaceStep))\differential\yLabel \Bigr | \differential\timeVariable\\
        &= \frac{1}{\timeStep}  \int_{\timeGridPoint{\indexTime}}^{\timeGridPoint{\indexTime + 1}}\reduceSpaceDoubleInt\sum_{\indexSpace_{\yLabel}\in\naturals} \Bigl | \int_{\indexSpace_{\yLabel}\spaceStep}^{(\indexSpace_{\yLabel}+1)\spaceStep} (\boundaryFunction{\labelWest}(\timeVariable, \yLabel)-\boundaryFunction{\labelWest}(\timeVariable, \yLabel + \spaceStep))\differential\yLabel \Bigr | \differential\timeVariable =\latticeVelocity  \int_{\timeGridPoint{\indexTime}}^{\timeGridPoint{\indexTime + 1}} \reduceSpaceDoubleInt\totalVariation{\tilde{\conservedVariable}_{\labelWest, \discreteMark_{\yLabel}}(\timeVariable, \cdot)}{\realsPositive}\differential\timeVariable,
    \end{align*}
    where we have defined 
    \begin{equation*}
        \tilde{\conservedVariable}_{\labelWest, \discreteMark_{\yLabel}}(\timeVariable, \yLabel)\definitionEquality \sum_{\indexSpace_{\yLabel}\in\naturals}\Bigl ( \dashint_{\indexSpace_{\yLabel}\spaceStep}^{(\indexSpace_{\yLabel}+1)\spaceStep}\boundaryFunction{\labelWest}(\timeVariable, \tilde{\yLabel})\differential\tilde{\yLabel} \Bigr )\indicatorFunction{[\indexSpace_{\yLabel}\spaceStep, (\indexSpace_{\yLabel}+1)\spaceStep)}(\yLabel).
    \end{equation*}
    Using \Cref{prop:totalVariationPiecewiseApprox}, we obtain that $\totalVariation{\tilde{\conservedVariable}_{\labelWest, \discreteMark_{\yLabel}}(\timeVariable, \cdot)}{\realsPositive}\leq \totalVariation{\boundaryFunction{\labelWest}(\timeVariable, \cdot)}{\realsPositive}$, in the almost-everywhere sense in $\timeVariable$, hence 
    \begin{equation}\label{eq:tmp7}
        \spaceStep \sum_{\indexSpace_{\yLabel}\in\naturals} |\boundaryDatumWest{\indexSpace_{\yLabel}}^{\indexTime}  - \boundaryDatumWest{\indexSpace_{\yLabel} + 1}^{\indexTime} |\leq \latticeVelocity  \int_{\timeGridPoint{\indexTime}}^{\timeGridPoint{\indexTime + 1}} \reduceSpaceDoubleInt\totalVariation{\boundaryFunction{\labelWest}(\timeVariable, \cdot)}{\realsPositive}\differential\timeVariable.
    \end{equation}
    Performing the same kind of procedure for the term concerning the southern boundary, we bound the right-hand side of \eqref{eq:estimateLackingLastTermNew} to gain 
    \begin{multline}\label{eq:estimateLackingLastTermNewNew}
        \totalVariation{\distributionFunctionsAsFunction^{\indexTime + 1}}{(\reals_+^{*})^2} \leq \totalVariation{\distributionFunctionsAsFunction^{\indexTime}}{(\reals_+^{*})^2} + \latticeVelocity  \int_{\timeGridPoint{\indexTime}}^{\timeGridPoint{\indexTime + 1}} \reduceSpaceDoubleInt\totalVariation{\boundaryFunction{\labelWest}(\timeVariable, \cdot)}{\realsPositive}\differential\timeVariable + \latticeVelocity  \int_{\timeGridPoint{\indexTime}}^{\timeGridPoint{\indexTime + 1}} \reduceSpaceDoubleInt\totalVariation{\boundaryFunction{\labelSouth}(\timeVariable, \cdot)}{\realsPositive}\differential\timeVariable \\
        + \spaceStep \Bigl ( \sum_{\indexSpace_{\yLabel}\in\naturals} |\distributionFunctionLetter_{\labPosX}^{\atEquilibrium}(\boundaryDatumWest{\indexSpace_{\yLabel}}^{\indexTime}) - \distributionFunction_{\labPosX, 0, \indexSpace_{\yLabel}}^{\indexTime, \collided}|  
        -\sum_{\indexSpace_{\yLabel}\in\naturals} |\distributionFunction_{\labNegX, 0, \indexSpace_{\yLabel}}^{\indexTime, \collided} - \distributionFunction_{\labNegX, 1, \indexSpace_{\yLabel}}^{\indexTime, \collided}| 
        +\sum_{\indexSpace_{\xLabel}\in\naturals} |\distributionFunctionLetter_{\labPosY}^{\atEquilibrium}(\boundaryDatumSouth{\indexSpace_{\xLabel}}^{\indexTime})- \distributionFunction_{\labPosY, \indexSpace_{\xLabel}, 0}^{\indexTime, \collided}| 
        -\sum_{\indexSpace_{\xLabel}\in\naturals} |\distributionFunction_{\labNegY, \indexSpace_{\xLabel}, 0}^{\indexTime, \collided} - \distributionFunction_{\labNegY, \indexSpace_{\xLabel}, 1}^{\indexTime, \collided}|  \Bigr ).
    \end{multline}
    We now have to handle the last four terms.
    We follow the idea of \cite[Lemma 3.2]{aregba2004kinetic}, extended in 2D (\emph{i.e.} summing on $\indexSpace_{\yLabel}\in\naturals$).
    Using \Cref{prop:contractivity}, we can write:
    \begin{multline}\label{eq:tmp8}
        \sum_{\indexVelocity\in\setVelIndexes} \sum_{\indexSpace_{\yLabel}\in\naturals}|\distributionFunction_{\indexVelocity, 0, \indexSpace_{\yLabel}}^{\indexTime + 1, \collided}  - \distributionFunctionLetter_{\indexVelocity}^{\atEquilibrium}(\boundaryDatumWest{\indexSpace_{\yLabel}}^{\indexTime})| = \sum_{\indexVelocity\in\setVelIndexes} \sum_{\indexSpace_{\yLabel}\in\naturals}|\collisionOperator_{\indexVelocity}(\vectorial{\distributionFunction}_{ 0, \indexSpace_{\yLabel}}^{\indexTime+1})  - \collisionOperator_{\indexVelocity}(\vectorial{\distributionFunctionLetter}^{\atEquilibrium}(\boundaryDatumWest{\indexSpace_{\yLabel}}^{\indexTime}))|
        \leq \sum_{\indexVelocity\in\setVelIndexes} \sum_{\indexSpace_{\yLabel}\in\naturals}|\distributionFunction_{\indexVelocity, 0, \indexSpace_{\yLabel}}^{\indexTime+1}  - \distributionFunctionLetter_{\indexVelocity}^{\atEquilibrium}(\boundaryDatumWest{\indexSpace_{\yLabel}}^{\indexTime})| \\
        = \sum_{\indexSpace_{\yLabel}\in\naturals}\Bigl ( |\distributionFunction_{\labZeroVel, 0, \indexSpace_{\yLabel}}^{\indexTime, \collided}  - \distributionFunctionLetter_{\labZeroVel}^{\atEquilibrium}(\boundaryDatumWest{\indexSpace_{\yLabel}}^{\indexTime})| + |\distributionFunction_{\labNegX, 1, \indexSpace_{\yLabel}}^{\indexTime, \collided}  - \distributionFunctionLetter_{\indexVelocity}^{\atEquilibrium}(\boundaryDatumWest{\indexSpace_{\yLabel}}^{\indexTime})| + |\distributionFunction_{\labPosY, 0, \indexSpace_{\yLabel}-1}^{\indexTime, \collided}  - \distributionFunctionLetter_{\labPosY}^{\atEquilibrium}(\boundaryDatumWest{\indexSpace_{\yLabel}}^{\indexTime})| + |\distributionFunction_{\labNegY, 0, \indexSpace_{\yLabel}+1}^{\indexTime, \collided}  - \distributionFunctionLetter_{\labNegY}^{\atEquilibrium}(\boundaryDatumWest{\indexSpace_{\yLabel}}^{\indexTime})|\Bigr ).
    \end{multline}
    Notice that in this expression, terms in $\distributionFunction_{\labPosX}$ lack thanks to the boundary condition.
    Using the boundary condition on the southern wall---indicated by $\labelSouth$---gives 
    \begin{multline*}
        \sum_{\indexVelocity\in\setVelIndexes} \sum_{\indexSpace_{\yLabel}\in\naturals}|\distributionFunction_{\indexVelocity, 0, \indexSpace_{\yLabel}}^{\indexTime+1, \collided}  - \distributionFunctionLetter_{\indexVelocity}^{\atEquilibrium}(\boundaryDatumWest{\indexSpace_{\yLabel}}^{\indexTime})|\leq | \distributionFunctionLetter_{\labPosY}^{\atEquilibrium}(\boundaryDatumWest{0}^{\indexTime}) - \distributionFunctionLetter_{\labPosY}^{\atEquilibrium}(\boundaryDatumSouth{0}^{\indexTime})| \\
        +\sum_{\indexSpace_{\yLabel}\in\naturals}\Bigl ( |\distributionFunction_{\labZeroVel, 0, \indexSpace_{\yLabel}}^{\indexTime, \collided}  - \distributionFunctionLetter_{\labZeroVel}^{\atEquilibrium}(\boundaryDatumWest{\indexSpace_{\yLabel}}^{\indexTime})| + |\distributionFunction_{\labNegX, 1, \indexSpace_{\yLabel}}^{\indexTime, \collided}  - \distributionFunctionLetter_{\indexVelocity}^{\atEquilibrium}(\boundaryDatumWest{\indexSpace_{\yLabel}}^{\indexTime})| + |\distributionFunction_{\labPosY, 0, \indexSpace_{\yLabel}}^{\indexTime, \collided}  - \distributionFunctionLetter_{\labPosY}^{\atEquilibrium}(\boundaryDatumWest{\indexSpace_{\yLabel} + 1}^{\indexTime})| + |\distributionFunction_{\labNegY, 0, \indexSpace_{\yLabel} + 1}^{\indexTime, \collided}  - \distributionFunctionLetter_{\labNegY}^{\atEquilibrium}(\boundaryDatumWest{\indexSpace_{\yLabel}}^{\indexTime})|\Bigr ).
    \end{multline*}
    We add and substract terms in $\distributionFunction_{\labNegX}$, $\distributionFunction_{\labPosY}$, and $\distributionFunction_{\labNegY}$, and use the triangle inequality, to yield
    \begin{multline*}
        \sum_{\indexVelocity\in\setVelIndexes} \sum_{\indexSpace_{\yLabel}\in\naturals}|\distributionFunction_{\indexVelocity, 0, \indexSpace_{\yLabel}}^{\indexTime + 1, \collided}  - \distributionFunctionLetter_{\indexVelocity}^{\atEquilibrium}(\boundaryDatumWest{\indexSpace_{\yLabel}}^{\indexTime})|\leq | \distributionFunctionLetter_{\labPosY}^{\atEquilibrium}(\boundaryDatumWest{0}^{\indexTime}) - \distributionFunctionLetter_{\labPosY}^{\atEquilibrium}(\boundaryDatumSouth{0}^{\indexTime})| +\sum_{\indexSpace_{\yLabel}\in\naturals} |\distributionFunction_{\labZeroVel, 0, \indexSpace_{\yLabel}}^{\indexTime, \collided}  - \distributionFunctionLetter_{\labZeroVel}^{\atEquilibrium}(\boundaryDatumWest{\indexSpace_{\yLabel}}^{\indexTime})|\\
        +\sum_{\indexSpace_{\yLabel}\in\naturals} (|\distributionFunction_{\labNegX, 0, \indexSpace_{\yLabel}}^{\indexTime, \collided} - \distributionFunction_{\labNegX, 1, \indexSpace_{\yLabel}}^{\indexTime, \collided}  | + |\distributionFunction_{\labNegX, 0, \indexSpace_{\yLabel}}^{\indexTime, \collided}  - \distributionFunctionLetter_{\indexVelocity}^{\atEquilibrium}(\boundaryDatumWest{\indexSpace_{\yLabel}}^{\indexTime}) | )
        +\sum_{\indexSpace_{\yLabel}\in\naturals}\Bigl ( |\distributionFunction_{\labPosY, 0, \indexSpace_{\yLabel}}^{\indexTime, \collided}  - \distributionFunctionLetter_{\labPosY}^{\atEquilibrium}(\boundaryDatumWest{\indexSpace_{\yLabel}}^{\indexTime})| + |\distributionFunctionLetter_{\labPosY}^{\atEquilibrium}(\boundaryDatumWest{\indexSpace_{\yLabel}}^{\indexTime}) - \distributionFunctionLetter_{\labPosY}^{\atEquilibrium}(\boundaryDatumWest{\indexSpace_{\yLabel} + 1}^{\indexTime})|  \Bigr ) \\
        +\sum_{\indexSpace_{\yLabel}\in\naturals}\Bigl (|\distributionFunction_{\labNegY, 0, \indexSpace_{\yLabel} + 1}^{\indexTime, \collided}  - \distributionFunctionLetter_{\labNegY}^{\atEquilibrium}(\boundaryDatumWest{\indexSpace_{\yLabel} + 1}^{\indexTime})| + | \distributionFunctionLetter_{\labNegY}^{\atEquilibrium}(\boundaryDatumWest{\indexSpace_{\yLabel}}^{\indexTime}) - \distributionFunctionLetter_{\labNegY}^{\atEquilibrium}(\boundaryDatumWest{\indexSpace_{\yLabel}+1}^{\indexTime})|\Bigr ).
    \end{multline*}
    We add $|\distributionFunction_{\labNegY, 0, 0}^{\indexTime, \collided}  - \distributionFunctionLetter_{\labNegY}^{\atEquilibrium}(\boundaryDatumWest{0}^{\indexTime})|$ to the right-hand side of the previous inequality, thus:
    \begin{multline*}
        \sum_{\indexVelocity\in\setVelIndexes} \sum_{\indexSpace_{\yLabel}\in\naturals}|\distributionFunction_{\indexVelocity, 0, \indexSpace_{\yLabel}}^{\indexTime + 1, \collided}  - \distributionFunctionLetter_{\indexVelocity}^{\atEquilibrium}(\boundaryDatumWest{\indexSpace_{\yLabel}}^{\indexTime})|\leq | \distributionFunctionLetter_{\labPosY}^{\atEquilibrium}(\boundaryDatumWest{0}^{\indexTime}) - \distributionFunctionLetter_{\labPosY}^{\atEquilibrium}(\boundaryDatumSouth{0}^{\indexTime})| +\sum_{\indexSpace_{\yLabel}\in\naturals} |\distributionFunction_{\labZeroVel, 0, \indexSpace_{\yLabel}}^{\indexTime, \collided}  - \distributionFunctionLetter_{\labZeroVel}^{\atEquilibrium}(\boundaryDatumWest{\indexSpace_{\yLabel}}^{\indexTime})|\\
        +\sum_{\indexSpace_{\yLabel}\in\naturals} (|\distributionFunction_{\labNegX, 0, \indexSpace_{\yLabel}}^{\indexTime, \collided} - \distributionFunction_{\labNegX, 1, \indexSpace_{\yLabel}}^{\indexTime, \collided}  | + |\distributionFunction_{\labNegX, 0, \indexSpace_{\yLabel}}^{\indexTime, \collided}  - \distributionFunctionLetter_{\indexVelocity}^{\atEquilibrium}(\boundaryDatumWest{\indexSpace_{\yLabel}}^{\indexTime}) | )
        +\sum_{\indexSpace_{\yLabel}\in\naturals}\Bigl ( |\distributionFunction_{\labPosY, 0, \indexSpace_{\yLabel}}^{\indexTime, \collided}  - \distributionFunctionLetter_{\labPosY}^{\atEquilibrium}(\boundaryDatumWest{\indexSpace_{\yLabel}}^{\indexTime})| + |\distributionFunctionLetter_{\labPosY}^{\atEquilibrium}(\boundaryDatumWest{\indexSpace_{\yLabel}}^{\indexTime}) - \distributionFunctionLetter_{\labPosY}^{\atEquilibrium}(\boundaryDatumWest{\indexSpace_{\yLabel} + 1}^{\indexTime})|  \Bigr ) \\
        +\sum_{\indexSpace_{\yLabel}\in\naturals}|\distributionFunction_{\labNegY, 0, \indexSpace_{\yLabel}}^{\indexTime, \collided}  - \distributionFunctionLetter_{\labNegY}^{\atEquilibrium}(\boundaryDatumWest{\indexSpace_{\yLabel}}^{\indexTime})| +\sum_{\indexSpace_{\yLabel}\in\naturals} | \distributionFunctionLetter_{\labNegY}^{\atEquilibrium}(\boundaryDatumWest{\indexSpace_{\yLabel}}^{\indexTime}) - \distributionFunctionLetter_{\labNegY}^{\atEquilibrium}(\boundaryDatumWest{\indexSpace_{\yLabel}+1}^{\indexTime})|.
    \end{multline*}
    Writing things differently, we obtain
    \begin{multline*}
        \sum_{\indexVelocity\in\setVelIndexes} \sum_{\indexSpace_{\yLabel}\in\naturals}|\distributionFunction_{\indexVelocity, 0, \indexSpace_{\yLabel}}^{\indexTime + 1, \collided}  - \distributionFunctionLetter_{\indexVelocity}^{\atEquilibrium}(\boundaryDatumWest{\indexSpace_{\yLabel}}^{\indexTime})|\leq | \distributionFunctionLetter_{\labPosY}^{\atEquilibrium}(\boundaryDatumWest{0}^{\indexTime}) - \distributionFunctionLetter_{\labPosY}^{\atEquilibrium}(\boundaryDatumSouth{0}^{\indexTime})| +\sum_{\indexVelocity\in\setVelIndexes}\sum_{\indexSpace_{\yLabel}\in\naturals} |\distributionFunction_{\indexVelocity, 0, \indexSpace_{\yLabel}}^{\indexTime, \collided}  - \distributionFunctionLetter_{\indexVelocity}^{\atEquilibrium}(\boundaryDatumWest{\indexSpace_{\yLabel}}^{\indexTime})| + \sum_{\indexSpace_{\yLabel}\in\naturals} |\distributionFunction_{\labNegX, 0, \indexSpace_{\yLabel}}^{\indexTime, \collided} - \distributionFunction_{\labNegX, 1, \indexSpace_{\yLabel}}^{\indexTime, \collided}  | \\
        - \sum_{\indexSpace_{\yLabel}\in\naturals} |\distributionFunction_{\labPosX, 0, \indexSpace_{\yLabel}}^{\indexTime, \collided}  - \distributionFunctionLetter_{\labPosX}^{\atEquilibrium}(\boundaryDatumWest{\indexSpace_{\yLabel}}^{\indexTime})|
        +\sum_{\indexSpace_{\yLabel}\in\naturals} |\distributionFunctionLetter_{\labPosY}^{\atEquilibrium}(\boundaryDatumWest{\indexSpace_{\yLabel}}^{\indexTime}) - \distributionFunctionLetter_{\labPosY}^{\atEquilibrium}(\boundaryDatumWest{\indexSpace_{\yLabel} + 1}^{\indexTime})|  
        +\sum_{\indexSpace_{\yLabel}\in\naturals} | \distributionFunctionLetter_{\labNegY}^{\atEquilibrium}(\boundaryDatumWest{\indexSpace_{\yLabel}}^{\indexTime}) - \distributionFunctionLetter_{\labNegY}^{\atEquilibrium}(\boundaryDatumWest{\indexSpace_{\yLabel}+1}^{\indexTime})|.
    \end{multline*}
    By the usual $\flat$-procedure, we have 
    \begin{multline}\label{eq:tmp1}
        \sum_{\indexVelocity\in\setVelIndexes} \sum_{\indexSpace_{\yLabel}\in\naturals}|\distributionFunction_{\indexVelocity, 0, \indexSpace_{\yLabel}}^{\indexTime + 1, \collided}  - \distributionFunctionLetter_{\indexVelocity}^{\atEquilibrium}(\boundaryDatumWest{\indexSpace_{\yLabel}}^{\indexTime})|\leq | \boundaryDatumWest{0}^{\indexTime} - \boundaryDatumSouth{0}^{\indexTime}|  + \sum_{\indexVelocity\in\setVelIndexes}\sum_{\indexSpace_{\yLabel}\in\naturals} |\distributionFunction_{\indexVelocity, 0, \indexSpace_{\yLabel}}^{\indexTime, \collided}  - \distributionFunctionLetter_{\indexVelocity}^{\atEquilibrium}(\boundaryDatumWest{\indexSpace_{\yLabel}}^{\indexTime})| + \sum_{\indexSpace_{\yLabel}\in\naturals} |\distributionFunction_{\labNegX, 0, \indexSpace_{\yLabel}}^{\indexTime, \collided} - \distributionFunction_{\labNegX, 1, \indexSpace_{\yLabel}}^{\indexTime, \collided}  | \\
        - \sum_{\indexSpace_{\yLabel}\in\naturals} |\distributionFunction_{\labPosX, 0, \indexSpace_{\yLabel}}^{\indexTime, \collided}  - \distributionFunctionLetter_{\labPosX}^{\atEquilibrium}(\boundaryDatumWest{\indexSpace_{\yLabel}}^{\indexTime})|
        +
        \frac{1}{\timeStep}  \int_{\timeGridPoint{\indexTime}}^{\timeGridPoint{\indexTime + 1}} \reduceSpaceDoubleInt\totalVariation{\boundaryFunction{\labelWest}(\timeVariable, \cdot)}{\realsPositive}\differential\timeVariable.
    \end{multline}
    Adding and subtracting $\distributionFunctionLetter_{\indexVelocity}^{\atEquilibrium}(\boundaryDatumWest{\indexSpace_{\yLabel}}^{\indexTime-1})$ in the second term of the right-hand side, using a triangle inequality, the monotonicity of equilibria, and \Cref{prop:contractivity}, we gain:
    \begin{multline}\label{eq:keyIneqX}
        \sum_{\indexVelocity\in\setVelIndexes} \sum_{\indexSpace_{\yLabel}\in\naturals}|\distributionFunction_{\indexVelocity, 0, \indexSpace_{\yLabel}}^{\indexTime+1, \collided}  - \distributionFunctionLetter_{\indexVelocity}^{\atEquilibrium}(\boundaryDatumWest{\indexSpace_{\yLabel}}^{\indexTime})|\leq | \boundaryDatumWest{0}^{\indexTime} - \boundaryDatumSouth{0}^{\indexTime}|  + \sum_{\indexVelocity\in\setVelIndexes}\sum_{\indexSpace_{\yLabel}\in\naturals} |\distributionFunction_{\indexVelocity, 0, \indexSpace_{\yLabel}}^{\indexTime, \collided}  - \distributionFunctionLetter_{\indexVelocity}^{\atEquilibrium}(\boundaryDatumWest{\indexSpace_{\yLabel}}^{\indexTime-1})| + \sum_{\indexSpace_{\yLabel}\in\naturals} |\boundaryDatumWest{\indexSpace_{\yLabel}}^{\indexTime} - \boundaryDatumWest{\indexSpace_{\yLabel}}^{\indexTime-1}|\\
        + \sum_{\indexSpace_{\yLabel}\in\naturals} |\distributionFunction_{\labNegX, 0, \indexSpace_{\yLabel}}^{\indexTime, \collided} - \distributionFunction_{\labNegX, 1, \indexSpace_{\yLabel}}^{\indexTime, \collided}  | 
        - \sum_{\indexSpace_{\yLabel}\in\naturals} |\distributionFunction_{\labPosX, 0, \indexSpace_{\yLabel}}^{\indexTime, \collided}  - \distributionFunctionLetter_{\labPosX}^{\atEquilibrium}(\boundaryDatumWest{\indexSpace_{\yLabel}}^{\indexTime})|
        +
        \frac{1}{\timeStep}  \int_{\timeGridPoint{\indexTime}}^{\timeGridPoint{\indexTime + 1}} \reduceSpaceDoubleInt\totalVariation{\boundaryFunction{\labelWest}(\timeVariable, \cdot)}{\realsPositive}\differential\timeVariable.
    \end{multline}
    Using the same techniques (i.e. switching the role of $x$ and $y$), we obtain the inequality on the southern boundary:
    \begin{multline}\label{eq:keyIneqY}
        \sum_{\indexVelocity\in\setVelIndexes} \sum_{\indexSpace_{\xLabel}\in\naturals}|\distributionFunction_{\indexVelocity, \indexSpace_{\xLabel}, 0}^{\indexTime +1, \collided}  - \distributionFunctionLetter_{\indexVelocity}^{\atEquilibrium}(\boundaryDatumSouth{\indexSpace_{\xLabel}}^{\indexTime})|\leq | \boundaryDatumWest{0}^{\indexTime} - \boundaryDatumSouth{0}^{\indexTime}| + \sum_{\indexVelocity\in\setVelIndexes}\sum_{\indexSpace_{\xLabel}\in\naturals} |\distributionFunction_{\indexVelocity, \indexSpace_{\xLabel}, 0}^{\indexTime, \collided}  - \distributionFunctionLetter_{\indexVelocity}^{\atEquilibrium}(\boundaryDatumSouth{\indexSpace_{\xLabel}}^{\indexTime-1})| + \sum_{\indexSpace_{\xLabel}\in\naturals} |\boundaryDatumSouth{\indexSpace_{\xLabel}}^{\indexTime} - \boundaryDatumSouth{\indexSpace_{\xLabel}}^{\indexTime-1}|\\
        + \sum_{\indexSpace_{\xLabel}\in\naturals} |\distributionFunction_{\labNegY, \indexSpace_{\xLabel}, 0}^{\indexTime, \collided} - \distributionFunction_{\labNegY, \indexSpace_{\xLabel}, 1}^{\indexTime, \collided}  | - \sum_{\indexSpace_{\xLabel}\in\naturals} |\distributionFunction_{\labPosY, \indexSpace_{\xLabel}, 0}^{\indexTime, \collided}  - \distributionFunctionLetter_{\labPosY}^{\atEquilibrium}(\boundaryDatumSouth{\indexSpace_{\xLabel}}^{\indexTime})| 
        +
        \frac{1}{\timeStep}  \int_{\timeGridPoint{\indexTime}}^{\timeGridPoint{\indexTime + 1}} \reduceSpaceDoubleInt\totalVariation{\boundaryFunction{\labelSouth}(\timeVariable, \cdot)}{\realsPositive}\differential\timeVariable.
    \end{multline}
    Summing \eqref{eq:keyIneqX} and \eqref{eq:keyIneqY} gives an upper bound for the second line of \eqref{eq:estimateLackingLastTermNewNew}:
    \begin{align*}
        \sum_{\indexSpace_{\yLabel}\in\naturals} |\distributionFunctionLetter_{\labPosX}^{\atEquilibrium}(\boundaryDatumWest{\indexSpace_{\yLabel}}^{\indexTime}) &- \distributionFunction_{\labPosX, 0, \indexSpace_{\yLabel}}^{\indexTime, \collided}|  
        -\sum_{\indexSpace_{\yLabel}\in\naturals} |\distributionFunction_{\labNegX, 0, \indexSpace_{\yLabel}}^{\indexTime, \collided} - \distributionFunction_{\labNegX, 1, \indexSpace_{\yLabel}}^{\indexTime, \collided}| 
        +\sum_{\indexSpace_{\xLabel}\in\naturals} |\distributionFunctionLetter_{\labPosY}^{\atEquilibrium}(\boundaryDatumSouth{\indexSpace_{\xLabel}}^{\indexTime})- \distributionFunction_{\labPosY, \indexSpace_{\xLabel}, 0}^{\indexTime, \collided}| 
        -\sum_{\indexSpace_{\xLabel}\in\naturals} |\distributionFunction_{\labNegY, \indexSpace_{\xLabel}, 0}^{\indexTime, \collided} - \distributionFunction_{\labNegY, \indexSpace_{\xLabel}, 1}^{\indexTime, \collided}|\\
        \leq &-\sum_{\indexVelocity\in\setVelIndexes} \sum_{\indexSpace_{\yLabel}\in\naturals}|\distributionFunction_{\indexVelocity, 0, \indexSpace_{\yLabel}}^{\indexTime + 1, \collided}  - \distributionFunctionLetter_{\indexVelocity}^{\atEquilibrium}(\boundaryDatumWest{\indexSpace_{\yLabel}}^{\indexTime})| - \sum_{\indexVelocity\in\setVelIndexes} \sum_{\indexSpace_{\xLabel}\in\naturals}|\distributionFunction_{\indexVelocity, \indexSpace_{\xLabel}, 0}^{\indexTime + 1, \collided}  - \distributionFunctionLetter_{\indexVelocity}^{\atEquilibrium}(\boundaryDatumSouth{\indexSpace_{\xLabel}}^{\indexTime})|\\
        &+\sum_{\indexVelocity\in\setVelIndexes}\sum_{\indexSpace_{\yLabel}\in\naturals} |\distributionFunction_{\indexVelocity, 0, \indexSpace_{\yLabel}}^{\indexTime, \collided}  - \distributionFunctionLetter_{\indexVelocity}^{\atEquilibrium}(\boundaryDatumWest{\indexSpace_{\yLabel}}^{\indexTime-1})| +\sum_{\indexVelocity\in\setVelIndexes}\sum_{\indexSpace_{\xLabel}\in\naturals} |\distributionFunction_{\indexVelocity, \indexSpace_{\xLabel}, 0}^{\indexTime, \collided}  - \distributionFunctionLetter_{\indexVelocity}^{\atEquilibrium}(\boundaryDatumSouth{\indexSpace_{\xLabel}}^{\indexTime-1})|\\
        &+ \sum_{\indexSpace_{\yLabel}\in\naturals} |\boundaryDatumWest{\indexSpace_{\yLabel}}^{\indexTime} - \boundaryDatumWest{\indexSpace_{\yLabel}}^{\indexTime-1}| + \sum_{\indexSpace_{\xLabel}\in\naturals} |\boundaryDatumSouth{\indexSpace_{\xLabel}}^{\indexTime} - \boundaryDatumSouth{\indexSpace_{\xLabel}}^{\indexTime-1}|\\
        &+2| \boundaryDatumWest{0}^{\indexTime} - \boundaryDatumSouth{0}^{\indexTime}| 
        +
        \frac{1}{\timeStep}  \int_{\timeGridPoint{\indexTime}}^{\timeGridPoint{\indexTime + 1}} \reduceSpaceDoubleInt\totalVariation{\boundaryFunction{\labelWest}(\timeVariable, \cdot)}{\realsPositive}\differential\timeVariable
        +
        \frac{1}{\timeStep}  \int_{\timeGridPoint{\indexTime}}^{\timeGridPoint{\indexTime + 1}} \reduceSpaceDoubleInt\totalVariation{\boundaryFunction{\labelSouth}(\timeVariable, \cdot)}{\realsPositive}\differential\timeVariable.
    \end{align*}
    Into \eqref{eq:estimateLackingLastTermNewNew}, upon rearranging
    \begin{multline*}
        \overbrace{\totalVariation{\distributionFunctionsAsFunction^{\indexTime+1}}{(\reals_+^{*})^2} +\spaceStep\Bigl( \sum_{\indexVelocity\in\setVelIndexes} \sum_{\indexSpace_{\yLabel}\in\naturals}|\distributionFunction_{\indexVelocity, 0, \indexSpace_{\yLabel}}^{\indexTime + 1, \collided}  - \distributionFunctionLetter_{\indexVelocity}^{\atEquilibrium}(\boundaryDatumWest{\indexSpace_{\yLabel}}^{\indexTime})| + \sum_{\indexVelocity\in\setVelIndexes} \sum_{\indexSpace_{\xLabel}\in\naturals}|\distributionFunction_{\indexVelocity, \indexSpace_{\xLabel}, 0}^{\indexTime + 1, \collided}  - \distributionFunctionLetter_{\indexVelocity}^{\atEquilibrium}(\boundaryDatumSouth{\indexSpace_{\xLabel}}^{\indexTime})| \Bigr )}^{=:a^{\indexTime}} \\
        \leq \underbrace{\totalVariation{\distributionFunctionsAsFunction^{\indexTime}}{(\reals_+^{*})^2} +\spaceStep \Bigl ( \sum_{\indexVelocity\in\setVelIndexes}\sum_{\indexSpace_{\yLabel}\in\naturals} |\distributionFunction_{\indexVelocity, 0, \indexSpace_{\yLabel}}^{\indexTime, \collided}  - \distributionFunctionLetter_{\indexVelocity}^{\atEquilibrium}(\boundaryDatumWest{\indexSpace_{\yLabel}}^{\indexTime-1})| +\sum_{\indexVelocity\in\setVelIndexes}\sum_{\indexSpace_{\xLabel}\in\naturals} |\distributionFunction_{\indexVelocity, \indexSpace_{\xLabel}, 0}^{\indexTime, \collided}  - \distributionFunctionLetter_{\indexVelocity}^{\atEquilibrium}(\boundaryDatumSouth{\indexSpace_{\xLabel}}^{\indexTime-1})| \Bigr )}_{=:a^{\indexTime-1}} \\
        +\underbrace{\spaceStep\Bigl ( 
        \sum_{\indexSpace_{\yLabel}\in\naturals} |\boundaryDatumWest{\indexSpace_{\yLabel}}^{\indexTime} - \boundaryDatumWest{\indexSpace_{\yLabel}}^{\indexTime-1}| + \sum_{\indexSpace_{\xLabel}\in\naturals} |\boundaryDatumSouth{\indexSpace_{\xLabel}}^{\indexTime} - \boundaryDatumSouth{\indexSpace_{\xLabel}}^{\indexTime-1}| 
        +2 | \boundaryDatumWest{0}^{\indexTime} - \boundaryDatumSouth{0}^{\indexTime}| \Bigr )
        +
        2\latticeVelocity\int_{\timeGridPoint{\indexTime}}^{\timeGridPoint{\indexTime + 1}} \reduceSpaceDoubleInt\totalVariation{\boundaryFunction{\labelWest}(\timeVariable, \cdot)}{\realsPositive}\differential\timeVariable
        +
        2\latticeVelocity\int_{\timeGridPoint{\indexTime}}^{\timeGridPoint{\indexTime + 1}} \reduceSpaceDoubleInt\totalVariation{\boundaryFunction{\labelSouth}(\timeVariable, \cdot)}{\realsPositive}\differential\timeVariable
        }_{=:b^{\indexTime-1}}.
    \end{multline*}
    We thus have the recurrence $a^{\indexTime} \leq a^{\indexTime-1} + b^{\indexTime-1}$ for $\indexTime\geq 1$, hence by iteration $a^{\indexTime} \leq a^0 + \sum_{p=0}^{\indexTime-1}b^p$.
    We have the bound  
    \begin{multline*}
        \sum_{p=0}^{\indexTime-1}b^p\leq \spaceStep\sum_{p = 1}^{\numberTimeSteps}\sum_{\indexSpace_{\yLabel}\in\naturals} |\boundaryDatumWest{\indexSpace_{\yLabel}}^{p} - \boundaryDatumWest{\indexSpace_{\yLabel}}^{p-1}| + \spaceStep\sum_{p = 1}^{\numberTimeSteps}\sum_{\indexSpace_{\xLabel}\in\naturals} |\boundaryDatumSouth{\indexSpace_{\xLabel}}^{p} - \boundaryDatumSouth{\indexSpace_{\xLabel}}^{p-1}|\\
        +2\latticeVelocity (\finalTime-\timeStep)  \maximumInitialDatum
        +2\latticeVelocity\int_{\timeStep}^{\finalTime} \reduceSpaceDoubleInt\totalVariation{\boundaryFunction{\labelWest}(\timeVariable, \cdot)}{\realsPositive}\differential\timeVariable
        +2\latticeVelocity
        \int_{\timeStep}^{\finalTime} \reduceSpaceDoubleInt\totalVariation{\boundaryFunction{\labelSouth}(\timeVariable, \cdot)}{\realsPositive}\differential\timeVariable.
    \end{multline*}
    The first and the second term on the right-hand side are treated using \eqref{eq:tmp9}, so that we obtain
    \begin{multline*}
        \sum_{p=0}^{\indexTime-1}b^p\leq \totalVariation{\boundaryFunction{\labelWest}}{(0, \finalTime)\times \realsPositive} + \totalVariation{\boundaryFunction{\labelSouth}}{(0, \finalTime)\times \realsPositive} 
        +2\latticeVelocity (\finalTime-\timeStep)  \maximumInitialDatum\\
        +2\latticeVelocity\int_{\timeStep}^{\finalTime} \reduceSpaceDoubleInt\totalVariation{\boundaryFunction{\labelWest}(\timeVariable, \cdot)}{\realsPositive}\differential\timeVariable
        +2\latticeVelocity
        \int_{\timeStep}^{\finalTime} \reduceSpaceDoubleInt\totalVariation{\boundaryFunction{\labelSouth}(\timeVariable, \cdot)}{\realsPositive}\differential\timeVariable.
    \end{multline*}

    We now work on a bound for $a^0$:
    \begin{equation}\label{eq:tmp6}
        a^0 = \totalVariation{\distributionFunctionsAsFunction^{1}}{(\reals_+^{*})^2} +\spaceStep\Bigl( \sum_{\indexVelocity\in\setVelIndexes} \sum_{\indexSpace_{\yLabel}\in\naturals}|\distributionFunction_{\indexVelocity, 0, \indexSpace_{\yLabel}}^{1, \collided}  - \distributionFunctionLetter_{\indexVelocity}^{\atEquilibrium}(\boundaryDatumWest{\indexSpace_{\yLabel}}^{0})| + \sum_{\indexVelocity\in\setVelIndexes} \sum_{\indexSpace_{\xLabel}\in\naturals}|\distributionFunction_{\indexVelocity, \indexSpace_{\xLabel}, 0}^{1, \collided}  - \distributionFunctionLetter_{\indexVelocity}^{\atEquilibrium}(\boundaryDatumSouth{\indexSpace_{\xLabel}}^{0})| \Bigr ).
    \end{equation}
    Using \eqref{eq:estimateLackingLastTermNewNew} at the initial time, bounding negative terms on the right-hand side by zero, and the fact of initializing at equilibrium:
    \begin{multline}
        \totalVariation{\distributionFunctionsAsFunction^{1}}{(\reals_+^{*})^2} \leq \totalVariation{\distributionFunctionsAsFunction^{0}}{(\reals_+^{*})^2} +\latticeVelocity  \int_{0}^{\timeStep} \reduceSpaceDoubleInt\totalVariation{\boundaryFunction{\labelWest}(\timeVariable, \cdot)}{\realsPositive}\differential\timeVariable + \latticeVelocity  \int_{0}^{\timeStep} \reduceSpaceDoubleInt\totalVariation{\boundaryFunction{\labelSouth}(\timeVariable, \cdot)}{\realsPositive}\differential\timeVariable \\
        + \spaceStep \Bigl ( \sum_{\indexSpace_{\yLabel}\in\naturals} |\distributionFunctionLetter_{\labPosX}^{\atEquilibrium}(\boundaryDatumWest{\indexSpace_{\yLabel}}^{0}) - \distributionFunctionLetter_{\labPosX}^{\atEquilibrium}(\conservedVariableDiscrete^0_{0, \indexSpace_{\yLabel}})|   
        +\sum_{\indexSpace_{\xLabel}\in\naturals} |\distributionFunctionLetter_{\labPosY}^{\atEquilibrium}(\boundaryDatumSouth{\indexSpace_{\xLabel}}^{0})- \distributionFunctionLetter_{\labPosY}^{\atEquilibrium}(\conservedVariableDiscrete_{\indexSpace_{\xLabel}, 0}^{0})| 
        \Bigr )\\
        \leq \totalVariation{\conservedVariable^{\initial}}{(\reals_+^{*})^2} + \latticeVelocity  \int_{0}^{\timeStep} \reduceSpaceDoubleInt\totalVariation{\boundaryFunction{\labelWest}(\timeVariable, \cdot)}{\realsPositive}\differential\timeVariable + \latticeVelocity  \int_{0}^{\timeStep} \reduceSpaceDoubleInt\totalVariation{\boundaryFunction{\labelSouth}(\timeVariable, \cdot)}{\realsPositive}\differential\timeVariable  + \spaceStep\sum_{\indexSpace_{\yLabel}\in\naturals} |\boundaryDatumWest{\indexSpace_{\yLabel}}^{0} - \conservedVariableDiscrete^0_{0, \indexSpace_{\yLabel}}| + \spaceStep\sum_{\indexSpace_{\xLabel}\in\naturals} |\boundaryDatumSouth{\indexSpace_{\xLabel}}^{0}-\conservedVariableDiscrete_{\indexSpace_{\xLabel}, 0}^{0}|,
    \end{multline}
    where the last inequality has been obtained through the $\flat$-procedure.

    For the last two terms in \eqref{eq:tmp6} (we take the first one for the sake of presentation), we use \eqref{eq:tmp1} with $\indexTime = 0$, and bound negative terms by zero, to yield
    \begin{multline*}
        \spaceStep\sum_{\indexVelocity\in\setVelIndexes} \sum_{\indexSpace_{\yLabel}\in\naturals}|\distributionFunction_{\indexVelocity, 0, \indexSpace_{\yLabel}}^{1, \collided}  - \distributionFunctionLetter_{\indexVelocity}^{\atEquilibrium}(\boundaryDatumWest{\indexSpace_{\yLabel}}^{0})|\leq \spaceStep| \boundaryDatumWest{0}^{0} - \boundaryDatumSouth{0}^{0}|  + \spaceStep\sum_{\indexVelocity\in\setVelIndexes}\sum_{\indexSpace_{\yLabel}\in\naturals} |\distributionFunction_{\indexVelocity, 0, \indexSpace_{\yLabel}}^{0, \collided}  - \distributionFunctionLetter_{\indexVelocity}^{\atEquilibrium}(\boundaryDatumWest{\indexSpace_{\yLabel}}^{0})| + \spaceStep \sum_{\indexSpace_{\yLabel}\in\naturals} |\distributionFunction_{\labNegX, 0, \indexSpace_{\yLabel}}^{0, \collided} - \distributionFunction_{\labNegX, 1, \indexSpace_{\yLabel}}^{0, \collided}  | \\
        +
       \latticeVelocity \int_{0}^{\timeStep} \reduceSpaceDoubleInt\totalVariation{\boundaryFunction{\labelWest}(\timeVariable, \cdot)}{\realsPositive}\differential\timeVariable.
    \end{multline*}
    Using the fact that we start at equilibrium, using the $\flat$-procedure on the penultimate term gives
    \begin{align*}
        \spaceStep \sum_{\indexVelocity\in\setVelIndexes} \sum_{\indexSpace_{\yLabel}\in\naturals}|\distributionFunction_{\indexVelocity, 0, \indexSpace_{\yLabel}}^{1, \collided}  - \distributionFunctionLetter_{\indexVelocity}^{\atEquilibrium}(\boundaryDatumWest{\indexSpace_{\yLabel}}^{0})| \leq \spaceStep| \boundaryDatumWest{0}^{0} - \boundaryDatumSouth{0}^{0}|  + \spaceStep \sum_{\indexSpace_{\yLabel}\in\naturals} |\conservedVariableDiscrete_{0, \indexSpace_{\yLabel}}^{0} - \boundaryDatumWest{\indexSpace_{\yLabel}}^{0}| + \spaceStep \sum_{\indexSpace_{\yLabel}\in\naturals} |\conservedVariableDiscrete_{0, \indexSpace_{\yLabel}}^{0} - \conservedVariableDiscrete_{1, \indexSpace_{\yLabel}}^{0}|  \\
        + \latticeVelocity \int_{0}^{\timeStep} \reduceSpaceDoubleInt\totalVariation{\boundaryFunction{\labelWest}(\timeVariable, \cdot)}{\realsPositive}\differential\timeVariable.
    \end{align*}
    Bounding corner terms by $\maximumInitialDatum$, we have 
    \begin{equation*}
        \spaceStep \sum_{\indexVelocity\in\setVelIndexes} \sum_{\indexSpace_{\yLabel}\in\naturals}|\distributionFunction_{\indexVelocity, 0, \indexSpace_{\yLabel}}^{1, \collided}  - \distributionFunctionLetter_{\indexVelocity}^{\atEquilibrium}(\boundaryDatumWest{\indexSpace_{\yLabel}}^{0})| \leq \spaceStep \sum_{\indexSpace_{\yLabel}\in\naturals} |\conservedVariableDiscrete_{0, \indexSpace_{\yLabel}}^{0} - \boundaryDatumWest{\indexSpace_{\yLabel}}^{0}| + \spaceStep \sum_{\indexSpace_{\yLabel}\in\naturals} |\conservedVariableDiscrete_{0, \indexSpace_{\yLabel}}^{0} - \conservedVariableDiscrete_{1, \indexSpace_{\yLabel}}^{0}|  + \latticeVelocity  \int_{0}^{\timeStep} \reduceSpaceDoubleInt\totalVariation{\boundaryFunction{\labelWest}(\timeVariable, \cdot)}{\realsPositive}\differential\timeVariable 
        + 2\spaceStep\maximumInitialDatum.
    \end{equation*}
    The first two terms on the right-hand side can be controlled by \eqref{eq:tmp11}, thus we obtain 
    \begin{multline*}
        \spaceStep \sum_{\indexVelocity\in\setVelIndexes} \sum_{\indexSpace_{\yLabel}\in\naturals}|\distributionFunction_{\indexVelocity, 0, \indexSpace_{\yLabel}}^{1, \collided}  - \distributionFunctionLetter_{\indexVelocity}^{\atEquilibrium}(\boundaryDatumWest{\indexSpace_{\yLabel}}^{0})| \leq 3\sup_{\xLabel>0}\lVert\conservedVariable^{\initial}(\xLabel, \cdot)\rVert_{\lebesgueSpace{1}(\realsPositive)}  
        + \sup_{\timeVariable\in(0, \finalTime)}\lVert\boundaryFunction{\labelWest}(\timeVariable, \cdot)\rVert_{\lebesgueSpace{1}(\realsPositive)} \\
        + \latticeVelocity  \int_{0}^{\timeStep} \reduceSpaceDoubleInt\totalVariation{\boundaryFunction{\labelWest}(\timeVariable, \cdot)}{\realsPositive}\differential\timeVariable 
        + 2\spaceStep\maximumInitialDatum.
    \end{multline*}

    Putting all these facts into \eqref{eq:tmp6}, we have 
    \begin{multline*}
        a^0 \leq \totalVariation{\conservedVariable^{\initial}}{(\reals_+^{*})^2} + 2\latticeVelocity  \int_{0}^{\timeStep} \reduceSpaceDoubleInt\totalVariation{\boundaryFunction{\labelWest}(\timeVariable, \cdot)}{\realsPositive}\differential\timeVariable + 2\latticeVelocity  \int_{0}^{\timeStep} \reduceSpaceDoubleInt\totalVariation{\boundaryFunction{\labelSouth}(\timeVariable, \cdot)}{\realsPositive}\differential\timeVariable  \\
        +3\sup_{\xLabel>0}\lVert\conservedVariable^{\initial}(\xLabel, \cdot)\rVert_{\lebesgueSpace{1}(\realsPositive)} +3\sup_{\yLabel>0}\lVert\conservedVariable^{\initial}(\cdot, \yLabel)\rVert_{\lebesgueSpace{1}(\realsPositive)} 
        + \sup_{\timeVariable\in(0, \finalTime)}\lVert\boundaryFunction{\labelWest}(\timeVariable, \cdot)\rVert_{\lebesgueSpace{1}(\realsPositive)} + \sup_{\timeVariable\in(0, \finalTime)}\lVert\boundaryFunction{\labelSouth}(\timeVariable, \cdot)\rVert_{\lebesgueSpace{1}(\realsPositive)} 
        + 4\spaceStep\maximumInitialDatum\\
        + \spaceStep\sum_{\indexSpace_{\yLabel}\in\naturals} |\boundaryDatumWest{\indexSpace_{\yLabel}}^{0} - \conservedVariableDiscrete^0_{0, \indexSpace_{\yLabel}}| + \spaceStep\sum_{\indexSpace_{\xLabel}\in\naturals} |\boundaryDatumSouth{\indexSpace_{\xLabel}}^{0}-\conservedVariableDiscrete_{\indexSpace_{\xLabel}, 0}^{0}|.
    \end{multline*}
    The last two terms on the right-hand side by \eqref{eq:tmp11}, we end up with
    \begin{multline*}
        a^0 \leq 2\totalVariation{\conservedVariable^{\initial}}{(\reals_+^{*})^2} + 2\latticeVelocity  \int_{0}^{\timeStep} \reduceSpaceDoubleInt\totalVariation{\boundaryFunction{\labelWest}(\timeVariable, \cdot)}{\realsPositive}\differential\timeVariable + 2\latticeVelocity  \int_{0}^{\timeStep} \reduceSpaceDoubleInt\totalVariation{\boundaryFunction{\labelSouth}(\timeVariable, \cdot)}{\realsPositive}\differential\timeVariable  \\
        +4\sup_{\xLabel>0}\lVert\conservedVariable^{\initial}(\xLabel, \cdot)\rVert_{\lebesgueSpace{1}(\realsPositive)} +4\sup_{\yLabel>0}\lVert\conservedVariable^{\initial}(\cdot, \yLabel)\rVert_{\lebesgueSpace{1}(\realsPositive)}  
        + 2\sup_{\timeVariable\in(0, \finalTime)}\lVert\boundaryFunction{\labelWest}(\timeVariable, \cdot)\rVert_{\lebesgueSpace{1}(\realsPositive)} + 2\sup_{\timeVariable\in(0, \finalTime)}\lVert\boundaryFunction{\labelSouth}(\timeVariable, \cdot)\rVert_{\lebesgueSpace{1}(\realsPositive)} 
        + 4\spaceStep\maximumInitialDatum.
    \end{multline*}

    Overall, the upper bound now reads
    \begin{multline*}
        \totalVariation{\distributionFunctionsAsFunction^{\indexTime}}{(\reals_+^{*})^2} 
        \leq 2\totalVariation{\conservedVariable^{\initial}}{(\reals_+^{*})^2} 
        +4\sup_{\xLabel>0}\lVert\conservedVariable^{\initial}(\xLabel, \cdot)\rVert_{\lebesgueSpace{1}(\realsPositive)} +4\sup_{\yLabel>0}\lVert\conservedVariable^{\initial}(\cdot, \yLabel)\rVert_{\lebesgueSpace{1}(\realsPositive)} \\
        + \totalVariation{\boundaryFunction{\labelWest}}{(0, \finalTime)\times \realsPositive} +  \totalVariation{\boundaryFunction{\labelSouth}}{(0, \finalTime)\times \realsPositive} 
        +4\latticeVelocity \finalTime  \maximumInitialDatum
        +2\latticeVelocity\int_{0}^{\finalTime} \reduceSpaceDoubleInt\totalVariation{\boundaryFunction{\labelWest}(\timeVariable, \cdot)}{\realsPositive}\differential\timeVariable
        +2\latticeVelocity
        \int_{0}^{\finalTime} \reduceSpaceDoubleInt\totalVariation{\boundaryFunction{\labelSouth}(\timeVariable, \cdot)}{\realsPositive}\differential\timeVariable \\
        + 2\sup_{\timeVariable\in(0, \finalTime)}\lVert\boundaryFunction{\labelWest}(\timeVariable, \cdot)\rVert_{\lebesgueSpace{1}(\realsPositive)} + 2\sup_{\timeVariable\in(0, \finalTime)}\lVert\boundaryFunction{\labelSouth}(\timeVariable, \cdot)\rVert_{\lebesgueSpace{1}(\realsPositive)}.
    \end{multline*}    
    Finally, using \Cref{thm:approxThmBV} and \Cref{rem:1Dvs2D}, we have that 
    \begin{equation*}
        \int_{0}^{\finalTime} \reduceSpaceDoubleInt\totalVariation{\boundaryFunction{\labelWest}(\timeVariable, \cdot)}{\realsPositive}\differential\timeVariable \leq \totalVariation{\boundaryFunction{\labelWest}}{(0, \finalTime)\times \realsPositive}, \qquad \int_{0}^{\finalTime} \reduceSpaceDoubleInt\totalVariation{\boundaryFunction{\labelSouth}(\timeVariable, \cdot)}{\realsPositive}\differential\timeVariable \leq \totalVariation{\boundaryFunction{\labelSouth}}{(0, \finalTime)\times \realsPositive},
    \end{equation*}
    thus
    \begin{multline*}
        \totalVariation{\distributionFunctionsAsFunction^{\indexTime}}{(\reals_+^{*})^2} 
        \leq 2\totalVariation{\conservedVariable^{\initial}}{(\reals_+^{*})^2} 
        +4\sup_{\xLabel>0}\lVert\conservedVariable^{\initial}(\xLabel, \cdot)\rVert_{\lebesgueSpace{1}(\realsPositive)} +4\sup_{\yLabel>0}\lVert\conservedVariable^{\initial}(\cdot, \yLabel)\rVert_{\lebesgueSpace{1}(\realsPositive)} +4\latticeVelocity \finalTime  \maximumInitialDatum\\
        +2 \totalVariation{\boundaryFunction{\labelWest}}{(0, \finalTime)\times \realsPositive} + 2 \totalVariation{\boundaryFunction{\labelSouth}}{(0, \finalTime)\times \realsPositive} 
        + 2\sup_{\timeVariable\in(0, \finalTime)}\lVert\boundaryFunction{\labelWest}(\timeVariable, \cdot)\rVert_{\lebesgueSpace{1}(\realsPositive)} + 2\sup_{\timeVariable\in(0, \finalTime)}\lVert\boundaryFunction{\labelSouth}(\timeVariable, \cdot)\rVert_{\lebesgueSpace{1}(\realsPositive)}.
    \end{multline*}  
\end{proof}

\subsection{Closeness to equilibrium}

The last ingredient that we need is a result on the fact that the solution dwells within a $\bigO{\spaceStep}$-neighborhood of the equilibrium associated with the conserved moment $\conservedVariableDiscrete$.
This eventually fosters the ``elimination'' of the distribution functions to remain with $\conservedVariableDiscrete$ only.
\begin{proposition}[Closeness to equilibrium]\label{prop:convergenceEquilibrium}
    Let \eqref{eq:mononotonicityConditionsD2Q5} be fulfilled. Then, there exists $C_{\atEquilibrium}(\conservedVariable^{\initial}, \boundaryFunction{\labelWest}, \boundaryFunction{\labelSouth}, \finalTime)>0$ such that, for $\indexTime\in\integerInterval{0}{\numberTimeSteps}$
    \begin{equation*}
        \lVert\distributionFunctionsAsFunction^{\indexTime} - \vectorial{\distributionFunctionLetter}^{\atEquilibrium}(\conservedVariableDiscreteAsFunction^{\indexTime})\rVert_{\lebesgueSpace{1}((\reals_+^{*})^2)} \leq C_{\atEquilibrium}(\conservedVariable^{\initial}, \boundaryFunction{\labelWest}, \boundaryFunction{\labelSouth})\spaceStep\frac{\max(|1-\relaxationParameterSymmetric|, |1-\relaxationParameterAntiSymmetric|)^{\indexTime}-1}{\max(|1-\relaxationParameterSymmetric|, |1-\relaxationParameterAntiSymmetric|)-1}\leq \frac{C_{\atEquilibrium}(\conservedVariable^{\initial}, \boundaryFunction{\labelWest}, \boundaryFunction{\labelSouth})}{1-\max(|1-\relaxationParameterSymmetric|, |1-\relaxationParameterAntiSymmetric|)}\spaceStep,
    \end{equation*}
    where the denominators do not vanish thanks to \eqref{eq:boundRelaxationParam}.
\end{proposition}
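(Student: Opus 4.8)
The plan is to show that the quantity
$$y_{\indexTime}\definitionEquality\lVert\distributionFunctionsAsFunction^{\indexTime}-\vectorial{\distributionFunctionLetter}^{\atEquilibrium}(\conservedVariableDiscreteAsFunction^{\indexTime})\rVert_{\lebesgueSpace{1}((\reals_+^{*})^2)}=\spaceStep^{2}\sum_{\indexVelocity\in\setVelIndexes}\sum_{\vectorial{\indexSpace}\in\naturals^2}|\distributionFunction_{\indexVelocity,\vectorial{\indexSpace}}^{\indexTime}-\distributionFunctionLetter_{\indexVelocity}^{\atEquilibrium}(\conservedVariableDiscrete_{\vectorial{\indexSpace}}^{\indexTime})|$$
obeys a geometric recurrence $y_{\indexTime+1}\leq \mu\, y_{\indexTime}+C_{\atEquilibrium}\spaceStep$, with $\mu\definitionEquality\max(|1-\relaxationParameterSymmetric|,|1-\relaxationParameterAntiSymmetric|)$, which is strictly less than $1$ by \eqref{eq:boundRelaxationParam}. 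Since \eqref{eq:initialization} and the consistency relation \eqref{eq:consistency} give $\conservedVariableDiscrete_{\vectorial{\indexSpace}}^{0}=\dashint_{\cell{\vectorial{\indexSpace}}}\conservedVariable^{\initial}$, hence $\distributionFunction_{\indexVelocity,\vectorial{\indexSpace}}^{0}=\distributionFunctionLetter_{\indexVelocity}^{\atEquilibrium}(\conservedVariableDiscrete_{\vectorial{\indexSpace}}^{0})$, we have $y_{0}=0$; iterating the recurrence then gives $y_{\indexTime}\leq C_{\atEquilibrium}\spaceStep\sum_{k=0}^{\indexTime-1}\mu^{k}=C_{\atEquilibrium}\spaceStep\,\dfrac{\mu^{\indexTime}-1}{\mu-1}\leq\dfrac{C_{\atEquilibrium}}{1-\mu}\spaceStep$, which is exactly the asserted bound.

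The first ingredient is that the relaxation contracts the $\petitLebesgueSpace{1}$-distance to the equilibrium manifold by the factor $\mu$: for every cell, with $\conservedVariableDiscrete\definitionEquality\sum_{p\in\setVelIndexes}\distributionFunction_{p}$ (which the relaxation leaves invariant),
$$\sum_{\indexVelocity\in\setVelIndexes}|\collisionOperator_{\indexVelocity}(\distributionFunction_{\labZeroVel},\dots,\distributionFunction_{\labNegY})-\distributionFunctionLetter_{\indexVelocity}^{\atEquilibrium}(\conservedVariableDiscrete)|\leq\mu\sum_{\indexVelocity\in\setVelIndexes}|\distributionFunction_{\indexVelocity}-\distributionFunctionLetter_{\indexVelocity}^{\atEquilibrium}(\conservedVariableDiscrete)|.$$
This is proved in \cite{aregba2025monotonicity}, and follows also by subtracting $\distributionFunctionLetter_{\indexVelocity}^{\atEquilibrium}(\conservedVariableDiscrete)$ in each line of the relaxation: the deviations $\distributionFunction_{\indexVelocity}-\distributionFunctionLetter_{\indexVelocity}^{\atEquilibrium}(\conservedVariableDiscrete)$ are mapped linearly, the map acting by $1-\relaxationParameterSymmetric$ on the $\labZeroVel$ component and, on each of the $\{\labPosX,\labNegX\}$ and $\{\labPosY,\labNegY\}$ pairs, sending the symmetric combination to $(1-\relaxationParameterSymmetric)$ times itself and the anti-symmetric one to $(1-\relaxationParameterAntiSymmetric)$ times itself; applying the scalar identity $|a|+|b|=\max(|a+b|,|a-b|)$ to each component of each pair yields the factor $\mu$.

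For the one-step estimate, after the transport $\distributionFunction_{\indexVelocity,\vectorial{\indexSpace}}^{\indexTime+1}=\distributionFunction_{\indexVelocity,\vectorial{\indexSpace}'}^{\indexTime,\collided}$, where $\vectorial{\indexSpace}'$ is the source cell for velocity $\indexVelocity$ streaming into $\vectorial{\indexSpace}$ (with $\vectorial{\indexSpace}'=\vectorial{\indexSpace}$ when $\indexVelocity=\labZeroVel$), the ghost-cell post-relaxation values being $\distributionFunctionLetter_{\indexVelocity}^{\atEquilibrium}$ of the boundary data of \eqref{eq:boundaryEquilibrium1}; recall also $\conservedVariableDiscrete_{\vectorial{\indexSpace}'}^{\indexTime}=\conservedVariableDiscrete_{\vectorial{\indexSpace}'}^{\indexTime,\collided}$. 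Writing, with the convention that $\conservedVariableDiscrete^{\indexTime}$ on a ghost cell denotes the associated boundary datum,
$$|\distributionFunction_{\indexVelocity,\vectorial{\indexSpace}}^{\indexTime+1}-\distributionFunctionLetter_{\indexVelocity}^{\atEquilibrium}(\conservedVariableDiscrete_{\vectorial{\indexSpace}}^{\indexTime+1})|\leq|\distributionFunction_{\indexVelocity,\vectorial{\indexSpace}'}^{\indexTime,\collided}-\distributionFunctionLetter_{\indexVelocity}^{\atEquilibrium}(\conservedVariableDiscrete_{\vectorial{\indexSpace}'}^{\indexTime})|+\maxDerivativeEquilibrium{\indexVelocity}\bigl(|\conservedVariableDiscrete_{\vectorial{\indexSpace}'}^{\indexTime}-\conservedVariableDiscrete_{\vectorial{\indexSpace}}^{\indexTime}|+|\conservedVariableDiscrete_{\vectorial{\indexSpace}}^{\indexTime}-\conservedVariableDiscrete_{\vectorial{\indexSpace}}^{\indexTime+1}|\bigr),$$
where the Lipschitz constants $\maxDerivativeEquilibrium{\indexVelocity}$ of the equilibria on $[-\maximumInitialDatum,\maximumInitialDatum]$ are finite and legitimately used thanks to the Maximum principle. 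Summing over $\indexVelocity\in\setVelIndexes$ and $\vectorial{\indexSpace}\in\naturals^2$ and multiplying by $\spaceStep^{2}$: after re-indexing, the first term is bounded by $\spaceStep^{2}\sum_{\indexVelocity,\vectorial{\indexSpace}\in\naturals^2}|\distributionFunction_{\indexVelocity,\vectorial{\indexSpace}}^{\indexTime,\collided}-\distributionFunctionLetter_{\indexVelocity}^{\atEquilibrium}(\conservedVariableDiscrete_{\vectorial{\indexSpace}}^{\indexTime})|$, the ghost-cell contributions vanishing because the boundary values are at equilibrium, so by the cell-by-cell contraction of the relaxation this term is $\leq\mu\, y_{\indexTime}$. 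In the second term the spatial differences vanish for $\indexVelocity=\labZeroVel$ and otherwise sum (up to narrow-band corner terms near the two boundaries) to $\spaceStep^{-1}\totalVariation{\conservedVariableDiscreteAsFunctionSpace{\indexTime}}{(\reals_+^{*})^2}\leq\spaceStep^{-1}\totalVariation{\distributionFunctionsAsFunction^{\indexTime}}{(\reals_+^{*})^2}$, bounded by \Cref{prop:totalVariationEstimates}; the temporal difference sums to $\spaceStep^{-2}\lVert\conservedVariableDiscreteAsFunctionSpace{\indexTime+1}-\conservedVariableDiscreteAsFunctionSpace{\indexTime}\rVert_{\lebesgueSpace{1}((\reals_+^{*})^2)}\leq\spaceStep^{-2}\lVert\distributionFunctionsAsFunction^{\indexTime+1}-\distributionFunctionsAsFunction^{\indexTime}\rVert_{\lebesgueSpace{1}((\reals_+^{*})^2)}$, bounded via \Cref{prop:equicontinuity}. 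Multiplying by $\spaceStep^{2}$, this second term—together with the remaining narrow-band and corner contributions, of the type $\spaceStep^{2}\sum_{\indexSpace_{\yLabel}}|\conservedVariableDiscrete_{0,\indexSpace_{\yLabel}}^{\indexTime}-\boundaryDatumWest{\indexSpace_{\yLabel}}^{\indexTime}|$ and their southern analogues, handled exactly as in the proofs of \Cref{prop:equicontinuity} and \Cref{prop:totalVariationEstimates} by means of \Cref{prop:VerticalIntegrationBV} and \Cref{prop:restrictionLinF}—is $\bigO{\spaceStep}$ with a constant $C_{\atEquilibrium}$ controlled by $\conservedVariable^{\initial}$, $\boundaryFunction{\labelWest}$, $\boundaryFunction{\labelSouth}$ and $\finalTime$. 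This yields the recurrence $y_{\indexTime+1}\leq\mu\, y_{\indexTime}+C_{\atEquilibrium}\spaceStep$, whence the claim by the iteration of the first paragraph.

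The main obstacle is the control of the second term of the splitting: the transport phase transfers mass between cells, so the ``base point'' $\conservedVariableDiscrete$ of the equilibrium changes within a single step, and one must absorb simultaneously a spatial oscillation (through the total-variation bound \Cref{prop:totalVariationEstimates}) and a temporal oscillation (through the equicontinuity bound \Cref{prop:equicontinuity}), all the while bookkeeping the corner and narrow-band terms produced by the ghost cells so that they remain $\bigO{\spaceStep}$ with data-dependent constants. It is the strict contraction factor $\mu<1$—a consequence of the fine structure of the two-relaxation-times collision and of \eqref{eq:boundRelaxationParam}—that makes the recurrence summable and hence produces the $\bigO{\spaceStep}$ closeness to the equilibrium manifold.
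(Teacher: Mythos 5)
Your proposal is correct and follows essentially the same route as the paper's proof: the same quantity $\delta^{\indexTime}$, the same one-step recursion $\delta^{\indexTime+1}\leq \mu\,\delta^{\indexTime}+C\,\spaceStep$ built from the cell-wise relaxation contraction toward equilibrium (factor $\mu=\max(|1-\relaxationParameterSymmetric|,|1-\relaxationParameterAntiSymmetric|)$), the total-variation bound of \Cref{prop:totalVariationEstimates}, the equicontinuity bound of \Cref{prop:equicontinuity}, and $\bigO{\spaceStep}$ narrow-band boundary estimates, then iterated from $\delta^{0}=0$. The only cosmetic differences are that you prove the contraction factor directly via the symmetric/antisymmetric decomposition (the paper cites it from \cite{aregba2025monotonicity}) and you use Lipschitz constants of the equilibria where the paper invokes monotonicity plus consistency (the $\flat$-procedure); both are valid.
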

\begin{proof}
    Let us denote $\delta^{\indexTime} \definitionEquality \lVert\distributionFunctionsAsFunction^{\indexTime} - \vectorial{\distributionFunctionLetter}^{\atEquilibrium}(\conservedVariableDiscreteAsFunction^{\indexTime})\rVert_{\lebesgueSpace{1}((\reals_+^{*})^2)}$.
    We have 
    \begin{align*}
        \delta^{\indexTime+1} = \spaceStep^2 \Bigl (\sum_{\vectorial{\indexSpace}\in\naturals^2} |\distributionFunction_{\labZeroVel, \vectorial{\indexSpace}}^{\indexTime, \collided} - \distributionFunctionLetter_{\labZeroVel}^{\atEquilibrium}(\conservedVariableDiscrete_{\vectorial{\indexSpace}}^{\indexTime + 1})| &+ \sum_{\indexSpace_{\yLabel}\in\naturals} |\distributionFunctionLetter_{\labPosX}^{\atEquilibrium}(\boundaryDatumWest{\indexSpace_{\yLabel}}^{\indexTime}) - \distributionFunctionLetter_{\labPosX}^{\atEquilibrium}(\conservedVariableDiscrete_{0, \indexSpace_{\yLabel}}^{\indexTime + 1})| + \sum_{\vectorial{\indexSpace}\in\naturals^2} |\distributionFunction_{\labPosX, \vectorial{\indexSpace}}^{\indexTime, \collided} - \distributionFunctionLetter_{\labPosX}^{\atEquilibrium}(\conservedVariableDiscrete_{\vectorial{\indexSpace} + \canonicalBasisVector{\xLabel}}^{\indexTime + 1})| + \sum_{\substack{\indexSpace_{\xLabel}\geq 1\\\indexSpace_{\yLabel}\in\naturals}} |\distributionFunction_{\labNegX, \vectorial{\indexSpace}}^{\indexTime, \collided} - \distributionFunctionLetter_{\labNegX}^{\atEquilibrium}(\conservedVariableDiscrete_{\vectorial{\indexSpace} - \canonicalBasisVector{\xLabel}}^{\indexTime + 1})| \\
        &+ \sum_{\indexSpace_{\xLabel}\in\naturals} |\distributionFunctionLetter_{\labPosY}^{\atEquilibrium}(\boundaryDatumSouth{\indexSpace_{\xLabel}}^{\indexTime}) - \distributionFunctionLetter_{\labPosY}^{\atEquilibrium}(\conservedVariableDiscrete_{\indexSpace_{\xLabel}, 0}^{\indexTime + 1}) | + \sum_{\vectorial{\indexSpace}\in\naturals^2} |\distributionFunction_{\labPosY, \vectorial{\indexSpace}}^{\indexTime, \collided} - \distributionFunctionLetter_{\labPosY}^{\atEquilibrium}(\conservedVariableDiscrete_{\vectorial{\indexSpace} + \canonicalBasisVector{\yLabel}}^{\indexTime + 1})| + \sum_{\substack{\indexSpace_{\xLabel}\in\naturals\\\indexSpace_{\yLabel}\geq 1}} |\distributionFunction_{\labNegY, \vectorial{\indexSpace}}^{\indexTime, \collided} - \distributionFunctionLetter_{\labNegY}^{\atEquilibrium}(\conservedVariableDiscrete_{\vectorial{\indexSpace} - \canonicalBasisVector{\yLabel}}^{\indexTime + 1})| \Bigr ).
    \end{align*}
    Using a $\flat$-procedure, we obtain---as with \eqref{eq:tmp11}:
    \begin{multline*}
        \spaceStep^2 \sum_{\indexSpace_{\yLabel}\in\naturals} |\distributionFunctionLetter_{\labPosX}^{\atEquilibrium}(\boundaryDatumWest{\indexSpace_{\yLabel}}^{\indexTime}) - \distributionFunctionLetter_{\labPosX}^{\atEquilibrium}(\conservedVariableDiscrete_{0, \indexSpace_{\yLabel}}^{\indexTime + 1})| \leq \spaceStep^2 \sum_{\indexSpace_{\yLabel}\in\naturals} |\boundaryDatumWest{\indexSpace_{\yLabel}}^{\indexTime} - \conservedVariableDiscrete_{0, \indexSpace_{\yLabel}}^{\indexTime + 1}| \leq \spaceStep^2 \sum_{\indexSpace_{\yLabel}\in\naturals} (|\boundaryDatumWest{\indexSpace_{\yLabel}}^{\indexTime}| + | \conservedVariableDiscrete_{0, \indexSpace_{\yLabel}}^{\indexTime + 1}|)\\
        \leq\latticeVelocity \int_{\timeGridPoint{\indexTime}}^{\timeGridPoint{\indexTime + 1}} \reduceSpaceDoubleInt\int_{0}^{+\infty}|\boundaryFunction{\labelWest}(\timeVariable, \yLabel)|\differential\yLabel\differential\timeVariable + \frac{1}{\timeStep} \int_{\timeGridPoint{\indexTime + 1}}^{\timeGridPoint{\indexTime + 2}} \reduceSpaceDoubleInt\int_{0}^{\spaceStep} \reduceSpaceDoubleInt\int_{0}^{+\infty}|\conservedVariableDiscreteAsFunction(\timeVariable, \xLabel, \yLabel)|\differential\yLabel\differential\xLabel\differential\timeVariable\\
        \leq \spaceStep \sup_{\timeVariable\in(0, \finalTime)} \lVert \boundaryFunction{\labelWest}(\timeVariable, \cdot) \rVert_{\lebesgueSpace{1}(\realsPositive)} 
        + \latticeVelocity \int_{\timeGridPoint{\indexTime + 1}}^{\timeGridPoint{\indexTime + 2}} \sup_{\xLabel>0} \lVert \conservedVariableDiscreteAsFunction(\timeVariable, \xLabel, \cdot) \rVert_{\lebesgueSpace{1}(\realsPositive)}\differential\timeVariable.
    \end{multline*}
    Thanks to \Cref{prop:L1Bound} and \Cref{prop:totalVariationEstimates}, the last integrand is uniformly bounded in time $\timeVariable$, therefore there exists a constant $C>0$ depending on the the data (and possibly on $\finalTime$) such that 
    \begin{align*}
        &\spaceStep^2 \sum_{\indexSpace_{\yLabel}\in\naturals} |\distributionFunctionLetter_{\labPosX}^{\atEquilibrium}(\boundaryDatumWest{\indexSpace_{\yLabel}}^{\indexTime}) - \distributionFunctionLetter_{\labPosX}^{\atEquilibrium}(\conservedVariableDiscrete_{0, \indexSpace_{\yLabel}}^{\indexTime + 1})|\leq C\spaceStep, \qquad \text{and analoguously, along the other axis}\\
        &\spaceStep^2\sum_{\indexSpace_{\xLabel}\in\naturals} |\distributionFunctionLetter_{\labPosY}^{\atEquilibrium}(\boundaryDatumSouth{\indexSpace_{\xLabel}}^{\indexTime}) - \distributionFunctionLetter_{\labPosY}^{\atEquilibrium}(\conservedVariableDiscrete_{\indexSpace_{\xLabel}, 0}^{\indexTime + 1}) | \leq C\spaceStep.
    \end{align*}

    For the other terms, adding and subtracting terms:
    \begin{align*}
        \circledast \definitionEquality \spaceStep^2 \Bigl (\sum_{\vectorial{\indexSpace}\in\naturals^2} |\distributionFunction_{\labZeroVel, \vectorial{\indexSpace}}^{\indexTime, \collided} - \distributionFunctionLetter_{\labZeroVel}^{\atEquilibrium}(\conservedVariableDiscrete_{\vectorial{\indexSpace}}^{\indexTime + 1})| &+ \sum_{\vectorial{\indexSpace}\in\naturals^2} |\distributionFunction_{\labPosX, \vectorial{\indexSpace}}^{\indexTime, \collided} - \distributionFunctionLetter_{\labPosX}^{\atEquilibrium}(\conservedVariableDiscrete_{\vectorial{\indexSpace} + \canonicalBasisVector{\xLabel}}^{\indexTime + 1})| + \sum_{\substack{\indexSpace_{\xLabel}\geq 1\\\indexSpace_{\yLabel}\in\naturals}} |\distributionFunction_{\labNegX, \vectorial{\indexSpace}}^{\indexTime, \collided} - \distributionFunctionLetter_{\labNegX}^{\atEquilibrium}(\conservedVariableDiscrete_{\vectorial{\indexSpace} - \canonicalBasisVector{\xLabel}}^{\indexTime + 1})| \\
        &+ \sum_{\vectorial{\indexSpace}\in\naturals^2} |\distributionFunction_{\labPosY, \vectorial{\indexSpace}}^{\indexTime, \collided} - \distributionFunctionLetter_{\labPosY}^{\atEquilibrium}(\conservedVariableDiscrete_{\vectorial{\indexSpace} + \canonicalBasisVector{\yLabel}}^{\indexTime + 1})| + \sum_{\substack{\indexSpace_{\xLabel}\in\naturals\\\indexSpace_{\yLabel}\geq 1}} |\distributionFunction_{\labNegY, \vectorial{\indexSpace}}^{\indexTime, \collided} - \distributionFunctionLetter_{\labNegY}^{\atEquilibrium}(\conservedVariableDiscrete_{\vectorial{\indexSpace} - \canonicalBasisVector{\yLabel}}^{\indexTime + 1})| \Bigr )\\
        = \spaceStep^2 \Bigl (&\sum_{\vectorial{\indexSpace}\in\naturals^2} |\distributionFunction_{\labZeroVel, \vectorial{\indexSpace}}^{\indexTime, \collided} - \distributionFunctionLetter_{\labZeroVel}^{\atEquilibrium}(\conservedVariableDiscrete_{\vectorial{\indexSpace}}^{\indexTime}) + \distributionFunctionLetter_{\labZeroVel}^{\atEquilibrium}(\conservedVariableDiscrete_{\vectorial{\indexSpace}}^{\indexTime})  - \distributionFunctionLetter_{\labZeroVel}^{\atEquilibrium}(\conservedVariableDiscrete_{\vectorial{\indexSpace}}^{\indexTime + 1})| \\
        + &\sum_{\vectorial{\indexSpace}\in\naturals^2} |\distributionFunction_{\labPosX, \vectorial{\indexSpace}}^{\indexTime, \collided} - \distributionFunctionLetter_{\labPosX}^{\atEquilibrium}(\conservedVariableDiscrete_{\vectorial{\indexSpace}}^{\indexTime}) + \distributionFunctionLetter_{\labPosX}^{\atEquilibrium}(\conservedVariableDiscrete_{\vectorial{\indexSpace}}^{\indexTime}) - \distributionFunctionLetter_{\labPosX}^{\atEquilibrium}(\conservedVariableDiscrete_{\vectorial{\indexSpace} + \canonicalBasisVector{\xLabel}}^{\indexTime}) + \distributionFunctionLetter_{\labPosX}^{\atEquilibrium}(\conservedVariableDiscrete_{\vectorial{\indexSpace} + \canonicalBasisVector{\xLabel}}^{\indexTime}) - \distributionFunctionLetter_{\labPosX}^{\atEquilibrium}(\conservedVariableDiscrete_{\vectorial{\indexSpace} + \canonicalBasisVector{\xLabel}}^{\indexTime + 1})| \\
        + &\sum_{\substack{\indexSpace_{\xLabel}\geq 1\\\indexSpace_{\yLabel}\in\naturals}} |\distributionFunction_{\labNegX, \vectorial{\indexSpace}}^{\indexTime, \collided} - \distributionFunctionLetter_{\labNegX}^{\atEquilibrium}(\conservedVariableDiscrete_{\vectorial{\indexSpace}}^{\indexTime}) + \distributionFunctionLetter_{\labNegX}^{\atEquilibrium}(\conservedVariableDiscrete_{\vectorial{\indexSpace}}^{\indexTime}) - \distributionFunctionLetter_{\labNegX}^{\atEquilibrium}(\conservedVariableDiscrete_{\vectorial{\indexSpace} - \canonicalBasisVector{\xLabel}}^{\indexTime}) + \distributionFunctionLetter_{\labNegX}^{\atEquilibrium}(\conservedVariableDiscrete_{\vectorial{\indexSpace} - \canonicalBasisVector{\xLabel}}^{\indexTime}) - \distributionFunctionLetter_{\labNegX}^{\atEquilibrium}(\conservedVariableDiscrete_{\vectorial{\indexSpace} - \canonicalBasisVector{\xLabel}}^{\indexTime + 1})| \\
        + &\sum_{\vectorial{\indexSpace}\in\naturals^2} |\distributionFunction_{\labPosY, \vectorial{\indexSpace}}^{\indexTime, \collided} - \distributionFunctionLetter_{\labPosY}^{\atEquilibrium}(\conservedVariableDiscrete_{\vectorial{\indexSpace}}^{\indexTime }) + \distributionFunctionLetter_{\labPosY}^{\atEquilibrium}(\conservedVariableDiscrete_{\vectorial{\indexSpace}}^{\indexTime }) - \distributionFunctionLetter_{\labPosY}^{\atEquilibrium}(\conservedVariableDiscrete_{\vectorial{\indexSpace} + \canonicalBasisVector{\yLabel}}^{\indexTime}) + \distributionFunctionLetter_{\labPosY}^{\atEquilibrium}(\conservedVariableDiscrete_{\vectorial{\indexSpace} + \canonicalBasisVector{\yLabel}}^{\indexTime}) - \distributionFunctionLetter_{\labPosY}^{\atEquilibrium}(\conservedVariableDiscrete_{\vectorial{\indexSpace} + \canonicalBasisVector{\yLabel}}^{\indexTime + 1})| \\
        + &\sum_{\substack{\indexSpace_{\xLabel}\in\naturals\\\indexSpace_{\yLabel}\geq 1}} |\distributionFunction_{\labNegY, \vectorial{\indexSpace}}^{\indexTime, \collided} - \distributionFunctionLetter_{\labNegY}^{\atEquilibrium}(\conservedVariableDiscrete_{\vectorial{\indexSpace}}^{\indexTime}) + \distributionFunctionLetter_{\labNegY}^{\atEquilibrium}(\conservedVariableDiscrete_{\vectorial{\indexSpace}}^{\indexTime}) - \distributionFunctionLetter_{\labNegY}^{\atEquilibrium}(\conservedVariableDiscrete_{\vectorial{\indexSpace} - \canonicalBasisVector{\yLabel}}^{\indexTime}) + \distributionFunctionLetter_{\labNegY}^{\atEquilibrium}(\conservedVariableDiscrete_{\vectorial{\indexSpace} - \canonicalBasisVector{\yLabel}}^{\indexTime}) - \distributionFunctionLetter_{\labNegY}^{\atEquilibrium}(\conservedVariableDiscrete_{\vectorial{\indexSpace} - \canonicalBasisVector{\yLabel}}^{\indexTime + 1})| \Bigr ).
    \end{align*}
    Using the triangle inequality and adding positive quantities on the right-hand side:
    \begin{align*}
        \circledast &\leq \spaceStep^2 \sum_{\indexVelocity\in\setVelIndexes} \sum_{\vectorial{\indexSpace}\in\naturals^2} |\collisionOperator_{\indexVelocity}(\vectorial{\distributionFunction_{\vectorial{\indexSpace}}}^{\indexTime}) - \distributionFunctionLetter_{\indexVelocity}^{\atEquilibrium}(\conservedVariableDiscrete_{\vectorial{\indexSpace}}^{\indexTime})| + 2\spaceStep\totalVariation{\conservedVariableDiscreteAsFunction^{\indexTime}}{(\reals_+^{*})^2} + \lVert\conservedVariableDiscreteAsFunction^{\indexTime + 1} - \conservedVariableDiscreteAsFunction^{\indexTime} \rVert_{\lebesgueSpace{1}((\reals_+^{*})^2)}\\
        &\leq \max(|1-\relaxationParameterSymmetric|, |1-\relaxationParameterAntiSymmetric|)\delta^{\indexTime}  + 2\spaceStep\totalVariation{\distributionFunctionsAsFunction^{\indexTime}}{(\reals_+^{*})^2} +  \lVert\distributionFunctionsAsFunction^{\indexTime + 1} - \distributionFunctionsAsFunction^{\indexTime} \rVert_{\lebesgueSpace{1}((\reals_+^{*})^2)}, 
    \end{align*}
    where the first term on the right-hand side of the second inequality is obtained as in the proof of \cite[Proposition 4.8]{aregba2025monotonicity}, and the others come from triangle inequalities.
    Using \Cref{prop:totalVariationEstimates} and \Cref{prop:equicontinuity}
    \begin{equation*}
        \delta^{\indexTime+1}\leq \max(|1-\relaxationParameterSymmetric|, |1-\relaxationParameterAntiSymmetric|)\delta^{\indexTime}  +  ( 2C + 2 C_{\mathscr{V}}(\conservedVariable^{\initial}, \boundaryFunction{\labelWest},  \boundaryFunction{\labelSouth}, \finalTime)  + C_{EC}(\conservedVariable^{\initial}, \boundaryFunction{\labelWest}, \boundaryFunction{\labelSouth}) ) \spaceStep .
    \end{equation*}
    Iterating and using that $\delta^0 = 0$, we obtain that there exists $C_{\atEquilibrium}(\conservedVariable^{\initial}, \boundaryFunction{\labelWest}, \boundaryFunction{\labelSouth}, \finalTime) \definitionEquality 2C + 2 C_{\mathscr{V}}(\conservedVariable^{\initial}, \boundaryFunction{\labelWest},  \boundaryFunction{\labelSouth}, \finalTime)  + C_{EC}(\conservedVariable^{\initial}, \boundaryFunction{\labelWest}, \boundaryFunction{\labelSouth}) >0$ such that
    \begin{equation*}
        \delta^{\indexTime}\leq C_{\atEquilibrium}(\conservedVariable^{\initial}, \boundaryFunction{\labelWest}, \boundaryFunction{\labelSouth}, \finalTime)\spaceStep\frac{\max(|1-\relaxationParameterSymmetric|, |1-\relaxationParameterAntiSymmetric|)^{\indexTime}-1}{\max(|1-\relaxationParameterSymmetric|, |1-\relaxationParameterAntiSymmetric|)-1}\leq \frac{C_{\atEquilibrium}(\conservedVariable^{\initial}, \boundaryFunction{\labelWest}, \boundaryFunction{\labelSouth}, \finalTime)}{1-\max(|1-\relaxationParameterSymmetric|, |1-\relaxationParameterAntiSymmetric|)}\spaceStep.
    \end{equation*}
\end{proof}

\subsection{Convergence of the discrete solution}

Now, combining \Cref{prop:L1Bound}, \ref{prop:equicontinuity}, \ref{prop:totalVariationEstimates}, and \ref{prop:convergenceEquilibrium} exactly in the way by \cite{aregba2024convergence, aregba2025monotonicity}, which follows the idea of proof by \cite{crandall1980monotone}, we prove what follows.
\begin{theorem}\label{thm:convergence}
    Let \eqref{eq:mononotonicityConditionsD2Q5} be satisfied and $\finalTime>0$ be fixed.
    Let $(\spaceStep_{p})_{p\in\naturals} $ be a sequence of non-negative space-steps such that $\lim_{p\to+\infty}\spaceStep_p = 0$.
    Then, there exists a subsequence of space-steps, also denoted $(\spaceStep_{p})_{p\in\naturals} $ for simplicity, and a function $\vectorial{\limitDistributionFunction}$ such that $\vectorial{\limitDistributionFunction}(\timeVariable, \cdot)\in\lebesgueSpace{1}((\reals_+^{*})^2)$ and $\vectorial{\limitDistributionFunction}(\timeVariable, \vectorial{\spaceVariable})\in\invariantCompactSetDistributions$ a.e. in $\vectorial{\spaceVariable}$, for $\timeVariable\in(0, \finalTime)$, such that
    \begin{equation}\label{eq:limit}
        \lim_{p\to +\infty} \lVert\distributionFunctionsAsFunctionWithStep{\discreteMark_p} - \vectorial{\limitDistributionFunction} \rVert_{\lebesgueTime{\infty}(0, \finalTime)\lebesgueInSpace{1}((\reals_+^{*})^2)} = 0,
    \end{equation}
    and, setting $\limitConservedMoment \definitionEquality \sum_{\indexVelocity=1}^{\indexVelocity=\numberVelocities}\limitDistributionFunction_{\indexVelocity}$, such that $\limitConservedMoment(\timeVariable, \vectorial{\spaceVariable})\in[-\maximumInitialDatum, \maximumInitialDatum]$ a.e. in $\vectorial{\spaceVariable}$, with $\lim_{p\to +\infty} \lVert\conservedVariableDiscreteAsFunctionWithStep{\discreteMark_p} -\limitConservedMoment \rVert_{\lebesgueTime{\infty}(0, \finalTime)\lebesgueInSpace{1}((\reals_+^{*})^2)} = 0$.
    Moreover, the limit distribution functions $\vectorial{\limitDistributionFunction}$ are at equilibrium, namely $\vectorial{\limitDistributionFunction}(\timeVariable, \vectorial{\spaceVariable}) = \vectorial{\distributionFunctionLetter}^{\atEquilibrium}(\limitConservedMoment(\timeVariable, \vectorial{\spaceVariable}))$ a.e. in $\vectorial{\spaceVariable}$.
\end{theorem}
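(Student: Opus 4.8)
The plan is to run the Crandall--Majda compactness argument, as in \cite{crandall1980monotone, aregba2024convergence, aregba2025monotonicity}, feeding it the four a priori estimates collected above as the only analytical input. Fix $\finalTime$, and for each $\spaceStep_p$ regard the piecewise-constant reconstruction $\distributionFunctionsAsFunctionWithStep{\discreteMark_p}$ as an element of $\lebesgueTime{\infty}(0,\finalTime)\lebesgueInSpace{1}((\realsPositive)^2)$. By the maximum principle, $\distributionFunctionsAsFunctionWithStep{\discreteMark_p}(\timeVariable,\vectorial{\spaceVariable})\in\invariantCompactSetDistributions$ for a.e.\ $\vectorial{\spaceVariable}$ and every $\timeVariable$, hence a uniform $\lebesgueSpace{\infty}$ bound; applying \Cref{prop:L1Bound} with the zero solution as the second solution (which the scheme indeed reproduces, since all equilibria vanish at $0$) gives a uniform bound in $\lebesgueTime{\infty}\lebesgueInSpace{1}$; \Cref{prop:totalVariationEstimates} gives, time-slice by time-slice, a uniform bound on the spatial total variation; and \Cref{prop:equicontinuity}, together with $\timeStep_p=\spaceStep_p/\latticeVelocity$, upgrades to a uniform modulus of continuity in time of the form $\lVert\distributionFunctionsAsFunctionWithStep{\discreteMark_p}(\timeVariable,\cdot)-\distributionFunctionsAsFunctionWithStep{\discreteMark_p}(\timeVariable',\cdot)\rVert_{\lebesgueSpace{1}((\realsPositive)^2)}\le C(|\timeVariable-\timeVariable'|+\spaceStep_p)$.

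Next I would extract the subsequence. On any bounded spatial box, the uniform $\boundedVariationSpace$ and $\lebesgueSpace{1}$ bounds yield, by the compactness theorem for $\boundedVariationSpace$ \cite{ambrosio2000functions}, relative compactness in $\lebesgueSpace{1}$ for each fixed time; combined with the time-equicontinuity, this is precisely the input of the Ascoli-type lemma used in \cite{crandall1980monotone, aregba2025monotonicity}, producing a subsequence converging in $\lebesgueTime{\infty}(0,\finalTime)\lebesgueSpace{1}(\text{box})$. A diagonal argument over an exhaustion of $(\realsPositive)^2$ by boxes, together with tightness --- which follows from the finite propagation speed $\latticeVelocity$ of the scheme and the $\lebesgueSpace{1}$-integrability of the data, by truncating $\conservedVariable^{\initial}$ and the $\boundaryFunction{\alpha}$ outside a large ball and controlling the induced perturbation uniformly in $p$ through the $\lebesgueSpace{1}$-contractivity of \Cref{prop:L1Bound} --- produces a single subsequence and a limit $\vectorial{\limitDistributionFunction}$ with $\vectorial{\limitDistributionFunction}(\timeVariable,\cdot)\in\lebesgueSpace{1}((\realsPositive)^2)$ satisfying \eqref{eq:limit}. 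Passing to a further subsequence converging almost everywhere, the closed set $\invariantCompactSetDistributions$ is stable under the limit, so $\vectorial{\limitDistributionFunction}(\timeVariable,\vectorial{\spaceVariable})\in\invariantCompactSetDistributions$ a.e.; summing the components gives $\conservedVariableDiscreteAsFunctionWithStep{\discreteMark_p}\to\limitConservedMoment$ in $\lebesgueTime{\infty}\lebesgueSpace{1}$ with $\limitConservedMoment(\timeVariable,\vectorial{\spaceVariable})\in[-\maximumInitialDatum,\maximumInitialDatum]$ a.e.

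The equilibrium identification is then the short last step: \Cref{prop:convergenceEquilibrium} gives $\lVert\distributionFunctionsAsFunctionWithStep{\discreteMark_p}-\vectorial{\distributionFunctionLetter}^{\atEquilibrium}(\conservedVariableDiscreteAsFunctionWithStep{\discreteMark_p})\rVert_{\lebesgueSpace{1}((\realsPositive)^2)}\le C\spaceStep_p\to 0$, uniformly in $\timeVariable$, while $\vectorial{\distributionFunctionLetter}^{\atEquilibrium}$ is Lipschitz on $[-\maximumInitialDatum,\maximumInitialDatum]$ because the fluxes are $\continuousSpace{1}$, so (by dominated convergence along the a.e.-convergent subsequence) $\vectorial{\distributionFunctionLetter}^{\atEquilibrium}(\conservedVariableDiscreteAsFunctionWithStep{\discreteMark_p})\to\vectorial{\distributionFunctionLetter}^{\atEquilibrium}(\limitConservedMoment)$ in $\lebesgueSpace{1}$. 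Combined with \eqref{eq:limit}, this forces $\vectorial{\limitDistributionFunction}=\vectorial{\distributionFunctionLetter}^{\atEquilibrium}(\limitConservedMoment)$ a.e., which completes the proof.

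The main obstacle is the compactness step on the unbounded quarter-plane: one must produce \emph{one} subsequence that converges uniformly in time --- not merely in $\lebesgueSpace{1}$ on $(0,\finalTime)\times(\realsPositive)^2$ --- which is exactly where the time-equicontinuity of \Cref{prop:equicontinuity} is indispensable, and one must secure the spatial tail control, which (unlike on the bounded domain $(0,1)^2$) is a genuine issue and is handled by the truncation-plus-$\lebesgueSpace{1}$-contraction argument above. Everything else is bookkeeping, since \Cref{prop:L1Bound}--\Cref{prop:convergenceEquilibrium} are tailored precisely to feed this machinery; note that the present statement does not assert that $\vectorial{\limitDistributionFunction}$ solves the target initial--boundary value problem, which would require a separate passage to the limit in the entropy inequality.
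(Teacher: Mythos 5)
Your proposal is correct and follows essentially the same route as the paper, which itself proves the theorem simply by combining Propositions \ref{prop:L1Bound}, \ref{prop:equicontinuity}, \ref{prop:totalVariationEstimates}, and \ref{prop:convergenceEquilibrium} in the Crandall--Majda fashion of \cite{crandall1980monotone, aregba2024convergence, aregba2025monotonicity}. Your write-up merely makes explicit the details the paper delegates to those references (comparison with the zero solution for the $\lebesgueSpace{1}$ bound, BV-plus-Ascoli compactness with diagonal extraction and tail control on the quarter-plane, and the Lipschitz-equilibrium identification), all of which are sound.
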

This result ensures that the numerical solution converges---up to an extraction---to limit objects $\vectorial{\limitDistributionFunction}$  and $\limitConservedMoment$ in the $\lebesgueSpace{1}$-topology, uniformly in time. Moreover, the former equals the equilibrium corresponding to the latter.

\subsection{Entropy inequality}

What now remains to do is to show that $\limitConservedMoment$ is indeed the entropy solution of the problem at hand.

\begin{theorem}
    Under the same assumptions as \Cref{thm:convergence}, the limit $\limitConservedMoment$ satisfies \Cref{def:weakSolution}.
\end{theorem}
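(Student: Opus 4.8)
The plan is to pass to the limit in a discrete entropy (Kruzhkov) inequality satisfied by the numerical scheme, using the convergence established in \Cref{thm:convergence}. First I would derive, for each fixed $\krushkovParameter\in\reals$, a discrete entropy inequality for the conserved moment $\conservedVariableDiscrete_{\vectorial{\indexSpace}}^{\indexTime}$. Since the relaxation operator is monotone and $\petitLebesgueSpace{1}$-contractive (\Cref{prop:monotonicity}, \Cref{prop:contractivity}) and preserves the constant state $\krushkovParameter$ (because $\vectorial{\distributionFunctionLetter}^{\atEquilibrium}(\krushkovParameter)$ is a fixed point of relaxation, being at equilibrium), the standard Crandall--Majda machinery gives a cell-entropy inequality of the form
\begin{equation*}
    |\conservedVariableDiscrete_{\vectorial{\indexSpace}}^{\indexTime+1} - \krushkovParameter| \leq |\conservedVariableDiscrete_{\vectorial{\indexSpace}}^{\indexTime}-\krushkovParameter| - \bigl(\Phi_{\xLabel, \vectorial{\indexSpace}+\canonicalBasisVector{\xLabel}/2}^{\indexTime} - \Phi_{\xLabel, \vectorial{\indexSpace}-\canonicalBasisVector{\xLabel}/2}^{\indexTime}\bigr) - \bigl(\Phi_{\yLabel, \vectorial{\indexSpace}+\canonicalBasisVector{\yLabel}/2}^{\indexTime} - \Phi_{\yLabel, \vectorial{\indexSpace}-\canonicalBasisVector{\yLabel}/2}^{\indexTime}\bigr),
\end{equation*}
where the numerical entropy fluxes $\Phi_{\xLabel}, \Phi_{\yLabel}$ are built from the $\collisionOperator_{\indexVelocity}$ applied to $|\vectorial{\distributionFunction}-\vectorial{\distributionFunctionLetter}^{\atEquilibrium}(\krushkovParameter)|$-type quantities and are consistent, in the sense that when all arguments are at equilibrium at the same state $\conservedVariableDiscrete$, they reduce to $\sign(\conservedVariableDiscrete-\krushkovParameter)(\flux_{\xLabel}(\conservedVariableDiscrete)-\flux_{\xLabel}(\krushkovParameter))$, up to $\bigO{\spaceStep}$ corrections controlled by the closeness-to-equilibrium estimate of \Cref{prop:convergenceEquilibrium}.

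Next I would multiply this cell inequality by a nonnegative test function $\testFunction\in\smoothFunctionsSpace([0,\finalTime)\times[0,1]^2)$ (evaluated at grid points), sum over $\vectorial{\indexSpace}$ and $\indexTime$, and perform discrete summation by parts (Abel transformation) in both space and time. This produces: (i) a bulk term converging, by \Cref{thm:convergence} and dominated convergence, to the first double integral in \Cref{def:weakSolution}, using that $\conservedVariableDiscreteAsFunction \to \limitConservedMoment$ in $\lebesgueTime{\infty}\lebesgueInSpace{1}$ and that $\flux_\xLabel,\flux_\yLabel\in\continuousSpace{1}$; (ii) an initial-time term converging to $\int_{(0,1)^2}|\conservedVariable^{\initial}-\krushkovParameter|\testFunction(0,\cdot)$, using \eqref{eq:initialization} and $\lebesgueSpace{1}$-convergence of the piecewise-constant projection of $\conservedVariable^{\initial}$; and (iii) boundary terms coming from the cells adjacent to the walls, where the transport step pulls in the equilibrium $\vectorial{\distributionFunctionLetter}^{\atEquilibrium}(\boundaryDatumVectorial{\alpha}{\indexSpace}^{\indexTime})$ from the ghost cells via \eqref{eq:boundaryEquilibrium1}--\eqref{eq:boundaryEquilibrium2}. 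The boundary contribution along, say, the western wall should, after the Abel transformation, take the form of a discrete sum of $\sign(\boundaryDatumWest{\indexSpace_\yLabel}^{\indexTime}-\krushkovParameter)(\flux_\xLabel(\cdot)-\flux_\xLabel(\krushkovParameter))\testFunction(\timeVariable,0,\yLabel)$ against the trace of the numerical solution in the first column of cells, plus error terms.

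The main obstacle, as usual for initial-boundary-value problems of Bardos--Leroux--Nédelec type, is the boundary term: one must show that the discrete trace of $\conservedVariableDiscreteAsFunction$ on the wall, appearing in (iii), converges to $\traceOperator_\alpha(\limitConservedMoment)$ in the appropriate sense, and that the inequality is oriented correctly so as to reproduce exactly the signed boundary entropy fluxes in \Cref{def:weakSolution} (with the $+$ on west/south and $-$ on east/north). This requires a BV trace argument: the uniform total-variation bound of \Cref{prop:totalVariationEstimates} gives, via \Cref{thm:2DtotalAlongAxis} and \Cref{prop:restrictionTotalVariation}, that the one-dimensional traces of $\conservedVariableDiscreteAsFunction(\timeVariable,\spaceStep/2,\cdot)$ (say) are uniformly BV, hence converge (up to the same subsequence) to $\traceOperator_{\labelWest}(\limitConservedMoment)$; and the closeness-to-equilibrium estimate of \Cref{prop:convergenceEquilibrium} lets me replace, up to $\bigO{\spaceStep}$ in $\lebesgueSpace{1}$, the distribution functions feeding the boundary entropy flux by the equilibria of $\conservedVariableDiscrete$, so the limiting boundary integrand is the consistent one. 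The sign structure is inherited from the one-sided nature of the transport (only the incoming velocity $\distributionFunction_{\labPosX}$ is overwritten at the western wall), exactly as in the finite-volume proofs, so that the ghost-cell equilibrium $\vectorial{\distributionFunctionLetter}^{\atEquilibrium}(\boundaryDatumWest{\indexSpace_\yLabel}^{\indexTime})$ enters with the right orientation. Finally, I would let $\spaceStep_p\to 0$ in the fully discrete weak entropy inequality, use the convergences just described together with the equilibrium identity $\vectorial{\limitDistributionFunction}=\vectorial{\distributionFunctionLetter}^{\atEquilibrium}(\limitConservedMoment)$ from \Cref{thm:convergence} to kill the $\bigO{\spaceStep}$ remainders, and conclude that $\limitConservedMoment$ satisfies the inequality of \Cref{def:weakSolution} for every $\krushkovParameter$ and every admissible $\testFunction$, i.e. $\limitConservedMoment$ is the weak entropy solution.
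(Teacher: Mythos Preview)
Your overall strategy (discrete entropy inequality, multiply by test function, sum by parts, pass to the limit) is the right shape, and the bulk and initial terms would go through as you describe. But the boundary step---which is the whole difficulty here---is where your proposal has a real gap.

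You assert that after summation by parts the western boundary contribution ``should take the form of a discrete sum of $\sign(\boundaryDatumWest{\indexSpace_\yLabel}^{\indexTime}-\krushkovParameter)(\flux_\xLabel(\cdot)-\flux_\xLabel(\krushkovParameter))\testFunction$''. It does not. What actually emerges at the western wall, after transport pulls in the ghost equilibrium and after passing to the limit, is the \emph{kinetic} quantity
\[
\latticeVelocity\bigl(|\distributionFunctionLetter_{\labPosX}^{\atEquilibrium}(\boundaryFunction{\labelWest})-\distributionFunctionLetter_{\labPosX}^{\atEquilibrium}(\krushkovParameter)|-|\distributionFunctionLetter_{\labNegX}^{\atEquilibrium}(\traceOperator_{\labelWest}(\limitConservedMoment))-\distributionFunctionLetter_{\labNegX}^{\atEquilibrium}(\krushkovParameter)|\bigr),
\]
i.e.\ the incoming equilibrium at the boundary datum minus the outgoing equilibrium at the trace. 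This is \emph{not} the Bardos--Leroux--N\'ed\'elec integrand. Two further nontrivial steps are needed, and neither appears in your plan. First, one must bound this kinetic expression above by $\sign(\boundaryFunction{\labelWest}-\krushkovParameter)\bigl(\latticeVelocity\distributionFunctionLetter_{\labPosX}^{\atEquilibrium}(\boundaryFunction{\labelWest})-\latticeVelocity\distributionFunctionLetter_{\labNegX}^{\atEquilibrium}(\traceOperator_{\labelWest}(\limitConservedMoment))-\flux_{\xLabel}(\krushkovParameter)\bigr)$; this is a separate algebraic lemma (in the paper, Lemma~1, taken from \cite{aregba2004kinetic}) relying on the monotonicity of the equilibria and the consistency relation $\latticeVelocity(\distributionFunctionLetter_{\labPosX}^{\atEquilibrium}-\distributionFunctionLetter_{\labNegX}^{\atEquilibrium})=\flux_{\xLabel}$. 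Second, even after that, the flux still mixes $\boundaryFunction{\labelWest}$ and $\traceOperator_{\labelWest}(\limitConservedMoment)$; to collapse it to $\flux_{\xLabel}(\traceOperator_{\labelWest}(\limitConservedMoment))$ one tests the limiting inequality against a boundary-layer family $\testFunction(\timeVariable,\xLabel,\yLabel)=\rho(\timeVariable,\yLabel)\max(0,1-\xLabel/\eta)$, lets $\eta\to 0$, and then takes $\krushkovParameter$ outside $[-\maximumInitialDatum,\maximumInitialDatum]$ to force the pointwise identity $\latticeVelocity\distributionFunctionLetter_{\labPosX}^{\atEquilibrium}(\boundaryFunction{\labelWest})-\latticeVelocity\distributionFunctionLetter_{\labNegX}^{\atEquilibrium}(\traceOperator_{\labelWest}(\limitConservedMoment))=\flux_{\xLabel}(\traceOperator_{\labelWest}(\limitConservedMoment))$. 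Only after substituting this back does the BLN form appear. Your appeal to ``the one-sided nature of the transport'' and the closeness-to-equilibrium estimate does not produce either of these steps.

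A secondary issue: you propose to derive a one-step cell-entropy inequality for $|\conservedVariableDiscrete_{\vectorial{\indexSpace}}^{\indexTime}-\krushkovParameter|$ directly. For a lattice Boltzmann scheme this is awkward, since on $\conservedVariableDiscrete$ alone the scheme is genuinely multi-step. The paper avoids this by working at the kinetic level with $\sum_{\indexVelocity}|\distributionFunction_{\indexVelocity}-\distributionFunctionLetter_{\indexVelocity}^{\atEquilibrium}(\krushkovParameter)|$, where the transport is exact and the relaxation is contractive, and only uses $\vectorial{\limitDistributionFunction}=\vectorial{\distributionFunctionLetter}^{\atEquilibrium}(\limitConservedMoment)$ at the very end to recover the macroscopic entropy pair.
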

\begin{proof}
    We consider Krushkov kinetic entropies $\kineticEntropy_{\indexVelocity}(\distributionFunction_{\indexVelocity})\definitionEquality |\distributionFunction_{\indexVelocity}-\distributionFunctionLetter_{\indexVelocity}^{\atEquilibrium}(\krushkovParameter)|$ for $\krushkovParameter\in\reals$.
    We have, for $\vectorial{\indexSpace}\in\naturals^2$
    \begin{equation*}
        \sum_{\indexVelocity\in\setVelIndexes} \kineticEntropy_{\indexVelocity}(\distributionFunction_{\indexVelocity, \vectorial{\indexSpace}}^{\indexTime+1, \collided}) = \sum_{\indexVelocity\in\setVelIndexes} |\collisionOperator_{\indexVelocity}(\vectorial{\distributionFunction}_{\vectorial{\indexSpace}}^{\indexTime+1})-\collisionOperator_{\indexVelocity}(\vectorial{\distributionFunctionLetter}^{\atEquilibrium}(\krushkovParameter))|\leq \sum_{\indexVelocity\in\setVelIndexes} \kineticEntropy_{\indexVelocity}(\distributionFunction_{\indexVelocity, \vectorial{\indexSpace}}^{\indexTime+1}),
    \end{equation*}
    using \Cref{prop:contractivity}.
    We specifically study what happens close to the boundary.
    For the corner:
    \begin{equation}\label{eq:corner}
        \sum_{\indexVelocity\in\setVelIndexes} \kineticEntropy_{\indexVelocity}(\distributionFunction_{\indexVelocity, 0, 0}^{\indexTime+1, \collided}) \leq \sum_{\indexVelocity\in\setVelIndexes} \kineticEntropy_{\indexVelocity}(\distributionFunction_{\indexVelocity, 0, 0}^{\indexTime+1}) = \kineticEntropy_{\labZeroVel}(\distributionFunction_{\labZeroVel, 0, 0}^{\indexTime, \collided}) + \kineticEntropy_{\labPosX}(\distributionFunctionLetter_{\labPosX}^{\atEquilibrium}(\boundaryDatumWest{0}^{\indexTime})) + \kineticEntropy_{\labNegX}(\distributionFunction_{\labNegX, 1, 0}^{\indexTime, \collided}) + \kineticEntropy_{\labPosY}(\distributionFunctionLetter_{\labPosY}^{\atEquilibrium}(\boundaryDatumSouth{0}^{\indexTime})) + \kineticEntropy_{\labNegY}(\distributionFunction_{\labNegY, 0, 1}^{\indexTime, \collided}).
    \end{equation}
    For the western boundary, let $\indexSpace_{\yLabel}\geq 1$:
    \begin{equation}\label{eq:west}
        \sum_{\indexVelocity\in\setVelIndexes} \kineticEntropy_{\indexVelocity}(\distributionFunction_{\indexVelocity, 0, \indexSpace_{\yLabel}}^{\indexTime+1, \collided}) \leq \sum_{\indexVelocity\in\setVelIndexes} \kineticEntropy_{\indexVelocity}(\distributionFunction_{\indexVelocity, 0, \indexSpace_{\yLabel}}^{\indexTime+1}) = \kineticEntropy_{\labZeroVel}(\distributionFunction_{\labZeroVel, 0, \indexSpace_{\yLabel}}^{\indexTime, \collided}) + \kineticEntropy_{\labPosX}(\distributionFunctionLetter_{\labPosX}^{\atEquilibrium}(\boundaryDatumWest{\indexSpace_{\yLabel}}^{\indexTime})) + \kineticEntropy_{\labNegX}(\distributionFunction_{\labNegX, 1, \indexSpace_{\yLabel}}^{\indexTime, \collided}) + \kineticEntropy_{\labPosY}(\distributionFunction_{\labPosY, 0, \indexSpace_{\yLabel} - 1}^{\indexTime, \collided}) + \kineticEntropy_{\labNegY}(\distributionFunction_{\labNegY, 0, \indexSpace_{\yLabel} + 1}^{\indexTime, \collided}).
    \end{equation}
    For the southern boundary, let $\indexSpace_{\xLabel}\geq 1$:
    \begin{equation}\label{eq:south}
        \sum_{\indexVelocity\in\setVelIndexes} \kineticEntropy_{\indexVelocity}(\distributionFunction_{\indexVelocity, \indexSpace_{\xLabel}, 0}^{\indexTime+1, \collided}) \leq \sum_{\indexVelocity\in\setVelIndexes} \kineticEntropy_{\indexVelocity}(\distributionFunction_{\indexVelocity, \indexSpace_{\xLabel}, 0}^{\indexTime+1}) = \kineticEntropy_{\labZeroVel}(\distributionFunction_{\labZeroVel, \indexSpace_{\xLabel}, 0}^{\indexTime, \collided}) + \kineticEntropy_{\labPosX}(\distributionFunction_{\labPosX, \indexSpace_{\xLabel}-1, 0}^{\indexTime, \collided})+ \kineticEntropy_{\labNegX}(\distributionFunction_{\labNegX, \indexSpace_{\xLabel}+1, 0}^{\indexTime, \collided}) + \kineticEntropy_{\labPosY}(\distributionFunctionLetter_{\labPosY}^{\atEquilibrium}(\boundaryDatumSouth{\indexSpace_{\xLabel}}^{\indexTime})) + \kineticEntropy_{\labNegY}(\distributionFunction_{\labNegY, \indexSpace_{\xLabel}, 1}^{\indexTime, \collided}).
    \end{equation}
    Away from the boundary, that is for $\indexSpace_{\xLabel}, \indexSpace_{\yLabel}\geq 1$:
    \begin{equation}\label{eq:inner2}
        \sum_{\indexVelocity\in\setVelIndexes} \kineticEntropy_{\indexVelocity}(\distributionFunction_{\indexVelocity, \vectorial{\indexSpace}}^{\indexTime+1, \collided}) \leq \sum_{\indexVelocity\in\setVelIndexes} \kineticEntropy_{\indexVelocity}(\distributionFunction_{\indexVelocity, \vectorial{\indexSpace}}^{\indexTime+1}) = \sum_{\indexVelocity\in\setVelIndexes} \kineticEntropy_{\indexVelocity}(\distributionFunction_{\indexVelocity, \vectorial{\indexSpace} - \vectorial{\discreteVelocityLetter}_{\indexVelocity}/\latticeVelocity}^{\indexTime, \collided}).
    \end{equation}
    Consider a test function $\testFunction\in\smoothFunctionsSpace([0, \finalTime)\times [0, +\infty)^2)$ such that $\testFunction\geq 0$, and its discretization
    \begin{equation*}
        \testFunctionDiscrete_{\vectorial{\indexSpace}}^{\indexTime}\definitionEquality \dashint_{\timeGridPoint{\indexTime}}^{\timeGridPoint{\indexTime + 1}}\reduceSpaceDoubleInt\dashint_{\cell{\vectorial{\indexSpace}}} \testFunction(\timeVariable, \vectorial{\spaceVariable})\differential\vectorial{\spaceVariable}\differential\timeVariable.
    \end{equation*}
    We obtain, using \eqref{eq:corner}, \eqref{eq:west}, \eqref{eq:south}, and \eqref{eq:inner2}:
    \begin{multline*}
        \timeStep\spaceStep^2\sum_{\indexTime=0}^{\numberTimeSteps-1}\sum_{\vectorial{\indexSpace}\in\naturals^2}\frac{\sum_{\indexVelocity\in\setVelIndexes} \kineticEntropy_{\indexVelocity}(\distributionFunction_{\indexVelocity, \vectorial{\indexSpace}}^{\indexTime+1, \collided}) - \sum_{\indexVelocity\in\setVelIndexes} \kineticEntropy_{\indexVelocity}(\distributionFunction_{\indexVelocity, \vectorial{\indexSpace}}^{\indexTime, \collided})}{\timeStep} \testFunctionDiscrete_{\vectorial{\indexSpace}}^{\indexTime}\leq \spaceStep^2\sum_{\indexTime=0}^{\numberTimeSteps-1}\sum_{\substack{\indexSpace_{\xLabel}\geq 1\\\indexSpace_{\yLabel}\geq 1}} \sum_{\indexVelocity\in\setVelIndexes\smallsetminus\{\labZeroVel\}} \Bigl ( \kineticEntropy_{\indexVelocity}(\distributionFunction_{\indexVelocity, \vectorial{\indexSpace} - \vectorial{\discreteVelocityLetter}_{\indexVelocity}/\latticeVelocity}^{\indexTime, \collided}) - \kineticEntropy_{\indexVelocity}(\distributionFunction_{\indexVelocity, \vectorial{\indexSpace}}^{\indexTime, \collided})  \Bigr ) \testFunctionDiscrete_{\vectorial{\indexSpace}}^{\indexTime}\\
        +\spaceStep^2\sum_{\indexTime=0}^{\numberTimeSteps-1} \Bigl ( \kineticEntropy_{\labPosX}(\distributionFunctionLetter_{\labPosX}^{\atEquilibrium}(\boundaryDatumWest{0}^{\indexTime})) + \kineticEntropy_{\labNegX}(\distributionFunction_{\labNegX, 1, 0}^{\indexTime, \collided}) + \kineticEntropy_{\labPosY}(\distributionFunctionLetter_{\labPosY}^{\atEquilibrium}(\boundaryDatumSouth{0}^{\indexTime})) + \kineticEntropy_{\labNegY}(\distributionFunction_{\labNegY, 0, 1}^{\indexTime, \collided})- \sum_{\indexVelocity\in\setVelIndexes\smallsetminus\{\labZeroVel\}} \kineticEntropy_{\indexVelocity}(\distributionFunction_{\indexVelocity, 0, 0}^{\indexTime, \collided})\Bigr )\testFunctionDiscrete_{0, 0}^{\indexTime}\\
        +\spaceStep^2\sum_{\indexTime=0}^{\numberTimeSteps-1}\sum_{\indexSpace_{\yLabel}\geq 1}  \Bigl (\kineticEntropy_{\labPosX}(\distributionFunctionLetter_{\labPosX}^{\atEquilibrium}(\boundaryDatumWest{\indexSpace_{\yLabel}}^{\indexTime})) + \kineticEntropy_{\labNegX}(\distributionFunction_{\labNegX, 1, \indexSpace_{\yLabel}}^{\indexTime, \collided}) + \kineticEntropy_{\labPosY}(\distributionFunction_{\labPosY, 0, \indexSpace_{\yLabel} - 1}^{\indexTime, \collided}) + \kineticEntropy_{\labNegY}(\distributionFunction_{\labNegY, 0, \indexSpace_{\yLabel} + 1}^{\indexTime, \collided})  - \sum_{\indexVelocity\in\setVelIndexes\smallsetminus\{\labZeroVel\}} \kineticEntropy_{\indexVelocity}(\distributionFunction_{\indexVelocity, 0, \indexSpace_{\yLabel}}^{\indexTime, \collided})\Bigr )\testFunctionDiscrete_{0, \indexSpace_{\yLabel}}^{\indexTime}\\
        +\spaceStep^2\sum_{\indexTime=0}^{\numberTimeSteps-1}\sum_{\indexSpace_{\xLabel}\geq 1}  \Bigl (\kineticEntropy_{\labPosX}(\distributionFunction_{\labPosX, \indexSpace_{\xLabel}-1, 0}^{\indexTime, \collided})+ \kineticEntropy_{\labNegX}(\distributionFunction_{\labNegX, \indexSpace_{\xLabel}+1, 0}^{\indexTime, \collided}) + \kineticEntropy_{\labPosY}(\distributionFunctionLetter_{\labPosY}^{\atEquilibrium}(\boundaryDatumSouth{\indexSpace_{\xLabel}}^{\indexTime})) + \kineticEntropy_{\labNegY}(\distributionFunction_{\labNegY, \indexSpace_{\xLabel}, 1}^{\indexTime, \collided}) - \sum_{\indexVelocity\in\setVelIndexes\smallsetminus\{\labZeroVel\}} \kineticEntropy_{\indexVelocity}(\distributionFunction_{\indexVelocity, \indexSpace_{\xLabel}, 0}^{\indexTime, \collided})\Bigr )\testFunctionDiscrete_{\indexSpace_{\xLabel}, 0}^{\indexTime} =: \circledast.
    \end{multline*}
    The left-hand side is the same as in \cite{aregba2025monotonicity}, so it undergoes the same treatment.
    The term on the second row of the previous inequality (the corner term) is a $\bigO{\spaceStep}$ term, hence goes to zero with $\spaceStep$ and shall therefore be neglected.
    The right-hand side therefore reads, performing changes in the indexes
    \begin{multline*}
        \circledast = \bigO{\spaceStep} \\
        + \spaceStep^2\sum_{\indexTime=0}^{\numberTimeSteps-1}\Bigl ( \sum_{\substack{\indexSpace_{\xLabel}\geq 0\\\indexSpace_{\yLabel}\geq 1}} \kineticEntropy_{\labPosX}(\distributionFunction_{\labPosX, \vectorial{\indexSpace}}^{\indexTime, \collided}) \testFunctionDiscrete_{\vectorial{\indexSpace} + \canonicalBasisVector{\xLabel}}^{\indexTime} + \sum_{\substack{\indexSpace_{\xLabel}\geq 2\\\indexSpace_{\yLabel}\geq 1}} \kineticEntropy_{\labNegX}(\distributionFunction_{\labNegX, \vectorial{\indexSpace}}^{\indexTime, \collided}) \testFunctionDiscrete_{\vectorial{\indexSpace}-\canonicalBasisVector{\xLabel}}^{\indexTime} + \sum_{\substack{\indexSpace_{\xLabel}\geq 1\\\indexSpace_{\yLabel}\geq 0}} \kineticEntropy_{\labPosY}(\distributionFunction_{\labPosY, \vectorial{\indexSpace}}^{\indexTime, \collided}) \testFunctionDiscrete_{\vectorial{\indexSpace}+\canonicalBasisVector{\yLabel}}^{\indexTime} +  \sum_{\substack{\indexSpace_{\xLabel}\geq 1\\\indexSpace_{\yLabel}\geq 2}} \kineticEntropy_{\labNegY}(\distributionFunction_{\labNegY, \vectorial{\indexSpace}}^{\indexTime, \collided}) \testFunctionDiscrete_{\vectorial{\indexSpace}-\canonicalBasisVector{\yLabel}}^{\indexTime} 
        - \sum_{\substack{\indexSpace_{\xLabel}\geq 1\\\indexSpace_{\yLabel}\geq 1}} \sum_{\indexVelocity\in\setVelIndexes\smallsetminus\{\labZeroVel\}} \kineticEntropy_{\indexVelocity}(\distributionFunction_{\indexVelocity, \vectorial{\indexSpace}}^{\indexTime, \collided})  \testFunctionDiscrete_{\vectorial{\indexSpace}}^{\indexTime} \\
        +\sum_{\indexSpace_{\yLabel}\geq 1}  \Bigl (\kineticEntropy_{\labPosX}(\distributionFunctionLetter_{\labPosX}^{\atEquilibrium}(\boundaryDatumWest{\indexSpace_{\yLabel}}^{\indexTime})) + \kineticEntropy_{\labNegX}(\distributionFunction_{\labNegX, 1, \indexSpace_{\yLabel}}^{\indexTime, \collided}) + \kineticEntropy_{\labPosY}(\distributionFunction_{\labPosY, 0, \indexSpace_{\yLabel} - 1}^{\indexTime, \collided}) + \kineticEntropy_{\labNegY}(\distributionFunction_{\labNegY, 0, \indexSpace_{\yLabel} + 1}^{\indexTime, \collided})  - \sum_{\indexVelocity\in\setVelIndexes\smallsetminus\{\labZeroVel\}} \kineticEntropy_{\indexVelocity}(\distributionFunction_{\indexVelocity, 0, \indexSpace_{\yLabel}}^{\indexTime, \collided})\Bigr )\testFunctionDiscrete_{0, \indexSpace_{\yLabel}}^{\indexTime}\\
        +\sum_{\indexSpace_{\xLabel}\geq 1}  \Bigl (\kineticEntropy_{\labPosX}(\distributionFunction_{\labPosX, \indexSpace_{\xLabel}-1, 0}^{\indexTime, \collided})+ \kineticEntropy_{\labNegX}(\distributionFunction_{\labNegX, \indexSpace_{\xLabel}+1, 0}^{\indexTime, \collided}) + \kineticEntropy_{\labPosY}(\distributionFunctionLetter_{\labPosY}^{\atEquilibrium}(\boundaryDatumSouth{\indexSpace_{\xLabel}}^{\indexTime})) + \kineticEntropy_{\labNegY}(\distributionFunction_{\labNegY, \indexSpace_{\xLabel}, 1}^{\indexTime, \collided}) - \sum_{\indexVelocity\in\setVelIndexes\smallsetminus\{\labZeroVel\}} \kineticEntropy_{\indexVelocity}(\distributionFunction_{\indexVelocity, \indexSpace_{\xLabel}, 0}^{\indexTime, \collided})\Bigr )\testFunctionDiscrete_{\indexSpace_{\xLabel}, 0}^{\indexTime}
        \Bigr ).
    \end{multline*}
    This simplifies into
    \begin{multline*}
        \circledast = \bigO{\spaceStep}
        + \spaceStep^2\sum_{\indexTime=0}^{\numberTimeSteps-1}\Bigl (\sum_{\substack{\indexSpace_{\xLabel}\geq 1\\\indexSpace_{\yLabel}\geq 1}} \sum_{\indexVelocity\in\setVelIndexes\smallsetminus\{\labZeroVel\}} \kineticEntropy_{\indexVelocity}(\distributionFunction_{\indexVelocity, \vectorial{\indexSpace}}^{\indexTime, \collided})  (\testFunctionDiscrete_{\vectorial{\indexSpace} + \vectorial{\discreteVelocityLetter}_{\indexVelocity}/\latticeVelocity}^{\indexTime} - \testFunctionDiscrete_{\vectorial{\indexSpace}}^{\indexTime}) \\
        +\sum_{\indexSpace_{\yLabel}\geq 1}  \Bigl ( \Bigl (\kineticEntropy_{\labPosX}(\distributionFunctionLetter_{\labPosX}^{\atEquilibrium}(\boundaryDatumWest{\indexSpace_{\yLabel}}^{\indexTime})) + \kineticEntropy_{\labPosY}(\distributionFunction_{\labPosY, 0, \indexSpace_{\yLabel} - 1}^{\indexTime, \collided}) + \kineticEntropy_{\labNegY}(\distributionFunction_{\labNegY, 0, \indexSpace_{\yLabel} + 1}^{\indexTime, \collided})  - \sum_{\indexVelocity\in\setVelIndexes\smallsetminus\{\labZeroVel\}} \kineticEntropy_{\indexVelocity}(\distributionFunction_{\indexVelocity, 0, \indexSpace_{\yLabel}}^{\indexTime, \collided})\Bigr )\testFunctionDiscrete_{0, \indexSpace_{\yLabel}}^{\indexTime} + \kineticEntropy_{\labPosX}(\distributionFunction_{\labPosX, 0, \indexSpace_{\yLabel}}^{\indexTime, \collided}) \testFunctionDiscrete_{1, \indexSpace_{\yLabel}}^{\indexTime} \Bigr )\\
        +\sum_{\indexSpace_{\xLabel}\geq 1}  \Bigl (\Bigl (\kineticEntropy_{\labPosX}(\distributionFunction_{\labPosX, \indexSpace_{\xLabel}-1, 0}^{\indexTime, \collided})+ \kineticEntropy_{\labNegX}(\distributionFunction_{\labNegX, \indexSpace_{\xLabel}+1, 0}^{\indexTime, \collided}) + \kineticEntropy_{\labPosY}(\distributionFunctionLetter_{\labPosY}^{\atEquilibrium}(\boundaryDatumSouth{\indexSpace_{\xLabel}}^{\indexTime}))  - \sum_{\indexVelocity\in\setVelIndexes\smallsetminus\{\labZeroVel\}} \kineticEntropy_{\indexVelocity}(\distributionFunction_{\indexVelocity, \indexSpace_{\xLabel}, 0}^{\indexTime, \collided})\Bigr )\testFunctionDiscrete_{\indexSpace_{\xLabel}, 0}^{\indexTime} + \kineticEntropy_{\labPosY}(\distributionFunction_{\labPosY, \indexSpace_{\xLabel}, 0}^{\indexTime, \collided}) \testFunctionDiscrete_{\indexSpace_{\xLabel}, 1}^{\indexTime}\Bigr )
        \Bigr ).
    \end{multline*}
    Let us discuss term-by-term. From now on, limits are taken for the space step going to zero, assuming that we have already extracted the needed subsequences.
    \begin{multline*}
        \spaceStep^2\sum_{\indexTime=0}^{\numberTimeSteps-1} \sum_{\substack{\indexSpace_{\xLabel}\geq 1\\\indexSpace_{\yLabel}\geq 1}} \sum_{\indexVelocity\in\setVelIndexes\smallsetminus\{\labZeroVel\}} \kineticEntropy_{\indexVelocity}(\distributionFunction_{\indexVelocity, \vectorial{\indexSpace}}^{\indexTime, \collided})  (\testFunctionDiscrete_{\vectorial{\indexSpace} + \vectorial{\discreteVelocityLetter}_{\indexVelocity}/\latticeVelocity}^{\indexTime} - \testFunctionDiscrete_{\vectorial{\indexSpace}}^{\indexTime}) = \int_{0}^{\finalTime}\reduceSpaceDoubleInt\int_{(\spaceStep, +\infty)^2} \sum_{\indexVelocity\in\setVelIndexes\smallsetminus\{\labZeroVel\}} \kineticEntropy_{\indexVelocity}(\distributionFunctionsAsFunctionComponent{\indexVelocity}^{\collided}(\timeVariable, \vectorial{\spaceVariable}))\frac{\testFunction(\timeVariable, \vectorial{\spaceVariable}+\vectorial{\discreteVelocityLetter}_{\indexVelocity}\spaceStep/\latticeVelocity)-\testFunction(\timeVariable, \vectorial{\spaceVariable})}{\timeStep}\differential\vectorial{\spaceVariable}\differential\timeVariable\\
        \to\int_0^{\finalTime}\reduceSpaceDoubleInt \int_{(\reals_+^{*})^2} \sign(\limitConservedMoment(\timeVariable, \vectorial{\spaceVariable})-\krushkovParameter)((\flux_{\xLabel}(\limitConservedMoment(\timeVariable, \vectorial{\spaceVariable}))-\flux_{\xLabel}(\krushkovParameter))\partial_{\xLabel}\testFunction(\timeVariable, \vectorial{\spaceVariable})
        +(\flux_{\yLabel}(\limitConservedMoment(\timeVariable, \vectorial{\spaceVariable}))-\flux_{\yLabel}(\krushkovParameter))\partial_{\yLabel}\testFunction(\timeVariable, \vectorial{\spaceVariable}))\differential\vectorial{\spaceVariable}\differential\timeVariable,
    \end{multline*}
    thanks to the same way of proceeding as \cite{aregba2025monotonicity}.
    For the other terms
    \begin{multline*}
        \spaceStep^2\sum_{\indexTime=0}^{\numberTimeSteps-1} \sum_{\indexSpace_{\yLabel}\geq 1}  \Bigl ( \Bigl (\kineticEntropy_{\labPosX}(\distributionFunctionLetter_{\labPosX}^{\atEquilibrium}(\boundaryDatumWest{\indexSpace_{\yLabel}}^{\indexTime})) + \kineticEntropy_{\labPosY}(\distributionFunction_{\labPosY, 0, \indexSpace_{\yLabel} - 1}^{\indexTime, \collided}) + \kineticEntropy_{\labNegY}(\distributionFunction_{\labNegY, 0, \indexSpace_{\yLabel} + 1}^{\indexTime, \collided})  - \sum_{\indexVelocity\in\setVelIndexes\smallsetminus\{\labZeroVel\}} \kineticEntropy_{\indexVelocity}(\distributionFunction_{\indexVelocity, 0, \indexSpace_{\yLabel}}^{\indexTime, \collided})\Bigr )\testFunctionDiscrete_{0, \indexSpace_{\yLabel}}^{\indexTime} + \kineticEntropy_{\labPosX}(\distributionFunction_{\labPosX, 0, \indexSpace_{\yLabel}}^{\indexTime, \collided}) \testFunctionDiscrete_{1, \indexSpace_{\yLabel}}^{\indexTime} \Bigr )\\
        = \spaceStep^2\sum_{\indexTime=0}^{\numberTimeSteps-1} \Bigl (  \sum_{\indexSpace_{\yLabel}\geq 1} \Bigl (\kineticEntropy_{\labPosX}(\distributionFunctionLetter_{\labPosX}^{\atEquilibrium}(\boundaryDatumWest{\indexSpace_{\yLabel}}^{\indexTime}))\testFunctionDiscrete_{0, \indexSpace_{\yLabel}}^{\indexTime} + \kineticEntropy_{\labPosX}(\distributionFunction_{\labPosX, 0, \indexSpace_{\yLabel}}^{\indexTime, \collided}) (\testFunctionDiscrete_{1, \indexSpace_{\yLabel}}^{\indexTime}-\testFunctionDiscrete_{0, \indexSpace_{\yLabel}}^{\indexTime})  - \kineticEntropy_{\labNegX}(\distributionFunction_{\labNegX, 0, \indexSpace_{\yLabel}}^{\indexTime, \collided}) \testFunctionDiscrete_{0, \indexSpace_{\yLabel}}^{\indexTime} \Bigr ) \\
        +\kineticEntropy_{\labPosY}(\distributionFunction_{\labPosY, 0, 0}^{\indexTime, \collided})\testFunctionDiscrete_{0, 1}^{\indexTime} + \sum_{\indexSpace_{\yLabel}\geq 1} \kineticEntropy_{\labPosY}(\distributionFunction_{\labPosY, 0, \indexSpace_{\yLabel}}^{\indexTime, \collided})(\testFunctionDiscrete_{0, \indexSpace_{\yLabel}+1}^{\indexTime} - \testFunctionDiscrete_{0, \indexSpace_{\yLabel}}^{\indexTime}) + \sum_{\indexSpace_{\yLabel}\geq 1} \kineticEntropy_{\labNegY}(\distributionFunction_{\labNegY, 0, \indexSpace_{\yLabel}}^{\indexTime, \collided})(\testFunctionDiscrete_{0, \indexSpace_{\yLabel}-1}^{\indexTime} - \testFunctionDiscrete_{0, \indexSpace_{\yLabel}}^{\indexTime}) - \kineticEntropy_{\labNegY}(\distributionFunction_{\labNegY, 0, 1}^{\indexTime, \collided}) \testFunctionDiscrete_{0,0}^{\indexTime} \Bigr ) \\
        =\latticeVelocity\int_0^{\finalTime}\reduceSpaceDoubleInt\int_{\spaceStep}^{+\infty}\Bigl ( \kineticEntropy_{\labPosX}(\distributionFunctionLetter_{\labPosX}^{\atEquilibrium}(\boundaryFunction{\labelWest}(\timeVariable, \yLabel)))\dashint_0^{\spaceStep} \testFunction(\timeVariable, \xLabel, \yLabel)\differential\xLabel + \kineticEntropy_{\labPosX}(\distributionFunctionsAsFunctionComponent{\labPosX}^{\collided}(\timeVariable, \tfrac{\spaceStep}{2}, \yLabel))\Bigl ( \dashint_{\spaceStep}^{2\spaceStep} \testFunction(\timeVariable, \xLabel, \yLabel)\differential\xLabel-\dashint_{0}^{\spaceStep} \testFunction(\timeVariable, \xLabel, \yLabel)\differential\xLabel\Bigr )\\
        - \kineticEntropy_{\labNegX}(\distributionFunctionsAsFunctionComponent{\labNegX}^{\collided}(\timeVariable, \tfrac{\spaceStep}{2}, \yLabel)) \dashint_{0}^{\spaceStep} \testFunction(\timeVariable, \xLabel, \yLabel)\differential\xLabel + \kineticEntropy_{\labPosY}(\distributionFunctionsAsFunctionComponent{\labPosY}^{\collided}(\timeVariable, \tfrac{\spaceStep}{2}, \yLabel))\dashint_0^{\spaceStep}(\testFunction(\timeVariable, \xLabel, \yLabel+\spaceStep)-\testFunction(\timeVariable, \xLabel, \yLabel))\differential\xLabel\\
        + \kineticEntropy_{\labNegY}(\distributionFunctionsAsFunctionComponent{\labNegY}^{\collided}(\timeVariable, \tfrac{\spaceStep}{2}, \yLabel))\dashint_0^{\spaceStep}(\testFunction(\timeVariable, \xLabel, \yLabel-\spaceStep)-\testFunction(\timeVariable, \xLabel, \yLabel))\differential\xLabel \Bigr ) \differential\yLabel\differential\timeVariable+ \bigO{\spaceStep}\\
        \to\latticeVelocity\int_0^{\finalTime}\reduceSpaceDoubleInt\int_{0}^{+\infty} \Bigl ( \kineticEntropy_{\labPosX}(\distributionFunctionLetter_{\labPosX}^{\atEquilibrium}(\boundaryFunction{\labelWest}(\timeVariable, \yLabel)))- \kineticEntropy_{\labNegX}(\distributionFunctionLetter_{\labNegX}^{\atEquilibrium}(\traceOperator_{\labelWest}(\limitConservedMoment)(\timeVariable, \yLabel))) \Bigr )\testFunction(\timeVariable, 0, \yLabel)\differential\yLabel\differential\timeVariable.
    \end{multline*}
    We apply \cite[Lemma 4.4]{aregba2004kinetic} along the $\xLabel$-axis, which holds because consistency \eqref{eq:consistency} along this axis involves only discrete velocities along the same direction (\idEst{} $\labPosX$ and $\labNegX$).
    This lemma here reads 
    \begin{lemma}
        Under \eqref{eq:mononotonicityConditionsD2Q5}, we have that 
        \begin{align*}
            \latticeVelocity (\kineticEntropy_{\labPosX}(\distributionFunctionLetter_{\labPosX}^{\atEquilibrium}(\boundaryFunction{\labelWest}))- \kineticEntropy_{\labNegX}(\distributionFunctionLetter_{\labNegX}^{\atEquilibrium}(\traceOperator_{\labelWest}(\limitConservedMoment)))) \leq \sign(\boundaryFunction{\labelWest}-\krushkovParameter)(\latticeVelocity\distributionFunctionLetter_{\labPosX}^{\atEquilibrium}(\boundaryFunction{\labelWest})) - \latticeVelocity\distributionFunctionLetter_{\labNegX}^{\atEquilibrium}(\traceOperator_{\labelWest}(\limitConservedMoment)) - \flux_{\xLabel}(\krushkovParameter)),\\
            \latticeVelocity (\kineticEntropy_{\labPosY}(\distributionFunctionLetter_{\labPosY}^{\atEquilibrium}(\boundaryFunction{\labelSouth}))- \kineticEntropy_{\labNegY}(\distributionFunctionLetter_{\labNegY}^{\atEquilibrium}(\traceOperator_{\labelSouth}(\limitConservedMoment)))) \leq \sign(\boundaryFunction{\labelSouth}-\krushkovParameter)(\latticeVelocity\distributionFunctionLetter_{\labPosY}^{\atEquilibrium}(\boundaryFunction{\labelSouth})) - \latticeVelocity\distributionFunctionLetter_{\labNegY}^{\atEquilibrium}(\traceOperator_{\labelSouth}(\limitConservedMoment)) - \flux_{\yLabel}(\krushkovParameter)).
        \end{align*}
    \end{lemma} 
    We obtain, in the limit
    \begin{multline}\label{eq:limitEntropyIneqAlmostFinished}
        -\int_0^{\finalTime}\int_{(\reals_+^{*})^2}|\limitConservedMoment-\krushkovParameter |\partial_{\timeVariable}\testFunction\differential\vectorial{\spaceVariable}\differential\timeVariable + \int_{(\reals_+^{*})^2}|\conservedVariable^{\initial}(\vectorial{\spaceVariable})-\krushkovParameter | \testFunction(0, \vectorial{\spaceVariable})\differential\vectorial{\spaceVariable}\\
        \leq \int_0^{\finalTime}\reduceSpaceDoubleInt \int_{(\reals_+^{*})^2} \sign(\limitConservedMoment-\krushkovParameter)((\flux_{\xLabel}(\limitConservedMoment)-\flux_{\xLabel}(\krushkovParameter))\partial_{\xLabel}\testFunction
        +(\flux_{\yLabel}(\limitConservedMoment)-\flux_{\yLabel}(\krushkovParameter))\partial_{\yLabel}\testFunction)\differential\vectorial{\spaceVariable}\differential\timeVariable\\
        +\int_0^{\finalTime}\reduceSpaceDoubleInt\int_{0}^{+\infty} \sign(\boundaryFunction{\labelWest}(\timeVariable, \yLabel) - \krushkovParameter)\Bigl ( \latticeVelocity \distributionFunctionLetter_{\labPosX}^{\atEquilibrium}(\boundaryFunction{\labelWest}(\timeVariable, \yLabel))- \latticeVelocity \distributionFunctionLetter_{\labNegX}^{\atEquilibrium}(\traceOperator_{\labelWest}(\limitConservedMoment)(\timeVariable, \yLabel)) - \flux_{\xLabel}(\krushkovParameter)\Bigr )\testFunction(\timeVariable, 0, \yLabel)\differential\yLabel\differential\timeVariable\\
        +\int_0^{\finalTime}\reduceSpaceDoubleInt\int_{0}^{+\infty} \sign(\boundaryFunction{\labelSouth}(\timeVariable, \xLabel) - \krushkovParameter)\Bigl ( \latticeVelocity \distributionFunctionLetter_{\labPosY}^{\atEquilibrium}(\boundaryFunction{\labelSouth}(\timeVariable, \xLabel))- \latticeVelocity \distributionFunctionLetter_{\labNegY}^{\atEquilibrium}(\traceOperator_{\labelSouth}(\limitConservedMoment)(\timeVariable, \xLabel)) - \flux_{\yLabel}(\krushkovParameter)\Bigr )\testFunction(\timeVariable, \xLabel, 0)\differential\xLabel\differential\timeVariable.
    \end{multline}
    We now proceed as in the discussion of the proof of Theorem 4.2 in \cite{aregba2004kinetic}: we take a specific family of test functions $\testFunction(\timeVariable, \xLabel, \yLabel) = \rho(\timeVariable, \yLabel)\max(0, 1-\xLabel/\eta)$ for $\rho\in\smoothFunctionsSpace((0, \finalTime)\times\reals_+^{*})$, with $\rho \geq 0$, and $\eta>0$.
    Inequality \eqref{eq:limitEntropyIneqAlmostFinished} thus becomes
    \begin{multline*}
        - \int_0^{\finalTime}\int_{0}^{+\infty}\dashint_{0}^{\eta}\sign(\limitConservedMoment-\krushkovParameter)(\flux_{\xLabel}(\limitConservedMoment)-\flux_{\xLabel}(\krushkovParameter))\rho(\timeVariable, \yLabel)\differential\xLabel\differential\yLabel\differential\timeVariable \\
        +\int_0^{\finalTime}\reduceSpaceDoubleInt\int_{0}^{+\infty} \sign(\boundaryFunction{\labelWest}(\timeVariable, \yLabel) - \krushkovParameter)\Bigl ( \latticeVelocity \distributionFunctionLetter_{\labPosX}^{\atEquilibrium}(\boundaryFunction{\labelWest}(\timeVariable, \yLabel))- \latticeVelocity \distributionFunctionLetter_{\labNegX}^{\atEquilibrium}(\traceOperator_{\labelWest}(\limitConservedMoment)(\timeVariable, \yLabel)) - \flux_{\xLabel}(\krushkovParameter)\Bigr )\rho(\timeVariable, \yLabel)\differential\yLabel\differential\timeVariable + \bigO{\eta}\geq 0,
    \end{multline*}
    where the $\bigO{\eta}$ are integrals of bounded functions with compact support with respect to $\yLabel$ (their last argument) over $(0, \finalTime)\times (0, \eta)\times \reals_+^{*}$.
    Letting $\eta\to 0$, we end up with 
    \begin{multline*}
        - \int_0^{\finalTime}\int_{0}^{+\infty}\sign(\traceOperator_{\labelWest}(\limitConservedMoment)(\timeVariable, \yLabel)-\krushkovParameter)(\flux_{\xLabel}(\traceOperator_{\labelWest}(\limitConservedMoment)(\timeVariable, \yLabel))-\flux_{\xLabel}(\krushkovParameter))\rho(\timeVariable, \yLabel)\differential\yLabel\differential\timeVariable \\
        +\int_0^{\finalTime}\reduceSpaceDoubleInt\int_{0}^{+\infty} \sign(\boundaryFunction{\labelWest}(\timeVariable, \yLabel) - \krushkovParameter)\Bigl ( \latticeVelocity \distributionFunctionLetter_{\labPosX}^{\atEquilibrium}(\boundaryFunction{\labelWest}(\timeVariable, \yLabel))- \latticeVelocity \distributionFunctionLetter_{\labNegX}^{\atEquilibrium}(\traceOperator_{\labelWest}(\limitConservedMoment)(\timeVariable, \yLabel)) - \flux_{\xLabel}(\krushkovParameter)\Bigr )\rho(\timeVariable, \yLabel)\differential\yLabel\differential\timeVariable\geq 0.
    \end{multline*}
    Since $\rho$ is smooth, positive, and arbitrary, we deduce that 
    \begin{equation*}
        \sign(\traceOperator_{\labelWest}(\limitConservedMoment)(\timeVariable, \yLabel)-\krushkovParameter)(\flux_{\xLabel}(\traceOperator_{\labelWest}(\limitConservedMoment)(\timeVariable, \yLabel))-\flux_{\xLabel}(\krushkovParameter))\leq \sign(\boundaryFunction{\labelWest}(\timeVariable, \yLabel) - \krushkovParameter)\Bigl ( \latticeVelocity \distributionFunctionLetter_{\labPosX}^{\atEquilibrium}(\boundaryFunction{\labelWest}(\timeVariable, \yLabel))- \latticeVelocity \distributionFunctionLetter_{\labNegX}^{\atEquilibrium}(\traceOperator_{\labelWest}(\limitConservedMoment)(\timeVariable, \yLabel)) - \flux_{\xLabel}(\krushkovParameter)\Bigr ),
    \end{equation*}
    for almost every $(\timeVariable, \yLabel)\in (0, \finalTime)\times \reals_+^{*}$.
    Taking $\krushkovParameter<-\maximumInitialDatum$ once, and $\krushkovParameter>\maximumInitialDatum$ another time, we deduce the following equalities almost everywhere:
    \begin{align*}
        \latticeVelocity \distributionFunctionLetter_{\labPosX}^{\atEquilibrium}(\boundaryFunction{\labelWest}(\timeVariable, \yLabel))- \latticeVelocity \distributionFunctionLetter_{\labNegX}^{\atEquilibrium}(\traceOperator_{\labelWest}(\limitConservedMoment)(\timeVariable, \yLabel))  &= \flux_{\xLabel}(\traceOperator_{\labelWest}(\limitConservedMoment)(\timeVariable, \yLabel)), \\
       \latticeVelocity \distributionFunctionLetter_{\labPosY}^{\atEquilibrium}(\boundaryFunction{\labelSouth}(\timeVariable, \xLabel))- \latticeVelocity \distributionFunctionLetter_{\labNegY}^{\atEquilibrium}(\traceOperator_{\labelSouth}(\limitConservedMoment)(\timeVariable, \xLabel))  &= \flux_{\yLabel}(\traceOperator_{\labelSouth}(\limitConservedMoment)(\timeVariable, \xLabel)),
    \end{align*}
    where the second one has been obtained throught the same procedure on $\yLabel$.
    Back into \eqref{eq:limitEntropyIneqAlmostFinished}, we obtain
    \begin{multline*}
        -\int_0^{\finalTime}\int_{(\reals_+^{*})^2}|\limitConservedMoment-\krushkovParameter |\partial_{\timeVariable}\testFunction\differential\vectorial{\spaceVariable}\differential\timeVariable + \int_{(\reals_+^{*})^2}|\conservedVariable^{\initial}(\vectorial{\spaceVariable})-\krushkovParameter | \testFunction(0, \vectorial{\spaceVariable})\differential\vectorial{\spaceVariable}\\
        \leq \int_0^{\finalTime}\reduceSpaceDoubleInt \int_{(\reals_+^{*})^2} \sign(\limitConservedMoment-\krushkovParameter)((\flux_{\xLabel}(\limitConservedMoment)-\flux_{\xLabel}(\krushkovParameter))\partial_{\xLabel}\testFunction
        +(\flux_{\yLabel}(\limitConservedMoment)-\flux_{\yLabel}(\krushkovParameter))\partial_{\yLabel}\testFunction)\differential\vectorial{\spaceVariable}\differential\timeVariable\\
        +\int_0^{\finalTime}\reduceSpaceDoubleInt\int_{0}^{+\infty} \sign(\boundaryFunction{\labelWest}(\timeVariable, \yLabel) - \krushkovParameter)\Bigl ( \flux_{\xLabel}(\traceOperator_{\labelWest}(\limitConservedMoment)(\timeVariable, \yLabel)) - \flux_{\xLabel}(\krushkovParameter)\Bigr )\testFunction(\timeVariable, 0, \yLabel)\differential\yLabel\differential\timeVariable\\
        +\int_0^{\finalTime}\reduceSpaceDoubleInt\int_{0}^{+\infty} \sign(\boundaryFunction{\labelSouth}(\timeVariable, \xLabel) - \krushkovParameter)\Bigl ( \flux_{\yLabel}(\traceOperator_{\labelSouth}(\limitConservedMoment)(\timeVariable, \xLabel)) - \flux_{\yLabel}(\krushkovParameter)\Bigr )\testFunction(\timeVariable, \xLabel, 0)\differential\xLabel\differential\timeVariable,
    \end{multline*}
    which concludes the proof.
\end{proof}

\section{Numerical experiments}\label{sec:numericalExperiments}

We now test boundary conditions based on equilibria both in the case of scalar problems, which has been addressed in \Cref{sec:convergenceScalar}, and for systems.
In the former case, we test numerical schemes also in regimes where they do not fulfill the monotonicity conditions, \idEst{} \eqref{eq:mononotonicityConditionsD2Q5}, \eqref{eq:mononotonicityConditionsD2Q4}, \eqref{eq:mononotonicityConditionsD1Q3}, and \eqref{eq:mononotonicityConditionsD1Q2}.
We only use BGK schemes, since it is outside the scope of this paper to discuss the advantages and the limitations of the many different relaxation operators. 
We thus consider $\relaxationParameterAntiSymmetric = \relaxationParameterSymmetric = \relaxationParameter$.

\subsection{Resilience against ``wrong'' traces: possible boundary layers}

In the introduction, we have discussed the fact that the PDE at hand might need a boundary condition only on part of the boundary.
However, the numerical scheme needs to specify incoming information on the whole boundary. 
We have discussed under what conditions the numerical scheme can be shown to converge to the entropy solution.
We now want to check the speed of convergence when clearly wrong data are used as arguments of the equilibria in \eqref{eq:boundaryEquilibrium1}, which we call resilience against ``wrong'' traces.

\subsubsection{Linear case: transport equation}\label{sec:transportExp}

Let us first consider a linear problem, which is useful to understand things from a theoretical perspective.
Consider a 1D transport equation with negative transport velocity set on a segment.
By linearity, the left boundary is---regardless of the solution---an outflow, whereas the right boundary is always an inflow.
Here, the trace of the solution must be enforced \emph{via} a Dirichlet boundary condition, that we decide to take equal to zero.
The problem reads, for $\advectionVelocity<0$: 
\begin{align*}
    &\partial_{\timeVariable}\conservedVariable(\timeVariable, \spaceVariable) + \advectionVelocity\partial_{\spaceVariable}\conservedVariable(\timeVariable, \spaceVariable) = 0, \qquad &&(\timeVariable, \spaceVariable)\in (0, \tfrac{1}{2})\times (0, 1), \\
    &\conservedVariable(0, \spaceVariable) = \conservedVariable^{\initial}(\spaceVariable), \qquad &&\spaceVariable\in(0, 1), \\
    &\conservedVariable(\timeVariable, 1) = \boundaryFunction{\labelEast}(\timeVariable) = 0, \qquad &&\timeVariable\in(0, \tfrac{1}{2}).
\end{align*} 
Still, something must be done at $\spaceVariable = 0$ from the numerical point of view.

We utilize a \lbmScheme{1}{2} scheme, where the numerical boundary condition is compatible with $\conservedVariable(\timeVariable, 1) = 0$ on the right, but a possibly wrong one on the left boundary.
We take the worst-case scenario where we systematically enforce a time-constant state $\tilde{\conservedVariableDiscrete}_{\labelWest}^{\indexTime} = \tilde{\conservedVariableDiscrete}_{\labelWest}\in[-\maximumInitialDatum, \maximumInitialDatum]$ on the left boundary, so that \eqref{eq:boundaryEquilibrium1} becomes
\begin{equation}\label{eq:wrongTraceCondition}
    \distributionFunction_{\labPosX, -1}^{\indexTime, \collided} = \distributionFunctionLetter_{\labPosX}^{\atEquilibrium}(\tilde{\conservedVariableDiscrete}_{\labelWest}) = \frac{\tilde{\conservedVariableDiscrete}_{\labelWest}}{2}(1+\courantNumber), \qquad  \distributionFunction_{\labNegX, \numberCells}^{\indexTime, \collided} = \distributionFunctionLetter_{\labNegX}^{\atEquilibrium}(0) = 0,
\end{equation}
where $\courantNumber\definitionEquality\advectionVelocity/\latticeVelocity$ is the Courant number.
We consider the case of a strict CFL condition: $-1<\courantNumber<0$.
Notice that better boundary conditions on the outflow to the left would be obtained by extrapolating the trace of the numerical solution outside the domain and take the corresponding equilibrium:
\begin{equation}\label{eq:extrapolations}
    \distributionFunction_{\labPosX, -1}^{\indexTime, \collided} = \distributionFunctionLetter_{\labPosX}^{\atEquilibrium}(\tilde{\conservedVariableDiscrete}_{\labelWest}^{\indexTime}) = \frac{\tilde{\conservedVariableDiscrete}_{\labelWest}^{\indexTime}}{2}(1+\courantNumber), \qquad\text{with}\qquad 
    \underbrace{\tilde{\conservedVariableDiscrete}_{\labelWest}^{\indexTime} = \conservedVariableDiscrete_0^{\indexTime}}_{\text{1st-order}}, \qquad \underbrace{\tilde{\conservedVariableDiscrete}_{\labelWest}^{\indexTime} = 2\conservedVariableDiscrete_0^{\indexTime} - \conservedVariableDiscrete_1^{\indexTime}}_{\text{2nd-order}}, \qquad \cdots
\end{equation}
However, starting from second-order extrapolations, there is no guarantee that $\distributionFunction_{\labPosX, -1}^{\indexTime, \collided}\in[\distributionFunctionLetter_{\labPosX}^{\atEquilibrium}(-\maximumInitialDatum),\distributionFunctionLetter_{\labPosX}^{\atEquilibrium}(\maximumInitialDatum)]$, which is essential in the proof that we have conducted in \Cref{sec:convergenceScalar}.

\paragraph{A numerical simulation with non-zero initial datum}

\begin{figure}
    \begin{center}
        \includegraphics[width=0.99\textwidth]{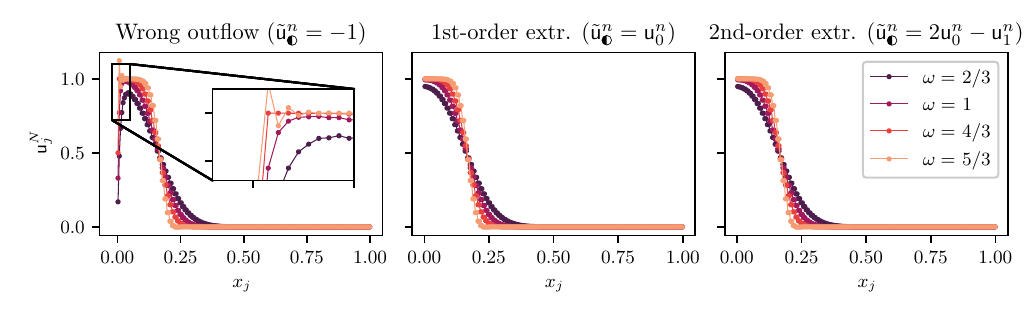}
    \end{center}\caption{\label{fig:D1Q2_Transport_Plot_Outflow}Solution at final time for the \lbmScheme{1}{2} scheme for the transport equation with different outflow ($\spaceVariable=0$) boundary conditions.}
\end{figure}

We propose a simulation with $\numberCells = 200$, using $\advectionVelocity = -1$ and $\latticeVelocity=2$.
The initial datum is $\conservedVariable^{\initial}(\spaceVariable) = \indicatorFunction{(1/3, 2/3)}(\spaceVariable)$ and the final time of the simulation is $\finalTime = \tfrac{1}{2}$. Within this framework, the \lbmScheme{1}{2} scheme is monotone for $\relaxationParameter\leq \tfrac{4}{3}$, \confer{} \eqref{eq:mononotonicityConditionsD1Q2}.
The results are presented in \Cref{fig:D1Q2_Transport_Plot_Outflow}.
We observe the expected boundary layer when \eqref{eq:wrongTraceCondition} is utilized.
We remark that this boundary layer looks geometrically damped for $\relaxationParameter<\tfrac{4}{3}$, and geometrically damped with oscillatory behavior when $\relaxationParameter>\tfrac{4}{3}$. When $\relaxationParameter = \tfrac{4}{3}$, the boundary layer is localized only on the first cell of the domain and does not extend inside.

This can be explained in the following way, following \cite{bellotti2024consistency}: away from the boundary, the conserved moment obeys the multi-step Finite Difference scheme $\conservedVariableDiscrete_{\indexSpace}^{\indexTime+1} = \tfrac{1}{2}(2-\relaxationParameter+\relaxationParameter\courantNumber)\conservedVariableDiscrete_{\indexSpace-1}^{\indexTime} + \tfrac{1}{2}(2-\relaxationParameter-\relaxationParameter\courantNumber)\conservedVariableDiscrete_{\indexSpace+1}^{\indexTime} + (\relaxationParameter-1) \conservedVariableDiscrete_{\indexSpace}^{\indexTime-1}$.
Inserting the ansatz  $\conservedVariableDiscrete_{\indexSpace}^{\indexTime}=\timeShiftOperator^{\indexTime}\fourierShift(\timeShiftOperator)^{\indexSpace}$, which boils down to consider a $\timeShiftOperator$-transform in time (see \cite{antoine:hal-00347884} for the details), yields
\begin{equation}\label{eq:charEquation}
    \tfrac{1}{2}(2-\relaxationParameter-\relaxationParameter\courantNumber) \fourierShift(\timeShiftOperator) + \tfrac{1}{2}(2-\relaxationParameter+\relaxationParameter\courantNumber) \fourierShift(\timeShiftOperator)^{-1}  = \timeShiftOperator + (1-\relaxationParameter)\timeShiftOperator^{-1}.
\end{equation}
This equation in $\fourierShift(\timeShiftOperator)$ has, when $-1<\courantNumber<0$ and except when $2-\relaxationParameter-\relaxationParameter\courantNumber= 0$ or $2-\relaxationParameter+\relaxationParameter\courantNumber= 0$, two non-trivial solutions $\solutionCharStable (\timeShiftOperator)$ (stable root) and $\solutionCharUnstable (\timeShiftOperator)$ (unstable root) such that $|\solutionCharStable (\timeShiftOperator)|<1$ and $|\solutionCharUnstable (\timeShiftOperator)|>1$ for $|\timeShiftOperator|>1$, which can be continuously extended up to $|\timeShiftOperator|=1$. 
When $2-\relaxationParameter+\relaxationParameter\courantNumber= 0$, the only non-trivial root is $\solutionCharUnstable (\timeShiftOperator)$ and $\solutionCharStable(\timeShiftOperator)\equiv 0$. 
The boundary layer originating from the left boundary at $\spaceVariable=0$ can only be propagated inside the domain by $\solutionCharStable$, which equals zero  when $2-\relaxationParameter+\relaxationParameter\courantNumber= 0$ ($\relaxationParameter=\tfrac{4}{3}$ for the considered setting).
Since the $\timeShiftOperator$-transform of a time-constant boundary source term has a pole at $\timeShiftOperator=1$, this is the value of $\timeShiftOperator$ at which one needs to study a residual to invert the $\timeShiftOperator$-transform, \confer{} \Cref{app:proofRelEstimate}:
\begin{equation}\label{eq:tmp3}
    \solutionCharStable(1) = \frac{2-\relaxationParameter+\relaxationParameter\courantNumber}{2-\relaxationParameter-\relaxationParameter\courantNumber},
\end{equation}
which is such that $\solutionCharStable(1)\in (0, 1)$ for $0<\relaxationParameter<\frac{2}{1-\courantNumber}$, $\solutionCharStable(1) = 0$ if $\relaxationParameter=\frac{2}{1-\courantNumber}$, and $\solutionCharStable(1)\in (-1, 0)$ if $\frac{2}{1-\courantNumber}<\relaxationParameter\leq 2$.
Since, see again \Cref{app:proofRelEstimate}, the boundary layer behaves $\propto\solutionCharStable(1)^{\indexSpace}$, this explains the behaviors of the numerical simulation observed on the first column of \Cref{fig:D1Q2_Transport_Plot_Outflow}. In particular, when $\relaxationParameter=\frac{2}{1-\courantNumber}$, we have $\solutionCharStable(1)^{\indexSpace} = \delta_{\indexSpace 0}$.

Finally, notice that extrapolations \eqref{eq:extrapolations} work fine and allow the initial datum to exit the domain from the left without boundary layer, as expected.

\paragraph{The structure of the boundary layer with zero initial datum}

\begin{figure}
    \begin{center}
        \includegraphics[width=0.99\textwidth]{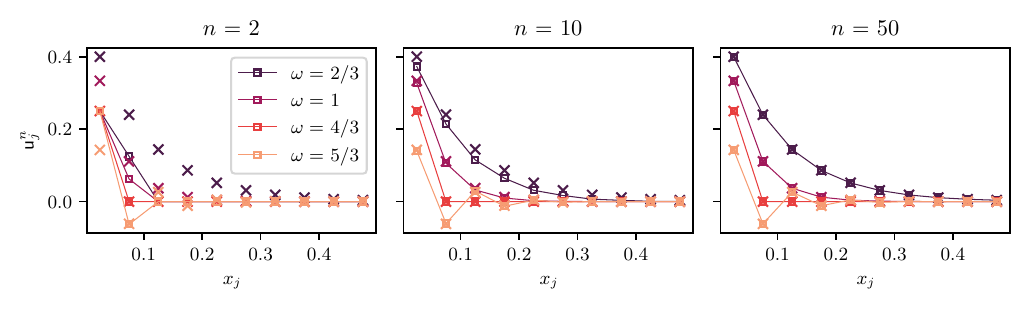}
    \end{center}\caption{\label{fig:D1Q2_Transport_Plot_BL_Description_Outflow}Solution at the iteration $\indexTime$ for the \lbmScheme{1}{2} scheme with boundary conditions \eqref{eq:wrongTraceCondition} with $\tilde{\conservedVariableDiscrete}_{\protect\labelWest} = 1$ and zero initial data. 
    Crosses correspond to the estimate \eqref{eq:estimationLongTimeRelaxationD1Q2WrongBC}.}
\end{figure}

The question now is: what kind of convergence rate can we hope for when a wrong trace on the outflow is imposed through \eqref{eq:wrongTraceCondition}? 
We know from \Cref{sec:convergenceScalar} that the scheme converges to the physical solution, which is insensitive of $\tilde{\conservedVariableDiscrete}_{\labelWest}$.
One must keep in mind that the convergence rate that one might expect for a problem without boundary, see \cite{brenner2006besov} for the theory on one-step linear Finite Difference schemes, in the $\lebesgueSpace{p}$ norm for a solution featuring shocks is $\bigO{\spaceStep^{1/(2p)}}$ when $\relaxationParameter\in(0, 2)$ (first-order scheme) and $\bigO{\spaceStep^{2/(3p)}}$ when $\relaxationParameter=2$ (second-order scheme).

For the sake of simplicity and thanks to the superposition principle, we consider the case of zero initial datum $\conservedVariable^{\initial}\equiv 0$.
In the case of a relaxation scheme, namely $\relaxationParameter = 1$, we can explicitly characterize the boundary layer profile.
When $\relaxationParameter\neq 1$, we can obtain the asymptotic profile for $\indexTime$ large enough.
This can help us figuring out the dependence of  $\lVert\conservedVariableDiscrete^{\indexTime}\rVert_{\petitLebesgueSpace{p}, \spaceStep}$ in $\spaceStep$, that is, the order of the scheme in presence of boundary layers induced by wrong outflow conditions.
\begin{proposition}[Description of the boundary layer for the linear \lbmScheme{1}{2}]\label{prop:EstimateWrongBoundaryRelaxationD1Q2}
    Let $\conservedVariableDiscrete_{\indexSpace}^{\indexTime}$ for $\indexTime\in\integerInterval{0}{\numberTimeSteps}$ and $\indexSpace\in\integerInterval{0}{\numberCells-1}$, be the solution of the \lbmScheme{1}{2} scheme with $\relaxationParameter=1$ (relaxation scheme) with $\conservedVariableDiscrete_{\indexSpace}^0 = 0$ for $\indexSpace\in\integerInterval{0}{\numberCells-1}$ and boundary conditions \eqref{eq:wrongTraceCondition}.
    Assume the strict CFL condition $-1<\courantNumber<0$.
    Then, for $\indexTime\geq 1$
    \begin{equation}\label{eq:estimateD1Q2Chebyshev}
        \conservedVariableDiscrete_{\indexSpace}^{\indexTime} = \frac{\tilde{\conservedVariableDiscrete}_{\labelWest}}{2}(1+\courantNumber) \Biggl (\delta_{\indexSpace 0} + \frac{\bigl ( \frac{1+\courantNumber}{1-\courantNumber}\bigr )^{\indexSpace/2}}{2\numberCells+2}\sum_{h=1}^{\lfloor\numberCells/2\rfloor}(4-\lambda_{\numberCells-h+1}^2) U_{\indexSpace}(\tfrac{1}{2}\lambda_{\numberCells-h+1}) \sum_{p=1}^{\indexTime-1} (1+(-1)^{\indexSpace+p})(\tfrac{1}{2}\sqrt{1-\courantNumber^2}\lambda_{\numberCells-h+1})^p \Biggr ),
    \end{equation}
    where $U_{\indexSpace}$ is the $\indexSpace$-th Chebyshev polynomial of second kind and $\lambda_h \definitionEquality -2\cos ( \frac{h\pi}{\numberCells+1} )$.

    Under the same conditions, except for the fact of allowing $\relaxationParameter\in (0, 2)$, when the number of cells $\numberCells$ is large ($\numberCells\gg 1$) as well as the discrete time $\indexTime$ ($\indexTime\gg1$), we have 
    \begin{equation}\label{eq:estimationLongTimeRelaxationD1Q2WrongBC}
        \conservedVariableDiscrete_{\indexSpace}^{\indexTime}\approx 
        \begin{cases}
            \tilde{\conservedVariableDiscrete}_{\labelWest} \frac{(2-\relaxationParameter)(1+\courantNumber)}{2-\relaxationParameter(1+\courantNumber)}\bigl ( \frac{2-\relaxationParameter+\relaxationParameter\courantNumber}{2-\relaxationParameter-\relaxationParameter\courantNumber}\bigr )^{\indexSpace}, \qquad &\text{if}\quad \relaxationParameter\neq\frac{2}{1-\courantNumber}, \\
            \tilde{\conservedVariableDiscrete}_{\labelWest}\frac{1+\courantNumber}{2}\delta_{\indexSpace 0}, \qquad &\text{if}\quad \relaxationParameter=\frac{2}{1-\courantNumber}.
        \end{cases}
    \end{equation}
\end{proposition}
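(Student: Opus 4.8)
The plan is to handle the two regimes of \Cref{prop:EstimateWrongBoundaryRelaxationD1Q2} with different tools: the exact identity~\eqref{eq:estimateD1Q2Chebyshev} at $\relaxationParameter=1$ by an explicit spectral decomposition, and the asymptotic profile~\eqref{eq:estimationLongTimeRelaxationD1Q2WrongBC} at general $\relaxationParameter\in(0,2)$ by a normal-mode / $\timeShiftOperator$-transform computation centred on the root $\solutionCharStable$.

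\emph{The relaxation case.} At $\relaxationParameter=1$ the post-collision populations coincide with their equilibria, so, declaring the ghost values $\conservedVariableDiscrete_{-1}^{\indexTime}=\tilde{\conservedVariableDiscrete}_{\labelWest}$ and $\conservedVariableDiscrete_{\numberCells}^{\indexTime}=0$ forced by~\eqref{eq:wrongTraceCondition} and $\boundaryFunction{\labelEast}=0$, the conserved moment solves, for $\indexSpace\in\integerInterval{0}{\numberCells-1}$, the linear recursion $\conservedVariableDiscrete_{\indexSpace}^{\indexTime+1}=\tfrac12(1+\courantNumber)\conservedVariableDiscrete_{\indexSpace-1}^{\indexTime}+\tfrac12(1-\courantNumber)\conservedVariableDiscrete_{\indexSpace+1}^{\indexTime}$ with $\conservedVariableDiscrete_{\indexSpace}^{0}=0$. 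First I would symmetrize it via $\conservedVariableDiscrete_{\indexSpace}^{\indexTime}=\rho^{\indexSpace}v_{\indexSpace}^{\indexTime}$, $\rho\definitionEquality((1+\courantNumber)/(1-\courantNumber))^{1/2}$, turning it into $v_{\indexSpace}^{\indexTime+1}=\mu(v_{\indexSpace-1}^{\indexTime}+v_{\indexSpace+1}^{\indexTime})$ with $\mu\definitionEquality\tfrac12\sqrt{1-\courantNumber^2}$, a homogeneous Dirichlet condition at $\indexSpace=\numberCells$, and the time-constant datum $v_{-1}^{\indexTime}=\rho\,\tilde{\conservedVariableDiscrete}_{\labelWest}$ at $\indexSpace=-1$. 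Reading this as $v^{\indexTime+1}=\matricial{A}v^{\indexTime}+b$ on $\reals^{\numberCells}$, with $\matricial{A}$ the symmetric tridiagonal matrix (zero diagonal, $\mu$ off-diagonals) and $b=\mu\rho\,\tilde{\conservedVariableDiscrete}_{\labelWest}\canonicalBasisVector{0}$, the zero initial datum gives $v^{\indexTime}=\bigl(\sum_{p=0}^{\indexTime-1}\matricial{A}^{p}\bigr)b$. I would then diagonalize $\matricial{A}$ in its orthogonal sine eigenbasis $\phi^{(h)}_{\indexSpace}=\sin\tfrac{h\pi(\indexSpace+1)}{\numberCells+1}$, with eigenvalues $2\mu\cos\tfrac{h\pi}{\numberCells+1}=\mu\lambda_{\numberCells-h+1}$, expand $\canonicalBasisVector{0}$ in it, and convert the resulting sine sum into a Chebyshev sum through $U_{\indexSpace}(\cos\theta)=\sin((\indexSpace+1)\theta)/\sin\theta$ together with $\cos\tfrac{h\pi}{\numberCells+1}=\tfrac12\lambda_{\numberCells-h+1}$ and $\sin^{2}\tfrac{h\pi}{\numberCells+1}=\tfrac14(4-\lambda_{\numberCells-h+1}^{2})$. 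The $p=0$ term collapses, by completeness of $\{\phi^{(h)}\}$, to $\mu\rho\,\tilde{\conservedVariableDiscrete}_{\labelWest}\delta_{\indexSpace 0}$ (equivalently $\conservedVariableDiscrete_{\indexSpace}^{1}=\tfrac12(1+\courantNumber)\tilde{\conservedVariableDiscrete}_{\labelWest}\delta_{\indexSpace 0}$), and folding the remaining sum under $h\mapsto\numberCells+1-h$, using $\lambda_{\numberCells-h+1}=-\lambda_{h}$ and $U_{\indexSpace}(-x)=(-1)^{\indexSpace}U_{\indexSpace}(x)$, produces the factor $1+(-1)^{\indexSpace+p}$ and restricts the range to $\integerInterval{1}{\lfloor\numberCells/2\rfloor}$. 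Restoring $\conservedVariableDiscrete_{\indexSpace}^{\indexTime}=\rho^{\indexSpace}v_{\indexSpace}^{\indexTime}$ and using $\rho^{\indexSpace}=((1+\courantNumber)/(1-\courantNumber))^{\indexSpace/2}$, $\mu\rho=\tfrac12(1+\courantNumber)$, $\mu\lambda_{\numberCells-h+1}=\tfrac12\sqrt{1-\courantNumber^{2}}\,\lambda_{\numberCells-h+1}$, yields exactly~\eqref{eq:estimateD1Q2Chebyshev}.

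\emph{The general case.} Here ``$\approx$'' means the limit as $\numberCells,\indexTime\to\infty$, and the limiting profile is the steady state of the corresponding half-line scheme; since $\numberCells\gg1$, the influence of the (correct) inflow boundary at $\spaceVariable=1$ is an $\solutionCharUnstable$-mode exponentially small in $\numberCells-\indexSpace$, so it is legitimate to compute on the half-line. The stationary interior relation has characteristic roots $1$ and $\solutionCharStable(1)=\tfrac{2-\relaxationParameter+\relaxationParameter\courantNumber}{2-\relaxationParameter-\relaxationParameter\courantNumber}$, cf.~\eqref{eq:tmp3}; selecting the decaying (physical) solution in the limit $\numberCells\to\infty$ kills the constant mode, so $\conservedVariableDiscrete_{\indexSpace}^{\infty}=b\,\solutionCharStable(1)^{\indexSpace}$, and imposing the wrong-trace relation from~\eqref{eq:wrongTraceCondition} at $\indexSpace=0$ (equivalently $\distributionFunction_{\labPosX,0}^{\infty}=\tfrac12(1+\courantNumber)\tilde{\conservedVariableDiscrete}_{\labelWest}$) together with decay of the non-conserved moment pins down $b=\tfrac{(2-\relaxationParameter)(1+\courantNumber)}{2-\relaxationParameter(1+\courantNumber)}\tilde{\conservedVariableDiscrete}_{\labelWest}$, which is the first line of~\eqref{eq:estimationLongTimeRelaxationD1Q2WrongBC}. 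To show that $\conservedVariableDiscrete_{\indexSpace}^{\indexTime}$ actually tends to this steady state I would take a $\timeShiftOperator$-transform in time as in \Cref{app:proofRelEstimate}, write $\widehat{\conservedVariableDiscrete}_{\indexSpace}(\timeShiftOperator)=\alpha(\timeShiftOperator)\,\solutionCharStable(\timeShiftOperator)^{\indexSpace}$ with $\solutionCharStable$ the stable root of~\eqref{eq:charEquation}, and observe that the time-constant source has a simple pole at $\timeShiftOperator=1$ and no other singularity on the closed unit disc (exactly where $-1<\courantNumber<0$ and $\relaxationParameter\in(0,2)$ yield $|\solutionCharStable|<1<|\solutionCharUnstable|$); residue calculus at $\timeShiftOperator=1$ then returns precisely $b\,\solutionCharStable(1)^{\indexSpace}$. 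Finally, when $\relaxationParameter=\tfrac{2}{1-\courantNumber}$ one has $\solutionCharStable\equiv0$: the layer cannot propagate past the first cell, and inspecting the one-step update at $\indexSpace=0$ directly gives $\tfrac12(1+\courantNumber)\tilde{\conservedVariableDiscrete}_{\labelWest}\delta_{\indexSpace 0}$, the second line.

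\emph{Main obstacle.} The delicate points are all in the general case: (i) making the half-line reduction quantitative, i.e.\ bounding the far-boundary correction by $\bigO{\gamma^{\numberCells-\indexSpace}}$ with $\gamma<1$; (ii) the normal-mode / Godunov--Ryabenkii analysis showing that the transformed boundary-value problem carries no mode on or inside the unit circle besides the source pole at $\timeShiftOperator=1$ --- this is where the strict CFL and $\relaxationParameter\in(0,2)$ are genuinely used, and it must be checked away from the degenerate value $\relaxationParameter=\tfrac{2}{1-\courantNumber}$, handled separately; and (iii) upgrading the residue computation to the asserted asymptotic equivalence, uniformly in $\indexSpace$, as $\indexTime,\numberCells\to\infty$. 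By contrast, the $\relaxationParameter=1$ computation is entirely explicit, and its ingredients --- the symmetrization, the sine/Chebyshev dictionary, the $h\mapsto\numberCells+1-h$ reflection --- are bookkeeping.
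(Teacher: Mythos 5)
Your proposal is correct and follows essentially the same route as the paper: the $\relaxationParameter=1$ case is reduced to $\conservedVariableDiscrete^{\indexTime}=\bigl(\sum_{p<\indexTime}\matricial{A}^{p}\bigr)$ acting on the boundary source for the same tridiagonal matrix (the paper simply cites a known formula for the powers of $\matricial{A}$ where you re-derive it by symmetrization and sine-basis diagonalization, which is equivalent bookkeeping), and the general $\relaxationParameter\in(0,2)$ case is handled, as in the paper, by the half-line $\timeShiftOperator$-transform with the stable root $\solutionCharStable$ and a residue at the source pole $\timeShiftOperator=1$, including the degenerate value $\relaxationParameter=\tfrac{2}{1-\courantNumber}$ where $\solutionCharStable\equiv 0$. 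The caveats you list as remaining obstacles (quantitative far-boundary correction, uniformity of the long-time residue approximation) are likewise left informal in the paper, which states \eqref{eq:estimationLongTimeRelaxationD1Q2WrongBC} only as an approximation for $\numberCells,\indexTime\gg 1$.
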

The construction of \Cref{prop:EstimateWrongBoundaryRelaxationD1Q2} (proof given in \Cref{app:proofRelEstimate}) is completely analogous to the discussion from \cite{boutin2017stability} concerning Dirichlet boundary conditions for Finite Difference schemes and the boundary layer that these can generate, except for the fact that we limit the study to the long-time behavior. The work of \cite{boutin2017stability} deals with short times by considering the trace of the exact solution on the boundary.

To check that \eqref{eq:estimationLongTimeRelaxationD1Q2WrongBC} gives a good estimate on the behavior of the boundary layer as $\indexTime$ is large enough, consider a mesh made up of $\numberCells = 20$ cells and a zero initial datum, enforcing $\tilde{\conservedVariableDiscrete}_{\labelWest}=1$. 
The results in \Cref{fig:D1Q2_Transport_Plot_BL_Description_Outflow} confirm that  \eqref{eq:estimationLongTimeRelaxationD1Q2WrongBC} gives a good description of the boundary layer, for $\indexTime$ large enough.

\Cref{prop:EstimateWrongBoundaryRelaxationD1Q2} shows that we cannot expect convergence in the $\lebesgueSpace{\infty}$ norm when wrong traces are enforced.
This is different from the scheme (which is indeed a Lax-Friedrichs scheme when $\relaxationParameter = 1$) on the whole real line $\reals$, which is indeed stable and convergent in the $\lebesgueSpace{\infty}$ norm, see \cite{brenner2006besov}.
Under the assumptions by \Cref{prop:EstimateWrongBoundaryRelaxationD1Q2} and using \eqref{eq:estimationLongTimeRelaxationD1Q2WrongBC}, we deduce that on sufficiently refined grids and for $\indexTime$ large enough:
\begin{equation*}
    \lVert\conservedVariableDiscrete^{\indexTime}\rVert_{\petitLebesgueSpace{p}, \spaceStep} \approx  C(p) |\tilde{\conservedVariableDiscrete}_{\labelWest}| \spaceStep^{1/p}.
\end{equation*}
Assuming a first-order scheme, which is the case when $\relaxationParameter < 2$, we see that the error by the boundary layer in the $\lebesgueSpace{p}$-norm is $\bigO{\spaceStep^{1/p}}$, whereas the error of the inner scheme is $\bigO{\spaceStep^{1/(2p)}}$ if the solution features a shock, thus the latter always dominates. When data are smooth, we have to compare the error due to the boundary layer of order $\bigO{\spaceStep^{1/p}}$ with the error of the inner scheme, being $\bigO{\spaceStep}$. They are of the same order as $p=1$, but the boundary layer domaines when $p>1$.

\subsubsection{Non-linear case: Burgers equation}

\begin{figure}[h]
    \begin{center}
        Final time $\finalTime = \tfrac{1}{5}$\\
        \includegraphics[width=0.99\textwidth]{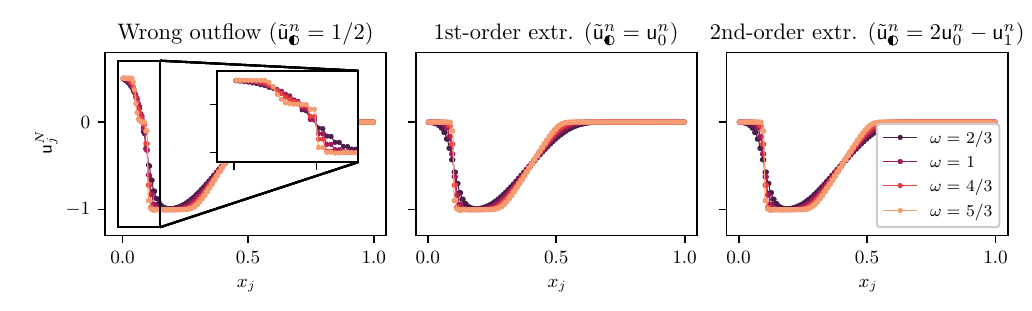}\\
        Final time $\finalTime = \tfrac{1}{2}$\\
        \includegraphics[width=0.99\textwidth]{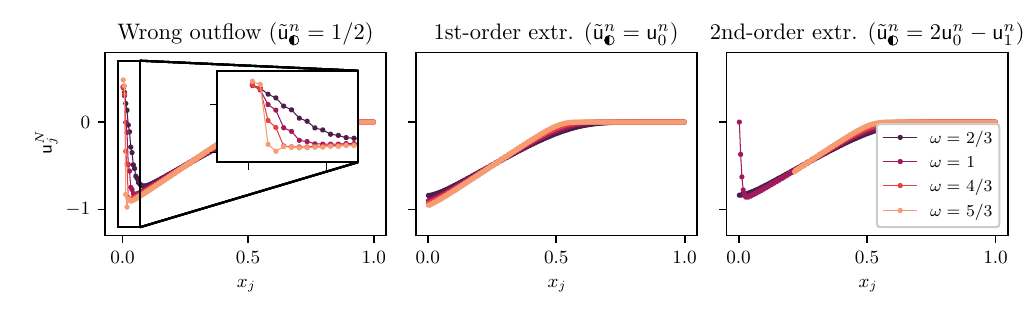}
    \end{center}\caption{\label{fig:D1Q2_Burgers_Plot_Outflow}Solution at final time for the \lbmScheme{1}{2} scheme for the Burgers equation with different outflow ($\spaceVariable=0$) boundary conditions.}
\end{figure}

We consider the same setting as \Cref{sec:transportExp}, except for the fact that the considered equation is a Burgers equation $\partial_{\timeVariable}\conservedVariable + \partial_{\spaceVariable}(\tfrac{1}{2}\conservedVariable^2)=0$.
We propose a simulation with $\numberCells = 200$, using $\latticeVelocity=2$.
The initial datum is $\conservedVariable^{\initial}(\spaceVariable) = -\indicatorFunction{(1/5, 1/2)}(\spaceVariable)$ and the final time of the simulation is either $\finalTime = \tfrac{1}{5}$ or $\finalTime=\tfrac{1}{2}$. Within this framework, the \lbmScheme{1}{2} scheme is monotone for $\relaxationParameter\leq \tfrac{4}{3}$.

The results in \Cref{fig:D1Q2_Burgers_Plot_Outflow} show a boundary layer at the outflow (at $\spaceVariable=0$) due to the wrong trace enforced at this boundary.
As long as the shock has not passed through the outflow, first and second-order extrapolations injected in the equilibrium work fine.
However, when the shock has left the domain, we see that the second-order extrapolation creates a boundary layer when $\relaxationParameter = 1$, and the simulation is unstable (data are missing) for $\relaxationParameter = \tfrac{4}{3}$ and $\tfrac{5}{3}$.
Indeed, as we have emphasized, there is no guarantee that $2\conservedVariableDiscrete_0^{\indexTime}-\conservedVariableDiscrete_1^{\indexTime}$ remains in $[-\maximumInitialDatum, \maximumInitialDatum]$ even when $\relaxationParameter \leq \tfrac{4}{3}$, which causes instabilities.

\subsection{A scalar 1D non-convex problem}

We now consider a test case based on the problem
\begin{align*}
    &\partial_{\timeVariable}\conservedVariable(\timeVariable, \spaceVariable) + \partial_{\spaceVariable} (\tfrac{1}{3}\conservedVariable(\timeVariable, \spaceVariable)^3 ) = 0, \qquad &&(\timeVariable, \spaceVariable) \in(0, 4)\times(0, 1), \\
    &\conservedVariable(0, \spaceVariable) = \conservedVariable^{\initial}(\spaceVariable) = 0, \qquad &&\spaceVariable\in(0, 1), \\
    &\conservedVariable(\timeVariable, 0) = \boundaryFunction{\labelWest}(\timeVariable) =  \sin(6\timeVariable), \qquad &&\timeVariable\in(0, 4), \\
    &\conservedVariable(\timeVariable, 1) = \boundaryFunction{\labelEast}(\timeVariable) =  0, \qquad &&\timeVariable\in(0, 4).
\end{align*}
We employ a \lbmScheme{1}{2} scheme with $\latticeVelocity = \tfrac{10}{7}$ as in \cite[Section 6.1]{aregba2004kinetic}, except that we fix it for the whole simulation without having the possibility to adapt it dynamically.
With this data, the scheme remains monotone for  $\relaxationParameter\leq \tfrac{20}{17}$.
Notice that whenever $\relaxationParameter = 1$, we obtain the two-velocities model---called ``XW''---from \cite{aregba2004kinetic}.
We compare to a Godunov scheme that has to be though as a benchmark reference scheme.

\begin{figure}[h]
    \begin{center}
        \includegraphics[width=0.99\textwidth]{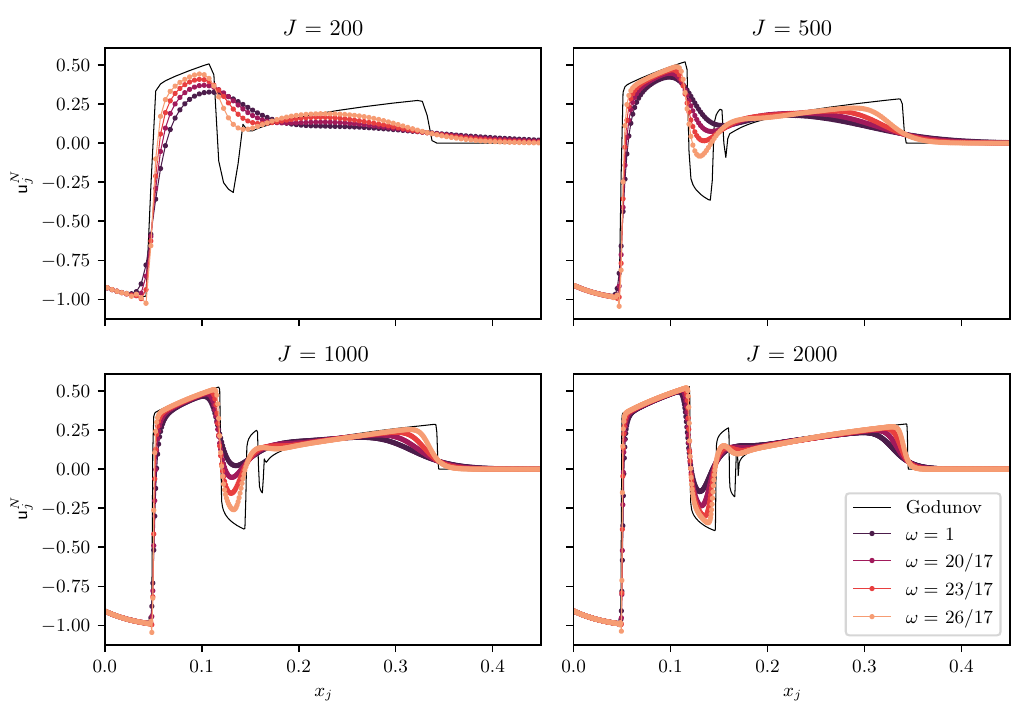}
    \end{center}\caption{\label{fig:D1Q2_XW_Plot}Solution at final time for the \lbmScheme{1}{2} scheme for the non-convex problem on part of the domain.}
\end{figure}

The results are given in \Cref{fig:D1Q2_XW_Plot} and show convergence to the same solution as the Godunov scheme.
The schemes we consider are more diffusive, and we see that by increasing $\relaxationParameter$, we can make the scheme more accurate compared to the relaxation scheme.
Moreover, we start observing oscillations when $\relaxationParameter> \tfrac{20}{17}$, and the scheme ceases to be monotone.

\subsection{Scalar 2D Burgers equation}

We now address a two-dimensional problem, which reads:
\begin{align*}
    &\partial_{\timeVariable}{\conservedVariable}(\timeVariable, \vectorial{\spaceVariable}) + \partial_{\xLabel}(\tfrac{1}{2}({\conservedVariable}(\timeVariable, \vectorial{\spaceVariable}))^2) + \partial_{\yLabel}(\tfrac{1}{2}({\conservedVariable}(\timeVariable, \vectorial{\spaceVariable}))^2) = 0, \qquad &&(\timeVariable, \vectorial{\spaceVariable})\in (0, \tfrac{1}{2})\times (0, 1)^2,\\
    &{\conservedVariable}(0, \vectorial{\spaceVariable}) = {\conservedVariable}^{\initial}(\vectorial{\spaceVariable}) = 0, \qquad &&\vectorial{\spaceVariable}\in(0, 1)^2, \\
    &{\conservedVariable}(\timeVariable, \xLabel = 0, \yLabel) = \tilde{{\conservedVariable}}_{\labelWest}(\timeVariable, \yLabel) = \indicatorFunction{\cos(\vartheta)(-\timeVariable/2)+\sin(\vartheta)(\yLabel-\timeVariable/2)\leq 0}, \qquad &&(\yLabel, \timeVariable)\in(0, 1)\times(0, \tfrac{1}{2}),\\
    &{\conservedVariable}(\timeVariable, \xLabel = 1, \yLabel) = \tilde{{\conservedVariable}}_{\labelEast}(\timeVariable, \yLabel)= \indicatorFunction{\cos(\vartheta)(1-\timeVariable/2)+\sin(\vartheta)(\yLabel-\timeVariable/2)\leq 0}, \qquad &&(\yLabel, \timeVariable)\in(0, 1)\times(0, \tfrac{1}{2}),\\
    &{\conservedVariable}(\timeVariable, \xLabel, \yLabel = 0) = \tilde{{\conservedVariable}}_{\labelSouth}(\timeVariable, \xLabel) =\indicatorFunction{\cos(\vartheta)(\xLabel-\timeVariable/2)+\sin(\vartheta)(-\timeVariable/2)\leq 0}, \qquad &&(\xLabel, \timeVariable)\in(0, 1)\times(0, \tfrac{1}{2}),\\
    &{\conservedVariable}(\timeVariable, \xLabel, \yLabel = 1) = \tilde{{\conservedVariable}}_{\labelNorth}(\timeVariable, \xLabel) = \indicatorFunction{\cos(\vartheta)(\xLabel-\timeVariable/2)+\sin(\vartheta)(1-\timeVariable/2)\leq 0}, \qquad &&(\xLabel, \timeVariable)\in(0, 1)\times(0, \tfrac{1}{2}),
\end{align*}
with $\vartheta\in[0, \pi/2]$.
The exact solution is a shock profile connecting constant states---equal to one before and equal to zero after the shock---moving in the direction $(\cos(\vartheta),\sin(\vartheta))$ according to the Rankine-Hugoniot relation.
This solution is ${\conservedVariable}(\timeVariable, \vectorial{\spaceVariable}) = \indicatorFunction{\cos(\vartheta)(\xLabel-\timeVariable/2)+\sin(\vartheta)(\yLabel-\timeVariable/2)\leq 0}$.
Since the initial datum is zero, this solution it totally built-up using boundary condition, which shows the relevance of this setting to evaluate our way of enforcing boundary conditions.
In the numerical simulations, we take $\vartheta = \tfrac{\pi}{3}$.

\begin{figure}[h]
    \begin{center}
        \includegraphics[width=0.99\textwidth]{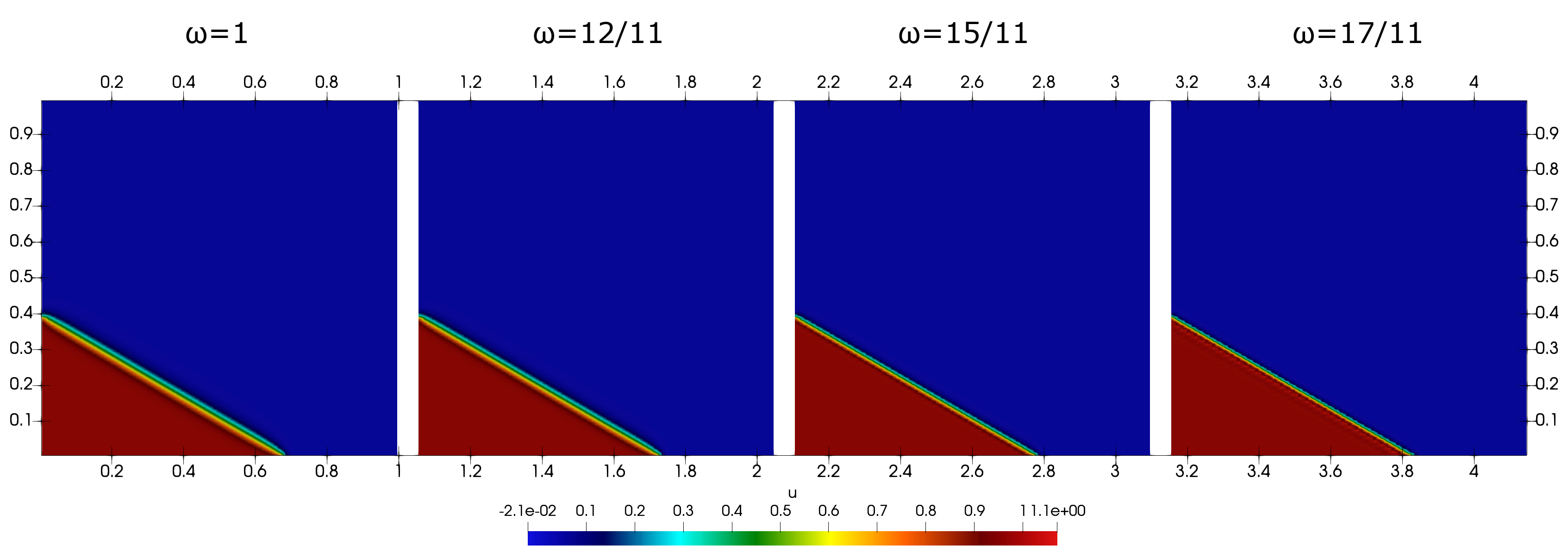}
    \end{center}\caption{\label{fig:D2Q4Burgers}Solution at final time $\conservedVariableDiscrete_{\vectorial{\indexSpace}}^{\numberTimeSteps}$ for the \lbmScheme{2}{4} for the 2D Burgers equation, for several values of $\relaxationParameter$.}
\end{figure}

We employ a \lbmScheme{2}{4} scheme with $\equilibriumCoefficientLinear_{\xLabel} = \equilibriumCoefficientLinear_{\yLabel} = \tfrac{1}{4}$.
Moreover, we take $\latticeVelocity = 3$, so that the scheme is monotone for its BGK version under $\relaxationParameter \leq \tfrac{12}{11}$.
Finally, we consider $\numberCells = 100$, see \eqref{eq:mononotonicityConditionsD2Q4}.
The solutions in \Cref{fig:D2Q4Burgers} show that the shock propagates at the right speed without deformation, thus the boundary conditions are able to reproduce the expected dynamics.
As expected, the shock undergoes less numerical diffusion for larger $\relaxationParameter$. However, this comes at the price of dispersive oscillations behind it, which are particualarly manifest when $\relaxationParameter = \tfrac{17}{11}$.
Moreover, since the shock is (indeed, on purpose) not parallel to the axis, we observe a small ``pinching'' at the level of the boundary, since the discrete velocities of the scheme are parallel to the axis.

\subsection{Euler equations in 2D: Double Mach 10 reflection}\label{sec:Mach10}

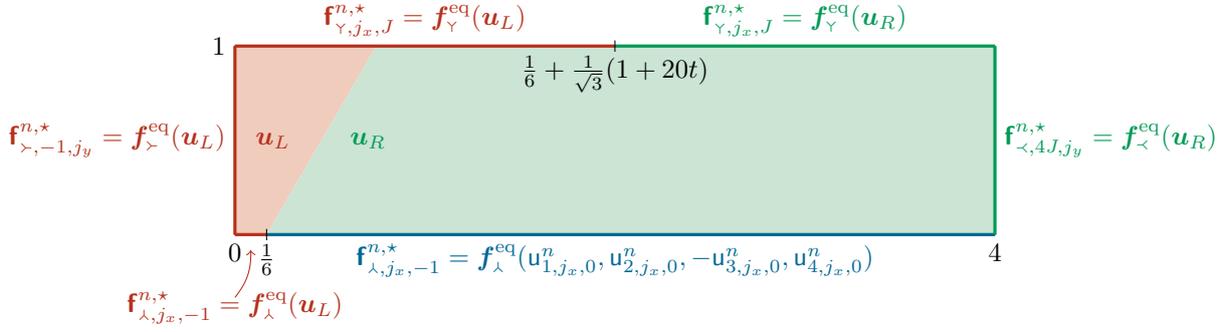
\begin{figure}[h]
    \begin{center}
        \begin{tikzpicture}[scale=2.5]
    \draw (0,0) rectangle (4,1);

    \def\xa{1/6} 
    \def\xb{\xa + 0.57735026919} 
     \fill[BrickRed!20] (0,0) -- (0,1) -- (\xb,1) -- (\xa,0) -- cycle;
     \fill[ForestGreen!20] (\xb,1) -- (\xa,0) -- (4, 0) -- (4, 1) -- cycle;

    \draw[BrickRed, very thick] (0,0) -- (0,1);
    
    \draw[BrickRed, very thick] (0,0) -- (1/6,0);
    
    \draw[MidnightBlue, very thick] (1/6,0) -- (4,0);
    
    \draw[BrickRed, very thick] (0,1) -- (2,1);
    
    \draw[ForestGreen, very thick] (2,1) -- (4,1);
    
    \draw[ForestGreen, very thick] (4,0) -- (4,1);
    
    \node[below] at (0,0) {0}; 
    \node[below] at (4,0) {4}; 
    \node[left] at (0,1) {1}; 
    \draw (1/6,-0.03) -- (1/6,0.03);
    \node[below] at (1/6,0) {\(\frac{1}{6}\)}; 
    \draw (2,1-0.03) -- (2,1+0.03);
    \node[below] at (2,1) {$\tfrac{1}{6}+\tfrac{1}{\sqrt{3}}(1+20\timeVariable)$}; 

    \node[color=BrickRed] at (0.2,0.5) {$\vectorial{\conservedVariable}_L$}; 
    \node[color=ForestGreen] at (0.7,0.5) {$\vectorial{\conservedVariable}_R$}; 

    \node[left, color=BrickRed] at (0,1/2) {$\vectorial{\distributionFunction}_{\labPosX, -1, \indexSpace_{\yLabel}}^{\indexTime, \collided} = \vectorial{\distributionFunctionLetter}_{\labPosX}^{\atEquilibrium}(\vectorial{\conservedVariable}_L)$};
    \node[above, color=BrickRed] at (1,1) {$\vectorial{\distributionFunction}_{\labNegY, \indexSpace_{\xLabel}, \numberCells}^{\indexTime, \collided} = \vectorial{\distributionFunctionLetter}_{\labNegY}^{\atEquilibrium}(\vectorial{\conservedVariable}_L)$};
    \node[below, color=BrickRed] at (0,-1/4) {$\vectorial{\distributionFunction}_{\labPosY, \indexSpace_{\xLabel}, -1}^{\indexTime, \collided} = \vectorial{\distributionFunctionLetter}_{\labPosY}^{\atEquilibrium}(\vectorial{\conservedVariable}_L)$};
    \draw[->, color=BrickRed] (0,-1/3) to[bend right=20] (1/12, -1/12);
    \node[above, color=ForestGreen] at (3,1) {$\vectorial{\distributionFunction}_{\labNegY, \indexSpace_{\xLabel}, \numberCells}^{\indexTime, \collided} = \vectorial{\distributionFunctionLetter}_{\labNegY}^{\atEquilibrium}(\vectorial{\conservedVariable}_R)$};
    \node[right, color=ForestGreen] at (4,0.5) {$\vectorial{\distributionFunction}_{\labNegX, 4\numberCells, \indexSpace_{\yLabel}}^{\indexTime, \collided} = \vectorial{\distributionFunctionLetter}_{\labNegX}^{\atEquilibrium}(\vectorial{\conservedVariable}_R)$};
    \node[below, color=MidnightBlue] at (2,0) {$\vectorial{\distributionFunction}_{\labPosY, \indexSpace_{\xLabel}, -1}^{\indexTime, \collided} = \vectorial{\distributionFunctionLetter}_{\labPosY}^{\atEquilibrium}(\conservedVariableDiscrete_{1, \indexSpace_{\xLabel}, 0}^{\indexTime}, \conservedVariableDiscrete_{2, \indexSpace_{\xLabel}, 0}^{\indexTime}, -\conservedVariableDiscrete_{3, \indexSpace_{\xLabel}, 0}^{\indexTime}, \conservedVariableDiscrete_{4, \indexSpace_{\xLabel}, 0}^{\indexTime})$};
    \end{tikzpicture}
    \end{center}\caption{\label{fig:BCEUler}Illustration of the boundary conditions enforced for the Mach 10 problem. Initial data, either $\vectorial{\conservedVariable}_L$ or $\vectorial{\conservedVariable}_R$ are presented in desaturated colors.}
\end{figure}

\begin{figure}[h]
    \begin{center}
        \includegraphics[width=0.99\textwidth]{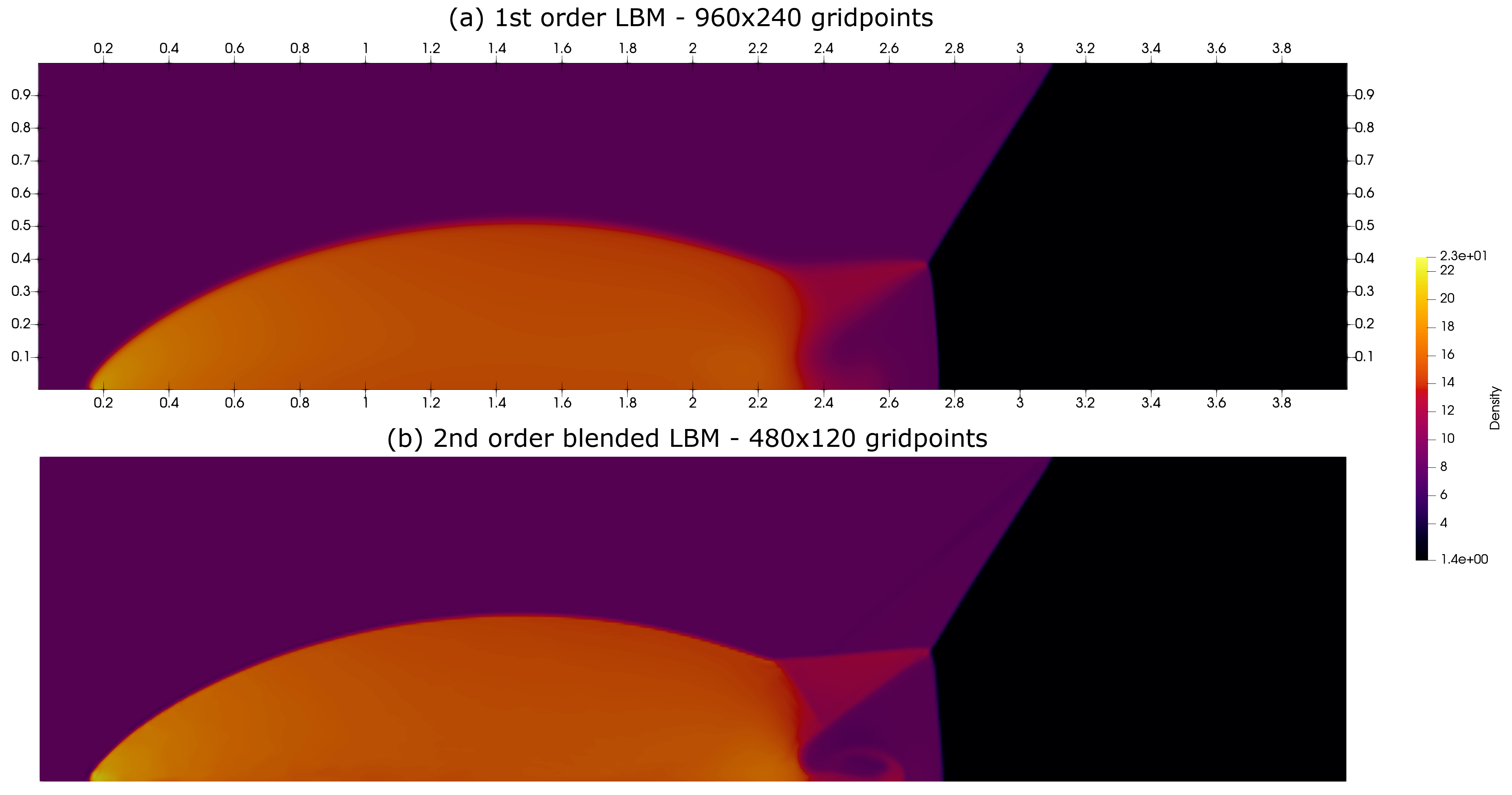}
    \end{center}\caption{\label{fig:Mach10}Density field at final time $\finalTime = \tfrac{1}{5}$ for the \lbmScheme{2}{4} scheme (a) and for the blended \lbmScheme{2}{5} scheme (b).}
\end{figure}

We finish by considering the test case first introduced by \cite{woodward1984numerical}: an oblique Mach 10 shock, forming a $60$-degree angle with the horizonal axis, and pre and post-shock states given by 
\begin{equation*}
    \vectorial{\conservedVariable}_L = (\rho_L, (\rho u)_L, (\rho v)_L, E_L) = (8, 57.16, -33, 563.52), \qquad \vectorial{\conservedVariable}_R  = (\rho_R, (\rho u)_R, (\rho v)_R, E_R) = (1.4, 0, 0, 2.5).
\end{equation*}
The flow variables $\vectorial{\conservedVariable} = (\rho, \rho u, \rho v, E)$ evolve under the Euler equations 
\begin{align*}
    \partial_{\timeVariable}\rho + \partial_{\xLabel}(\rho u) + \partial_{\yLabel}(\rho v)= 0, \qquad (\timeVariable, \vectorial{\spaceVariable})\in(0, \tfrac{1}{5})\times (0, 4)\times (0, 1),  \\
    \partial_{\timeVariable}(\rho u ) + \partial_{\xLabel}(\rho u^2 + p) + \partial_{\yLabel}(\rho uv)= 0, \qquad (\timeVariable, \vectorial{\spaceVariable})\in(0, \tfrac{1}{5})\times (0, 4)\times (0, 1),\\
    \partial_{\timeVariable}(\rho v ) + \partial_{\xLabel}(\rho u v) + \partial_{\yLabel}(\rho v^2 + p)= 0,\qquad (\timeVariable, \vectorial{\spaceVariable})\in(0, \tfrac{1}{5})\times (0, 4)\times (0, 1),\\
    \partial_{\timeVariable}E + \partial_{\xLabel}((E+p)u) + \partial_{\yLabel}((E+p)v)= 0,\qquad (\timeVariable, \vectorial{\spaceVariable})\in(0, \tfrac{1}{5})\times (0, 4)\times (0, 1),
\end{align*}
with pressure law $p = (\gamma - 1)(E - \tfrac{1}{2}\rho(u^2 + v^2))$.
We consider $\gamma = 1.4$ and use the initial datum
\begin{equation*}
    \vectorial{\conservedVariable}^{\initial}(\vectorial{\spaceVariable}) = \vectorial{\conservedVariable}_L\indicatorFunction{\yLabel-\sqrt{3}(\xLabel-\tfrac{1}{6})\geq 0} + \vectorial{\conservedVariable}_R\indicatorFunction{\yLabel-\sqrt{3}(\xLabel-\tfrac{1}{6})< 0}.
\end{equation*}
Boundary conditions are as follows, see \Cref{fig:BCEUler}.
On the western and eastern boundaries (labelled $\labelWest$ and $\labelEast$), we respectively enforce the trace $\vectorial{\conservedVariable}_L$ and $\vectorial{\conservedVariable}_R$, through the corresponding equilibria of $\vectorial{\distributionFunction}_{\labPosX}$ and $\vectorial{\distributionFunction}_{\labNegX}$.
On the northern boundary $\labelNorth$, we enforce the trace of the undisturbed shock in an infinite domain using the equilibrium of $\vectorial{\distributionFunction}_{\labNegY}$, that is, we impose $\vectorial{\conservedVariable}_L$ for $0\leq \xLabel\leq \tfrac{1}{6} + \tfrac{1}{\sqrt{3}}(1+20\timeVariable)$ and $\vectorial{\conservedVariable}_R$ for $\tfrac{1}{6} + \tfrac{1}{\sqrt{3}}(1+20\timeVariable)<\xLabel \leq 4$.
For the southern boundary $\labelSouth$, we enforce, using the equilibrium of $\vectorial{\distributionFunction}_{\labPosY}$, the state $\vectorial{\conservedVariable}_L$ for $0\leq \xLabel\leq \tfrac{1}{6}$.
On the other hand, for $\tfrac{1}{6}<\xLabel\leq 4$, we want this part of sourthern boundary be a reflective wall. This is obtained by using the equilibrium on the state where the vertical velocity is reversed. This state is computed using a first-order extrapolation in the direction of the normal vector to the boundary. Thus, on this part of the wall, we enforce 
\begin{equation*}
    \vectorial{\distributionFunction}_{\labPosY, \indexSpace_{\xLabel}, -1}^{\indexTime, \collided} = \vectorial{\distributionFunctionLetter}_{\labPosY}^{\atEquilibrium}(\conservedVariableDiscrete_{1, \indexSpace_{\xLabel}, 0}^{\indexTime}, \conservedVariableDiscrete_{2, \indexSpace_{\xLabel}, 0}^{\indexTime}, -\conservedVariableDiscrete_{3, \indexSpace_{\xLabel}, 0}^{\indexTime}, \conservedVariableDiscrete_{4, \indexSpace_{\xLabel}, 0}^{\indexTime}).
\end{equation*}

We compare two kinds of numerical scheme.
\begin{itemize}
    \item[(a)] A \lbmScheme{2}{4} with equilibria given by 
    \begin{align*}
        \vectorial{\distributionFunctionLetter}_{\labPosX}^{\atEquilibrium}(\vectorial{\conservedVariable}) &= \tfrac{1}{4} \vectorial{\conservedVariable} + \tfrac{1}{2\latticeVelocity}\vectorial{\flux}_{\xLabel}(\vectorial{\conservedVariable}), \qquad \vectorial{\distributionFunctionLetter}_{\labNegX}^{\atEquilibrium}(\vectorial{\conservedVariable}) = \tfrac{1}{4} \vectorial{\conservedVariable} - \tfrac{1}{2\latticeVelocity}\vectorial{\flux}_{\xLabel}(\vectorial{\conservedVariable}), \\
        \vectorial{\distributionFunctionLetter}_{\labPosY}^{\atEquilibrium}(\vectorial{\conservedVariable}) &= \tfrac{1}{4} \vectorial{\conservedVariable} + \tfrac{1}{2\latticeVelocity}\vectorial{\flux}_{\yLabel}(\vectorial{\conservedVariable}), \qquad \vectorial{\distributionFunctionLetter}_{\labNegY}^{\atEquilibrium}(\vectorial{\conservedVariable}) = \tfrac{1}{4} \vectorial{\conservedVariable} - \tfrac{1}{2\latticeVelocity}\vectorial{\flux}_{\yLabel}(\vectorial{\conservedVariable}),
    \end{align*}
    fulfilling \eqref{eq:consistency}.
    We employ a BGK collision operator with $\relaxationParameter = 1.35$, and $\latticeVelocity = 30$ due to the strength of the shock.
    \item[(b)] A \lbmScheme{2}{5} developed by \cite{wissocq2024positive} based on the equilibria
    \begin{align*}
        \vectorial{\distributionFunctionLetter}_{\labZeroVel}^{\atEquilibrium}(\vectorial{\conservedVariable}) = \tfrac{1}{2}\vectorial{\conservedVariable}, \qquad \vectorial{\distributionFunctionLetter}_{\labPosX}^{\atEquilibrium}(\vectorial{\conservedVariable}) &= \tfrac{1}{8} \vectorial{\conservedVariable} + \tfrac{1}{2\latticeVelocity}\vectorial{\flux}_{\xLabel}(\vectorial{\conservedVariable}), \qquad \vectorial{\distributionFunctionLetter}_{\labNegX}^{\atEquilibrium}(\vectorial{\conservedVariable}) = \tfrac{1}{8} \vectorial{\conservedVariable} - \tfrac{1}{2\latticeVelocity}\vectorial{\flux}_{\xLabel}(\vectorial{\conservedVariable}), \\
        \vectorial{\distributionFunctionLetter}_{\labPosY}^{\atEquilibrium}(\vectorial{\conservedVariable}) &= \tfrac{1}{8} \vectorial{\conservedVariable} + \tfrac{1}{2\latticeVelocity}\vectorial{\flux}_{\yLabel}(\vectorial{\conservedVariable}), \qquad \vectorial{\distributionFunctionLetter}_{\labNegY}^{\atEquilibrium}(\vectorial{\conservedVariable}) = \tfrac{1}{8} \vectorial{\conservedVariable} - \tfrac{1}{2\latticeVelocity}\vectorial{\flux}_{\yLabel}(\vectorial{\conservedVariable}),
    \end{align*}
    Without entering into the details, this scheme uses a BGK collision operator with $\relaxationParameter \in[1, 2]$ (always trying to keep it as close as possible to two to achieve second-order accuracy) which is locally adjusted (blended) to enforce admissibility of the numerical solution, \idEst{} positive density and positive pressure, and to minimize oscillations on shocks.
    This schemes dynamically adapts $\latticeVelocity$ in time.
\end{itemize}

In the results of \Cref{fig:Mach10}, we see that both for scheme (a) and (b), boundary conditions are correctly enforced.
Of course, scheme (a) is way more diffusive than scheme (b) even if utilized with twice the gridpoints per direction.
We see that our boundary conditions based on the equilibria can be successfully used with state-of-the-art schemes, such as (b).

\section*{Acknowledgments}

The authors are grateful to G. Wissocq, Y. Liu, and R. Abgrall for the enlighting discussion on their work and for providing a ready-to-use code to run the example of \Cref{sec:Mach10} with their numerical scheme \cite{wissocq2024positive}.

\bibliographystyle{apalike}
\bibliography{biblio}

\appendix

\section{Proofs of results in \Cref{sec:usefulBV}}\label{app:proofsBV}

\begin{proof}[Proof of \Cref{prop:totalVariationPiecewiseApprox}]
    Through changes of variable and basic properties of the Lebesgue integral, we obtain:
    \begin{align*}
        \totalVariation{\conservedVariable_{\discreteMark}}{(\realsPositive)^2} &= \spaceStep \sum_{\vectorial{\indexSpace}\in\naturals^2} \Bigl (\Bigl | \dashint_{\cell{\vectorial{\indexSpace}}}\conservedVariable(\xLabel, \yLabel)\differential\xLabel\differential\yLabel - \dashint_{\cell{\vectorial{\indexSpace}+\canonicalBasisVector{\xLabel}}}\conservedVariable(\xLabel, \yLabel)\differential\xLabel\differential\yLabel\Bigr|+\Bigl | \dashint_{\cell{\vectorial{\indexSpace}}}\conservedVariable(\xLabel, \yLabel)\differential\xLabel\differential\yLabel - \dashint_{\cell{\vectorial{\indexSpace}+\canonicalBasisVector{\yLabel}}}\conservedVariable(\xLabel, \yLabel)\differential\xLabel\differential\yLabel\Bigr|\Bigr )\\
        &=\frac{1}{\spaceStep}\sum_{\vectorial{\indexSpace}\in\naturals^2} \Bigl (\Bigl |\int_{\cell{\vectorial{\indexSpace}}} (\conservedVariable(\xLabel, \yLabel)-\conservedVariable(\xLabel+\spaceStep, \yLabel)) \differential\xLabel\differential\yLabel\Bigr | + \Bigl |\int_{\cell{\vectorial{\indexSpace}}} (\conservedVariable(\xLabel, \yLabel)-\conservedVariable(\xLabel, \yLabel+\spaceStep)) \differential\xLabel\differential\yLabel\Bigr | \Bigr )\\
        &\leq \frac{1}{\spaceStep} \int_{(\realsPositive)^2}|\conservedVariable(\xLabel, \yLabel)-\conservedVariable(\xLabel+\spaceStep, \yLabel)|\differential\xLabel\differential\yLabel + \frac{1}{\spaceStep} \int_{(\realsPositive)^2}|\conservedVariable(\xLabel, \yLabel)-\conservedVariable(\xLabel, \yLabel + \spaceStep)|\differential\xLabel\differential\yLabel.
    \end{align*}
    Now, consider $(\conservedVariable_k)_{k\in\naturals}$ in $\cKClass{\infty}((\realsPositive)^2)\cap \sobolevSpace{1}{1}((\realsPositive)^2)$ by \Cref{thm:approxThmBV}, hence such that 
    \begin{equation*}
        \conservedVariable_k\to \conservedVariable \quad \text{in}\quad \lebesgueSpace{1}((\realsPositive)^2), \qquad \text{and}\qquad \totalVariation{\conservedVariable_k}{(\realsPositive)^2} = \int_{(\realsPositive)^2}|\nabla \conservedVariable_k(\vectorial{\spaceVariable})|\differential\vectorial{\spaceVariable}\to\totalVariation{\conservedVariable}{(\realsPositive)^2}.
    \end{equation*}
    Let now $k\in\naturals$: we obtain 
    \begin{multline*}
        \totalVariation{\conservedVariable_{\discreteMark}}{(\realsPositive)^2}\leq \frac{1}{\spaceStep} \int_{(\realsPositive)^2}|\conservedVariable(\xLabel, \yLabel)-\conservedVariable_k(\xLabel, \yLabel)+\conservedVariable_k(\xLabel, \yLabel)-\conservedVariable_k(\xLabel+\spaceStep, \yLabel)+\conservedVariable_k(\xLabel+\spaceStep, \yLabel)-\conservedVariable(\xLabel+\spaceStep, \yLabel)|\differential\xLabel\differential\yLabel \\
        + \frac{1}{\spaceStep} \int_{(\realsPositive)^2}|\conservedVariable(\xLabel, \yLabel)-\conservedVariable_k(\xLabel, \yLabel)+\conservedVariable_k(\xLabel, \yLabel)-\conservedVariable_k(\xLabel, \yLabel + \spaceStep)+\conservedVariable_k(\xLabel, \yLabel + \spaceStep)-\conservedVariable(\xLabel, \yLabel + \spaceStep)|\differential\xLabel\differential\yLabel.
    \end{multline*}
    Thanks to the triangle inequality:
    \begin{align*}
        \totalVariation{\conservedVariable_{\discreteMark}}{(\realsPositive)^2}\leq \frac{4}{\spaceStep}\lVert\conservedVariable - \conservedVariable_k\rVert_{\lebesgueSpace{1}((\realsPositive)^2)} &+ \frac{1}{\spaceStep} \int_{(\realsPositive)^2}|\conservedVariable_k(\xLabel, \yLabel)-\conservedVariable_k(\xLabel+\spaceStep, \yLabel)|\differential\xLabel\differential\yLabel \\
        &+ \frac{1}{\spaceStep} \int_{(\realsPositive)^2}|\conservedVariable_k(\xLabel, \yLabel)-\conservedVariable_k(\xLabel, \yLabel + \spaceStep)|\differential\xLabel\differential\yLabel.
    \end{align*}
    By the fundamental theorem of calculus, which we can apply to $\conservedVariable_k$ thanks to its smoothness, we obtain 
    \begin{multline*}
        \totalVariation{\conservedVariable_{\discreteMark}}{(\realsPositive)^2}\leq \frac{4}{\spaceStep}\lVert\conservedVariable - \conservedVariable_k\rVert_{\lebesgueSpace{1}} + \frac{1}{\spaceStep} \int_{(\realsPositive)^2}\Bigl |\int_{\xLabel}^{\xLabel+\spaceStep} \partial_{\xLabel}\conservedVariable_k(\eta, \yLabel)\differential\eta \Bigr |\differential\xLabel\differential\yLabel 
        + \frac{1}{\spaceStep} \int_{(\realsPositive)^2}\Bigl |\int_{\yLabel}^{\yLabel+\spaceStep} \partial_{\yLabel}\conservedVariable_k(\xLabel, \eta)\differential\eta \Bigr |\differential\yLabel\differential\xLabel\\
        \leq  \frac{4}{\spaceStep}\lVert\conservedVariable - \conservedVariable_k\rVert_{\lebesgueSpace{1}} + \frac{1}{\spaceStep} \int_0^{+\infty}\reduceSpaceDoubleInt\int_0^{+\infty}\reduceSpaceDoubleInt\int_{\xLabel}^{\xLabel+\spaceStep} |\partial_{\xLabel}\conservedVariable_k(\eta, \yLabel)|\differential\eta \differential\xLabel\differential\yLabel 
        + \frac{1}{\spaceStep} \int_0^{+\infty}\reduceSpaceDoubleInt\int_0^{+\infty}\reduceSpaceDoubleInt\int_{\yLabel}^{\yLabel+\spaceStep} |\partial_{\yLabel}\conservedVariable_k(\xLabel, \eta)|\differential\eta \differential\yLabel\differential\xLabel.\\
    \end{multline*}
    Let us deal with the penultimate addendum on the right-hand side: the last term is handled analoguously. 
    We apply the Fubini's theorem to switch the order of integration in the last two integrals of this term, with the area on which the double integral is computed depicted in \Cref{fig:fubini}.
    Therefore, for any fixed $\yLabel>0$
    \begin{align*} 
        \int_0^{+\infty}\reduceSpaceDoubleInt\int_{\xLabel}^{\xLabel+\spaceStep} |\partial_{\xLabel}\conservedVariable_k(\eta, \yLabel)|\differential\eta \differential\xLabel &= \int_{0}^{\spaceStep}\int_0^{\eta}|\partial_{\xLabel}\conservedVariable_k(\eta, \yLabel)| \differential\xLabel \differential\eta + \int_{\spaceStep}^{+\infty}\int_{\eta-\spaceStep}^{\eta}|\partial_{\xLabel}\conservedVariable_k(\eta, \yLabel)| \differential\xLabel \differential\eta\\
        &= \int_{0}^{\spaceStep}\eta |\partial_{\xLabel}\conservedVariable_k(\eta, \yLabel)| \differential\eta + \int_{\spaceStep}^{+\infty}\spaceStep|\partial_{\xLabel}\conservedVariable_k(\eta, \yLabel)|  \differential\eta \leq \spaceStep\int_{0}^{+\infty}|\partial_{\xLabel}\conservedVariable_k(\eta, \yLabel)|\differential\eta.
    \end{align*}
    This entails 
    \begin{align*}
        \totalVariation{\conservedVariable_{\discreteMark}}{(\realsPositive)^2}
        &\leq \frac{4}{\spaceStep}\lVert\conservedVariable - \conservedVariable_k\rVert_{\lebesgueSpace{1}} + \int_{(\realsPositive)^2}|\partial_{\xLabel}\conservedVariable_k(\eta, \yLabel)|\differential\eta\differential\yLabel 
        + \int_{(\realsPositive)^2}|\partial_{\yLabel}\conservedVariable_k(\xLabel, \eta)|\differential\eta \differential\xLabel\\
        &= \frac{4}{\spaceStep}\lVert\conservedVariable - \conservedVariable_k\rVert_{\lebesgueSpace{1}((\realsPositive)^2)} + \int_{(\realsPositive)^2}|\nabla \conservedVariable_k(\vectorial{\spaceVariable})|\differential\vectorial{\spaceVariable}.
    \end{align*}
    Letting $k\to+\infty$ provides $\totalVariation{\conservedVariable_{\discreteMark}}{(\realsPositive)^2}\leq \totalVariation{\conservedVariable}{(\realsPositive)^2}$.

    \begin{figure}[h]
        \begin{center}
            \begin{tikzpicture}[->, thick]

                \fill[Cyan, opacity=0.5] 
                (0, 0) -- (3.5, 3.5) -- (3, 4) -- (0, 1) -- cycle;
            
            
                    \draw[->] (-0.5, 0) -- (5, 0) node[right] {\(x\)};
                    \draw[->] (0, -0.5) -- (0, 4) node[above] {\(\eta\)};
                    
                    \draw[-] (0, 0) -- (3.5, 3.5) node[right] {$\eta = x$};
                    \draw[-] (0, 1) -- (3, 4) node[right] {$\eta = x + \spaceStep$};
            
                    \draw[-, dotted] (0, 1) -- (1, 1);
                    \draw[-, dotted] (1, 1) -- (1, 0) node[below] {$\spaceStep$};

                    \node[left] at (0, 1) {$\spaceStep$};
              
              \end{tikzpicture}
        \end{center}\caption{\label{fig:fubini}Area of integration (light blue) when employing the Fubini's theorem in the proof of \Cref{prop:totalVariationPiecewiseApprox}.}
    \end{figure}
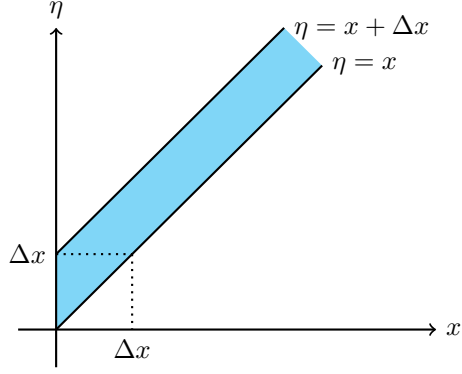

\end{proof}

\begin{proof}[Proof of \Cref{prop:totalVariationObsoluteValue}]
    The property of $|\conservedVariable|$ being in $\lebesgueSpace{\infty}(\Omega)\cap \lebesgueSpace{1}(\Omega)$ is trivial.
    Let us discuss the $\boundedVariationSpace$-properties. Consider $(\conservedVariable_k)_{k\in\naturals}$ in $\cKClass{\infty}(\Omega)\cap \sobolevSpace{1}{1}(\Omega)$ with the approximation properties ensured by \Cref{thm:approxThmBV}.
    By the reverse triangle inequality, one obtains that $|\conservedVariable_k|\to |\conservedVariable|$ in $\lebesgueSpace{1}(\Omega)$.
    Let now $\testFunction \in\smoothFunctionsSpaceWithReg{1}(\Omega;\reals^{\spatialDimensionality})$ such that $\lVert\testFunction\rVert_{\lebesgueSpace{\infty}(\Omega)}\leq 1$: we obtain 
    \begin{equation*}
        \int_{\Omega}|\conservedVariable_k(\vectorial{\spaceVariable})|\divergence(\testFunction (\vectorial{\spaceVariable}))\differential\vectorial{\spaceVariable}\to \int_{\Omega}|\conservedVariable(\vectorial{\spaceVariable})|\divergence(\testFunction(\vectorial{\spaceVariable}))\differential\vectorial{\spaceVariable}.
    \end{equation*}
    As $\nabla(|\conservedVariable_k|) = \nabla(\sign(\conservedVariable_k)\conservedVariable_k) =\sign(\conservedVariable_k) \nabla(\conservedVariable_k)$ in the distributional sense, we deduce that $\nabla(|\conservedVariable_k|)\in\lebesgueSpace{1}(\Omega)$, for $\conservedVariable_k\in\sobolevSpace{1}{1}(\Omega)$.
    This entails 
    \begin{equation*}
        \int_{\Omega}|\nabla(|\conservedVariable_k(\vectorial{\spaceVariable})|)|\differential\vectorial{\spaceVariable} = \totalVariation{|\conservedVariable_k|}{\Omega} = \int_{\Omega}|\nabla(\conservedVariable_k(\vectorial{\spaceVariable}))|\differential\vectorial{\spaceVariable} = \totalVariation{\conservedVariable_k}{\Omega},
    \end{equation*}
    which---by the definition of total variation---gives that $\int_{\Omega}|\conservedVariable_k(\vectorial{\spaceVariable})|\divergence(\testFunction(\vectorial{\spaceVariable}))\differential\vectorial{\spaceVariable}\leq \totalVariation{\conservedVariable_k}{\Omega}$.
    Taking the limit for $k\to+\infty$ gives 
    \begin{equation*}
        \int_{\Omega}|\conservedVariable(\vectorial{\spaceVariable})|\divergence(\testFunction(\vectorial{\spaceVariable}))\differential\vectorial{\spaceVariable}\leq \totalVariation{\conservedVariable}{\Omega},
    \end{equation*}
    hence a supremum over the test functions $\testFunction$ gives the final result.
\end{proof}

\begin{proof}[Proof of \Cref{prop:restrictionTotalVariation}]
    Using the Fubini's theorem, we have that $v\in\lebesgueSpace{1}(a_{\xLabel}, b_{\xLabel})$. Now let $\testFunction\in\smoothFunctionsSpaceWithReg{1}(a_{\xLabel}, b_{\xLabel})$ and observe that the function $(\xLabel, \yLabel)\mapsto \conservedVariable(\xLabel, \yLabel)\testFunction'(\xLabel)$ is integrable on $\Omega$.
    This yields 
    \begin{align*}
        \totalVariation{v}{(a_{\xLabel}, b_{\xLabel})} &= \sup \Bigl \{ \int_{a_{\xLabel}}^{b_{\xLabel}} v(\xLabel)\testFunction'(\xLabel)\differential\xLabel, \qquad \testFunction\in\smoothFunctionsSpaceWithReg{1}(a_{\xLabel}, b_{\xLabel}), \quad \lVert\testFunction\rVert_{\lebesgueSpace{\infty}(a_{\xLabel}, b_{\xLabel})}\leq 1 \Bigr \}\\
        &= \sup \Bigl \{ \int_{a_{\xLabel}}^{b_{\xLabel}}\Bigl (  \int_{a_{\yLabel}}^{b_{\yLabel}} \conservedVariable(\xLabel, \yLabel)\differential\yLabel\Bigr )\testFunction'(\xLabel)\differential\xLabel, \qquad \testFunction\in\smoothFunctionsSpaceWithReg{1}(a_{\xLabel}, b_{\xLabel}), \quad \lVert\testFunction\rVert_{\lebesgueSpace{\infty}(a_{\xLabel}, b_{\xLabel})}\leq 1\Bigr \}\\
        &= \sup \Bigl \{ \int_{a_{\yLabel}}^{b_{\yLabel}}\Bigl (  \int_{a_{\xLabel}}^{b_{\xLabel}} \conservedVariable(\xLabel, \yLabel)\testFunction'(\xLabel)\differential\xLabel\Bigr )\differential\yLabel, \qquad \testFunction\in\smoothFunctionsSpaceWithReg{1}(a_{\xLabel}, b_{\xLabel}), \quad \lVert\testFunction\rVert_{\lebesgueSpace{\infty}(a_{\xLabel}, b_{\xLabel})}\leq 1\Bigr \}\\
        &\leq \int_{a_{\yLabel}}^{b_{\yLabel}}\sup \Bigl \{ \Bigl (  \int_{a_{\xLabel}}^{b_{\xLabel}}\conservedVariable(\xLabel, \yLabel)\testFunction'(\xLabel)\differential\xLabel\Bigr ), \qquad \testFunction\in\smoothFunctionsSpaceWithReg{1}(a_{\xLabel}, b_{\xLabel}), \quad \lVert\testFunction\rVert_{\lebesgueSpace{\infty}(a_{\xLabel}, b_{\xLabel})}\leq 1\Bigr \}\differential\yLabel\\
        &=\int_{a_{\yLabel}}^{b_{\yLabel}}\totalVariation{\conservedVariable(\cdot, \yLabel)}{(a_{\xLabel}, b_{\xLabel})}\differential\yLabel = \totalVariationAlongAxis{\conservedVariable}{\Omega}{\xLabel}.
    \end{align*}
    The last inequality in the claim comes from \Cref{rem:1Dvs2D}.
\end{proof}

\begin{proof}[Proof of \Cref{prop:restrictionLinF}]
    Thanks to \Cref{prop:restrictionTotalVariation}, we have that $v\in\lebesgueSpace{1}(a_{\xLabel}, b_{\xLabel})\cap \boundedVariationSpace(a_{\xLabel}, b_{\xLabel})$.
    Using \cite[Pages 134 and 135]{ambrosio2000functions}, we deduce the claim since $\lebesgueSpace{1}(a_{\xLabel}, b_{\xLabel})\cap \boundedVariationSpace(a_{\xLabel}, b_{\xLabel})\subset \lebesgueSpace{\infty}(a_{\xLabel}, b_{\xLabel})$.
\end{proof}

\begin{proof}[Proof of \Cref{prop:VerticalIntegrationBV}]
    We have 
    \begin{equation*}
        \Bigl  | \int_{c_{\xLabel}}^{c_{\xLabel}+\eta}v(\xLabel)\differential\xLabel\Bigr  |\leq \int_{c_{\xLabel}}^{c_{\xLabel}+\eta}\lVert v \rVert_{\lebesgueSpace{\infty}(a_{\xLabel}, b_{\xLabel})}\differential\xLabel  = \eta \lVert v \rVert_{\lebesgueSpace{\infty}(a_{\xLabel}, b_{\xLabel})},
    \end{equation*}
    where the right-hand side is finite by virtue of \Cref{prop:restrictionLinF}.
\end{proof}

\section{Proof of \Cref{prop:EstimateWrongBoundaryRelaxationD1Q2}}\label{app:proofRelEstimate}

\begin{proof}[Proof of \Cref{prop:EstimateWrongBoundaryRelaxationD1Q2}]
    Let us start with the first part of the claim: as we consider $\relaxationParameter = 1$, the scheme can be rewritten as a one-step scheme only on the conserved moment $\conservedVariableDiscrete$, which reads $\conservedVariableDiscrete_{\indexSpace}^0 = 0$ for $\indexSpace\in\integerInterval{0}{\numberCells-1}$ and, for $\indexTime\in\naturals$
    \begin{equation*}
        \begin{cases}
            \conservedVariableDiscrete_{0}^{\indexTime + 1} = \tfrac{1}{2}(1+\courantNumber) \tilde{\conservedVariableDiscrete}_{\labelWest} + \tfrac{1}{2}(1-\courantNumber) \conservedVariableDiscrete_{1}^{\indexTime}, \\
            \conservedVariableDiscrete_{\indexSpace}^{\indexTime + 1} = \tfrac{1}{2}(1+\courantNumber) \conservedVariableDiscrete_{\indexSpace - 1}^{\indexTime} + \tfrac{1}{2}(1-\courantNumber) \conservedVariableDiscrete_{\indexSpace + 1}^{\indexTime}, \qquad \indexSpace\in\integerInterval{1}{\numberCells-2}, \\
            \conservedVariableDiscrete_{\numberCells-1}^{\indexTime + 1} = \tfrac{1}{2}(1+\courantNumber) \conservedVariableDiscrete_{\numberCells-2}^{\indexTime}.
        \end{cases}
    \end{equation*}
    The solution $\conservedVariableDiscrete^{\indexTime}\definitionEquality\transpose{(\conservedVariableDiscrete_{0}^{\indexTime}, \dots, \conservedVariableDiscrete_{\numberCells-1}^{\indexTime})}$ hence satisfies the recurrence $\conservedVariableDiscrete^{0} = \vectorial{0}_{\numberCells}$ and $\conservedVariableDiscrete^{\indexTime + 1} = \matricial{A} \conservedVariableDiscrete^{\indexTime} + \tfrac{1}{2}(1+\courantNumber) \tilde{\conservedVariableDiscrete}_{\labelWest}\canonicalBasisVector{1}$ for $\indexTime\in\naturals$, where $\matricial{A}$ is the $\numberCells\times\numberCells$ tridiagonal matrix with $\tfrac{1}{2}(1+\courantNumber)$ on the subdiagonal, zero on the diagonal, and $\tfrac{1}{2}(1-\courantNumber)$ supradiagonal, whereas $\canonicalBasisVector{1}$ is the first vector of the canonical basis of $\reals^{\numberCells}$.
    We thus obtain, for $\indexTime\geq 1$:
    \begin{equation}\label{eq:tmp2}
        \conservedVariableDiscrete^{\indexTime} = \frac{\tilde{\conservedVariableDiscrete}_{\labelWest}}{2}(1+\courantNumber) \Bigl (\sum_{p=0}^{\indexTime-1}\matricial{A}^p \Bigr )\canonicalBasisVector{1}.
    \end{equation}
    To obtain \eqref{eq:estimateD1Q2Chebyshev}, we write the powers $\matricial{A}^p$ in \eqref{eq:tmp2} using the formula from \cite[Theorem 2]{gutierrez2008powers}.

    For the second part of the claim, having $\relaxationParameter\in(0, 2)$, we consider a domain infinite to the right, and adopt a GKS (Gustafsson, Kreiss, and Sundstr\"om) construction based on the $\timeShiftOperator$-transform, see \cite{bellotti2024consistency}. The $\timeShiftOperator$-transformed boundary equations read 
    \begin{equation}\label{eq:zTransBoundary}
        \begin{cases*}
            \timeShiftOperator \tilde{\conservedVariableDiscrete}_0(\timeShiftOperator) - \tfrac{1}{2}(1-\relaxationParameter\courantNumber) \tilde{\conservedVariableDiscrete}_1(\timeShiftOperator) + \tfrac{1}{2\latticeVelocity}(1-\relaxationParameter)\tilde{\discrete{v}}_1(\timeShiftOperator) = \tfrac{1}{2}(1+\courantNumber) \tilde{\conservedVariableDiscrete}_{\labelWest} \frac{\timeShiftOperator}{\timeShiftOperator-1}, \\
            \timeShiftOperator \tilde{\discrete{v}}_0(\timeShiftOperator) + \tfrac{\latticeVelocity}{2}(1-\relaxationParameter\courantNumber) \tilde{\conservedVariableDiscrete}_1(\timeShiftOperator) - \tfrac{1}{2}(1-\relaxationParameter)\tilde{\discrete{v}}_1(\timeShiftOperator) = \tfrac{\latticeVelocity}{2}(1+\courantNumber) \tilde{\conservedVariableDiscrete}_{\labelWest} \frac{\timeShiftOperator}{\timeShiftOperator-1},
        \end{cases*}
    \end{equation}
    where $\discrete{v}$ is the first-order moment of the distribution functions.
    For the inner points, that is for $\indexSpace\geq 1$, the scheme reads
    \begin{equation}\label{eq:inner}
        \begin{cases*}
            \timeShiftOperator \tilde{\conservedVariableDiscrete}_{\indexSpace}(\timeShiftOperator) - \tfrac{1}{2}(1+\relaxationParameter\courantNumber) \tilde{\conservedVariableDiscrete}_{\indexSpace-1}(\timeShiftOperator) - \tfrac{1}{2}(1-\relaxationParameter\courantNumber) \tilde{\conservedVariableDiscrete}_{\indexSpace+1}(\timeShiftOperator) - \tfrac{1}{2\latticeVelocity}(1-\relaxationParameter)\tilde{\discrete{v}}_{\indexSpace - 1}(\timeShiftOperator) + \tfrac{1}{2\latticeVelocity}(1-\relaxationParameter)\tilde{\discrete{v}}_{\indexSpace + 1}(\timeShiftOperator) = 0, \\
            \timeShiftOperator \tilde{\discrete{v}}_{\indexSpace}(\timeShiftOperator) - \tfrac{\latticeVelocity}{2}(1+\relaxationParameter\courantNumber) \tilde{\conservedVariableDiscrete}_{\indexSpace- 1}(\timeShiftOperator) + \tfrac{\latticeVelocity}{2}(1-\relaxationParameter\courantNumber) \tilde{\conservedVariableDiscrete}_{\indexSpace + 1}(\timeShiftOperator)  - \tfrac{1}{2}(1-\relaxationParameter)\tilde{\discrete{v}}_{\indexSpace - 1}(\timeShiftOperator) - \tfrac{1}{2}(1-\relaxationParameter)\tilde{\discrete{v}}_{\indexSpace + 1}(\timeShiftOperator) =0.
        \end{cases*}
    \end{equation}
    Calling 
    \begin{equation*}
        \matricial{L}_{\timeShiftOperator}(\fourierShift)=
        \begin{pmatrix}
            \timeShiftOperator \fourierShift - \tfrac{1}{2}(1+\relaxationParameter\courantNumber) - \tfrac{1}{2}(1-\relaxationParameter\courantNumber) \fourierShift^2 & - \tfrac{1}{2\latticeVelocity}(1-\relaxationParameter) + \tfrac{1}{2\latticeVelocity}(1-\relaxationParameter) \fourierShift^2\\
            - \tfrac{\latticeVelocity}{2}(1+\relaxationParameter\courantNumber) + \tfrac{\latticeVelocity}{2}(1-\relaxationParameter\courantNumber) \fourierShift^2 & \timeShiftOperator \fourierShift- \tfrac{1}{2}(1-\relaxationParameter) - \tfrac{1}{2}(1-\relaxationParameter)\fourierShift^2
        \end{pmatrix},
    \end{equation*}
    from the theory of linear recurrences \cite[Chapter 8]{gohberg2005matrix}, the $\ell^2$-stable-in-space (on $\naturals$) general solution to \eqref{eq:inner} reads, for $|\timeShiftOperator|\geq 1$
    \begin{equation}\label{eq:ansatzFull}
        \begin{pmatrix}
            \tilde{\conservedVariableDiscrete}_{\indexSpace}(\timeShiftOperator)\\
            \tilde{\discrete{v}}_{\indexSpace}(\timeShiftOperator)
        \end{pmatrix}
         = C_-(\timeShiftOperator)\vectorial{\varphi}_-(\timeShiftOperator)\solutionCharStable(\timeShiftOperator)^{\indexSpace} + C_0(\timeShiftOperator)\vectorial{\varphi}_0(\timeShiftOperator)\delta_{\indexSpace 0}, \qquad \indexSpace\in\naturals,
    \end{equation}
    where $\solutionCharStable(\timeShiftOperator)$ is the stable solution of \eqref{eq:charEquation}, $\vectorial{\varphi}_-(\timeShiftOperator)\in\textnormal{ker}(\matricial{L}_{\timeShiftOperator}(\solutionCharStable(\timeShiftOperator)))$, $\vectorial{\varphi}_0(\timeShiftOperator)\in\textnormal{ker}(\matricial{L}_{\timeShiftOperator}(0))$, and the constants $C_-(\timeShiftOperator)$ and $C_0(\timeShiftOperator)$ are determined using the boundary conditions \eqref{eq:zTransBoundary}.
    The inverse $\timeShiftOperator$-transform is 
    \begin{equation}\label{eq:inverseZ}
        \conservedVariableDiscrete_{\indexSpace}^{\indexTime} = \frac{1}{2\pi i}\oint_{C}\timeShiftOperator^{\indexTime-1}\tilde{\conservedVariableDiscrete}_{\indexSpace}(\timeShiftOperator)\differential\timeShiftOperator = \sum_{\substack{\tilde{\timeShiftOperator}\text{ s.t.}\\
        \timeShiftOperator^{\indexTime-1}\tilde{\conservedVariableDiscrete}_{\indexSpace}(\timeShiftOperator)\\
        \text{singular}}} \textnormal{Res}_{\timeShiftOperator = \tilde{\timeShiftOperator}}[\timeShiftOperator^{\indexTime-1}\tilde{\conservedVariableDiscrete}_{\indexSpace}(\timeShiftOperator)]\approx  \sum_{\substack{|\tilde{\timeShiftOperator}|\geq1\text{ s.t.}\\
        \timeShiftOperator^{\indexTime-1}\tilde{\conservedVariableDiscrete}_{\indexSpace}(\timeShiftOperator)\\
        \text{singular}}} \textnormal{Res}_{\timeShiftOperator =\tilde{\timeShiftOperator}}[\timeShiftOperator^{\indexTime-1}\tilde{\conservedVariableDiscrete}_{\indexSpace}(\timeShiftOperator)],
    \end{equation}
    where $C$ is a contour, directed in the counterclockwise sense, large enough to enclose all the poles, and the approximation is a long-time one, thus for $\indexTime\gg 1$.
    Under this approximation, we now understand why the study of the $\timeShiftOperator$-transformed discrete solution for $|{\timeShiftOperator}|\geq 1$ is enough.

    Inserting \eqref{eq:ansatzFull} into \eqref{eq:zTransBoundary}, we notice that the only pole of both $\tilde{\conservedVariableDiscrete}_{\indexSpace}(\timeShiftOperator)$ and $\tilde{\discrete{v}}_{\indexSpace}(\timeShiftOperator)$ is at $\timeShiftOperator = 1$, and that in this vicinity 
    \begin{equation*}
        \vectorial{\varphi_{-}}(\timeShiftOperator) = 
        \begin{pmatrix}
        1\\
        \frac{\relaxationParameter\advectionVelocity}{\relaxationParameter-2}
        \end{pmatrix} + \bigO{\timeShiftOperator-1}, \quad C_-(\timeShiftOperator) = \tilde{\conservedVariableDiscrete}_{\labelWest} \frac{(2-\relaxationParameter)(1+\courantNumber)}{2-\relaxationParameter(1+\courantNumber)} \frac{\timeShiftOperator}{\timeShiftOperator - 1} + \bigO{1}, \qquad \vectorial{\varphi}_{0}(\timeShiftOperator) = 
        \begin{pmatrix}
            1\\
            \frac{\latticeVelocity + \relaxationParameter\advectionVelocity}{\relaxationParameter-1}
        \end{pmatrix}
        , \quad C_0(\timeShiftOperator) = \bigO{\timeShiftOperator-1}.
    \end{equation*}
    From \eqref{eq:inverseZ}, we therefore obtain 
    \begin{equation*}
        \conservedVariableDiscrete_{\indexSpace}^{\indexTime}\approx \tilde{\conservedVariableDiscrete}_{\labelWest} \frac{(2-\relaxationParameter)(1+\courantNumber)}{2-\relaxationParameter(1+\courantNumber)} \textnormal{Res}_{\timeShiftOperator = 1}\Bigl [\frac{\timeShiftOperator^{\indexTime}\solutionCharStable(\timeShiftOperator)^{\indexSpace}}{\timeShiftOperator - 1}\Bigr ] = \tilde{\conservedVariableDiscrete}_{\labelWest} \frac{(2-\relaxationParameter)(1+\courantNumber)}{2-\relaxationParameter(1+\courantNumber)}\Bigl ( \frac{2-\relaxationParameter+\relaxationParameter\courantNumber}{2-\relaxationParameter-\relaxationParameter\courantNumber}\Bigr )^{\indexSpace}.
    \end{equation*}

    Notice that in the case $\relaxationParameter = 1$, we can deduce \eqref{eq:estimationLongTimeRelaxationD1Q2WrongBC} in another way. Equation \eqref{eq:tmp2} entails that
    \begin{equation*}
        \lim_{\indexTime\to+\infty}\conservedVariableDiscrete^{\indexTime} = \frac{\tilde{\conservedVariableDiscrete}_{\labelWest}}{2}(1+\courantNumber) \Bigl (\sum_{p=0}^{+\infty}\matricial{A}^p \Bigr )\canonicalBasisVector{1} = \frac{\tilde{\conservedVariableDiscrete}_{\labelWest}}{2}(1+\courantNumber) (\identityMatrix{\numberCells} - \matricial{A})^{-1} \canonicalBasisVector{1},
    \end{equation*}
    where we used the Neumann series. Remark that $\identityMatrix{\numberCells} - \matricial{A}$ is a tridiagonal matrix with $-\tfrac{1}{2}(1+\courantNumber)$ on the subdiagonal, $1$ on the diagonal, and $-\tfrac{1}{2}(1-\courantNumber)$ supradiagonal.
    We can therefore employ \cite[Corollary 4.1]{DAFONSECA20017} to make the expression of $(\identityMatrix{\numberCells} - \matricial{A})^{-1}$ explicit:
    \begin{equation*}
        \lim_{\indexTime\to+\infty}\conservedVariableDiscrete^{\indexTime}_{\indexSpace} = \frac{\tilde{\conservedVariableDiscrete}_{\labelWest}}{2}(1+\courantNumber)(-1)^{\indexSpace} \frac{\Bigl ( -\tfrac{1}{2}(1+\courantNumber)\Bigr )^{\indexSpace}}{\Bigl ( \tfrac{1}{2}\sqrt{1-\courantNumber^2}\Bigr )^{\indexSpace + 1}} \frac{U_{\numberCells-\indexSpace-1}(\frac{1}{\sqrt{1-\courantNumber^2}})}{U_{\numberCells}(\frac{1}{\sqrt{1-\courantNumber^2}})}.
    \end{equation*}
    Notice that under the strict CFL condition, we have that $\frac{1}{\sqrt{1-\courantNumber^2}}\in(1, +\infty)$, hence the previous expression never vanish, nor the denominator become zero.
    Proceeding as in the proof of \cite[Theorem 2]{bellotti2024consistency}, we obtain that 
    \begin{align*}
        \lim_{\numberCells\to+\infty} \lim_{\indexTime\to+\infty}\conservedVariableDiscrete^{\indexTime}_{\indexSpace} &= \frac{\tilde{\conservedVariableDiscrete}_{\labelWest}}{2}(1+\courantNumber)(-1)^{\indexSpace} \frac{\Bigl ( -\tfrac{1}{2}(1+\courantNumber)\Bigr )^{\indexSpace}}{\Bigl ( \tfrac{1}{2}\sqrt{1-\courantNumber^2}\Bigr )^{\indexSpace + 1}} \lim_{\numberCells\to+\infty}  \frac{U_{\numberCells-\indexSpace-1}(\frac{1}{\sqrt{1-\courantNumber^2}})}{U_{\numberCells}(\frac{1}{\sqrt{1-\courantNumber^2}})} \\
        &= \frac{\tilde{\conservedVariableDiscrete}_{\labelWest}}{2}(1+\courantNumber)(-1)^{\indexSpace} \frac{\Bigl ( -\tfrac{1}{2}(1+\courantNumber)\Bigr )^{\indexSpace}}{\Bigl ( \tfrac{1}{2}\sqrt{1-\courantNumber^2}\Bigr )^{\indexSpace + 1}} \frac{(1+\courantNumber)^{\indexSpace+1}}{(\sqrt{1-\courantNumber^2})^{\indexSpace+1}} = \tilde{\conservedVariableDiscrete}_{\labelWest}\Bigl ( \frac{1+\courantNumber}{1-\courantNumber}\Bigr )^{\indexSpace + 1}.
    \end{align*}

\end{proof}

\end{document}